\newcommand{\NN}{{\mathbb N}}
\newcommand{\RR}{{\mathbb R}}
\newcommand{\ZZ}{{\mathbb Z}}
\newcommand{\abs}[1]{ \left| #1 \right|}
\newcommand{\dc}{\on{DC}}
\newcommand{\del}{\partial}
\newcommand{\diam}[1]{\on{diam}\left(#1\right)}
\newcommand{\dist}{\on{dist}}
\newcommand{\eps}{\varepsilon}
\newcommand{\floor}[1]{\lfloor #1 \rfloor}
\newcommand{\Id}{\mathrm{Id}}
\newcommand{\loc}{\mathrm{loc}}
\newcommand{\nor}[2]{\left\|#1\right\|_{#2}}
\newcommand{\odd}{\mathrm{odd}}
\newcommand{\oline}[1]{\overline{#1}}
\newcommand{\oo}{\infty}
\newcommand{\osc}{\on{osc}}
\newcommand{\pars}[1]{\left(#1\right)}
\newcommand{\rad}{\mathrm{rad}}
\newcommand{\supp}{\on{supp }}
\newcommand{\Var}{\mbf{Var}}
\newcommand{\mcl}{\mathcal}
\newcommand{\mbf}{\mathbf}
\newcommand{\on}{\operatorname}
\newtheorem{lemma}{Lemma}
\newtheorem{proposition}{Proposition}
\newtheorem{theorem}{Theorem}
\newtheorem{question}{Question}
\newtheorem{definition}{Definition}
\numberwithin{equation}{section}
\numberwithin{lemma}{section}
\numberwithin{proposition}{section}
\numberwithin{theorem}{section}
\numberwithin{corollary}{section}
\numberwithin{definition}{section}
\begin{document}
\title{Interpolation results for pathwise Hamilton-Jacobi equations}
\author[P.-L. Lions, B. Seeger, P. Souganidis]{Pierre-Louis Lions$^{1,3,6}$, Benjamin Seeger$^{1,4,7}$, Panagiotis Souganidis$^{2,5,8}$}
\address{$^1$Universit\'e Paris-Dauphine \& Coll\`ege de France \\ Place du Mar\'echal de Lattre de Tassigny \\ 75016 Paris, France}
\address{$^2$University of Chicago \\ 5734 S. University Ave. \\ Chicago, IL 60637}
\email{$^3$lions@ceremade.dauphine.fr, $^4$seeger@ceremade.dauphine.fr, $^5$souganidis@math.uchicago.edu}

\thanks{$^6$Partially supported by the Air Force Office for Scientific Research  grant FA9550-18-1-0494\\
$^7$Partially supported by the National Science Foundation Mathematical Sciences Postdoctoral Research Fellowship under Grant Number DMS-1902658\\
$^8$Partially supported by the National Science Foundation grants DMS-1600129 and DMS-1900599, the Office for Naval Research grant N000141712095 and the Air Force Office for Scientific Research grant FA9550-18-1-0494}

\subjclass[2010]{60H15, 46B70}
\keywords{Pathwise Hamilton-Jacobi equations, interpolation}

\date{\today}

\maketitle

\begin{abstract}
	We study the interplay between the regularity of paths and Hamiltonians in the theory of pathwise Hamilton-Jacobi equations with the use of interpolation methods. The regularity of the paths is measured with respect to Sobolev, Besov, H\"older, and variation norms, and criteria for the Hamiltonians are presented in terms of both regularity and structure. We also explore various properties of functions that are representable as the difference of convex functions, the largest space of Hamiltonians for which the equation is well-posed for all continuous paths. Finally, we discuss some open problems and conjectures.
\end{abstract}

\section{Introduction}

The objective of this paper is to study the well-posedness of pathwise viscosity solutions for the initial value problem
\begin{equation}\label{E:main}
	du = \sum_{i=1}^m H^i(Du) \cdot dW^i \quad \text{in } \RR^d \times (0,\oo) \quad \text{and} \quad u(\cdot,0) = u_0 \quad \text{on } \RR^d,
\end{equation}
where $H = (H^1,H^2,\ldots, H^m) \in C(\RR^d,\RR^m)$, $W = (W^1,W^2,\ldots, W^m) \in C([0,\oo),\RR^m)$, and $u_0 \in UC(\RR^d)$, the space of uniformly continuous functions on $\RR^d$. In particular, we aim to expand the understanding of \eqref{E:main} by analyzing the interplay between the properties of the Hamiltonian $H$ and the path $W$.

To date, the theory of solutions of \eqref{E:main} falls broadly into two categories, depending on the assumed regularity of the path $W$.

In the first case, which is classical, the path $W$ is continuously differentiable and the Hamiltonian $H$ is continuous, and \eqref{E:main} is understood using the Crandall-Lions theory of viscosity solutions (see \cite{CIL}). In this setting, $dW$ stands for the continuous function $\frac{d}{dt} W(t) = \dot W(t)$, and ``$\cdot$'' denotes multiplication. As a consequence of the evolution structure of the equation, the theory also extends to paths with $\dot W \in L^1$ or paths of bounded variation; see Ishii \cite{I} or Lions and Perthame \cite{LP}. 

The second class of problems was studied by Lions and Souganidis \cite{LS1, LS2, LS3, LS4, Snotes}, who introduced the notion of pathwise viscosity solutions of \eqref{E:main} for arbitrary continuous paths $W$. In these works, appropriately defined sub- and super-solutions are shown to satisfy a comparison principle, and, hence, the uniqueness of solutions is proved. Moreover, the equation is stable with respect to the driving paths in the topology of uniform convergence. That is, the solution $u$ of \eqref{E:main} can be identified as the unique function such that, if
\begin{equation}\label{approxpaths}
	(W_n)_{n \in \NN} \subset W^{1,1}([0,T],\RR^m), \quad \lim_{n \to \oo} W_n = W \quad \text{uniformly,}
\end{equation}
and $(u_n)_{n \in \NN} \subset UC(\RR^d \times [0,T]$ are the classical viscosity solutions of
\begin{equation}\label{E:introapprox}
	u_{n,t} = \sum_{i=1}^m H^i(Du_n) \cdot \dot W^i_n \quad \text{in } \RR^d \times (0,\oo) \quad \text{and} \quad u(\cdot,0) = u_0 \quad \text{on } \RR^d,
\end{equation}
then, as $n \to \oo$, $u_n$ converges uniformly on $\RR^d \times [0,T]$ to $u$.

In \cite{LS1}, the well-posedness of \eqref{E:main} is established for $W \in C([0,T],\RR^m)$ under the condition that $H \in C^2(\RR^d)$. This is extended to less regular Hamiltonians in \cite{LS2}, where it is proved that \eqref{E:main} is well-posed for all continuous paths and all choices of initial data if and only if, for every $i = 1,2,\ldots,m$,
\[
	H^i \in \dc(\RR^d) := \{ H \in C(\RR^d) : H = H_1 - H_2 \text{ for some convex functions } H_1 \text{ and } H_2 \}.
\]

The condition that each $H^i$ be equal to a difference of convex functions is much weaker than $H^i \in C^2(\RR^d)$, and covers a variety of interesting examples. For instance, the results of \cite{LS2} allow for the study of the geometric equation
\[
	du = |Du| \cdot dW,
\]
which models interface motion with the prescribed normal velocity $dW$. 

Nevertheless, $\dc$-Hamiltonians satisfy a variety of restrictions not shared by generic continuous functions. Indeed, they are locally Lipschitz, as well as twice-differentiable almost everywhere. Thus, for example, if $0 < \gamma < 1$, the space $C^{1,\gamma}(\RR^d)$ is not contained in $\dc(\RR^d)$. Hence, according to \cite{LS2}, for any $H \in C^{1,\gamma}(\RR^d) \backslash \dc(\RR^d)$, there exist continuous paths $W$ and approximations as in \eqref{approxpaths} such that the corresponding solutions of \eqref{E:introapprox} can have multiple limits or exhibit blow-up.

On the other hand, the motivation for studying the equation \eqref{E:main} comes from applications in which $W$ is, say, the sample path of a stochastic process, such as a Brownian motion. Such paths are nowhere differentiable and of unbounded variation on any time interval. However, they possess many properties not shared by generic continuous paths, like, for example, H\"older, Sobolev, or Besov regularity, or finite $p$-variation for some $p > 1$. It is natural to expect that the well-posedness of \eqref{E:main} can be established for more regular paths and Hamiltonians not belonging to $\dc$, and, in particular, the solution of \eqref{E:main} can still be identified as the limit of solutions of \eqref{E:introapprox} for appropriate approximating sequences $(W_n)_{n \in \NN}$.

We accomplish this by interpolating between the two regimes described above. For various examples of spaces $\mathscr H \subset C(\RR^d)$ that contain functions not belonging to $\dc(\RR^d)$, we identify spaces $\mathscr P \supset W^{1,1}([0,T],\RR^m)$ with the property that, given $u_0 \in UC(\RR^d)$, $H \in \mathscr H$, and $W \in \mathscr P$, there exists a unique function $u \in UC(\RR^d \times [0,T])$ such that, if $(W_n)_{n \in \NN} \subset W^{1,1}([0,T])$ is a sequence satisfying
\[
	\lim_{n \to \oo} \sup_{t \in [0,T]} |W_n(t) - W(t)| = 0 \quad \text{and} \quad \sup_{n \in \NN} \nor{W_n}{\mathscr P} < \oo,
\]
then, as $n \to \oo$, the solution $u_n$ of \eqref{E:introapprox} converges uniformly in $\RR^d \times [0,T]$ to $u$.

The interplay between the regularity of $H$ and $W$ naturally imposes some restrictions on the possibilities for $\mathscr H$ and $\mathscr P$. Formal interpolation arguments indicate that, if the path space $\mathscr P$ measures regularity of the paths of degree $\alpha \in (0,1)$, in some sense, then the space $\mathscr H$ should contain Hamiltonians with regularity of order $2(1-\alpha)$, and the results we prove support this hypothesis.

Throughout the paper, we consider Hamiltonians that depend only on the gradient. Different methods are required if $H$ depends on $u$, as described in \cite{LS3,Snotes}. In fact, there is not a satisfactory theory for Hamiltonians depending on both $Du$ and $u$ unless the dependence on one is linear. 

When the Hamiltonian depends on the space variable $x$, the question of well-posedness becomes more complicated. Indeed, more regularity and structural requirements are needed for the Hamiltonian, as is described in more detail in \cite{Snotes}. Some particular and instructive examples are explored in the works of Friz, Gassiat, Lions, and Souganidis \cite{FGLS}, Lions and Souganidis \cite{LSpreprint}, and Seeger \cite{Se}. If $H$ is linear in $Du$, more general spatial dependence can be treated using either stochastic calculus or the theory of rough paths, as in Caruana, Friz, and Oberhauser \cite{CFO} and Diehl, Friz, and Oberhauser \cite{DFO}. In any of these settings, the question of using interpolation between existing regimes of well-posedness remains completely open.

\subsection{Some representative results} \label{SS:examples}
In order to give a flavor of the results to follow later in the paper, we discuss several examples of spaces of Hamiltonians and paths for which the above program can be carried out. These are consequences of the main theorems, which involve real interpolation spaces (see Theorem \ref{T:intromain}, Theorem \ref{T:intromainweak}, and Section \ref{S:mainresult}). 

Throughout the rest of the introduction, to simplify the presentation, we take $m = 1$, that is, $H$ and $W$ are both scalar valued. There is no loss of generality in doing so, as all the results continue to hold for $m > 1$.

Below, for $\alpha \in (0,1)$, $C^{0,\alpha}$ is the space of $\alpha$-H\"older continuous paths; for $p \in [1,\oo)$, $V_p$ denotes the space of paths of finite $p$-variation (see subsection \ref{SS:variation}); and $B^s_{pq}$ is the Besov space of parameters $s > 0$ and $1 \le p,q \le \oo$ (see subsection \ref{SS:besov}). Recall that, if $R > 0$ and $f \in B^s_{pq}(B_R)$, then $f \in L^p(B_R)$ and
\[
	\left[\int_0^1\pars{ \frac{ \sup_{|y| \le t} \nor{ f(\cdot + y) + f(\cdot - y) - 2f}{L^p(B_R)} }{t^s} }^q \frac{dt}{t} \right]^{1/q} < \oo.
\]
For more definitions and notation, see subsection \ref{SS:notation} and Section \ref{S:spaces} below.

\begin{theorem}\label{T:introBesovresult}
	Fix $\alpha \in (0,1)$ and assume that, for all $R > 0$, $H \in B^{2(1-\alpha)}_{\oo,1}(B_R)$. 
	\begin{enumerate}[(a)]
	\item If $W \in C^{0,\alpha}([0,T],\RR)$, then there exists a unique $u \in UC(\RR^d \times [0,T])$ such that, if $(W_n)_{n \in \NN} \subset W^{1,1}([0,T],\RR)$, $\lim_{n \to \oo} W_n = W$ uniformly, and 
	\begin{equation}\label{Holdercontrol}
		\sup_{n \in \NN} \nor{W_n}{C^{0,\alpha}} < \oo,
	\end{equation}
	then, as $n \to \oo$, the solution $u_n$ of \eqref{E:introapprox} converges uniformly in $\RR^d \times [0,T]$ to $u$.
	\item The same result is true if $W \in V_{1/\alpha}([0,T],\RR^m)$ and, instead of \eqref{Holdercontrol}, the approximating sequence satisfies
	\[
		\sup_{n \in \NN} \nor{W_n}{V_{0,1/\alpha}} < \oo,
	\]
	or if $W \in B^\alpha_{p,1}([0,T])$ for some $p > 1/\alpha$ and the approximating sequence satisfies
	\[
		\sup_{n \in \NN} \nor{W_n}{B^\alpha_{p,1}} < \oo.
	\]
	\end{enumerate}
\end{theorem}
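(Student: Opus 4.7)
The plan is to derive Theorem~\ref{T:introBesovresult} from the abstract interpolation framework furnished by Theorem~\ref{T:intromain} and Theorem~\ref{T:intromainweak}. These main theorems ask for a compatible pair of spaces $(\mathscr H, \mathscr P)$ together with a family of smoothings of the Hamiltonian that interpolates between the two classical regimes recalled in the introduction. The starting observation is that $B^{2(1-\alpha)}_{\oo,1}(B_R)$ is the real interpolation space of exponent $(1-\alpha, 1)$ between $C^0(B_R)$ and $C^2(B_R)$; explicitly, mollifying $H$ at scale $\delta > 0$ produces $H^\delta \in C^\oo(\RR^d)$ with
\[
\nor{H - H^\delta}{L^\oo(B_R)} \le \omega_R(\delta), \qquad \nor{D^2 H^\delta}{L^\oo(B_R)} \le c_R \delta^{-2\alpha} \nor{H}{B^{2(1-\alpha)}_{\oo,1}(B_R)},
\]
where $\omega_R$ carries a dyadic summability inherited from the Besov norm. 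The third index $q = 1$ in the Besov definition is exactly what secures this summability across scales.

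Let $u_n$ and $u_n^\delta$ denote the classical Crandall-Lions viscosity solutions of \eqref{E:introapprox} with path $W_n$ and Hamiltonians $H$ and $H^\delta$ respectively, and let $u^\delta \in UC(\RR^d \times [0,T])$ be the pathwise (Lions-Souganidis) viscosity solution of the smoothed equation with $H^\delta$ and driving path $W$. Since $H^\delta \in C^2$, the Lions-Souganidis stability theorem yields $\nor{u_n^\delta - u^\delta}{\infty} \to 0$ as $n \to \oo$ for each fixed $\delta$. The heart of the proof is the quantitative estimate
\[
\nor{u_n - u_n^\delta}{L^\oo(\RR^d \times [0,T])} \le F\bigl(\delta, \nor{W_n}{\mathscr P}\bigr),
\]
with $F(\delta, R) \to 0$ as $\delta \to 0$ for each fixed $R$. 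This is obtained by partitioning $[0,T]$ into small pieces of length $h$, running the Crandall-Lions comparison principle on each piece (so that only the local oscillation $\osc_{[t_k,t_{k+1}]} W_n$ enters, rather than the full BV norm), balancing $\nor{H - H^\delta}{L^\oo(B_L)}$ against $\nor{D^2 H^\delta}{L^\oo(B_L)}$, and exploiting the $\alpha$-regularity bound $\osc_{[t_k,t_{k+1}]} W_n \lesssim h^\alpha$. Optimizing the free parameter $h$ against $\delta$ then produces the exponent $2(1-\alpha)$ matching the Besov regularity of $H$.

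With these ingredients the triangle inequality
\[
\nor{u_n - u_m}{\infty} \le \nor{u_n - u_n^\delta}{\infty} + \nor{u_n^\delta - u_m^\delta}{\infty} + \nor{u_m^\delta - u_m}{\infty},
\]
together with \eqref{Holdercontrol}, gives an $\eps/3$ argument: choose $\delta$ small so the outer terms are $\le \eps$ uniformly in $n, m$, then take $n, m$ large so the middle term is $\le \eps$. Thus $(u_n)$ is Cauchy in $UC(\RR^d \times [0,T])$ and its limit $u$ is the sought-after solution, independent of the approximating sequence. Part (b) follows by the same scheme once the oscillation estimate on a partition is rewritten: for $W_n \in V_{1/\alpha}$ one uses $\sum_k (\osc_{I_k} W_n)^{1/\alpha} \le \nor{W_n}{V_{1/\alpha}}^{1/\alpha}$; for $W_n \in B^\alpha_{p,1}$ with $p > 1/\alpha$ one invokes the embedding $B^\alpha_{p,1} \hookrightarrow V_{1/\alpha}$ (valid as soon as $p\alpha > 1$) or argues directly via a Littlewood-Paley expansion of $W_n$.

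The main technical obstacle is the quantitative estimate for $\nor{u_n - u_n^\delta}{\infty}$ above. A naive application of Crandall-Lions comparison on $[0,T]$ yields only the bound $\nor{H - H^\delta}{\infty} \nor{W_n}{BV}$, and the BV norm is not controlled by any of the three path norms in play. The resolution is the subdivision-and-optimization procedure outlined above, which is exactly where the $\alpha$-exponent of the paths and the $(1-\alpha)$-exponent of the Hamiltonian are coupled, and it constitutes the core of the interpolation mechanism encoded in Theorem~\ref{T:intromain} and Theorem~\ref{T:intromainweak}.
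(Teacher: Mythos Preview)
Your opening sentence is on target: the theorem is a direct consequence of Theorem~\ref{T:intromainweak} once one verifies the inclusions $B^{2(1-\alpha)}_{\oo,1;\loc} \subset \mathscr H_{\alpha,1}$ and $C^{0,\alpha}_0 \subset V_{1/\alpha,0} \subset \mathscr P_{\alpha,\oo}$ (together with the Besov--Lorentz inclusion for the last case of part~(b)). The paper does exactly this, via Proposition~\ref{P:Hapgeneral}, Proposition~\ref{P:variationandHolder}, and Proposition~\ref{P:PapBesov}, after which Theorem~\ref{T:mainweak} with $p=1$ delivers the conclusion.

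However, the self-contained argument you then sketch has a genuine gap at the step ``running the Crandall--Lions comparison principle on each piece (so that only the local oscillation \dots\ enters, rather than the full BV norm)''. The classical stability estimate for two Hamiltonians $H$ and $H^\delta$ along a common path $W_n$ reads
\[
	\nor{u_n(\cdot,t_{k+1}) - u_n^\delta(\cdot,t_{k+1})}{\oo} \le \nor{u_n(\cdot,t_k) - u_n^\delta(\cdot,t_k)}{\oo} + \nor{H - H^\delta}{\oo,B_L} \int_{I_k} |\dot W_n|,
\]
and the integral on the right is the BV-norm of $W_n$ on $I_k$, \emph{not} its oscillation. Replacing BV by oscillation requires the Lions--Souganidis pathwise estimate (Proposition~\ref{P:dcextension}), but that estimate needs the Hamiltonian to lie in $\dc$, and $H \notin \dc$ in general. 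If you try to route through the piecewise-linear interpolant $\tilde W_n$ of $W_n$ along the partition, you can control $\nor{u_n^\delta - \tilde u_n^\delta}{\oo}$ (since $H^\delta \in \dc$) and $\nor{\tilde u_n - \tilde u_n^\delta}{\oo}$ (since $\tilde W_n$ has controlled $W^{1,1}$-norm), but the remaining term $\nor{u_n - \tilde u_n}{\oo}$ again compares two paths for the Hamiltonian $H \notin \dc$, and there is no available estimate.

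The paper's resolution, carried out in the proof of Theorems~\ref{T:main} and~\ref{T:mainweak}, is to use a full \emph{dyadic family} $(H_m)_{m \in \NN} \subset \dc$ approximating $H$, rather than a single mollification $H^\delta$, and to telescope the differences $S(H_m,W_n) - S(H_{m+1},W_n)$. Because each pair $H_m, H_{m+1}$ now lies in $\dc$, Lemma~\ref{L:decomposed}(a) applies and produces a bound in terms of the $K$-functional $K(2^{-m}, W_n, C_0, W^{1,1}_0)$ of the path. Summing over $m$ couples the $\ell^1$-summability of $2^{-m\alpha} K(2^m, H, \dc, C)$ (this is precisely where the index $q=1$ in $B^{2(1-\alpha)}_{\oo,1}$ enters, via Proposition~\ref{P:Hapgeneral}) against the $\ell^\oo$-bound $\sup_m 2^{m\alpha} K(2^{-m}, W_n, C_0, W^{1,1}_0) \lesssim \nor{W_n}{\mathscr P_{\alpha,\oo}}$. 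The time-partitioning you propose does appear in the paper, but in a different role: it is used in Lemma~\ref{L:variation} and Proposition~\ref{P:variationandHolder} to establish the inclusion $V_{1/\alpha,0} \subset \mathscr P_{\alpha,\oo}$, i.e., to control the path's $K$-functional, \emph{not} to run the comparison argument between Hamiltonians.
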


We note the condition that $H \in B^{2(1-\alpha)}_{\oo,1}$ is satisfied if, for example,
\[
	H \in C^{0,\beta}(\RR^d) \text{ for } 2(1-\alpha) < \beta < 1  \quad \text{or} \quad H \in C^{1,\beta-1}(\RR^d)  \text{ for } 1 \le 2(1-\alpha) < \beta.
\]

We discuss now what Theorem \ref{T:introBesovresult} says when $W$ is a Brownian motion. It is well known (see Stroock and Varadhan \cite{SV} and Friz and Victoir \cite{FV}) that, with probability one, Brownian paths belong to $C^{0,\alpha}$ and $V_{1/\alpha}$ for $\alpha < 1/2$, and fail to belong to the same spaces for $\alpha \ge 1/2$. It is also true (see Ciesielski, Kerkyacharian, and Roynette \cite{CKR} and Roynette \cite{R}) that Brownian paths belong to $B^\alpha_{p,q}$ for any $\alpha < 1/2$, or if $\alpha = 1/2$ and $q = \oo$, and fail to belong to Besov spaces of any other parameters. Theorem \ref{T:introBesovresult} thus allows for an extension of the equation to Brownian paths as long as $H$ belongs to the Besov space $B^\beta_{\oo,1}$ for $\beta > 1$, with the approximating paths being, say, a standard mollification or a piecewise linear interpolant of the sample path. 

The next result explains that Brownian paths have properties that allow this to be pushed further, that is, we may take $H \in B^1_{\oo,1}$ for particular approximating families.

\begin{theorem}\label{T:introBrownian}
	Let $W: [0,T] \times \Omega \to \RR$ be a standard Brownian motion defined on a probability space $(\Omega,\mbf F, \mbf P)$, and assume that, for all $R > 0$, $H \in B^1_{\oo,1}(B_R)$. Then there exists a unique random variable $u : \Omega \to UC(\RR^d \times [0,T])$ such that, if $(\delta_n)_{n \in \NN}$ and $(W_n)_{n\in \NN}$ satisfy $\delta_n \xrightarrow{ n \to \oo} 0$ and either
	\[
		\left\{
		\begin{split}
		&W_n \text{ is piecewise affine on a partition $\mcl P_n = \{0 = t^n_0 < t^n_1 < \cdots < t^n_{N_n} = T\}$ of $[0,T]$ such that}\\
		&|W_n(t_i) - W_n(t_{i-1})| = \delta_n \text{ for all } n \in \NN \text{ and } i = 1,2,\ldots, N_n,
		\end{split}
		\right.
	\]
	or
	\[
		W_n(t) = \delta_n \zeta\pars{ \frac{t}{\delta_n^2}} \text{ for a linearly-interpolated simple random walk $\zeta$},
	\]
	then, as $n \to \oo$, the solution $u_n$ of \eqref{E:introapprox} converges uniformly in $\RR^d \times [0,T]$ to $u$, almost surely in the first case and in distribution in the second case.
\end{theorem}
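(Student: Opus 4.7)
The strategy is to bootstrap off Theorem \ref{T:introBesovresult} by first mollifying the Hamiltonian. For $\eta > 0$, let $H^\eta$ be a standard mollification of $H$, so that $H^\eta \in C^\oo(\RR^d)$ and, for every $\alpha \in (0,1)$ and $R > 0$, $H^\eta \in B^{2(1-\alpha)}_{\oo,1}(B_R)$. Since Brownian paths almost surely belong to $C^{0,\alpha}([0,T],\RR)$ for every $\alpha < 1/2$, and the piecewise-affine approximations $W_n$ in both cases can be controlled in $C^{0,\alpha}$ (almost surely in case 1, via hitting-time estimates for Brownian motion, and in distribution in case 2, via Donsker's invariance principle), Theorem \ref{T:introBesovresult}(a) provides, for each fixed $\eta > 0$, a limit $u^\eta \in UC(\RR^d \times [0,T])$ such that the classical solutions $u_n^\eta$ of \eqref{E:introapprox} with Hamiltonian $H^\eta$ and path $W_n$ converge to $u^\eta$ uniformly.

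Next I would show that the family $(u^\eta)_{\eta > 0}$ is Cauchy in $UC(\RR^d \times [0,T])$ as $\eta \to 0$, thereby defining a candidate limit $u$. This reduces to a quantitative pathwise stability estimate of the form
\[
\sup_{t \in [0,T]} \nor{u^\eta(\cdot,t) - u^{\eta'}(\cdot,t)}{\oo} \le C(W,R)\, \nor{H^\eta - H^{\eta'}}{B^1_{\oo,1}(B_R)},
\]
with the constant $C(W,R)$ depending on the path only through its quadratic variation (of order $T$ for Brownian motion), rather than through the total variation. Since $H^\eta \to H$ in $B^1_{\oo,1}(B_R)$, such a bound produces the desired limit $u$.

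The final step is to exchange the limits $n \to \oo$ and $\eta \to 0$, by proving
\[
\lim_{\eta \to 0} \sup_{n \in \NN} \sup_{t \in [0,T]} \nor{u_n^\eta(\cdot,t) - u_n(\cdot,t)}{\oo} = 0,
\]
whereupon a $3\eps$-argument combined with $u_n^\eta \to u^\eta$ and $u^\eta \to u$ yields $u_n \to u$. This is where the specific structure of the approximations is essential: in both cases $W_n$ is piecewise affine with every increment of absolute value exactly $\delta_n$, so $u_n(T)$ is obtained by composing $N_n$ semigroup operators $S^{\pm}_{\delta_n,H}$, where $S^{\pm}_{s,H}$ denotes the time-$s$ solution operator of $v_t = \pm H(Dv)$. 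Consecutive opposite operators cancel to leading order, and the regularity $H \in B^1_{\oo,1}$ is just enough to provide a commutator error of order $\delta_n^2$ per pair; summing over $N_n \sim T/\delta_n^2$ pairs yields a bounded total error, which vanishes with $\nor{H - H^\eta}{B^1_{\oo,1}}$.

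The main obstacle is the quantitative stability estimate together with its companion commutator bound at the level of the approximating equations. Both must exploit the fact that, although the total variation of $W_n$ diverges like $1/\delta_n$, the quadratic variation remains bounded, which is precisely what converts the loss of a full derivative (relative to the generic $C^{0,\alpha}$-setting of Theorem \ref{T:introBesovresult}) into the endpoint regularity $B^1_{\oo,1}$. For case 2, convergence in distribution then follows by combining the continuity of the pathwise solution map in the uniform topology with Donsker's theorem.
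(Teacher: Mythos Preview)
Your approach diverges substantially from the paper's, and the proposal has genuine gaps at its two crucial steps. The paper does not mollify $H$ and then chase a stability estimate in the $B^1_{\oo,1}$-norm; instead it proves once and for all (Proposition~\ref{P:Hapgeneral}) that $B^1_{\oo,1}\subset\mathscr H_{1/2,1}$, shows that Brownian paths and both approximating families lie in $\mathscr P_{1/2,\oo}$ (Propositions~\ref{P:Brownian} and~\ref{P:almostsure}, Lemma~\ref{L:boundedinlaw}), and then invokes the abstract Theorem~\ref{T:mainweak}. The underlying stability comes from Lemma~\ref{L:decomposed} and Proposition~\ref{P:dcextension}, which use the convex structure of $\dc$ and Hopf--Lax cancellations (Lemma~\ref{L:convexestimate}), not Taylor-type commutator bounds.

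Your two unproved assertions are exactly where the difficulty lives. First, the claimed Lipschitz bound $\nor{u^\eta-u^{\eta'}}{\oo}\le C(W,R)\nor{H^\eta-H^{\eta'}}{B^1_{\oo,1}(B_R)}$ with $C$ depending only on the quadratic variation of $W$ is not available in the paper and is not obviously true; the paper's estimates (Lemma~\ref{L:decomposed}) are bilinear in $K$-functionals, not Lipschitz in a Besov norm. Second, your commutator claim that $B^1_{\oo,1}$-regularity yields an $O(\delta_n^2)$ error for $S^+_{\delta_n,H}S^-_{\delta_n,H}$ is asserted, not proved: for $H\in C^2$ this follows from the method of characteristics, but $B^1_{\oo,1}$ functions are only $C^1$ and lack a second derivative to drive such an expansion. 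The paper's mechanism for cancellation is entirely different---it decomposes $H$ into a $\dc$-piece (where exact Hopf--Lax cancellation applies) and a uniformly small remainder, and balances the two via the interpolation norm. Finally, your last sentence is also problematic: for $H\notin\dc$ the solution map is \emph{not} continuous in the uniform topology on $C_0$, so Donsker's theorem alone does not transfer convergence in law from $W_n$ to $u_n$; the paper needs the generalized mapping theorem (Lemma~\ref{L:mapping}) together with tightness of $(W_n)$ in $\mathscr P_{1/2,\oo}$ (Lemma~\ref{L:boundedinlaw}).
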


We next present some further refinements of the above results. 

When $d = 1$, then $\dc(\RR)$ can be exactly characterized as the space of functions with first derivative of bounded variation. This is used to prove the next theorem:

\begin{theorem}\label{T:intro1d}
	If $\alpha \in (0,1)$ and $d = 1$, then the conclusions of Theorem \ref{T:introBesovresult} remain true if, for some $r > \frac{1}{1-\alpha}$ and for all $R > 0$, $H \in B^{2(1-\alpha)}_{r,1}(B_R)$. If $d = 1$, then the conclusions of Theorem \ref{T:introBrownian} remain true if, for some $r > 2$ and for all $R > 0$, $H \in B^1_{r,1}(B_R)$.
\end{theorem}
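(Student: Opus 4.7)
The plan exploits the fact, specific to $d = 1$, that $\dc(\RR)$ coincides with the space of continuous functions whose derivative lies in $BV_\loc(\RR)$. Since $BV(B_R)$ is strictly larger than $L^\oo(B_R)$, this identification makes the cone of admissible $\dc$-Hamiltonians substantially larger in one dimension than in the higher-dimensional regime used for Theorem \ref{T:introBesovresult}, and thereby produces a weaker integrability requirement on $H$.

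First, I would approximate a given $H \in B^{2(1-\alpha)}_{r,1}(B_R)$ by a one-parameter family $(H_\eps)_{\eps > 0} \subset \dc(\RR)$, for instance by convolving $H$ against a mollifier at scale $\eps$, and quantify both the $L^\oo$-rate of approximation and the growth of the natural 1D $\dc$-seminorm of $H_\eps$, namely $\nor{H''_\eps}{\mcl M(B_R)}$. Using the real interpolation identity $B^{2(1-\alpha)}_{r,1} = (L^r, B^2_{r,1})_{1-\alpha, 1}$ together with the 1D Besov-into-$L^\oo$ embedding valid at exponent $1/r$, I expect quantitative bounds of the form
\begin{equation*}
	\nor{H - H_\eps}{L^\oo(B_R)} \lesssim \eps^a \quad \text{and} \quad \nor{H''_\eps}{\mcl M(B_R)} \lesssim \eps^{-b}
\end{equation*}
for explicit positive exponents $a,b$ depending on $\alpha$ and $r$, with the condition $r > 1/(1-\alpha)$ being exactly what enforces $a > 0$ --- that is, the Besov embedding into $L^\oo$ becomes critical precisely at that threshold.

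Second, I would rerun the proof of Theorem \ref{T:introBesovresult}, changing only that $H$ is approximated by the $\dc$-family $H_\eps$ above rather than by the finer approximation used in the proof when $H \in B^{2(1-\alpha)}_{\oo,1}$. Given a sequence $(W_n)_{n \in \NN}$ approximating $W$ in $C([0,T])$ with uniform bounds in the path space $\mathscr P$, one chooses $\eps = \eps_n$ as a function of $n$ so as to balance, in the estimate for $\nor{u_n - u_m}{L^\oo}$ derived from the $\dc$-comparison principle of \cite{LS2}, the error from replacing $H$ by $H_{\eps_n}$ against the error from replacing $W$ by $W_n$. The first error is controlled by $\nor{H - H_{\eps_n}}{L^\oo}$ times the total variation of $W_n$ on $[0,T]$, while the second enters through the constant $\nor{H''_{\eps_n}}{\mcl M}$ multiplied by $\nor{W_n - W_m}{C^0}$ and a power of the path regularity scale. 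The condition $r > 1/(1-\alpha)$ is exactly what makes the two contributions simultaneously summable under a diagonal choice $\eps_n \to 0$, guaranteeing that $(u_n)$ is Cauchy in $C(\RR^d \times [0,T])$. For the Brownian statement, one specializes to $\alpha = 1/2$, reads off the threshold $r > 2$, and invokes Donsker's invariance principle to upgrade deterministic to distributional convergence for the random walk approximation.

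The main obstacle I anticipate is obtaining the sharp pair of estimates above for $H$ only in $B^{2(1-\alpha)}_{r,1}$ with $r < \oo$: the case $r = \oo$ that enters the proof of Theorem \ref{T:introBesovresult} is essentially classical, but finite $r$ requires one to combine the real interpolation identity above with a dual $L^r$ estimate on the mollifier's second derivative, which is what generates the exponent $-b$ and the critical threshold. A secondary delicate point is verifying that the $\dc$-dependence in the comparison principle of \cite{LS2} enters through the measure norm $\nor{H''_\eps}{\mcl M}$ rather than through a larger norm such as $\nor{H''_\eps}{L^\oo}$; in 1D this is possible thanks to the $\dc = \{H : H' \in BV_\loc\}$ characterization, but in higher dimensions no analogous reduction is available, which explains why the sharper threshold is restricted to $d = 1$.
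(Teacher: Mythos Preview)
You have correctly identified the key one-dimensional ingredient, namely that $\dc(\RR)$ coincides with $\{H : H' \in BV_\loc\}$ and hence contains $W^{2,1}_\loc$. However, the execution you sketch does not reach the conclusion, and the paper's route is genuinely different.

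The gap is this: single-scale mollification together with power-law bounds $\nor{H-H_\eps}{L^\oo}\lesssim\eps^a$, $\nor{H_\eps''}{\mcl M}\lesssim\eps^{-b}$ only yields $H\in\mathscr H_{\alpha',\oo}$ for some $\alpha'$, and a computation shows $\alpha'>\alpha$ whenever $r<\oo$, so this does \emph{not} place $H$ in $\mathscr H_{\alpha,1}$. Concretely, bounding each Littlewood--Paley piece $L_kH$ via $\nor{(L_kH)''}{L^1(B_R)}\le C_R\nor{(L_kH)''}{L^r}$ throws away exactly a factor $2^{\alpha k/r}$ in the summability, which is fatal for the $\ell^1$-condition defining $\mathscr H_{\alpha,1}$. (Your claim that $r>1/(1-\alpha)$ is ``exactly what enforces $a>0$'' is also off: with $a=2(1-\alpha)-1/r$ one only needs $r>1/(2(1-\alpha))$ for $a>0$; the stronger threshold $1/(1-\alpha)$ enters for a different reason.) Your second step, re-running the comparison/balancing argument by hand, is in any case unnecessary once one knows $H\in\mathscr H_{\alpha,1}$: that is precisely what Theorems \ref{T:intromain}--\ref{T:intromainweak} package.

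The paper instead proves the embedding abstractly (Proposition \ref{P:1dbesov}). It uses $B^2_{1,1}\subset W^{2,1}\subset\dc$ and $B^0_{\oo,1}\subset C$, identifies $(B^2_{1,1},B^0_{\oo,1})_{\alpha,p}$ with $(\ell^{2,1}(L^1),\ell^{0,1}(L^\oo))_{\alpha,p}$ via retracts, and then invokes the vector-valued inclusion $\ell^{2(1-\alpha),1}\bigl((L^1,L^\oo)_{\alpha,p}\bigr)\subset(\ell^{2,1}(L^1),\ell^{0,1}(L^\oo))_{\alpha,p}$. Since $(L^1,L^\oo)_{\alpha,p}=L^{1/(1-\alpha),p}$, this gives $B^{2(1-\alpha)}_1(L^{1/(1-\alpha),p})_\loc\subset\mathscr H_{\alpha,p}$, and the stated result follows because on bounded intervals $L^r\subset L^{1/(1-\alpha),p}$ precisely when $r>1/(1-\alpha)$. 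The Lorentz-space step---splitting each $L_kH$ by level sets rather than simply embedding $L^r\hookrightarrow L^1$---is what recovers the lost $2^{\alpha k/r}$ and is absent from your plan.
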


Theorem \ref{T:intro1d} implies that, in one dimension and for the path spaces specified in Theorem \ref{T:introBesovresult}, it is possible to take $H$ belonging to the Sobolev-Slobodeckij space $W^{\beta,r}$ for $\beta > 2(1-\alpha)$ and $r > \frac{1}{1-\alpha}$. In fact, the Hamiltonian $H$ may even belong to the Besov-Lorentz space $B^{2(1-\alpha)}_1(L^{\frac{1}{1-\alpha},1})_\loc$ (see Proposition \ref{P:1dbesov}, and see subsection \ref{SS:besov} for definitions). 

Note that Theorem \ref{T:introBrownian}, and Theorem \ref{T:introBesovresult} when $\alpha = 1/2$, give the criterion $H \in B^1_{\oo;1}$, which is strictly contained in $C^1$. However, if $d = 1$, then, by Theorem \ref{T:intro1d}, $H$ may belong to $H \in B^1_{r,1;\loc}$ for $r > 2$, or even $B^1_1(L^{2,1})_\loc$, and such functions are, in general, not even Lipschitz continuous.

The final result gives further examples of Hamiltonians for which Theorem \ref{T:introBesovresult} still holds. These are obtained by taking advantage of properties of $\dc$-functions having to do with structure rather than regularity.

\begin{theorem}\label{T:introstructure}
	Let $\alpha \in (0,1)$. Then the conclusions of Theorem \ref{T:introBesovresult}, and Theorem \ref{T:introBrownian} with $\alpha = 1/2$, hold if
	\[
		H \text{ is radial and, for some } r > \frac{d}{1-\alpha} \text{ and all $R > 0$, $H \in B^{2(1-\alpha)}_{r,1}(B_R)$},
	\]
	or
	\[
		\left\{
		\begin{split}
		&\text{for some $(d-1)$-dimensional hyperplane $\Gamma$, there exist } H_1,H_2 \in B^{2(1-\alpha)}_{\oo,1;\loc} \text{ such that}\\[1.2mm]
		&\text{$H = H_1$ or $H = H_2$ on either side of $\Gamma$,}
		\end{split}
		\right.
	\]
	or
	\[
		H(p) = a\pars{ \frac{p}{|p|}}|p| \quad \text{for some } a \in B^{2(1-\alpha)}_{\oo,1}(S^{d-1}).
	\]
\end{theorem}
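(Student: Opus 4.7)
The plan is to derive Theorem~\ref{T:introstructure} from the general interpolation machinery described in Section~\ref{S:mainresult} by constructing, in each of the three cases, a family of $\dc$-approximations $H^n \to H$ whose rate of convergence in the appropriate interpolation norm matches the regularity assumptions on $W$. In the framework of Theorem~\ref{T:introBesovresult} and Theorem~\ref{T:introBrownian}, this suffices to carry through the interpolation argument: the solutions of the approximating problems with $H^n$ converge (uniformly) to a limit independent of the specific approximation, thereby defining the pathwise solution. The structural hypotheses in the present theorem allow us either to lower the integrability index from $\oo$ to a finite $r$, or to circumvent the pointwise regularity requirement on $H$ entirely, at the cost of exploiting additional algebraic structure when producing the $\dc$-approximations.

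For the radial case $H(p) = h(|p|)$, I would pass to polar coordinates $p = \rho\theta$ with $\theta \in S^{d-1}$, which gives
\[
    \nor{H}{B^{2(1-\alpha)}_{r,1}(B_R)} \sim \nor{h}{B^{2(1-\alpha)}_{r,1}([0,R];\rho^{d-1}d\rho)},
\]
so the hypothesis reduces to a weighted one-dimensional Besov bound. The weight factor $\rho^{d-1}$ accounts exactly for the appearance of $d/(1-\alpha)$ in place of the threshold $1/(1-\alpha)$ from Theorem~\ref{T:intro1d}, since weighted Sobolev embedding yields the integrability needed to invoke the one-dimensional argument, where $\dc(\RR)$ coincides with functions having BV first derivative. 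A standard mollification of $h$ away from the origin, combined with a separate regularization near $\rho = 0$ using the reflection symmetry $h(\rho) = h(-\rho)$, then gives smooth radial approximations $H^n(p) = h^n(|p|)$ converging to $H$ at the required rate.

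For the hyperplane case, after an affine change of coordinates and subtracting an affine function, I may assume $\Gamma = \{p_d = 0\}$ and $H$ continuous across $\Gamma$, with $H = H_1$ on $\{p_d \le 0\}$ and $H = H_2$ on $\{p_d > 0\}$ where $H_1, H_2 \in B^{2(1-\alpha)}_{\oo,1;\loc}$. The key observation is that a continuous function that is piecewise smooth on two half-spaces separated by a hyperplane is $\dc$: writing $g = H_2 - H_1$ (which vanishes on $\Gamma$ after the normalization), we have $H = H_1 + g_+\,\ind_{p_d > 0} - g_-\,\ind_{p_d > 0}$, and since $g$ vanishes on $\Gamma$ the product $g\cdot\ind_{p_d > 0}$ is Lipschitz; moreover functions of the form $\max(\ell, 0)g$ for $\ell$ affine and $g$ smooth can be expressed as differences of convex functions locally. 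Mollifying $H_1, H_2$ separately then yields smooth approximations on each half-space, which glue to $\dc$-approximations $H^n$ of $H$ with controlled Besov norms on both sides.

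For the positively $1$-homogeneous case $H(p) = a(p/|p|)|p|$, since $|\cdot|$ is convex and hence in $\dc(\RR^d)$, the only obstruction is the angular factor. I would mollify $a$ on $S^{d-1}$ to obtain $a^n \in C^\oo(S^{d-1})$ with $\nor{a^n - a}{B^{2(1-\alpha)}_{\oo,1}(S^{d-1})} \to 0$, and set $H^n(p) = a^n(p/|p|)|p|$. Each $H^n$ is smooth and $1$-homogeneous away from the origin; positively homogeneous functions of degree one that are $C^2$ on $S^{d-1}$ are locally $\dc$, and can be made globally $\dc$ by adding a suitable multiple of $|p|^2$ minus the same multiple. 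The main obstacle in this case, and the most delicate step of the proof, is verifying that the Besov norm of $a$ on the sphere controls uniformly the corresponding norm of $H^n$ on each annulus $\{R/2 \le |p| \le R\}$ in $\RR^d$, and then summing these contributions to obtain a quantitative rate compatible with the interpolation functors used in Theorem~\ref{T:intromain}; the radial scaling must be carefully tracked to avoid losing a factor which could spoil the matching with the path regularity exponent.
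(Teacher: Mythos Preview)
Your overall strategy---constructing $\dc$-approximations $H^n$ at a controlled rate so as to show $H \in \mathscr H_{\alpha,1}$, and then appealing to Theorems \ref{T:intromain}/\ref{T:intromainweak}---is exactly the paper's route, and your treatments of the hyperplane and positively $1$-homogeneous cases are close in spirit to Propositions \ref{P:Happlane} and \ref{P:Hapfront}. In the hyperplane case the paper makes the reduction cleaner by subtracting $H_1$ entirely (so that $f=0$ on one half-space) and then invoking Proposition \ref{P:piecewisedc} for the piecewise-$C^{1,1}$ mollification; your claim about functions of the form $\max(\ell,0)g$ is essentially a special case of that proposition but would need to be stated and proved. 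In the homogeneous case, Proposition \ref{P:dclevelset} gives directly $\nor{a_\delta(\cdot/|\cdot|)\,|\cdot|}{\dc(B_R)} \le CR\nor{a_\delta}{C^{1,1}}$, so your annular summation concern does not arise and the $|p|^2$ device is unnecessary.

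The radial case, however, has a genuine gap. You propose to pass to the profile $h$ with $H(p)=h(|p|)$ and use the one-dimensional characterization $\dc(\RR)=\{h:h'\in BV\}$. But $h\in\dc(\RR)$ does \emph{not} imply that the radial extension $h(|\cdot|)$ belongs to $\dc(\RR^d)$ for $d\ge 2$: the Hessian of $h(|x|)$ is $\phi''(r)\hat x\otimes\hat x + (\phi'(r)/r)(\Id-\hat x\otimes\hat x)$, and controlling a one-sided bound for this matrix requires information about both $\phi''$ and $\phi'/r$ that is not captured by membership in $\dc(\RR)$. The paper avoids this by proving the $d$-dimensional embedding $W^{2,q}_{\rad}(\RR^d)\subset\dc(\RR^d)$ for $q>d$ (Proposition \ref{P:radialfdc}), which constructs an explicit radial convex majorant of $D^2 f$ using that $\int_0^R|\phi''(r)|\,dr$ is finite once $\phi''\in L^q(r^{d-1}dr)$ with $q>d$. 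The interpolation then proceeds between $W^{2,(1-\alpha)r}_{\rad}$ and $C_{\rad}$ (Proposition \ref{P:radialHap}), which is what produces the threshold $r>d/(1-\alpha)$; your weighted-1D heuristic gets the same exponent but for the wrong reason.
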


The third example in Theorem \ref{T:introstructure} is important in the theory of front propagation.  Indeed, when $H$ takes such a form, then the level sets of the solution $u$ of \eqref{E:main} evolve according to the normal velocity $a(n)dW$.

\subsection{The main result: interpolation spaces}

The previous theorems follow from the main results of the paper, which are described next. 

For $\alpha \in [0,1]$ and $p \in [1,\oo]$, we define
\[
	\mathscr H_{\alpha,p} := (\dc(\RR^d), C(\RR^d) \cap L^\oo(\RR^d))_{\alpha,p;\loc} \quad \text{and} \quad \mathscr P_{\alpha,p} := (C_0([0,T],\RR^m),W_0^{1,1}([0,T],\RR^m) )_{\alpha,p}.
\]
Here, for two compatible normed spaces $X$ and $Y$, that is, both $X$ and $Y$ belong to a common Hausdorff topological space, $(X,Y)_{\alpha,p}$ denotes the real interpolation space of Lions and Peetre \cite{LP} of parameters $\alpha \in [0,1]$ and $p \in [1,\oo]$; see Bergh and L\"ofstr\"om \cite{BL} for more details. The notation $C_0$ and $W^{1,1}_0$ indicates the appropriate space of paths that satisfy $W(0) = 0$.

To formulate the results, it is convenient to introduce the solution map
\begin{equation}\label{introsolutionmap}
	(H,W) \mapsto u := S(H,W) \in UC(\RR^d \times [0,T]),
\end{equation}
where, for a fixed initial datum $u_0 \in UC(\RR^d)$, $u$ is the solution of \eqref{E:main}. The classical and pathwise viscosity solution theories then give that $S$ is a well-defined and continuous map on, respectively, $C(\RR^d) \times W_0^{1,1}([0,T],\RR^m)$ and $\dc_\loc(\RR^d) \times C_0([0,T],\RR^m)$.

The main results of the paper are that, for $\alpha \in (0,1)$ and $p \in [1,\oo]$, the meaning of $S$ can be extended, in an appropriate way, to a well-defined map on $\mathscr H_{\alpha,p} \times \mathscr P_{\alpha,p'}$, where $p'$ denotes the conjugate exponent of $p \in [1,\oo]$, that is, $\frac{1}{p} + \frac{1}{p'} = 1$. The first theorem deals with the case where $1 < p < \oo$, which, in particular, implies that $\dc_\loc(\RR^d)$ is dense in $\mathscr H_{\alpha,p}$ and $W^{1,1}_0([0,T],\RR^m)$ is dense in $\mathscr P_{\alpha,p'}$.

\begin{theorem}\label{T:intromain}
	Let $\alpha \in (0,1)$ and $p \in (1,\oo)$. Then the map \eqref{introsolutionmap} extends continuously to $\mathscr H_{\alpha,p} \times \mathscr P_{\alpha,p'}$. 
\end{theorem}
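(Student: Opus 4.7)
The plan is to exploit density: since $p, p' \in (1,\oo)$, the subset $\dc_\loc(\RR^d) \times W_0^{1,1}([0,T],\RR^m)$ is dense in $\mathscr H_{\alpha,p} \times \mathscr P_{\alpha,p'}$, so it suffices to show that $S$ is uniformly continuous on bounded subsets of this dense product with respect to the two interpolation norms; the continuous extension to the full product then follows from the completeness of $UC(\RR^d \times [0,T])$.

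The key inputs are quantitative endpoint estimates on the two product spaces where $S$ is already known to be defined. From the classical viscosity theory on $(C(\RR^d) \cap L^\oo(\RR^d)) \times W_0^{1,1}$, the Crandall-Lions comparison principle gives
\[
	\nor{S(H_1, W) - S(H_2, W)}{\oo} \le \nor{H_1 - H_2}{L^\oo(B_R)} \nor{\dot W}{L^1}
\]
and an analogous bound for fixed $H$ and varying $W$ in terms of $\nor{\dot W_1 - \dot W_2}{L^1}$, with $R$ depending only on the Lipschitz constant of $u_0$. From the pathwise theory of \cite{LS2} on $\dc_\loc \times C_0$ one obtains comparable stability:
\[
	\nor{S(H_1, W) - S(H_2, W)}{\oo} \le C \nor{H_1 - H_2}{L^\oo(B_R)}, \quad \nor{S(H, W_1) - S(H, W_2)}{\oo} \le C \Lip(H; B_R) \nor{W_1 - W_2}{\oo},
\]
with $C$ depending on bounds for $W$ in $C_0$ or $H$ in $\dc_\loc$, respectively.

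Given $(H, W), (\tilde H, \tilde W) \in \dc_\loc \times W_0^{1,1}$ in a fixed bounded set, for each $t, s > 0$ I would take K-functional decompositions
\[
	H - \tilde H = A_0(t) + A_1(t), \quad \nor{A_0(t)}{\dc_\loc} + t \nor{A_1(t)}{L^\oo} \lesssim K(t, H - \tilde H),
\]
\[
	W - \tilde W = B_0(s) + B_1(s), \quad \nor{B_0(s)}{C_0} + s \nor{B_1(s)}{W_0^{1,1}} \lesssim K(s, W - \tilde W),
\]
and then telescope $S(H,W) - S(\tilde H, \tilde W) = \sum_{k=1}^4 \bigl[S(H^{(k)}, W^{(k)}) - S(H^{(k-1)}, W^{(k-1)})\bigr]$ through four intermediate pairs, ordered so that each elementary step varies exactly one of $A_0, A_1, B_0, B_1$ while keeping the frozen argument in the domain of one of the two theories above---for instance, a $\dc$-variation of $H$ may be performed over any continuous path, but a $C \cap L^\oo$-variation only makes sense over a $W^{1,1}$ path. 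Applying the corresponding endpoint estimate to each of the four terms, integrating in logarithmic scale against the weights $t^{-\alpha p} \frac{dt}{t}$ and $s^{-\alpha p'} \frac{ds}{s}$, and using H\"older's inequality with the duality $\frac{1}{p} + \frac{1}{p'} = 1$, should yield a uniform-continuity bound
\[
	\nor{S(H, W) - S(\tilde H, \tilde W)}{\oo} \le \Phi\bigl(\nor{H - \tilde H}{\mathscr H_{\alpha, p}} + \nor{W - \tilde W}{\mathscr P_{\alpha, p'}}\bigr)
\]
with $\Phi(r) \to 0$ as $r \to 0$, uniform over the bounded set.

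The main obstacle is that the classical endpoint estimate contains the $L^1$-norm of the derivative of the frozen path, which is \emph{not} controlled by the $\mathscr P_{\alpha,p'}$-interpolation norm; hence one must also decompose the frozen paths (such as $\tilde W$ and $\tilde W + B_1(s)$) via their own K-functional splittings, and balance the resulting scale parameters against the scales $t, s$ introduced for the differences. Arranging this bookkeeping so that all cross-terms combine into the required H\"older pairing in $(p,p')$ is the most delicate technical step; once it is in place, the argument reduces to a standard application of the Lions-Peetre real interpolation machinery, and the extension follows by density.
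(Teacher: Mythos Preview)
Your overall architecture (density, $K$-functional decompositions, H\"older pairing between $p$ and $p'$) is right, and you correctly flag the main obstacle. But there is a genuine gap: the ``endpoint'' estimate you invoke on $\dc_\loc \times C_0$,
\[
	\nor{S(H_1,W) - S(H_2,W)}{\oo} \le C \nor{H_1 - H_2}{L^\oo(B_R)}, \qquad C = C(\nor{W}{\oo}),
\]
is \emph{not} available from the pathwise theory, and in fact cannot hold with such a constant --- if it did, one could pass to the $L^\oo$-closure of $\dc$ in $C$ for arbitrary continuous $W$, contradicting the necessity of $\dc$ (Proposition \ref{P:dcnecessary}). What the pathwise theory actually gives (the paper's Lemma \ref{L:decomposed}) is a \emph{mixed} bound
\[
	\nor{S(H_1,W) - S(H_2,W)}{\oo} \le C\bigl(\nor{H_1}{\dc} + \nor{H_2}{\dc}\bigr)\, K\!\left(\tfrac{\nor{H_1-H_2}{\oo}}{\nor{H_1}{\dc}+\nor{H_2}{\dc}},\, W,\, C_0,\, W^{1,1}_0\right),
\]
and a symmetric one with the roles of $H$ and $W$ exchanged. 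The $K$-functional of the \emph{frozen} argument enters at a scale determined by both the size of the varied pair in the strong norm and the size of the difference in the weak norm, so the two interpolations are entangled from the start.

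Because of this, decomposing the \emph{differences} $H-\tilde H$ and $W-\tilde W$ and telescoping through four intermediate pairs does not produce usable terms: each ``elementary step'' still requires control of $K(\cdot,W)$ or $K(\cdot,H)$ at an uncontrolled scale. The paper's route is different. It approximates each individual $H_j$ (and each $W_j$) at dyadic levels via the $K$-functional of $H_j$ itself, obtaining $H_{j,n}\in\dc$ with $\nor{H_{j,n}}{\dc}\lesssim K(2^n,H_j)$ and $\nor{H_{j,n}-H_{j,n+1}}{\oo}\lesssim 2^{-n}K(2^n,H_j)$. Telescoping $u_j=\lim_n u_{j,n}$ and applying the mixed estimate to consecutive approximants then forces the scale in $K(\cdot,W)$ to be exactly $2^{-n}$, yielding terms $K(2^n,H)\,K(2^{-n},W)$ to which H\"older in $(p,p')$ applies directly. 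The step you call ``bookkeeping'' is therefore not a matter of rearrangement; it requires replacing your endpoint Lipschitz estimates by the bilinear $K$-functional estimate and reorganising the whole telescoping scheme around approximants of the individual data rather than of their differences.
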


If $X_0$ and $X_1$ are two normed spaces such that $X_0$ embeds continuously into $X_1$, then, for $\alpha \in (0,1)$, $X_0$ is not dense in $(X_0,X_1)_{\alpha,\oo}$ or $(X_1,X_0)_{\alpha,\oo}$. In the present context, $X_0 = \dc_\loc$ or $X_0 = W^{1,1}_0$ are spaces of sufficiently ``smooth'' Hamiltonians or paths that are not dense in, respectively, $\mathscr H_{\alpha,\oo}$ or $\mathscr P_{\alpha,\oo}$ for any $\alpha \in (0,1)$. Thus, in order to make sense of $S(H,W)$ for arbitrary $H$ or $W$ belonging to either $\mathscr H_{\alpha,\oo}$ or $\mathscr P_{\alpha,\oo}$, it is necessary to allow for approximating sequences that do not converge in the full topology of these spaces. This is achieved with the next result.

\begin{theorem}\label{T:intromainweak}
Let $\alpha \in (0,1)$, $p \in [1,\oo]$, and $(H,W) \in \mathscr H_{\alpha,p} \times \mathscr P_{\alpha,p'}$. Then there exists a unique $S(H,W) \in UC(\RR^d \times [0,T])$ such that:
\begin{enumerate}[(a)] 
\item If $p < \oo$, $(W_n)_{n=1}^\oo \subset \mathscr P_{\alpha,p'}$, and
\begin{equation}\label{boundedpaths}
	\lim_{n \to \oo} \nor{W_n - W}{\oo,[0,T]}  = 0 \quad \text{and} \quad \sup_{n \in \NN} \nor{W_n}{\mathscr P_{\alpha,p'} } < \oo,
\end{equation}
then
\[
	\lim_{n \to \oo} \nor{ S(H, W_n) - S(H,W)}{\oo,\RR^d \times [0,T]} = 0.
\]
\item If $p' < \oo$, $(H^n)_{n=1}^\oo \subset \mathscr H_{\alpha,p}$, and
\begin{equation}\label{boundedHs}
	\lim_{n \to \oo} H_n = H \text{ locally uniformly and } \sup_{n \in \NN} \nor{H_n}{\mathscr H_{\alpha,p} } < \oo,
\end{equation}
then
\[
	\lim_{n \to \oo} \nor{S(H_n, W) - S(H,W)}{\oo,\RR^d \times [0,T]} = 0.
\]
\end{enumerate}
\end{theorem}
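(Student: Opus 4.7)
The plan is to upgrade Theorem \ref{T:intromain} by means of quantitative continuity estimates for $S$ in which the uniform distance of the perturbation is isolated from the ambient interpolation norms. Specifically, I would extract from the proof of Theorem \ref{T:intromain} two bounds of the form
\[
\nor{S(H, W_1) - S(H,W_2)}{\oo} \le \omega_W\bigl(\nor{W_1-W_2}{\oo};\ \nor{H}{\mathscr H_{\alpha,p}}, \max_i \nor{W_i}{\mathscr P_{\alpha,p'}}\bigr),
\]
\[
\nor{S(H_1, W) - S(H_2,W)}{\oo} \le \omega_H\bigl(\nor{H_1-H_2}{\oo,B_R};\ \nor{W}{\mathscr P_{\alpha,p'}}, \max_i \nor{H_i}{\mathscr H_{\alpha,p}}\bigr),
\]
valid for smooth data $H_i \in \dc_\loc$, $W_i \in W_0^{1,1}$, where $R$ depends on $u_0$ and the ambient norms, and where the moduli vanish in their first argument with the other arguments held fixed. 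Such estimates arise by inserting a $K$-functional splitting of the perturbation inside the modulus of $S$ and applying the two endpoint continuity results: the pathwise modulus of $S$ on $\dc_\loc \times C_0$ from \cite{LS1,LS2} and the classical Crandall--Lions modulus on $C \times W_0^{1,1}$.

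For any $(H,W) \in \mathscr H_{\alpha,p} \times \mathscr P_{\alpha,p'}$ and any $p \in [1,\oo]$, I would next build smooth approximants $(\widetilde H_n, \widetilde W_n) \in \dc_\loc \times W_0^{1,1}$ by convolution with smooth mollifiers, so that $\widetilde H_n \to H$ locally uniformly, $\widetilde W_n \to W$ uniformly on $[0,T]$, and the ambient interpolation norms of the approximants are controlled by those of $H$ and $W$; mollification is bounded on both $\mathscr H_{\alpha,p}$ and $\mathscr P_{\alpha,p'}$ for every $p \in [1,\oo]$, so this is possible even at the endpoints. The two quantitative bounds then make $S(\widetilde H_n, \widetilde W_n)$ Cauchy in $UC(\RR^d \times [0,T])$, and its limit defines $S(H,W)$, independently of the mollifier. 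For part (a), if $p < \oo$ and $(W_n)$ satisfies \eqref{boundedpaths}, I would mollify each $W_n$ (with bounds uniform in $n$ thanks to \eqref{boundedpaths}) and $H$, apply the first estimate with argument $\nor{W_n - W}{\oo}$, and conclude $S(H, W_n) \to S(H,W)$ uniformly by a standard diagonal argument; part (b) is entirely symmetric. The uniqueness of $S(H,W)$ is immediate from these stability statements.

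The main obstacle is the extraction of the moduli $\omega_W$ and $\omega_H$. Because $S$ is nonlinear in both variables, one cannot split $S$ linearly via the $K$-functional; instead, one writes the perturbation $W_1 - W_2 = \phi^0_t + \phi^1_t$ with $\phi^0_t \in C_0$ and $\phi^1_t \in W_0^{1,1}$ near-optimally for the $K$-functional of $\mathscr P_{\alpha,p'}$ and then estimates
\[
|S(H,W_1) - S(H,W_2)| \le |S(H,W_2 + \phi^0_t + \phi^1_t) - S(H, W_2 + \phi^0_t)| + |S(H, W_2 + \phi^0_t) - S(H, W_2)|,
\]
bounding the first term by the classical $W^{1,1}$ modulus applied with perturbation $\phi^1_t$ and the second by the pathwise $C_0$ modulus applied with perturbation $\phi^0_t$; optimizing $t$ against the constraints $\nor{W_1 - W_2}{\oo} = \delta$ and the uniform bound $\max_i \nor{W_i}{\mathscr P_{\alpha,p'}}$ yields the vanishing of $\omega_W$. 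A symmetric argument on the Hamiltonian side gives $\omega_H$. Once this is in place, the remainder is a standard extension-by-approximation, the only novelty being that at the endpoints $p \in \{1,\oo\}$ the subspace $\dc_\loc$ or $W_0^{1,1}$ is not dense in the full interpolation topology, forcing the weaker topology of uniform convergence plus bounded interpolation norm in the hypotheses of (a) and (b).
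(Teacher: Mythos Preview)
Your decomposition is aimed at the wrong variable, and this is a genuine gap. When you split $W_1 - W_2 = \phi^0_t + \phi^1_t$ and apply the two endpoint moduli, the pathwise $C_0$-estimate (Proposition~\ref{P:dcextension}) contributes $C\nor{H}{\dc(B_L)}\nor{\phi^0_t}{\oo}$ and the $W^{1,1}$-estimate (Lemma~\ref{L:easystability}) contributes $\nor{H}{\oo,B_L}\nor{\dot\phi^1_t}{1}$. Neither term contains the $K$-functional of $H$, so after optimizing over $t$ against $\nor{W_1-W_2}{\oo}=\delta$ and the bound $R$ on $\nor{W_i}{\mathscr P_{\alpha,p'}}$ you are left with a modulus of the form $C\nor{H}{\dc(B_L)}\,\delta + (\text{lower order})$, not one depending only on $\nor{H}{\mathscr H_{\alpha,p}}$. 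This is fatal for your Cauchy argument: if $H\in\mathscr H_{\alpha,p}\setminus\dc_\loc$, the mollifications $\widetilde H_n$ satisfy $\nor{\widetilde H_n}{\dc}\to\oo$, and your estimate for $|S(\widetilde H_n,\widetilde W_n)-S(\widetilde H_m,\widetilde W_m)|$ blows up.

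The paper resolves this by decomposing in the \emph{other} variable. To compare $S(H,W_1)$ and $S(H,W_2)$ with $W_1,W_2\in W^{1,1}_0$, it writes $H=G+F$ with $G\in\dc$ and $F\in C$, which after optimizing gives (Lemma~\ref{L:decomposed}(b)) a bound proportional to $\nor{W_1-W_2}{\oo}\cdot K\bigl(\tfrac{\nor{\dot W_1}{1}+\nor{\dot W_2}{1}}{\nor{W_1-W_2}{\oo}},H,\dc,C\bigr)$. This now carries the $K$-functional of $H$, but still the uncontrolled $W^{1,1}$-norms of the paths, so a single scale does not suffice either: one takes near-optimal $W^{1,1}$-approximants $W_{j,n}$ of each $W_j$ at every dyadic scale $2^{-n}$, telescopes $S(H,W_j)-S(H,W_{j,n})$, and bounds each increment by $K(2^{-m},W_j,C_0,W^{1,1}_0)\,K(2^m,H,\dc,C)$. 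H\"older then yields a product of tail $\ell^{p'}$- and $\ell^p$-sums, and it is exactly the hypothesis $p<\oo$ that makes the $H$-tail vanish as $n\to\oo$. The bridge term $S(H,W_{1,n})-S(H,W_{2,n})$ is handled at a fixed large $n$ by one more application of Lemma~\ref{L:decomposed}(b), now with $\nor{W_{1,n}-W_{2,n}}{\oo}$ controlled by $\delta + 2^{-n\alpha}R$. In short: decompose $H$, not $W_1-W_2$, and replace the single-parameter optimization by a dyadic telescoping.
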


The next result demonstrates that, in general, the assumptions of Theorem \ref{T:intromainweak} cannot be relaxed. 
	
\begin{theorem}\label{T:sharp}
	For $\beta \in (0,1)$ and $p \in \RR^d$, define $H_\beta(p) := |p|^\beta$. Then $H_\beta \in \mathscr H_{\alpha,1}$ if and only if $\alpha + \beta > 1$. Moreover, if $\alpha \in (0,1)$ and $u_0(x) = |x|$ for $x \in \RR^d$, then the following hold:
	\begin{enumerate}[(a)]
	\item If $\alpha + \beta < 1$, then there exists a sequence of paths $(W_n)_{n \in \NN} \subset W^{1,1}_0([0,T],\RR)$ such that
	\[
		\lim_{n \to \oo} W_n = 0 \text{ uniformly,} \quad \sup_{n \in \NN} \nor{W_n}{\mathscr P_{\alpha,\oo}} < \oo, \quad \text{and} \quad \lim_{n \to \oo} S(H_\beta,W_n) = +\oo.
	\]
	\item For any $c_0 > 0$, there exists a sequence of paths $(W_n)_{n \in \NN} \subset W^{1,1}([0,T])$ such that
	\[
		\lim_{n \to \oo} W_n = 0 \text{ uniformly,} \quad \sup_{n \in \NN} \nor{W_n}{\mathscr P_{\alpha,\oo}} < \oo,
	\]
	and
	\[
		\lim_{n \to \oo} \sup_{(x,t) \in \RR^d \times [0,T]} |S(H_{1-\alpha},W_n)(x,t) - (|x| \vee c_0 t^\alpha)| = 0.
	\]
	\end{enumerate}
\end{theorem}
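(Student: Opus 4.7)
I would handle the identification of $\mathscr H_{\alpha,1}$ first and then the two dynamical statements.

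For the characterization $H_\beta \in \mathscr H_{\alpha,1} \iff \alpha+\beta>1$, the plan is to estimate the $K$-functional of $H_\beta$ for the couple $(\dc_{\loc}, C\cap L^\infty)$, localized on the unit ball. For the upper bound, I would use the mollification decomposition $H_\beta = H_\beta*\rho_h + (H_\beta - H_\beta*\rho_h)$. The $\beta$-H\"older continuity of $|p|^\beta$ gives $\|H_\beta - H_\beta*\rho_h\|_\infty \lesssim h^\beta$, while the pointwise estimate $|D^2(|p|^\beta*\rho_h)| \lesssim (|p|+h)^{\beta-2}$ integrates to $\|H_\beta*\rho_h\|_{\dc(B_1)} \lesssim h^{\beta-1}$, using that a $C^2$ function has $\dc$-seminorm controlled by the $L^1$ norm of its Hessian. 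Balancing $h^{\beta-1}$ against $th^\beta$ at $h\sim 1/t$ yields $K(t,H_\beta)\lesssim t^{1-\beta}$ for large $t$, and the trivial bound $K(t,H_\beta)\le t\|H_\beta\|_{L^\infty(B_1)}\lesssim t$ handles small $t$. For the matching lower bound I would test against a positive smooth bump $\phi_r$ supported in $B_r$ with $\|\phi_r\|_\infty\le 1$ and $\|\Delta\phi_r\|_{L^1}\lesssim r^{d-2}$: integration by parts gives $\int H_\beta \Delta\phi_r \gtrsim r^{d+\beta-2}$, while for any splitting $H_\beta=f_0+f_1$ the same integral is bounded by $\|f_0\|_{\dc}+r^{d-2}\|f_1\|_\infty$ (up to scaling). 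This yields $K(t,H_\beta)\gtrsim t^{1-\beta}$ after setting $t=1/r$. Finally $\int_0^\infty t^{-\alpha-1}K(t,H_\beta)\,dt$ is controlled at infinity by $\int_1^\infty t^{-\alpha-\beta}\,dt$, which converges iff $\alpha+\beta>1$.

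For part (b), I would exploit the scaling symmetry of $u_t=|u_x|^{1-\alpha}\dot W$: if $u$ solves the equation with path $W$ and datum $|x|$, so does $u^\lambda(x,t)=\lambda^\alpha u(x/\lambda^\alpha,\,t/\lambda)$ corresponding to $W^\lambda(t)=\lambda^\alpha W(t/\lambda)$, and both the datum and the $C^\alpha$ norm are invariant. This picks out $|x|\vee c_0 t^\alpha$ as the unique self-similar limiting profile compatible with the scaling. The construction proposal is to fix a reference profile $\phi\in W^{1,1}_0([0,1])$ whose classical solution with datum $|x|$ has tip value $\phi^\sharp(0,1)>0$, then concatenate shrinking rescaled copies of $\phi$ on dyadic time-intervals $[T(1-2^{-k+1}),T(1-2^{-k})]$, choosing the amplitudes $\lambda_k$ so that the cumulative tip rise along the $k$-th block equals the increment of $c_0 t^\alpha$ over that interval. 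Each rescaled piece has the same $C^\alpha$ norm (so $\sup_n\|W_n\|_{\mathscr P_{\alpha,\infty}}<\infty$), and the amplitudes vanish so $W_n\to 0$ uniformly; matching the scaling of the rescaled pieces to the Hölder scaling $\alpha$ is exactly what forces the critical exponent $\beta=1-\alpha$.

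For part (a), the same type of construction with $\beta<1-\alpha$ admits strictly more "budget'': the intrinsic space--time cost $\delta\mapsto\delta^{1-\beta}$ of a cycle of amplitude $\delta$ is shorter than the Hölder cost $\delta\mapsto\delta^{1/\alpha}$, so one can pack arbitrarily many cycles into a fixed $\mathscr P_{\alpha,\infty}$-ball. Iterating the (b)-construction with the block scales chosen so the tip gains sum to infinity while $\|W_n\|_\infty\to 0$ should produce the desired divergent sequence. The main obstacle lies in the verification step common to both (a) and (b): since $H_\beta$ is not locally Lipschitz at the origin, the classical viscosity theory for the approximate equations $u_t=|u_x|^\beta \dot W_n$ must be handled carefully (e.g.\ via smooth Lipschitz regularizations of $H_\beta$), and one must show that the predicted tip rise per block is \emph{realized}---that cascades of nested oscillations at different scales combine additively through the cusp of $|p|^\beta$ at $p=0$ rather than cancelling. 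This is precisely the regime in which the comparison-principle-based bounds that would force uniform convergence when $\alpha+\beta>1$ break down, and it is the technical heart of the theorem.
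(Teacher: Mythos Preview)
Your plan has two concrete gaps, one in each half.

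\textbf{The $K$-functional upper bound.} The claim that ``a $C^2$ function has $\dc$-seminorm controlled by the $L^1$ norm of its Hessian'' is false for $d>1$; this is exactly the content of Proposition~\ref{P:Texample}. If you instead use the correct $L^\infty$ bound $\nor{f}{\dc(B_1)}\lesssim\nor{D^2f}{\oo}$ on your mollification, you get $K(t,H_\beta)\lesssim h^{\beta-2}+th^\beta$, whose optimum at $h\sim t^{-1/2}$ yields only $K(t)\lesssim t^{1-\beta/2}$, proving membership in $\mathscr H_{\alpha,1}$ merely for $\alpha>1-\beta/2$. The paper gets the sharp bound not by mollifying but by truncating: with $H_{\beta,\delta}(p):=|p|^\beta\vee\delta^\beta$ one has $\nor{H_\beta-H_{\beta,\delta}}{\oo}=\delta^\beta$, and $H_{\beta,\delta}$ has the \emph{explicit} $\dc$ decomposition $H_{\beta,\delta}=\beta\delta^{\beta-1}(|p|-\delta)_+-H_2$ with $H_2$ convex, giving $\nor{H_{\beta,\delta}}{\dc(B_L)}\lesssim\delta^{\beta-1}$ directly. (Your lower-bound-by-testing idea can be made to work, but the paper instead deduces the necessity of $\alpha+\beta>1$ from parts (a) and (b) together with Theorem~\ref{T:intromainweak}.)

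\textbf{Parts (a) and (b).} You correctly isolate the crux---verifying that the ``tip rise'' per oscillation is actually realized and accumulates---but you propose no mechanism for it; regularizing $H_\beta$ will not help you compute the solution. The paper's key tool, which you never invoke, is the Hopf formula: because $u_0(x)=|x|$ is convex, the solution of $u_t=H(Du)$ is $u(x,t)=\sup_p\{p\cdot x-u_0^*(p)+tH(p)\}$, so one can track the evolution entirely on the Legendre side. The paper takes a single unit triangle wave $W$, computes explicitly (via convex envelopes of $p\mapsto a(|p|-1)\pm\beta^{-1}|p|^\beta$ on $\oline{B_1}$) that one full oscillation sends $|x|\vee a$ to $|x|\vee(a+\tfrac{1-\beta}{\beta}a^{-\beta/(1-\beta)})$, and then analyzes the resulting recursion to obtain $a_k\asymp k^{1-\beta}$. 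A single rescaling $u_n(x,t)=2^{-n\alpha}u(2^{n\alpha}x,2^nt)$, $W_n(t)=\beta^{-1}2^{-n\alpha}W(2^nt)$ (not a dyadic concatenation) then produces paths bounded in $C^{0,\alpha}$, converging uniformly to $0$, with $u_n\gtrsim 2^{n(1-\alpha-\beta)}t^{1-\beta}$, which blows up when $\alpha+\beta<1$ and converges to $|x|\vee c_0t^\alpha$ (after adjusting by a multiplicative $\lambda$) when $\alpha+\beta=1$. Without the Hopf formula your proposal has no way to close this computation.
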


The classical viscosity solution theory says that, for any $u_0 \in UC(\RR^d)$, we have $S(H,0) = u_0$, that is, the solution $u$ of \eqref{E:main} with $W \equiv 0$ must satisfy $u(x,t) = u_0(x)$ for $(x,t) \in \RR^d \times [0,T]$. Theorem \ref{T:sharp}(b) then implies that the map $W \mapsto S(H_{1-\alpha},W)$ cannot be extended uniquely to $\mathscr P_{\alpha,\oo}$ by taking approximating sequences that are bounded in $\mathscr P_{\alpha,\oo}$.

\subsection{Some open questions}
Although Theorems \ref{T:intromain} and \ref{T:intromainweak} give an extensive description of the initial value problem \eqref{E:main}, a number of questions still remain, some of which we outline next.

Unless $d = 1$, there is no analytic characterization of the space $\dc(\RR^d)$. We do have, however, the continuous inclusions (see Propositions \ref{P:C11dc} and \ref{P:1ddc})
\[
	W^{2,\oo}(\RR^d) \subset \dc(\RR^d) \quad \text{and} \quad W^{2,1}(\RR) \subset \dc(\RR).
\]
This shows that some form of second-order regularity can be used as a criterion for belonging to $\dc$, and, hence, leads us to formulate the following question:

\begin{question}\label{Q:betterdc}
Does there exist $q = q_d  \in [1,\oo)$ such that $W^{2,q}(\RR^d) \subset \dc(\RR^d)$ for all $q > q_d$?
\end{question}

A partial result (see Proposition \ref{P:radialfdc}) is that
\begin{equation}\label{partialresult}
	\text{if } f \in W^{2,q}(\RR^d) \text{ with } q > d \text{ and $f$ is radial, then } f \in \dc(\RR^d).
\end{equation}
Indeed, this is what allows for a proof of the first statement of Theorem \ref{T:introstructure}. 

On the other hand, while finishing the preparation of this work, a counterexample was communicated to us by Terence Tao \cite{Texample}, which shows that the answer to Question \ref{Q:betterdc} is negative in general if $d > 1$; see Proposition \ref{P:Texample} for more details.

The next question, which is about certain interpolation spaces, is motivated by the definition of $\mathscr P_{\alpha,p}$, and, when $d = 1$, the definition of $\mathscr H_{\alpha,p}$.

\begin{question}\label{Q:interpolationspace}
What is the characterization of the interpolation space $(W^{m,1}(\RR), C(\RR))_{\alpha,p}$ for $\alpha \in (0,1)$, $p \in [1,\oo]$, and $m = 1,2,\ldots$? 
\end{question}

The space $(W^{1,1}(\RR), C(\RR))$ is related to $\mathscr P_{\alpha,p}$, which, as we prove in Section \ref{S:P}, contains examples of H\"older, variation, and Besov-Lorentz spaces. The case $m = 2$ is treated in Propositions \ref{P:1dbesov}, which shows that Besov-Lorentz regularity is a sufficient criterion for belonging to $(W^{2,1}(\RR),C(\RR))_{\alpha,p}$. Similar questions were studied by Kruglyak \cite{K} using Calder\'on-Zygmund decomposition methods, but for ranges of exponents that fall out of the scope of Question \ref{Q:interpolationspace}.

The last question is motivated by the example given in Theorem \ref{T:sharp}, and by analogous observations from the theory of rough differential equations. 

Let $\mcl T$ denote the map
\begin{equation}\label{roughpathmap}
	C^1([0,T],\RR^m) \ni Y \mapsto \mcl TY := X \in C([0,T],\RR^n),
\end{equation}
where, for some smooth function $f: \RR^n \to \RR^{n \times m}$ and $x \in \RR^n$, $X = \mcl TY$ is given by the solution of the initial value problem
\begin{equation}\label{E:RDE}
	\dot X(t) = f(X(t)) \dot Y(t) \quad \text{for } t \in [0,T] \quad \text{and} \quad X(0) = 0.
\end{equation}
When $\alpha > \frac{1}{2}$, it turns out that the solution operator $\mcl T$ for \eqref{E:RDE} extends continuously to $Y \in C^{0,\alpha}([0,T],\RR^m)$, which is the so-called Young regime. However, if $\alpha \le 1/2$, and if $Y \in C^{0,\alpha}([0,T],\RR^m)$ and $(Y_n)_{n \in \NN} \subset C^1([0,T],\RR^m)$ are such that
\begin{equation}\label{goodYseq}
	\lim_{n \to \oo} Y_n = Y \text{ uniformly and } \sup_{n \in \NN} \nor{Y_n}{C^{0,\alpha}} < \oo,
\end{equation}
then the sequence of solutions $(X_n)_{n \in \NN}$ of \eqref{E:RDE} corresponding to $(Y_n)_{n \in \NN}$ can fail to converge, or can have different limits for different approximating sequences. 

This lack of convergence and uniqueness is tied directly to certain iterated integrals of $Y$. For example, if $1/3 < \alpha \le 1/2$, $Y \in C^{0,\alpha}([0,T],\RR^m)$, $(Y_n)_{n \in \NN}$ and $(\tilde Y_n)_{n \in \NN}$ are two sequences of smooth paths satisfying \eqref{goodYseq}, and if, as $n \to \oo$, the antisymmetric matrix-valued paths $\mathbb Y_n$ and $\tilde {\mathbb Y_n}$ defined, for $i,j = 1,2,\ldots,m$ and $t \in [0,T]$, by
\[
	\mathbb Y^{ij}_n(t) := \int_0^t  Y^i_n(s)\dot Y^j_n(s)ds - \int_0^t  Y^j_n(s)\dot Y^i_n(s)ds \quad \text{and} \quad
	\tilde{\mathbb Y}^{ij}_n(t) := \int_0^t  \tilde Y^i_n(s)\dot {\tilde Y}^j_n(s)ds - \int_0^t  \tilde Y^j_n(s)\dot {\tilde Y}^i_n(s)ds
\]
converge uniformly to, respectively, $\mathbb Y$ and $\tilde{\mathbb Y}$, then, as $n \to \oo$, the corresponding solutions $X_n$ and $\tilde X_n$ of \eqref{E:RDE} have different uniform limits unless $\mathbb Y = \tilde{\mathbb Y}$.

The theory of rough paths put forward by Lyons \cite{Ly} (see also Friz and Hairer \cite{FH} and Friz and Victoir \cite{FV} for many more details and extensions) makes this connection systematic by introducing suitably augmented versions of the H\"older spaces that ``record'' the information about the iterated integrals. The modified solution map \eqref{roughpathmap} is then continuous with respect to the appropriate topology, and this allows for the equation \eqref{E:RDE} to have an analytic solution theory for paths with regularity below the Young regime.

In view of the analogous phenomena demonstrated by Theorem \ref{T:sharp}, we are led to the following question.

\begin{question}\label{Q:closable}
	Suppose that $\alpha,\beta \in (0,1)$ satisfy $\alpha + \beta < 1$. Can the topology on the space $\mathscr H_{\alpha,1} \times \mathscr P_{\beta,\oo}$ be ``augmented'' with certain quantities in such a way that the solution operator \eqref{introsolutionmap} can be extended with respect to the new topology?
\end{question}

The question of borrowing ideas and techniques from the theory of rough paths to make sense of singular partial differential equations is not new. In the seminal work of Hairer \cite{Hrfirst,Hrrs}, the theory of regularity structures is introduced in order to provide an analytic framework for several nonlinear, singular partial differential equations. A different but related approach is that of paracontrolled distributions, as in the work of Gubinelli, Imkeller, and Perkowski \cite{GIP}. A more direct analogy with rough paths is seen in the work of Otto and Weber \cite{OW}. 

\subsection{Organization of the paper}
Section \ref{S:spaces} contains an overview of several of the spaces that are referred to throughout the paper, especially those that are not often used in the theory of Hamilton-Jacobi equations, that is, Lorentz, Sobolev, Besov, and real interpolation spaces. In Section \ref{S:mainresult}, we prove the main results, that is, Theorems \ref{T:intromain} and \ref{T:intromainweak}. Section \ref{S:H} and Section \ref{S:P} then analyze several examples of spaces belonging to respectively $\mathscr H_{\alpha,p}$ and $\mathscr P_{\alpha,p'}$, thus giving proofs of the results described above in subsection \ref{SS:examples}. The sharpness result, Theorem \ref{T:sharp}, is discussed in Section \ref{S:sharp}. In Appendix \ref{S:LS}, we give an overview of the fact that the solution operator \eqref{introsolutionmap} extends to continuous paths and $\dc$-Hamiltonians. Finally, in Appendix \ref{S:dc}, we present many examples of functions that are representable as a difference of convex functions.

\subsection{Notation}\label{SS:notation} Given a space of paths $X([0,T],\RR^m)$,
\[
	X_0 = X_0([0,T],\RR^m) := \left\{ W \in X([0,T],\RR^m) : W(0) = 0 \right\}.
\]
This notation will be used with $C^{0,\alpha}$, $W^{s,p}$, $B^s_{pq}$, and others in place of $X$, in which case the usual norms can be replaced with equivalent semi-norms, which will be explained in the various contexts.

For a space of functions on $\RR^d$ denoted by $Y$, we say that $f \in Y_\loc(\RR^d)$ if, for every open and bounded set $U \subset \RR^d$, there exists $\tilde f \in Y(\RR^d)$ such that $f = \tilde f$ on $U$. In most situations, it will hold that, if $f \in Y_\loc(\RR^d)$ and $\eta \in C^\oo(\RR^d)$ has compact support, then $\eta \cdot f \in Y(\RR^d)$. We also write
\[
	Y_\rad(\RR^d) := \left\{ f : f \text{ is radial on } \RR^d \right\} \quad \text{and} \quad Y_\odd(\RR^d) := \left\{ f : f \text{ is odd on } \RR^d\right\}.
\]

We write $\nor{f}{\oo}$ for the supremum norm of a function $f$. At times, we also use the notation $\nor{f}{\oo,U} := \sup_{x \in U} |f(x)|$ to distinguish the domain. We denote by $(B)UC(U)$ the space of (bounded and) uniformly continuous functions on $U$. If $U$ is bounded, this space is equipped with the supremum-norm $\nor{\cdot}{\oo,U}$, and, otherwise, $f \in UC(U)$ is equivalent to
\[
	\nor{f}{UC(U)} := \sum_{n =1}^\oo \max \left\{ 2^{-n} , \nor{f}{\oo,B_n \cap U} \right\} < \oo.
\]

We denote by $\mathscr S = \mathscr S(\RR^d)$ the space of Schwartz functions on $\RR^d$, that is,
\[
	\mathscr S(\RR^d) := \left\{ f \in C^\oo(\RR^d) : \sup_{x \in \RR^d}  |x|^m |D^n f(x)| < \oo \right\},
\]
and $\mathscr S'(\RR^d) = \mathscr S'$ its dual, the space of tempered distributions. 

If $(X,\mu)$ is a measure space and $p \in [1,\oo]$, then
\[
	L^p(X,\mu) := \left\{ f:X \to \RR, \; \nor{f}{L^p(X,\mu)} := \left[ \int_X f(x)^p \mu(dx) \right]^{1/p} < \oo \right\}.
\]
At times, we suppress the dependence on $X$ or $\mu$ when this does not cause confusion. For $p \in [1,\oo]$,
\[
	p' := \frac{p}{p-1} \in [1,\oo].
\]

For an open set $U \subset \RR^d$ and $f: U \to \RR$, $\supp f$ is the closure of the set on which $f = 0$. If $k = 0,1,2,\ldots$, $C^k_c(U)$ is the set of functions with up to $k$ continuous derivatives and such that $\supp f$ is compact. $BV(U)$ denotes the space of functions of bounded variation on $U$.

For a probability space $(\Omega,\mbf F, \mbf P)$ and a $\mbf F$-measurable random variable $X: \Omega \to \RR$, we write
\[
	\mbf E[X] := \int_{\Omega} X(\omega) \mbf P(d\omega) \quad \text{and} \quad \mbf {Var}[X] := \mbf E| X - \mbf E X|^2.
\]

When $f: \RR^d \to\RR$, we write the Legendre transform of $f$ as $f^*$, that is,
\[
	f^*(p) := \sup_{x \in \RR^d} \left\{ p \cdot x - f(x) \right\} \quad \text{for } p \in \RR^d.
\]

If $x \in \RR^d$ and $R > 0$, $B_R(x) := \{ y \in \RR^d: |y-x| < R \}$ and $B_R := B_R(0)$. For $d = 1,2,3,\ldots$, $S^{d-1}$ denotes the $(d-1)$-dimensional unit sphere in $\RR^d$. For $\alpha \in \RR$, $\lfloor \alpha \rfloor \in \ZZ$ and $\lceil \alpha \rceil \in \ZZ$ denote respectively the floor and ceiling of $\alpha$, and, for $\beta \in \RR$,
\[
	\alpha \vee \beta := \max(\alpha,\beta), \quad \alpha \wedge \beta = \min(\alpha,\beta), \quad \alpha_+ := \alpha \vee 0, \quad \text{and} \quad \alpha_- := - (\alpha \wedge 0).
\]

\section{Function spaces}\label{S:spaces}

This section contains a brief overview of various function spaces used throughout the paper. Many more details can be found in the appropriate references, listed below. 

\subsection{The space $\dc$}

Functions that are representable as a difference of convex functions made an appearance in the context of pathwise viscosity solutions in \cite{LS2}. In order to use the space of such functions in the interpolation theory, it is necessary to equip it with an appropriate norm.

\begin{definition}\label{D:dc}
Let $U \subset \RR^d$ be an open domain and let $f: U \to \RR$. Then $f \in \dc(U)$ if there exist convex functions $f_1$ and $f_2$ on $U$ such that
\[
	f = f_1 - f_2.
\]
If $U$ is bounded, $\dc(U)$ is equipped with the norm
\[
	\nor{f}{\dc(U)} := \inf\left\{ \nor{f_1}{\oo,U} + \nor{f_2}{\oo,U} : f = f_1 - f_2, \; f_1,f_2 \text{ convex}  \right\}.
\]
A function $f$ is said to belong to $\dc_\loc(U)$ if $f \in \dc(V)$ for all bounded $V \subset U$, or equivalently,
\[
	\nor{f}{\dc_\loc} := \sum_{n=1}^\oo \max(2^{-n}, \nor{f}{\dc(B_n)} ) < \oo.
\]
When $U = \RR^d$, we write $\dc = \dc(\RR^d)$ and $\dc_\loc = \dc_\loc(\RR^d)$.
\end{definition}

We note that the quantity $\nor{\cdot}{\dc_\loc}$ is not a true norm, but the Fr\'echet space $\dc_\loc$ is a complete metric space with the metric $(f,g) \mapsto \nor{f - g}{\dc_\loc}$.

In terms of regularity, $\dc$-functions share essentially the same properties as convex functions; namely, they are locally Lipschitz, but not $C^1$ in general, and they are almost everywhere twice-differentiable. Examples include $C^{1,1}$-functions and, when $d = 1$, functions whose first derivative belongs to $BV$. For more details and examples, see Appendix \ref{S:dc}.

\subsection{Lebesgue and Lorentz spaces}\label{SS:LL}

Let $1 < p < \oo$. Recall that, given a measure space $(X,\mu)$ and $f \in L^p(X,\mu)$, we have
\[
	\nor{f}{L^p(X,\mu)}^p = p \int_0^\oo \sigma^{p-1} \mu(\{ x \in X : |f(x)| > \sigma\}|)d\sigma.
\]
Denote by $\oline{f}:[0,\oo) \to [0,\oo)$ the nonincreasing rearrangement of $|f|$, that is,
\[
	\oline{f}(t) := \inf\{ \sigma : \mu(\{ x \in X: |f(x)| > \sigma\}) \le t \},
\]
which satisfies
\[
	\int_0^\oo |\oline{f}(t)|^p dt = \nor{f}{L^p(X,\mu)}^p.
\]
Now let $q \in [1,\oo]$. The Lorentz space $L^{p,q}(X,\mu)$ is defined as those functions with finite $\nor{\cdot}{L^{p,q}(X,\mu)}$-norm, where
\[
	\nor{f}{L^{p,q}(X,\mu)} := \left[ \int_0^\oo \pars{ t^{1/p} \oline{f}(t)}^q \frac{dt}{t} \right]^{1/q},
\]
with the corresponding analogue when $q = \oo$. Observe that $L^{p,p}(X,\mu) = L^p(X,\mu)$ for all $1 < p < \oo$.

The $L^{p,q}(X,\mu)$-norm is equivalent to the quantity, 
\[
	N_{p,q}(f) := \left[\sum_{k\in \ZZ} (2^k \mu(\{ x \in X: |f(x)| > 2^k\})^{1/p})^q\right]^{1/q},
\]
which is not itself a norm, since it fails to satisfy the triangle inequality property. 

If $\mu(X) < \oo$, then $N_{p,q}$ is equivalent to
\[
	\tilde N_{p,q}(f) := \left[\sum_{k=1}^\oo (2^k \mu(\{ x \in X: |f(x)| > 2^k\})^{1/p})^q\right]^{1/q},
\]
and therefore, in this case, for all $r > p$, $L^r(X,\mu) \subset L^{p,q}(X,\mu)$.

Additional difficulties arise when $p = 1$ or $p = \oo$. For instance, $L^{1,q}$ is not a Banach space for any $q>1$. In this paper, we only consider the cases $1 < p < \oo$ or $p = q$. For more details on Lorentz spaces, see \cite{BL,Hu}.

\subsection{Spaces of vector-value sequences}\label{SS:sequence}

Given $p \in [1,\oo]$ and a normed spaced $X$, $\ell^p(X)$ is the space of sequences $(a_n)_{n \in \NN} \subset X$ such that
\[
	\nor{(a_n)}{\ell^p(X)} := \pars{ \sum_{n=1}^\oo \nor{a_n}{X}^p }^{1/p} < \oo,
\]
and, for $s \in \RR$, $\ell^{s,p}(X)$ is the space of sequences $(a_n)_{n \in \NN}$ that satisfy
\[
	\nor{(a_n)}{\ell^{s,p}(X)} := \left[ \sum_{n=1}^\oo \pars{ 2^{ns} \nor{a_n}{X} }^p \right]^{1/p} < \oo,
\]
with the correct analogue when $p = \oo$ in both cases.

\subsection{Spaces of Besov type}\label{SS:besov} We now list various equivalent definitions of Besov spaces that are used throughout the paper. For more details, characterizations, and properties, see \cite{BL,T}.

We first give the standard definition in terms of Fourier analysis. We denote by $\mathscr F : \mathscr S \to \mathscr S$ the Fourier transform
\[
	\mathscr F f(\xi) = \hat{f}(\xi) := \int_{\RR^d} e^{-ix \cdot \xi} f(x)\;dx \quad \text{for }\xi \in \RR^d,
\]
and its inverse $\mathscr F^{-1}$ is
\[
	\mathscr F^{-1} f(x) = \check{f}(x) := \frac{1}{(2\pi)^d} \int_{\RR^d} e^{ix \cdot \xi} f(\xi)\;d\xi \quad \text{for } x \in \RR^d.
\]
Both $\mathscr F$ and $\mathscr F^{-1}$ extend by duality to $\mathscr S'$.

It is a standard fact \cite{BL,T} that there exists a function $\phi \in C^\oo_0(\RR^d)$ such that
\begin{equation}\label{pou}
	\phi \ge 0, \quad \supp \phi = \left\{ \xi : \frac{1}{2} \le |\xi| \le 2 \right\}, \quad \text{and} \quad \sum_{k=-\oo}^{\oo} \phi(2^{-k} \xi) = 1.
\end{equation}
We then define $(\phi_k)_{k \in \ZZ} \subset \mathscr S$ and $\psi \in \mathscr S$ by
\[
	\mathscr F \psi(\xi) = 1 - \sum_{k=1}^\oo \phi(2^{-k}\xi) \quad \text{and} \quad \mathscr F \phi_k(\xi) = \phi(2^{-k}\xi) \quad \text{for } \xi \in \RR^d.
\]

For $s \in \RR$ and $1 \le p,q \le \oo$, the Besov space $B^s_{pq}(\RR^d)$ is given by
\[
	B^s_{pq}(\RR^d) = B^s_{pq} := \left\{ f \in \mathscr S' : \nor{f}{B^s_{pq}} < \oo \right\},
\]
where
\begin{equation}\label{besovnorm}
	\nor{f}{B^s_{pq}} := \nor{\psi * f}{L^p(\RR^d)} + \left[ \sum_{k=1}^\oo (2^{sk} \nor{\phi_k *f}{L^p(\RR^d)} )^q \right]^{1/q}.
\end{equation}
The linear maps defined, for $f \in \mathscr S'$, by
\begin{equation}\label{LP}
	L_0 f := \psi * f \quad \text{and} \quad L_k f := \phi_k * f \quad \text{for } k = 1,2,\ldots
\end{equation}
are known as the Littlewood-Paley projections, and it is clear that $f \in B^s_{pq}$ if and only if
\[
	\pars{ L_k f}_{k=0}^\oo \subset \ell^{s,q}(L^p(\RR^d)).
\]
At times, is convenient to use the notation
\[
	B^s_{pq} = B^s_q(L^p)
\]
to emphasize the role of the underlying $L^p$-metric. These spaces can also be generalized to allow for choices other than $L^p$. As a particular example, we consider spaces of Besov-Lorentz type (see \cite{ST} for more details), that is, the spaces defined, for $s \in \RR$ and $1 \le p,q,r \le \oo$, by
\[
	B^s_q(L^{p,r}) := \left\{ f \in \mathscr S' : (L_k f)_{k = 0}^\oo \in \ell^{s,q}(L^{p,r}(\RR^d)) \right\}
\]
with the norm
\[
	\nor{f}{B^s_q(L^{p,r})} := \nor{\pars{ L_k f}_{k=0}^\oo }{\ell^{s,q}(L^{p,r}(\RR^d))}.
\]

In some cases, the Besov norm of a function can be equivalently defined in terms of its modulus of continuity. For $h \in \RR^d$ and $f: \RR^d \to \RR$, define
\[
	\Delta^2_h f(x) := f(x+h) + f(x-h) - 2f(x) \quad \text{for } x \in \RR^d.
\]
Then, for $0 < s < 2$ and $1 \le p,q \le \oo$, we define a norm equivalent to \eqref{besovnorm} by
\begin{equation}\label{equivalentbesov}
	f \mapsto \nor{f}{L^p(\RR^d)} + \left[ \int_0^\oo \pars{\frac{ \sup_{|h| \le t} \nor{\Delta^2_h f}{L^p(\RR^d)} }{t^s} }^q \frac{dt}{t} \right]^{1/q},
\end{equation}
with the $L^q$-norm replaced with the correct analogue when $q = \oo$. This definition can be extended to the case $s \ge 2$ in various ways, but we do not pursue this here, since the range $0 < s < 2$ is the one relevant to this paper.

If $f \in B^s_{pq,\loc}$, then, for any $\eta \in C_c^\oo(\RR^d)$, $\tilde f := f \cdot \eta \in B^s_{pq}$, and for any open set $U$ containing support of $\eta$, the norm \eqref{equivalentbesov} for $\tilde f$ is also equivalent to
\begin{equation}\label{equivalentbesov2}
	\tilde f \mapsto \nor{\tilde f}{L^p(U)} + \left[ \int_0^1 \pars{\frac{ \sup_{|h| \le t} \nor{\Delta^2_h \tilde f}{L^p(U)} }{t^s} }^q \frac{dt}{t} \right]^{1/q}.
\end{equation}

Both \eqref{equivalentbesov} and \eqref{equivalentbesov2} have extensions to the Besov-Lorentz spaces; that is, the norm $\nor{\cdot}{B^s_{q}(L^{p,r})}$ can be replaced with the correct analogues of \eqref{equivalentbesov} or \eqref{equivalentbesov2} with the $L^p$-norm replaced with the $L^{p,r}$-norm. In particular, for all $\tilde p > p$, we have the continuous embedding
\[
	B^s_{\tilde p, q;\loc} \subset B^s_q(L^{p,r})_\loc.
\]

When $s > 0$ is not an integer and $1 \le p = q < \oo$, then $B^s_{pp}$ is equal to the Sobolev-Slobodeckij space $W^{s,p} = W^{s,p}(\RR^d)$, where, if $s = n + \alpha$ with $n = 0,1,2,\ldots$ and $\alpha \in (0,1)$, equipped with the norm
\[
	\nor{f}{W^{s,p}} := \sum_{k=1}^n \nor{D^k f}{L^p} + \left[ \iint_{\RR^d \times \RR^d} \frac{|f(x) - f(y)|^p}{|x-y|^{\alpha p + d}} dxdy \right]^{1/p}.
\]
When $p = q = \oo$ and $s > 0$, $B^s_{\oo \oo}$ is also called the Zygmund space $\mathscr C^s(\RR^d)$. For $s$ not equal to an integer, $\mathscr C^s(\RR^d)$ is equal to the space $C^{n,\alpha}(\RR^d)$, where $s = n + \alpha$ with $n =0,1,2,\ldots$ and $\alpha \in (0,1)$. In general, for a nonnegative integer $n$, we have the strict inclusions
\[
	C^n(\RR^d) \subset C^{n-1,1}(\RR^d) \subset \mathscr C^n(\RR^d).
\]

The Besov spaces used throughout this paper have indices in the range $s > d/p$, and therefore are subspaces of continuous functions, in view of the continuous embeddings $B^s_{pq} \subset \mathscr C^{s-d/p} \subset C$.

\subsection{Variation spaces}\label{SS:variation}
For $p \in [1,\oo)$, define
\[
	V_{p,0} := \left\{ W \in C_0 : \nor{W}{V_p} := \sup_{ \mcl P} \pars{ \sum_{i=1}^n |W(t_i) - W(t_{i-1})|^p}^{1/p} < \oo \right\},
\]
where the supremum is taken over all partitions $\mcl P := \left\{ 0 = t_0 < t_1 < \cdots < t_N = T \right\}$ of $[0,T]$. The space $V_{p,0}$ becomes a Banach space under the norm $\nor{\cdot}{V_p}$. Observe that $V_{1,0}$ is the space of continuous paths $W$ of bounded variation that satisfy $W(0) = 0$.

We have the (strict) continuous embedding
\[
	C_0^{0,1/p} \subset V_{p,0} \quad \text{for all } p \in [1,\oo).
\]

\subsection{Real interpolation spaces}\label{SS:interp}

We now list several facts about real interpolation spaces, the proofs of which, along with many more details, can be found in \cite{BL}.

Let $X_0$ and $X_1$ be compatible normed spaces, that is, both $X_0$ and $X_1$ are subspaces of some Hausdorff topological space $Y$. For $x \in X_0 + X_1$ and $t \ge 0$, define the $K$-functional
\[
	K(t,x,X_0,X_1) = \inf\left\{ \nor{x_0}{X_0} + t\nor{x_1}{X_1} : x = x_0 + x_1, \; x_0 \in X_0, \; x_1 \in X_1 \right\}.
\]

\begin{lemma}\label{L:K}
	\begin{enumerate}[(a)]
	\item\label{L:Knorm} For any $t > 0$, the map
	\[
		x \mapsto K(t,x,X_0,X_1)
	\]
	defines a norm on $X_0 + X_1$.
	\item\label{L:Kscales} For any $t > 0$ and $x \in X_0 + X_1$,
	\[
		K(t,x,X_0,X_1) = t K \pars{ \frac{1}{t}, x, X_1,X_0}.
	\]
	\item\label{L:Kmonotone} For any $x \in X_0 + X_1$,
	\[
		t \mapsto K(t,x,X_0,X_1)
	\]
	is convex and nondecreasing, and, for all $s,t > 0$,
	\[
		K(s+t,x,X_0,X_1) \le 2 \pars{ K(s,x,X_0,X_1) + K(t,x,X_0,X_1)}.
	\]
	\item\label{L:Kmonotone2} For any $x \in X_0 + X_1$, the map
		\[
			[0,\oo)^2 \ni (s,t) \mapsto t K\pars{ \frac{s}{t}, x, X_0,X_1} = s K \pars{ \frac{t}{s}, x, X_1,X_0}
		\]
		is nondecreasing in both $s$ and $t$.
	\end{enumerate}
\end{lemma}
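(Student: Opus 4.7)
The plan is to derive each of (a)--(d) directly from the defining formula
\[
	K(t, x, X_0, X_1) = \inf\{ \nor{x_0}{X_0} + t\nor{x_1}{X_1} : x = x_0 + x_1, \; x_0 \in X_0, \; x_1 \in X_1 \},
\]
viewing this, for $x$ fixed, as a pointwise infimum over admissible decompositions of the affine family $t \mapsto \nor{x_0}{X_0} + t\nor{x_1}{X_1}$. All four claims are classical, and the arguments amount to bookkeeping from this viewpoint.

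For (a), nonnegativity is immediate, and absolute homogeneity follows by rescaling $x = x_0 + x_1$ to $\lambda x = (\lambda x_0) + (\lambda x_1)$. The triangle inequality comes from adding near-optimal decompositions of $x$ and $y$: if $x = x_0 + x_1$ and $y = y_0 + y_1$, then $(x_0 + y_0, x_1 + y_1)$ is an admissible decomposition of $x + y$. The only subtle point is positive definiteness. If $K(t, x) = 0$ for some fixed $t > 0$, one extracts a sequence of decompositions $x = x_0^{(n)} + x_1^{(n)}$ with both $\nor{x_0^{(n)}}{X_0} \to 0$ and $\nor{x_1^{(n)}}{X_1} \to 0$, which forces $x_0^{(n)} \to 0$ and $x_1^{(n)} \to 0$ in the common ambient Hausdorff space $Y$, and hence $x = 0$. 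This is the only place where the compatibility assumption on $(X_0, X_1)$ is genuinely used.

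For (b), one factors a $t$ out of the defining infimum and relabels the roles of $x_0$ and $x_1$ to obtain $K(t, x, X_0, X_1) = t K(1/t, x, X_1, X_0)$. For (c), monotonicity in $t$ is immediate because each summand $t\nor{x_1}{X_1}$ is nonnegative and nondecreasing in $t$. The shape property of $t \mapsto K(t, x)$ is the standard observation that a pointwise infimum of an affine family is concave; combined with $K \ge 0$, this yields the subhomogeneity $K(\alpha \tau, x) \le \alpha K(\tau, x)$ for $\alpha \ge 1$, from which
\[
	K(s+t, x) \le K(2 \max(s, t), x) \le 2 K(\max(s, t), x) \le 2(K(s, x) + K(t, x))
\]
follows via monotonicity and the trivial bound $K(s, x) + K(t, x) \ge K(\max(s, t), x)$.

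Finally, for (d), applying (b) on either side shows that the common value $tK(s/t, x, X_0, X_1) = sK(t/s, x, X_1, X_0)$ equals
\[
	\inf\{ t\nor{x_0}{X_0} + s\nor{x_1}{X_1} : x = x_0 + x_1\},
\]
which is manifestly nondecreasing separately in $s$ and in $t$ since the integrand is. The main---and essentially only---nontrivial step is the positive-definiteness part of (a), where the Hausdorff compatibility of the pair $(X_0, X_1)$ is essential; every other claim reduces to direct manipulation of the defining infimum.
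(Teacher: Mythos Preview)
The paper does not actually supply a proof of this lemma; it simply states that the facts ``can be found in \cite{BL}''. Your argument is the standard one and is correct in substance.

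One point deserves explicit comment. Part (c) of the statement asserts that $t \mapsto K(t,x,X_0,X_1)$ is \emph{convex}, while you (correctly) argue that it is \emph{concave}, being a pointwise infimum of affine functions of $t$. This is a typo in the statement: the $K$-functional is concave and nondecreasing in $t$, and it is concavity (or, more directly, the elementary inequality $\nor{x_0}{X_0} + \alpha\tau\nor{x_1}{X_1} \le \alpha(\nor{x_0}{X_0} + \tau\nor{x_1}{X_1})$ for $\alpha \ge 1$) that yields the subhomogeneity $K(\alpha\tau,x) \le \alpha K(\tau,x)$ you use to derive the bound $K(s+t,x) \le 2(K(s,x)+K(t,x))$. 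Your proof handles what is actually true and what is actually used later in the paper; you should, however, flag the discrepancy with the literal statement rather than silently correct it.
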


For $\alpha \in [0,1]$ and $q \in [1,\oo]$, we define the norm
\[
	\nor{x}{(X_0,X_1)_{\alpha,q}} :=
	\begin{dcases}
		\left[ \int_0^\oo \pars{ \frac{K(t,x,X_0,X_1)}{t^\alpha}}^q \frac{dt}{t} \right]^{1/q} & \text{if } 1 \le q < \oo, \text{ and}\\
		\sup_{t \in (0,\oo)} \frac{K(t,x,X_0,X_1)}{t^\alpha} & \text{if } q = \oo.
	\end{dcases}
\]
The real interpolation space between $X_0$ and $X_1$ of parameters $\alpha \in [0,1]$ and $q \in [1,\oo]$ is given by
\[
	(X_0,X_1)_{\alpha,q} := \left\{ x \in X_0 + X_1 : \nor{x}{(X_0,X_1)_{\alpha,q}}< \oo \right\}.
\]

\begin{lemma}\label{L:interp}
Let $X_0$ and $X_1$ be two compatible normed spaces, and fix $\alpha \in (0,1)$ and $p \in [1,\oo]$.

\begin{enumerate}[(a)]
\item\label{L:interpreflexive} The equality $(X_0,X_1)_{\alpha,p} = (X_1,X_0)_{1-\alpha,p}$ holds.
\item\label{L:interpsum} The norm $\nor{\cdot}{(X_0,X_1)_{\alpha,p}}$ is equivalent, with the proportionality constants depending only on $\alpha$ and $p$, to
	\[
		x \mapsto \pars{ \sum_{n \in \ZZ} K(2^n, x, X_0,X_1)^q 2^{-nq\alpha} }^{1/q}.
	\]
	If $X_1$ embeds continuously into $X_0$, the following norm is also equivalent:
	\[
		x \mapsto \pars{ \sum_{n =0}^\oo K(2^{-n}, \cdot, X_0,X_1)^q 2^{nq\alpha} }^{1/q}
	\]
	and
	\[
		x \mapsto \left[ \int_0^1 \pars{ \frac{K(t,x,X_0,X_1)}{t^\alpha}}^q \frac{dt}{t} \right]^{1/q}.
	\]
	
\item\label{L:interpmonotone} If $1\le p_1 \le p_2 \le \oo$ and $\alpha \in (0,1)$, then
	\[
		(X_0,X_1)_{\alpha,p_1} \subset (X_0,X_1)_{\alpha,p_2} \quad \text{continuously.}
	\]
	If $X_1$ embeds continuously into $X_0$, $p_1,p_2 \in [1,\oo]$, and $\alpha_1 < \alpha_2$, then
	\[
		(X_0,X_1)_{\alpha_2,p_1} \subset (X_0,X_1)_{\alpha_1,p_1} \quad \text{continuously.}
	\]
\item\label{L:interpdense} Assume that $X_1 \subset X_0$ continuously. Then, for all $\alpha \in (0,1)$ and $1 \le p < \oo$, $X_1$ is dense in $(X_0,X_1)_{\alpha,p}$ in the topology of $(X_0,X_1)_{\alpha,p}$. The closure of $X_1$ in the topology of $(X_0,X_1)_{\alpha,\oo}$ are those $x \in X_0 + X_1$ for which
	\[
		\lim_{n \to \oo} K(2^{-n}, x, X_0,X_1) 2^{n\alpha} = 0.
	\]
\end{enumerate}
\end{lemma}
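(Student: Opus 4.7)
The plan is to prove the four parts in order, using the monotonicity properties of the $K$-functional from Lemma~\ref{L:K}. Part (a) is immediate: by Lemma~\ref{L:K}(b), $K(t,x,X_0,X_1) = tK(1/t,x,X_1,X_0)$, and the change of variables $s = 1/t$ converts the defining integral for $\nor{x}{(X_0,X_1)_{\alpha,p}}$ into that for $\nor{x}{(X_1,X_0)_{1-\alpha,p}}$; the $p=\oo$ case is the analogous supremum identity.

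For part (b), the key inputs are that $t \mapsto K(t,x,X_0,X_1)$ is nondecreasing (Lemma~\ref{L:K}(c)) while $t \mapsto K(t,x,X_0,X_1)/t = K(1/t,x,X_1,X_0)$ is nonincreasing (by Lemma~\ref{L:K}(b) applied to the monotone function in (c)). These together sandwich $K(2^{n+1},x)$ between $K(2^n,x)$ and $2K(2^n,x)$, so on each dyadic interval $[2^n,2^{n+1}]$ the integrand $t^{-\alpha}K(t,x)$ is comparable, with constants depending only on $\alpha$, to $2^{-n\alpha}K(2^n,x)$. Integrating against $dt/t$ on each block and summing yields the claimed $\ell^q$-equivalence. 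When $X_1 \hookrightarrow X_0$ continuously, any decomposition $x = x_0+x_1$ satisfies $\nor{x_0}{X_0}+\nor{x_1}{X_1} \ge C^{-1}\nor{x}{X_0}$, hence $K(1,x) \ge C^{-1}\nor{x}{X_0} \ge C^{-1}K(t,x)$ for every $t$; this absorbs the negative-index tail into the $n=0$ term of the half-line sum, producing both $\sum_{n\ge 0} K(2^{-n},x)^q 2^{nq\alpha}$ and the equivalent integral over $(0,1]$.

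Part (c) reduces, via (b), to elementary facts about weighted $\ell^p$. Monotonicity in $p$ is the standard inclusion $\ell^{p_1} \subset \ell^{p_2}$ for $p_1 \le p_2$ on counting measure (normalize so $\nor{a}{\ell^{p_1}}=1$, then $|a_n| \le 1$ forces $|a_n|^{p_2} \le |a_n|^{p_1}$). Monotonicity in $\alpha$, assuming $X_1 \hookrightarrow X_0$, is immediate from the half-line sum since $2^{n\alpha_1} \le 2^{n\alpha_2}$ for $n \ge 0$ when $\alpha_1 \le \alpha_2$.

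Part (d) is the most delicate and is where the main obstacle lies. The plan is to approximate $x \in (X_0,X_1)_{\alpha,p}$ by the family $\{x_1(t)\}_{t>0} \subset X_1$ drawn from near-optimal decompositions $x = x_0(t)+x_1(t)$ with $\nor{x_0(t)}{X_0}+t\nor{x_1(t)}{X_1} \le 2K(t,x)$, and let $t \to 0$. To estimate $\nor{x_0(t)}{(X_0,X_1)_{\alpha,p}}^p = \int_0^\oo (s^{-\alpha}K(s,x_0(t)))^p \frac{ds}{s}$, I would split the $s$-integral at $s=t$: for $s \le t$, $K(s,x_0(t)) \le K(s,x)+s\nor{x_1(t)}{X_1} \le 3K(s,x)$ (using $K(t,x)/t \le K(s,x)/s$ from the $K/t$-monotonicity), yielding a piece that vanishes as $t\to 0$ by dominated convergence against the finite $(X_0,X_1)_{\alpha,p}$-norm; for $s \ge t$, $K(s,x_0(t)) \le \nor{x_0(t)}{X_0} \le 2K(t,x)$, yielding a piece bounded by $\frac{2^p}{\alpha p}(t^{-\alpha}K(t,x))^p$. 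The crux is to verify this last quantity vanishes as $t \to 0$ when $p<\oo$; this follows because part (b) places the sequence $(2^{-n\alpha}K(2^n,x))_n$ in $\ell^p$ and hence forces its termwise decay, while the dyadic near-constancy $a_{n+1}/a_n \in [2^{-\alpha},2^{1-\alpha}]$ established in (b) upgrades this to $t^{-\alpha}K(t,x) \to 0$ as $t \to 0$. The same two-regime estimate handles $p=\oo$: the norm of $x_0(t)$ is controlled by $\sup_{s\le t} s^{-\alpha}K(s,x)$ plus $t^{-\alpha}K(t,x)$, so membership of $x$ in the closure of $X_1$ is equivalent to $\lim_{n\to\oo} 2^{n\alpha}K(2^{-n},x) = 0$, which is the stated characterization.
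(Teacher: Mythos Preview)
Your proof is correct and follows the standard textbook route; the paper itself does not prove this lemma at all, stating only that ``the proofs of which, along with many more details, can be found in \cite{BL}.'' So there is nothing to compare against beyond noting that your argument is essentially the one found in Bergh--L\"ofstr\"om.

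One small point in part~(d) for $p=\infty$: you establish that the condition $\lim_{n\to\infty} 2^{n\alpha}K(2^{-n},x)=0$ implies $x$ lies in the closure of $X_1$, but the converse direction is asserted (``equivalent to'') without argument. To close the loop, observe that any $y\in X_1$ satisfies $t^{-\alpha}K(t,y)\le t^{1-\alpha}\nor{y}{X_1}\to 0$, and that the set of $x$ with $\lim_{t\to 0}t^{-\alpha}K(t,x)=0$ is closed in $(X_0,X_1)_{\alpha,\infty}$ because $|t^{-\alpha}K(t,x)-t^{-\alpha}K(t,x_n)|\le \nor{x-x_n}{(X_0,X_1)_{\alpha,\infty}}$ uniformly in $t$. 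This is routine, but worth stating explicitly since the lemma claims a full characterization of the closure.
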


We will also need the following stability property.

\begin{lemma}\label{L:oostability}
	Let $\alpha \in (0,1)$ and assume that $X_1 \subset X_0$ continuously. Suppose that $(x_n)_{n \in \NN} \subset (X_0,X_1)_{\alpha,\oo}$, and, for some $R > 0$,
	\[
		\sup_{n \in \NN} \nor{x_n}{(X_0,X_1)_{\alpha,\oo}} \le R \quad \text{and} \quad \lim_{n \to \oo} \nor{x_n - x}{X_0} = 0.
	\]
	Then $x \in (X_0,X_1)_{\alpha,\oo}$ and $\nor{x}{(X_0,X_1)_{\alpha,\oo}} \le R$.
\end{lemma}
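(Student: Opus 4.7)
The approach is to exploit the fact that for each fixed $t > 0$, the map $y \mapsto K(t,y,X_0,X_1)$ is a norm (Lemma \ref{L:K}\eqref{L:Knorm}) which is dominated by the $X_0$-norm via the trivial decomposition $y = y + 0$. This essentially gives lower semicontinuity of $K(t,\cdot,X_0,X_1)$ with respect to $X_0$-convergence, and the uniform $(X_0,X_1)_{\alpha,\oo}$-bound is preserved in the limit.

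First I would note that since $X_1 \subset X_0$ continuously, we have $X_0 + X_1 = X_0$, so that $(X_0,X_1)_{\alpha,\oo} \subset X_0$; in particular, each $x_n$ lies in $X_0$ and the hypothesis $\|x_n - x\|_{X_0} \to 0$ forces $x \in X_0 \subset X_0 + X_1$. Next, for an arbitrary $t > 0$ and $n \in \NN$, I would use the triangle inequality from Lemma \ref{L:K}\eqref{L:Knorm} applied to the decomposition $x = (x - x_n) + x_n$ to obtain
\[
	K(t, x, X_0, X_1) \le K(t, x - x_n, X_0, X_1) + K(t, x_n, X_0, X_1).
\]
For the first term I would use the trivial decomposition $x - x_n = (x-x_n) + 0$, which is admissible since $x - x_n \in X_0$ and $0 \in X_1$, to get $K(t, x - x_n, X_0, X_1) \le \|x - x_n\|_{X_0}$. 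For the second term the hypothesis $\|x_n\|_{(X_0,X_1)_{\alpha,\oo}} \le R$ directly gives $K(t,x_n,X_0,X_1) \le R t^{\alpha}$.

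Combining these yields $K(t, x, X_0, X_1) \le \|x - x_n\|_{X_0} + Rt^\alpha$, and letting $n \to \oo$ gives $K(t, x, X_0, X_1) \le R t^\alpha$. Since $t > 0$ was arbitrary, taking the supremum of $t^{-\alpha} K(t,x,X_0,X_1)$ over $t > 0$ produces $\nor{x}{(X_0,X_1)_{\alpha,\oo}} \le R$, which also shows $x \in (X_0,X_1)_{\alpha,\oo}$. There is no serious obstacle here: the argument is a direct application of the triangle inequality combined with the trivial upper bound on the $K$-functional by the $X_0$-norm, and no compactness, density, or duality is required.
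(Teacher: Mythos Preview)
Your proof is correct and follows essentially the same approach as the paper. The paper unwraps the definition of $K$ by choosing near-optimal decompositions $y_n(t) \in X_1$ and introducing an auxiliary $\eps > 0$, whereas you invoke the triangle inequality for $K(t,\cdot)$ directly (Lemma \ref{L:K}\eqref{L:Knorm}) together with the trivial bound $K(t,x-x_n,X_0,X_1)\le \|x-x_n\|_{X_0}$; this makes your version slightly cleaner, but the underlying idea is identical.
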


\begin{proof}
	Fix $\eps > 0$. Then, for every $t > 0$ and $n \in \NN$, there exists $y_n(t) \in X_1$ such that
	\[
		\nor{x_n - y_n(t)}{X_0} + t \nor{y_n(t)}{X_1} \le (R + \eps)t^\alpha,
	\]
	and therefore
	\[
		\nor{x - y_n(t)}{X_0} + t \nor{y_n(t)}{X_1} \le (R + \eps)t^\alpha + \nor{x - x_n}{X_0}.
	\]
	Choose $n(t) \in \NN$ such that
	\[
		\nor{x - x_{n(t)}}{X_0} \le \eps t^\alpha.
	\]
	Then
	\[
		\nor{x - y_{n(t)}(t)}{X_0} + t \nor{y_{n(t)}(t)}{X_1} \le (R + 2\eps)t^\alpha,
	\]
	whence $x \in (X_0,X_1)_{\alpha,\oo}$ and $\nor{x}{(X_0,X_1)_{\alpha,\oo}} \le R + 2\eps$. The result follows from the fact that $\eps$ was arbitrary.
\end{proof}

\section{The main results}\label{S:mainresult}

For $\alpha \in [0,1]$ and $p \in [1,\oo]$, we set
\[
	\mathscr H_{\alpha,p} := (\dc(\RR^d), C(\RR^d) \cap L^\oo(\RR^d))_{\alpha,p;\loc} \quad \text{and} \quad \mathscr P_{\alpha,p} := (C_0([0,T],\RR^m), W_0^{1,1}([0,T],\RR^m))_{\alpha,p},
\]
where the former is more precisely defined as
\[
	H \in \mathscr H_{\alpha,p} \quad \Leftrightarrow \quad H \in (\dc(B_L), C(B_L))_{\alpha,p} \text{ for every } L > 0,
\]
and is a metric space with the metric
\[
	(H,\tilde H) \mapsto \sum_{n=1}^\oo 2^{-n} \wedge \nor{H - \tilde H}{(\dc(B_n), C(B_n))_{\alpha,p}}.
\]

In view of Lemma \ref{L:interp}\eqref{L:interpreflexive} and \eqref{L:interpmonotone}, we have the continuous inclusions
\[
	\begin{dcases}
		\mathscr H_{\alpha,p_2} \subset \mathscr H_{\alpha,p_1} \text{ and } \mathscr P_{\alpha, p_1} \subset \mathscr P_{\alpha,p_2} & \text{if } \alpha \in (0,1) \text{ and } 1 \le p_1 \le p_2 \le \oo, \text{ and}\\
		\mathscr H_{\alpha_1,p_1} \subset \mathscr H_{\alpha_2,p_2} \text{ and } \mathscr P_{\alpha_2,p_1} \subset \mathscr P_{\alpha_1,p_2} & \text{if } 0 < \alpha_1 < \alpha_2 < 1 \text{ and } p_1,p_2 \in [1,\oo].
	\end{dcases}
\]

Throughout the statements and proofs below, for ease of notation, we occasionally omit the domains in the notation of the various function spaces when this does not cause confusion.

For $H \in C(\RR^d)$, $W \in C_0([0,T],\RR^m)$, and $u_0 \in UC(\RR^d)$,
\begin{equation}\label{solutionmap}
	(H,W,u_0) \mapsto S(H,W,u_0) \in UC(\RR^d \times [0,T])
\end{equation}
denotes the solution map for the equation \eqref{E:main}; that is, $u = S(H,W,u_0)$ is the viscosity solution of \eqref{E:main} whenever this makes sense. We point out the abuse of notation between here and the introduction, where the operator $S$ does not depend on the initial datum $u_0$.

If $(H,W) \in C(\RR^d) \times W_0^{1,1}([0,T],\RR^m)$ or $(H,W) \in \dc_\loc(\RR^d) \times C_0([0,T],\RR^m)$, then it follows from respectively the classical \cite{CIL} or pathwise \cite{LS1,LS2,Snotes} viscosity solution theory that, with respect to the initial datum, $S(H,W,\cdot)$ preserves boundedness and Lipschitz continuity, commutes with constants, and is contractive and monotone, that is,
\begin{equation}\label{Sproperties}
	\left\{
	\begin{split}
		&\text{if } u_0 \in BUC(\RR^d), \text{ then } S(H,W,u_0) \in BUC(\RR^d \times [0,T]),\\
		&\text{if } \nor{Du_0}{\oo} \le L, \text{ then } \nor{D_x S(H,W,u_0)}{\oo,\RR^d \times [0,T]} \le L,\\
		&\text{if } u_0 \in UC(\RR^d) \text{ and } k \in \RR, \text{ then } S(H,W,u_0 + k) = S(H,W,u_0) + k,\\
		&\text{if } u_0^1,u_0^2 \in UC(\RR^d), \text{ then } \nor{S(H,W,u_0^1) - S(H,W,u_0^2)}{\oo,\RR^d \times [0,T]} \le \nor{u_0^1 - u_0^2}{\oo}, \text{ and}\\
		&\text{if } u_0^1 \le u_0^2, \text{ then } S(H,W,u_0^1) \le S(H,W,u_0^2).
	\end{split}
	\right.
\end{equation}

We now present and prove the main results from the introduction, that is, we show that the solution operator extends in an appropriate sense to $\mathscr H_{\alpha,p} \times \mathscr P_{\alpha,p'}$ for $\alpha \in (0,1)$ and $p \in [1,\oo]$, and continues to satisfy \eqref{Sproperties}.

\begin{theorem}\label{T:main}
	Let $\alpha \in (0,1)$ and $p \in (1,\oo)$. Then \eqref{solutionmap} extends to a continuous map on $\mathscr H_{\alpha,p} \times \mathscr P_{\alpha,p'} \times UC(\RR^d)$ that satisfies \eqref{Sproperties} for every fixed $(H,W) \in \mathscr H_{\alpha,p} \times \mathscr P_{\alpha,p'}$.
\end{theorem}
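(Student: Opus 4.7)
The plan is to define $S(H, W, u_0)$ by continuous extension from the overlap regime, exploiting density: since $p, p' \in (1, \infty)$, Lemma \ref{L:interp}\eqref{L:interpdense} ensures that $\dc_\loc$ is dense in $\mathscr H_{\alpha, p}$ and that $W_0^{1,1}$ is dense in $\mathscr P_{\alpha, p'}$. By the contraction property in $u_0$ from \eqref{Sproperties}, it suffices to carry out the extension for Lipschitz $u_0$ with $\nor{Du_0}{\infty} \le L$; general $u_0 \in UC(\RR^d)$ is then handled by uniform approximation by Lipschitz data.

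The heart of the argument is a pair of quantitative endpoint estimates on the common domain $\dc_\loc \times W_0^{1,1}$. Classical viscosity theory yields, for $H_1, H_2 \in C(\RR^d)$ and $W \in W_0^{1,1}$,
\[
	\nor{S(H_1, W, u_0) - S(H_2, W, u_0)}{\infty} \le \nor{H_1 - H_2}{\infty, B_L}\int_0^T |\dot W|\, ds,
\]
since both solutions have spatial Lipschitz constant $\le L$ uniformly in time. The pathwise theory of \cite{LS2}, refined to track the $\dc$-norm of the Hamiltonian, yields, for $H \in \dc_\loc$ and $W_1, W_2 \in C_0$,
\[
	\nor{S(H, W_1, u_0) - S(H, W_2, u_0)}{\infty} \le C(L) \nor{H}{\dc(B_L)} \nor{W_1 - W_2}{\infty},
\]
visible from the sup/inf-convolution representation of the solution. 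Together these two estimates equip $S$ with the ``$C \times W^{1,1} + \dc \times C$'' bilinear Lipschitz-type structure required for real interpolation.

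To extend $S$, I choose approximating sequences $(H_n) \subset \dc_\loc$ and $(W_n) \subset W_0^{1,1}$ converging to $H$ and $W$ in the respective interpolation norms, and telescope
\[
	S(H_n, W_n) - S(H_m, W_m) = \bigl[S(H_n, W_n) - S(H_m, W_n)\bigr] + \bigl[S(H_m, W_n) - S(H_m, W_m)\bigr].
\]
To each difference I apply an optimal $K$-functional splitting at every dyadic scale: $H_n - H_m$ is split with respect to $K(\cdot, \cdot; \dc(B_L), C(B_L))$ and $W_n - W_m$ with respect to $K(\cdot, \cdot; C_0, W_0^{1,1})$, each piece being fed into the endpoint estimate appropriate for its class (the $\dc$-part absorbed into the fixed argument of $S$ and the $C$-part controlled by the classical bound for the $H$-difference, and dually for the $W$-difference). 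Summing over dyadic scales and pairing the $\ell^p$-summation controlling $\mathscr H_{\alpha, p}$ (via Lemma \ref{L:interp}\eqref{L:interpsum}) against the $\ell^{p'}$-summation controlling $\mathscr P_{\alpha, p'}$ by H\"older's inequality produces a bound of the form
\[
	\nor{S(H_n, W_n) - S(H_m, W_m)}{\infty} \lesssim \nor{H_n - H_m}{\mathscr H_{\alpha, p}} \sup_k \nor{W_k}{\mathscr P_{\alpha, p'}} + \sup_k \nor{H_k}{\mathscr H_{\alpha, p}} \nor{W_n - W_m}{\mathscr P_{\alpha, p'}}.
\]
Thus $(S(H_n, W_n, u_0))$ is Cauchy in $UC(\RR^d \times [0,T])$; applying the same inequality between two distinct approximating sequences shows the limit is unique, and so defines $S(H, W, u_0)$. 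Continuity of the extended map and persistence of the properties \eqref{Sproperties} pass to the limit under uniform convergence.

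The main obstacle is the second endpoint estimate, which requires promoting the qualitative pathwise stability of \cite{LS2} to a bound \emph{linear} in $\nor{H}{\dc(B_L)}$. Once this is in hand, the remainder is a standard real-interpolation $K$-functional computation, with the appearance of the conjugate pair $(p, p')$ reflecting precisely the H\"older duality between the two $K$-functional summations.
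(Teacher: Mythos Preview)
Your overall strategy coincides with the paper's: reduce to Lipschitz $u_0$, use density of $\dc_\loc \times W^{1,1}_0$ (valid since $p,p'<\oo$), and interpolate between the classical estimate in $H$ and the pathwise estimate in $W$; the latter is exactly Proposition~\ref{P:dcextension} and is indeed linear in $\nor{H}{\dc(B_L)}$, so the obstacle you flag is already resolved.

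The gap is in the combination step. You propose to split $H_n-H_m$ via its $K$-functional and ``absorb the $\dc$-part into the fixed argument of $S$''. Because $S$ is nonlinear in $H$, this does not close: writing $H_n-H_m=G_\dc+G_C$, the classical bound handles $S(H_n,W_n)-S(H_m+G_\dc,W_n)$, but no endpoint estimate controls the residual $S(H_m+G_\dc,W_n)-S(H_m,W_n)$ by $\nor{G_\dc}{\dc}$ alone. What actually works is to split the \emph{other} variable. For instance, with $W=X+Y$, $X\in C_0$, $Y\in W^{1,1}_0$, the three-term chain $S(H_1,W)\to S(H_1,Y)\to S(H_2,Y)\to S(H_2,W)$ is controlled by your two endpoints and, optimised over $X+Y=W$, gives
\[
\nor{S(H_1,W)-S(H_2,W)}{\oo}\lesssim(\nor{H_1}{\dc(B_L)}+\nor{H_2}{\dc(B_L)})\,K\!\left(\tfrac{\nor{H_1-H_2}{\oo,B_L}}{\nor{H_1}{\dc(B_L)}+\nor{H_2}{\dc(B_L)}},\,W,\,C_0,\,W^{1,1}_0\right),
\]
which is the paper's Lemma~\ref{L:decomposed}(a) (derived there via the auxiliary equation $d\phi=(H_1-H_2)(D\phi)\cdot dW$, $\phi(z,0)=L|z|$); part (b) is dual. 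Only then does dyadic telescoping work: approximate each $H_j$ by its own $K$-minimisers $H_{j,n}$, telescope $S(H_{j,n},W_1)$ over $n$, and pair the resulting products $K(2^n,H)\,K(2^{-n},W)$ by H\"older. The outcome is not the clean bilinear Lipschitz inequality you state but a bound involving tail sums $\bigl(\sum_{m\ge n}K(2^m,H)^p 2^{-mp\alpha}\bigr)^{1/p}$ depending on the specific $(H,W)$; one obtains continuity at each point, not a uniform modulus on bounded sets.
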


The next result treats the cases $p = 1$ and $p = \oo$. Note that, in view of Lemma \ref{L:interp}\eqref{L:interpdense}, $\dc_\loc(\RR^d)$ is not dense in $\mathscr H_{\alpha,\oo}$ and $W^{1,1}_0([0,T],\RR^m)$ is not dense in $\mathscr P_{\alpha,\oo}$.

\begin{theorem}\label{T:mainweak}
Let $\alpha \in (0,1)$, $p \in [1,\oo]$, and $(H,W,u_0) \in \mathscr H_{\alpha,p} \times \mathscr P_{\alpha,p'} \times UC(\RR^d)$. Then there exists a unique $S(H,W,u_0) \in UC(\RR^d \times [0,T])$ such that the following hold:
\begin{enumerate}[(a)]
\item The properties of \eqref{Sproperties} are satisfied.
\item If $p < \oo$, $(W_n)_{n=1}^\oo \subset \mathscr P_{\alpha,p'}$, and
\begin{equation}\label{boundedpaths}
	\lim_{n \to \oo} \nor{W_n - W}{\oo,[0,T]}  = 0 \quad \text{and} \quad \sup_{n \in \NN} \nor{W_n}{\mathscr P_{\alpha,p'} } < \oo,
\end{equation}
then
\[
	\lim_{n \to \oo} \nor{ S(H, W_n, u_0) - S(H,W,u_0)}{\oo,\RR^d \times [0,T]} = 0.
\]
\item If $p' < \oo$, $(H_n)_{n=1}^\oo \subset \mathscr H_{\alpha,p}$,
\begin{equation}\label{boundedHs}
	\lim_{n \to \oo} H_n = H \text{ locally uniformly, and } \sup_{n \in \NN} \nor{H_n}{\mathscr H_{\alpha,p} } < \oo,
\end{equation}
then
\[
	\lim_{n \to \oo} \nor{S(H_n, W, u_0) - S(H,W,u_0)}{\oo,\RR^d \times [0,T]} = 0.
\]
\end{enumerate}
\end{theorem}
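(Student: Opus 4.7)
The plan is as follows. The case $p \in (1,\oo)$, where $\dc_\loc \times W_0^{1,1}$ is dense in $\mathscr H_{\alpha,p} \times \mathscr P_{\alpha,p'}$ by Lemma~\ref{L:interp}\eqref{L:interpdense}, is essentially covered by Theorem~\ref{T:main}, which already provides a continuous extension in the full norm topology. What is genuinely new in Theorem~\ref{T:mainweak} is twofold: the endpoint cases $p \in \{1,\oo\}$, where density fails on one of the two sides (Lemma~\ref{L:interp}\eqref{L:interpdense}), and the relaxation of the hypothesis on $(H_n)$ or $(W_n)$ from norm convergence in the interpolation space to mere locally uniform or uniform convergence accompanied by a norm bound.

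The backbone of the argument is a master stability estimate on the nice class $\dc_\loc \times W_0^{1,1}$. Two elementary bounds feed into it. First, the Crandall-Lions viscosity theory gives
\[
    \nor{S(H^1,W,u_0) - S(H^2,W,u_0)}{\oo} \le \nor{H^1 - H^2}{\oo,B_L} \nor{W}{W^{1,1}_0}, \qquad L := \nor{Du_0}{\oo}.
\]
Second, the pathwise theory of \cite{LS2} (Appendix~\ref{S:LS}) yields, for $H \in \dc_\loc$ and $W^1,W^2 \in C_0$, a control $\nor{S(H,W^1,u_0) - S(H,W^2,u_0)}{\oo} \le \omega_H(\nor{W^1 - W^2}{\oo})$ with $\omega_H$ subadditive and governed by $\nor{H}{\dc(B_L)}$. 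Splitting $H - \tilde H$ and $W - \tilde W$ into $\dc + C$ and $C + W^{1,1}$ pieces optimally with respect to their $K$-functionals, chaining through a four-step telescoping of intermediate classical solutions, and finally pairing the two sum-norm representations of Lemma~\ref{L:interp}\eqref{L:interpsum} via H\"older's inequality along the log-measure with conjugate exponents $p$ and $p'$ should yield a master inequality
\[
    \nor{S(H, W, u_0) - S(\tilde H, \tilde W, u_0)}{\oo,\RR^d \times [0,T]} \le \Phi\pars{\nor{H-\tilde H}{\mathscr H_{\alpha,p}}, \nor{W-\tilde W}{\mathscr P_{\alpha,p'}}},
\]
with $\Phi$ depending also on $L$ and on the ambient $\mathscr H_{\alpha,p}$ and $\mathscr P_{\alpha,p'}$ norms, and vanishing as its first two arguments tend to zero.

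With this inequality in hand, $S(H,W,u_0)$ is constructed by approximation. Given $(H,W)$, choose $H_k \in \dc_\loc$ and $W_k \in W_0^{1,1}$ with $H_k \to H$ locally uniformly, $W_k \to W$ uniformly, and $\sup_k (\nor{H_k}{\mathscr H_{\alpha,p}} + \nor{W_k}{\mathscr P_{\alpha,p'}}) < \oo$. Such sequences exist in all cases: for $p \in (1,\oo)$ by Lemma~\ref{L:interp}\eqref{L:interpdense}; for $p = \oo$, $W_k$ converges in the norm of $\mathscr P_{\alpha,1}$ while $H_k$ is obtained by mollification with its $\mathscr H_{\alpha,\oo}$-norm controlled via Lemma~\ref{L:oostability}, and the case $p = 1$ is symmetric. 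A further application of Lemma~\ref{L:oostability}, together with the monotonicity of $K$ in Lemma~\ref{L:K}\eqref{L:Kmonotone2}, shows that uniform convergence with bounded $\mathscr P_{\alpha,p'}$-norm upgrades to norm convergence in $\mathscr P_{\alpha',p'}$ for every $\alpha' < \alpha$, and analogously for $\mathscr H$. The master inequality at parameter $\alpha'$ then forces $u_k := S(H_k, W_k, u_0)$ to be Cauchy in $UC(\RR^d \times [0,T])$, and its limit is declared to be $S(H, W, u_0)$. Independence of the choice of approximating sequences follows by interleaving two admissible sequences and reapplying the master inequality, and the properties in \eqref{Sproperties} pass to the uniform limit. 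Parts (b) and (c) are then obtained by applying the master inequality directly to the pair $(H, W_n)$ (resp.~$(H_n, W)$) at parameter $\alpha' < \alpha$, again using the upgraded convergence provided by Lemma~\ref{L:oostability}.

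The principal technical hurdle is the master stability estimate itself. Orchestrating the decompositions of $H - \tilde H$ and $W - \tilde W$ simultaneously along a four-step telescoping of classical solutions, so that the outcome collapses into a single $K$-functional inequality that can be H\"older-paired with the conjugate exponents $(p, p')$, requires careful bookkeeping of the Lipschitz constants and of the ball $B_L$ inherited from $\nor{Du_0}{\oo} \le L$. A secondary subtlety, specific to the endpoint cases, is that $\dc_\loc$ and $W_0^{1,1}$ are not dense in $\mathscr H_{\alpha,\oo}$ and $\mathscr P_{\alpha,\oo}$, so well-definedness of $S$ there cannot come from density alone and is secured by the combination of the quantitative master estimate with the closedness assertion of Lemma~\ref{L:oostability}.
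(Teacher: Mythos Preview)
Your plan has a genuine gap in the ``shift to parameter $\alpha' < \alpha$'' step for parts (b) and (c). The inclusions
\[
\mathscr H_{\alpha_1,q} \subset \mathscr H_{\alpha_2,q} \quad \text{and} \quad \mathscr P_{\alpha_2,q'} \subset \mathscr P_{\alpha_1,q'} \qquad \text{for } \alpha_1 < \alpha_2
\]
run in \emph{opposite} directions: decreasing $\alpha$ enlarges the path space but shrinks the Hamiltonian space. Your observation that uniform convergence of $W_n$ plus a $\mathscr P_{\alpha,p'}$-bound upgrades to norm convergence in $\mathscr P_{\alpha',p'}$ for $\alpha' < \alpha$ is correct, but a master inequality at $\alpha'$ would then require $H \in \mathscr H_{\alpha',p}$, which is \emph{not} implied by $H \in \mathscr H_{\alpha,p}$. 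The symmetric shift for part (c) fails for the mirror reason. No auxiliary choice of $(\alpha',q)$ can simultaneously keep the fixed argument in the needed space and upgrade the varying one, because the H\"older pairing in your telescoped estimate couples $H$ and $W$ at one and the same $\alpha$. The same obstruction breaks the Cauchy argument for the construction step: the approximants $H_k \in \dc_\loc$ and $W_k \in W^{1,1}_0$ do lie in all the interpolation spaces, but their norms there (e.g.\ $\nor{H_k}{\dc_\loc}$) are not uniformly bounded, so the master inequality at $\alpha'$ carries an uncontrolled constant.

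The paper's proof avoids any parameter shift. For part (b), with $H \in \mathscr H_{\alpha,p}$ and $p < \oo$ fixed, one picks $W_{j,n} \in W^{1,1}_0$ nearly optimal for $K(2^{-n},W_j,C_0,W^{1,1}_0)$ and applies Lemma~\ref{L:decomposed}(b). The resulting bound on $\nor{S(H,W_1)-S(H,W_2)}{\oo}$ splits into a term $R\,\bigl(\sum_{m\ge n} (2^{-m\alpha}K(2^m,H,\dc,C))^p\bigr)^{1/p}$, a term $R\,2^{-n\alpha}K(2^n,H,\dc,C)$, and a remainder of order $2^{n\alpha(1-\alpha)}\delta^{1-\alpha}$ with $\delta = \nor{W_1-W_2}{\oo}$. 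The first two vanish as $n\to\oo$ \emph{precisely because} $p<\oo$ (Lemma~\ref{L:interp}\eqref{L:interpsum},\eqref{L:interpdense}); one first sends $n$ large and then $\delta$ small. Thus all the smallness comes from the vanishing $K$-tails of the fixed $H$, paired only against the uniform bound $R$ on the paths---no interpolation-norm convergence of $W_n - W$ is ever invoked. Part (c) is the symmetric argument with the roles of $(H,p)$ and $(W,p')$ interchanged.
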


The proofs of the above two theorems rely on Proposition \ref{P:dcextension} and some other stability estimates that are proved next.

\begin{lemma}\label{L:easystability}
	Assume that $H \in \dc_\loc$, $W \in C_0$, $\zeta \in W^{1,1}_0$, and $u_0 \in C^{0,1}(\RR^d)$ with $\nor{Du_0}{\oo} \le L$. Then 
	\[
		\nor{S(H,W+\zeta,u_0) - S(H,W,u_0)}{\oo,\RR^d \times [0,T]} \le \sup_{|p| \le L} |H(p)| \int_0^T |\dot \zeta(t)|dt.
	\]
\end{lemma}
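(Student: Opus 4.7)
The plan is to reduce to the case $W \in W^{1,1}_0$ by approximation, and then use an explicit sub/supersolution built from the time profile
\[
	\phi(t) := M \int_0^t |\dot\zeta(s)|ds, \qquad M := \sup_{|p|\le L}|H(p)|.
\]
This is the natural candidate: the $L$-Lipschitz preservation in \eqref{Sproperties} forces any smooth test function touching either $u := S(H,W,u_0)$ or $v := S(H, W + \zeta, u_0)$ from one side to have spatial gradient of norm at most $L$, so the Hamiltonian evaluated at that gradient is bounded in absolute value by $M$.

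For the reduction, take $(W_n)_{n \in \NN} \subset W^{1,1}_0$ with $W_n \to W$ uniformly. Then $W_n + \zeta \to W + \zeta$ uniformly as well, and since $H \in \dc_\loc$, the pathwise viscosity theory recalled in Appendix \ref{S:LS} gives uniform convergence on $\RR^d \times [0,T]$ of both $S(H, W_n, u_0) \to S(H,W,u_0)$ and $S(H, W_n + \zeta, u_0) \to S(H, W+\zeta, u_0)$. The right-hand side of the claimed estimate depends only on $\zeta$, so it suffices to prove the bound when $W \in W^{1,1}_0$; in that case, both $u$ and $v$ are classical viscosity solutions of Hamilton-Jacobi equations with $L^1$-in-time coefficients in the sense of \cite{I, LP}.

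Assuming $W \in W^{1,1}_0$, set $w(x,t) := u(x,t) + \phi(t)$. If a smooth $\varphi$ touches $w$ from below at $(x_0,t_0)$, then $\varphi - \phi$ touches $u$ from below there; the $L$-Lipschitz bound on $u(\cdot,t_0)$ forces $|D_x\varphi(x_0,t_0)| \le L$, and combining the viscosity supersolution inequality for $u$ with the elementary bound $H(D_x\varphi(x_0,t_0))\dot\zeta(t_0) \le M|\dot\zeta(t_0)| = \dot\phi(t_0)$ gives the supersolution inequality for the equation satisfied by $v$. Since $w(\cdot,0) = v(\cdot,0)$, comparison yields $v \le u + \phi(T)$; the symmetric argument, with $u - \phi$ as a subsolution of the equation for $v$, gives $u \le v + \phi(T)$. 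Combining the two produces $\nor{u - v}{\oo,\RR^d \times [0,T]} \le \phi(T) = M\int_0^T|\dot\zeta(t)|dt$, as claimed.

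The main delicate point is handling the almost-everywhere-in-$t$ aspect of the viscosity sub/supersolution definition for Hamilton-Jacobi equations with $L^1$-in-time coefficients, so that comparison legitimately applies to the pair $(w,v)$; this is standard in the Ishii--Lions--Perthame framework, but it is the only place where any real care is needed. Once that is in place, the algebraic step above is routine, and the hypothesis $H \in \dc_\loc$ is used only in the initial approximation, the comparison itself relying only on continuity of $H$ together with the spatial Lipschitz bound on the solutions.
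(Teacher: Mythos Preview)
Your proof is correct and follows the same strategy as the paper: reduce to $W \in W^{1,1}_0$ by density and the stability of $S(H,\cdot,u_0)$ for $H \in \dc_\loc$, then invoke the comparison principle for classical viscosity solutions. The paper simply writes ``the result follows easily from the comparison principle'' for the second step, whereas you spell out the natural sub/supersolution $u \pm \phi$; this is exactly the argument the paper has in mind.
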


\begin{proof}
	Assume first that $W \in W^{1,1}_0$. Then the result follows easily from the comparison principle for classical viscosity solutions. The above estimate then holds for arbitrary $W \in C_0$ by density, since the left-hand side is continuous with respect to $W \in C_0$, and the right-hand side does not depend on $W$.
\end{proof}

In the next result, the stability of the solution operator $S$ is measured with respect to the $K$-functional defined in subsection \ref{SS:interp}.

\begin{lemma}\label{L:decomposed}
	There exists a constant $C = C_L > 0$ such that, if $u_0 \in C^{0,1}(\RR^d)$ with $\nor{Du_0}{\oo} \le L$, then the following hold:
	\begin{enumerate}[(a)]
	\item If $H_1,H_2 \in \dc_\loc$ and $W \in C_0$, then
	\begin{align*}
		\sup_{\RR^d \times [0,T]}& \abs{ S(H_1,W,u_0) - S(H_2,W,u_0)} \\
		&\le C \pars{ \nor{H_1}{\dc(B_L)} + \nor{H_2}{\dc(B_L)} } K\pars{ \frac{  \nor{H_1 - H_2}{\oo,B_L}}{  \nor{H_1}{\dc(B_L)} + \nor{H_2}{\dc(B_L)} } , W, C_0, W^{1,1}_0 }.
	\end{align*}
	\item If $H \in C$ and $W_1,W_2 \in W^{1,1}_0$, then
	\begin{align*}
		\sup_{\RR^d \times [0,T]} &\abs{ S(H,W_1,u_0) - S(H,W_2,u_0) }\\
		& \le C \nor{W_1 - W_2}{\oo,B_L} K \pars{ \frac{ \nor{\dot W_1}{L^1([0,T])} + \nor{\dot W_2}{L^1([0,T])} }{ \nor{W_1 - W_2}{\oo,[0,T]}} ,H, \dc_\loc, C}.
	\end{align*}
	\end{enumerate}
\end{lemma}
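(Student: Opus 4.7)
\textbf{Proof plan for Lemma \ref{L:decomposed}.}
The idea is to reduce both inequalities to two ``endpoint'' stability estimates, one valid when the Hamiltonian sits in $\dc_\loc$ and the other when the path sits in $W^{1,1}_0$, and then take an infimum over decompositions so that the $K$-functional appears automatically.

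\emph{Endpoint estimates.}
The plan is to establish (or cite from Appendix \ref{S:LS}) the following two building blocks, both holding under the Lipschitz bound $\nor{Du_0}{\oo}\le L$ so that only the values of the Hamiltonians on $B_L$ matter.
\textbf{(B1)} For any $H\in C(\RR^d)\cap L^\oo(\RR^d)$ and any $W_1,W_2\in W^{1,1}_0$,
\[
\nor{S(H,W_1,u_0)-S(H,W_2,u_0)}{\oo,\RR^d\times[0,T]}\le \nor{H}{\oo,B_L}\pars{\nor{\dot W_1}{L^1}+\nor{\dot W_2}{L^1}},
\]
and, for any $H_1,H_2$ and $W\in W^{1,1}_0$,
\[
\nor{S(H_1,W,u_0)-S(H_2,W,u_0)}{\oo,\RR^d\times[0,T]}\le \nor{H_1-H_2}{\oo,B_L}\nor{\dot W}{L^1}.
\]
These are standard consequences of the comparison principle applied to the forced Hamilton-Jacobi equation satisfied by the difference.
\textbf{(B2)} For any $H\in\dc_\loc(\RR^d)$ and any $W_1,W_2\in C_0([0,T],\RR^m)$,
\[
\nor{S(H,W_1,u_0)-S(H,W_2,u_0)}{\oo,\RR^d\times[0,T]}\le C\,\nor{H}{\dc(B_L)}\nor{W_1-W_2}{\oo,[0,T]},
\]
with $C$ depending only on $L$. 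This is the quantitative stability of the pathwise viscosity solution theory of \cite{LS2}, which I will borrow from Appendix \ref{S:LS}.

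\emph{Part (a).}
For any splitting $W=W_c+W_s$ with $W_c\in C_0$ and $W_s\in W^{1,1}_0$, I will insert $S(H_1,W_s,u_0)$ and $S(H_2,W_s,u_0)$ into the triangle inequality, so that
\[
|S(H_1,W,u_0)-S(H_2,W,u_0)|\le \sum_{i=1}^{2}|S(H_i,W,u_0)-S(H_i,W_s,u_0)|+|S(H_1,W_s,u_0)-S(H_2,W_s,u_0)|.
\]
The first two terms are bounded via (B2) by $C\nor{H_i}{\dc(B_L)}\nor{W_c}{\oo}$, and the last term is bounded via (B1) by $\nor{H_1-H_2}{\oo,B_L}\nor{\dot W_s}{L^1}$. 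Factoring out $A:=\nor{H_1}{\dc(B_L)}+\nor{H_2}{\dc(B_L)}$ yields
\[
|S(H_1,W,u_0)-S(H_2,W,u_0)|\le C\,A\left[\nor{W_c}{\oo}+\frac{\nor{H_1-H_2}{\oo,B_L}}{A}\nor{\dot W_s}{L^1}\right].
\]
Taking the infimum over all admissible splittings $W=W_c+W_s$ produces exactly the $K$-functional with first argument $\nor{H_1-H_2}{\oo,B_L}/A$, which is the claimed bound.

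\emph{Part (b).}
The argument is dual: for any decomposition $H=H_d+H_c$ on $B_L$ with $H_d\in\dc_\loc$ and $H_c\in C\cap L^\oo$ (extending each piece to $\RR^d$ in any bounded continuous way, since only values on $B_L$ matter), I insert $S(H_d,W_j,u_0)$ into the triangle inequality, obtaining
\[
|S(H,W_1,u_0)-S(H,W_2,u_0)|\le \sum_{j=1}^{2}|S(H,W_j,u_0)-S(H_d,W_j,u_0)|+|S(H_d,W_1,u_0)-S(H_d,W_2,u_0)|.
\]
The first two terms are controlled by (B1), using that $W_1,W_2\in W^{1,1}_0$, by $\nor{H_c}{\oo,B_L}\nor{\dot W_j}{L^1}$; the last term is controlled by (B2) by $C\nor{H_d}{\dc(B_L)}\nor{W_1-W_2}{\oo}$. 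Factoring $\nor{W_1-W_2}{\oo,[0,T]}$ out and infimizing over decompositions produces the $K$-functional $K\pars{(\nor{\dot W_1}{L^1}+\nor{\dot W_2}{L^1})/\nor{W_1-W_2}{\oo},H,\dc_\loc,C}$, up to the constant absorbed into $C$.

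\emph{Main obstacle.}
The routine triangle-inequality-plus-infimum structure is standard once the two endpoint estimates are in hand; the real content is the quantitative stability estimate (B2), which is not completely trivial because for $\dc$-Hamiltonians the pathwise theory produces solutions only via approximation, so one must verify that the pointwise bound $|S(H,W_1)-S(H,W_2)|\le C\nor{H}{\dc(B_L)}\nor{W_1-W_2}{\oo}$ passes to the limit along approximating sequences in $W^{1,1}_0$. I will deduce this from the explicit Hopf/Hopf-Lax representations available for convex Hamiltonians and monotone paths by writing $H=H_1-H_2$ with $H_1,H_2$ convex and splitting $[0,T]$ into subintervals of monotonicity of $W_1,W_2$, which is precisely the approach summarized in Appendix \ref{S:LS}.
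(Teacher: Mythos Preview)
Your approach is correct and arrives at exactly the same pre-infimum inequality as the paper, but by a genuinely different route. The paper does not use the triangle inequality on the solution operator; instead, for part (a) it passes to the auxiliary function $\phi(x-y,t)$ satisfying $u_1(x,t)-u_2(y,t)\le\phi(x-y,t)$, where $\phi$ solves $d\phi=(H_1(D\phi)-H_2(D\phi))\cdot dW$ with initial datum $L|z|$, then splits $W=X+Y$ at the level of this auxiliary equation (introducing $\psi$ driven by $dX$), and applies Proposition~\ref{P:dcextension} and Lemma~\ref{L:easystability} to $\psi$ and $\phi-\psi$ separately. Part (b) is analogous with $\phi_t=H(D\phi)(\dot W_1-\dot W_2)$ and a splitting $H=F+G$. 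Your argument bypasses the auxiliary $\phi$ entirely: you split the data directly, insert the intermediate solutions $S(H_i,W_s,u_0)$ (respectively $S(H_d,W_j,u_0)$), and apply the two endpoint estimates to the resulting pieces. Both routes rest on the same building blocks (Proposition~\ref{P:dcextension} for (B2), and the elementary $W^{1,1}$ stability that underlies Lemma~\ref{L:easystability} for (B1)), and both yield the identical bound $C(\nor{H_1}{\dc}+\nor{H_2}{\dc})\nor{W_c}{\oo}+\nor{H_1-H_2}{\oo,B_L}\nor{\dot W_s}{L^1}$ before infimizing. Your approach is slightly more elementary in that it never needs the well-posedness of the difference-Hamiltonian equation $d\phi=(H_1-H_2)(D\phi)\cdot dW$ for $W\in C_0$; the paper implicitly uses that $H_1-H_2\in\dc_\loc$ so that this auxiliary pathwise equation makes sense. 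One minor remark: in your ``main obstacle'' paragraph, the sketch of (B2) via monotonicity subintervals is not quite how Appendix~\ref{S:LS} proceeds (Lemma~\ref{L:convexestimate} is a one-shot operator inequality, not a piecewise argument), but since you also say you will simply cite Proposition~\ref{P:dcextension}, this is harmless.
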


\begin{proof}
	Throughout the proofs of both parts, the constant $C > 0$ depends only on $L$, and may change from line to line.
	
	(a) We write $u_1 = S(H_1,W,u_0)$ and $u_2 = S(H_2,W,u_0)$. Using arguments as in \cite{Snotes}, we have, for all $x,y \in \RR^d$ and $t > 0$,
	\[
		u_1(x,t) - u_2(y,t) \le \phi(x-y,t),
	\]
	where
	\[
		d\phi = \pars{ H_1(D\phi) - H_2(D\phi)} \cdot dW \quad \text{in } \RR^d \times [0,T] \quad \text{and} \quad \phi(z,0) = L|z| \quad \text{for } z \in \RR^d.
	\]
	Let $Y \in W^{1,1}_0([0,T],\RR^m)$, write $W = X + Y$, and let $\psi$ solve
	\[
		d\psi = \pars{ H_1(D\psi) - H_2(D\psi)} \cdot dX \quad \text{in } \RR^d \times [0,T] \quad \text{and} \quad \psi(z,0) = L|z| \quad \text{for } z \in \RR^d.
	\]
	Then Proposition \ref{P:dcextension} gives
	\[
		\psi(z,t) \le L|z| + C \pars{ \nor{H_1}{\dc(B_L)} + \nor{H_2}{\dc(B_L)} }\nor{X}{\oo,[0,T]},
	\]
	while Lemma \ref{L:easystability} yields
	\[
		\phi(z,t) - \psi(z,t) \le \nor{H_1 - H_2}{\oo,B_L} \nor{\dot Y}{L^1([0,T])}.
	\]
	Combining all terms and using a symmetric argument for $u_2 - u_1$, we find that
	\[
		\nor{u_1 - u_2}{\oo,\RR^d \times [0,T]} \le C \pars{ \nor{H_1}{\dc(B_L)} + \nor{H_2}{\dc(B_L)} }\nor{X}{\oo,[0,T]} + \nor{H_1 - H_2}{\oo,B_L} \nor{\dot Y}{L^1([0,T])}.
	\]
	We conclude by taking the infimum over all such $X$ and $Y$.
	
	(b) We write $u_1 = S(H,W_1,u_0)$ and $u_2 = S(H,W_2,u_0)$. A similar argument as in part (a) gives that, for all $x,y \in \RR^d$ and $t > 0$,
	\[
		u_1(x,t) - u_2(y,t) \le \phi(x-y,t),
	\]
	where
	\[
		\phi_t = H(D\phi)  \pars{ \dot W_1 - \dot W_2} \quad \text{in } \RR^d \times [0,T] \quad \text{and} \quad \phi(z,0) = L|z| \quad \text{for } z \in \RR^d.
	\]
	Let $G \in \dc_\loc$, set $F = H - G$, and let $\psi$ solve
	\[
		\psi_t = G(D\psi) \pars{ \dot W_1 - \dot W_2} \quad \text{in } \RR^d \times [0,T] \quad \text{and} \quad \psi(z,0) = L|z| \quad \text{for } z \in \RR^d.
	\]
	Then Proposition \ref{P:dcextension} gives
	\[
		\psi(z,t) \le L|z| + C \nor{G}{\dc(B_L) }\nor{W_1 - W_2}{\oo,[0,T]},
	\]
	while an application of standard stability estimates from the viscosity theory yields
	\[
		\phi(z,t) - \psi(z,t) \le \nor{F}{\oo,B_L}\pars{  \nor{\dot W_1}{L^1([0,T])} + \nor{\dot W_2}{L^1([0,T])}}.
	\]
	Combining all terms and using a symmetric argument for $u_2 - u_1$, we find that
	\[
		\nor{u_1 - u_2}{\oo,\RR^d \times [0,T]} \le C \pars{ \nor{F}{\oo,B_L}\pars{  \nor{\dot W_1}{L^1([0,T])} + \nor{\dot W_2}{L^1([0,T])}} + \nor{G}{\dc(B_L)}\nor{W_1 - W_2}{\oo,[0,T]}}.
	\]
	We conclude by taking the infimum over all such $F$ and $G$.
\end{proof}	

\begin{proof}[Proof of Theorem \ref{T:main}]
	The properties in \eqref{Sproperties} are stable under uniform convergence, and therefore continue to hold upon extending the solution operator to the appropriate spaces of Hamiltonians or paths. Moreover, in view of the contraction property, by a density argument, it suffices to consider a fixed initial datum $u_0 \in C^{0,1}(\RR^d)$ with $\nor{Du_0}{\oo} \le L$ for some $L > 0$, and, hence, by \eqref{Sproperties}, it suffices to consider the relevant norms of Hamiltonians over the ball $B_L$.
	
Throughout, since $u_0$ is fixed, we suppress its dependence and write
\[
	S(H,W,u_0) = S(H,W).
\]

In order to prove the result, it suffices to show that, for any $(H,W) \in \mathscr H_{\alpha,p} \times \mathscr P_{\alpha,p'}$ and $\eps > 0$ fixed, there exists $\delta \in (0,1]$ such that, if $(H_1,W_1), (H_2,W_2) \in \dc_\loc \times W^{1,1}_0$ satisfy, for $j=  1,2$,
	\[
		\nor{H_j - H}{\mathscr H_{\alpha,p}} + \nor{W_j - W}{\mathscr P_{\alpha,p'}} < \delta,
	\]
	then
	\[
		\nor{S(H_1,W_1) - S(H_2,W_2)}{\oo,\RR^d \times [0,T]} < \eps.
	\]
	Indeed, this means that $S(H,W)$ can be uniquely identified as the uniform limit of $S(\tilde H,\tilde W)$ as $(\tilde H,\tilde W) \in \dc_\loc \times W^{1,1}_0$ converges to $(H,W)$ in the $\mathscr H_{\alpha,p} \times \mathscr P_{\alpha,p'}$-norm, and the extended map $S$ is then continuous.
	
	In what follows, we write $u_1 = S(H_1,W_1)$, $u_2 = S(H_2, W_2)$, and $v = S(H_2,W_1)$.
	
	{\it Step 1.} We first estimate $u_1 - v$. By the definition of the $K$-functional, there exist $(H_{1,n}, H_{2,n})_{n \in \NN} \subset \dc(B_L)$ such that
	\[
		\nor{H_{1,n}}{\dc(B_L)} + 2^{n} \nor{H_1 - H_{1,n}}{\oo,B_L} \le 2K(2^n,H_1,\dc,C)
	\]
	and
	\[
		\nor{H_{2,n}}{\dc(B_L)} + 2^{n} \nor{H_2 - H_{2,n}}{\oo,B_L} \le 2K(2^n, H_2,\dc,C).
	\]
	For $n \in \NN$, set $u_{1,n} := S(H_{1,n}, W^1)$ and $v_n = S(H_{2,n},W^1)$. We have, by Lemma \ref{L:K}\eqref{L:Kmonotone},
	\[
		\nor{H_{1,n}}{\dc(B_L)} + \nor{H_{1,n+1}}{\dc(B_L)} \le 2\pars{ K(2^n,H_1,\dc,C) + K(2^{n+1},H_1,\dc,C)} \le 6 K(2^n,H_1,\dc,C)
	\]
	and
	\begin{align*}
		\nor{H_{1,n} - H_{1,n+1}}{\oo,B_L} &\le \nor{H_{1,n} - H_1}{\oo,B_L} + \nor{H_1 - H_{1,n+1}}{\oo,B_L} \\
		&\le 2\pars{ 2^{-n} K(2^n,H_1,\dc,C) + 2^{-n-1} K(2^{n+1},H_1,\dc,C)} \\
		&\le 2^{2-n} K(2^n,H_1,\dc,C).
	\end{align*}
	Then Lemmas \ref{L:K}\eqref{L:Kmonotone2} and \ref{L:decomposed}(a) give, for some $C = C_L > 0$,
	\begin{align*}
		&\nor{u_{1,n} - u_{1,n+1}}{\oo,\RR^d \times [0,T]}\\
		 &\le C \pars{ \nor{H_{1,n}}{\dc(B_L)} + \nor{H_{1,n+1}}{\dc(B_L)} } K\pars{ \frac{  \nor{H_{1,n} - H_{1,n+1}}{\oo,B_L}}{  \nor{H_{1,n}}{\dc(B_L)} + \nor{H_{1,n+1}}{\dc(B_L)} } , W_1, C_0, W^{1,1}_0 }\\
		&\le 6C K(2^n,H_1,\dc,C) K\pars{ 2^{-n}, W_1, C_0, W^{1,1}_0},
	\end{align*}
	and so, in view of H\"older's inequality and Lemma \ref{L:interp}\eqref{L:interpsum}, for some $C_{L,p,\alpha}$,
	\begin{equation}\label{unu}
		\begin{split}
		&\nor{u_{1,n} - u_1}{\oo,\RR^d \times [0,T]} \\
		&\le C \pars{ \sum_{m = n}^{\oo} K(2^m,H_1,\dc,C)^p 2^{-mp\alpha} }^{1/p} \pars{ \sum_{m = n}^{\oo} K(2^{-m},W_1,C_0,W^{1,1}_0)^{p'} 2^{mp'\alpha}}^{1/p'}\\
		&\le  C \pars{ \sum_{m = n}^{\oo} K(2^m,H,\dc,C)^p 2^{-mp\alpha} }^{1/p} \pars{ \sum_{m = n}^{\oo} K(2^{-m},W,C_0,W^{1,1}_0)^{p'} 2^{mp'\alpha}}^{1/p'} \\
		&+ C\pars{ \nor{H}{\mathscr H_{\alpha,p}} + \nor{W}{\mathscr P_{\alpha,p'}} + 1} \delta.
		\end{split}
	\end{equation}
	A similar argument gives
	\begin{equation}\label{vnv}
	\begin{split}
		&\nor{v_{n} - v}{\oo} \\
		&\le C \pars{ \sum_{m = n}^{\oo} K(2^m, H_2,\dc,C)^p 2^{-mp\alpha} }^{1/p} \pars{ \sum_{m = n}^{\oo} K(2^{-m},W_1,C_0,W^{1,1}_0)^{p'} 2^{mp'\alpha}}^{1/p'}\\
		&\le C \pars{ \sum_{m = n}^{\oo} K(2^m,H,\dc,C)^p 2^{-mp\alpha} }^{1/p} \pars{ \sum_{m = n}^{\oo} K(2^{-m},W,C_0,W^{1,1}_0)^{p'} 2^{mp'\alpha}}^{1/p'} \\
		&+ C\pars{ \nor{H}{\mathscr H_{\alpha,p}} + \nor{W}{\mathscr P_{\alpha,p'}} + 1} \delta 
	\end{split}
	\end{equation}
	
	We next estimate $u_{1,n} - v_{n}$. First, Lemma \ref{L:interp}\eqref{L:interpsum} yields, for some $C = C_{p,\alpha} > 0$ and for all $n \in \NN$ and $j = 1,2$,
	\[
		K(2^{n},H_j,\dc,C) \le C \nor{H_j}{\mathscr H_{\alpha,p}} 2^{\alpha n}
		\le C\pars{ \nor{H}{\mathscr H_{\alpha,p}} + 1} 2^{\alpha n},
	\]
	and, hence,
	\[
		\nor{H_{j,n}}{\dc(B_L)} \le 2 K(2^{n}, H_j,\dc,C) \le C\pars{ \nor{H}{\mathscr H_{\alpha,p}} + 1} 2^{\alpha n}
	\]
	and
	\begin{align*}
		\nor{H_{1,n} - H_{2,n}}{\oo,B_L} 
		&\le 2^{1-n} \left[ K(2^{n}, H_1,\dc,C) + K(2^{n}, H_2,\dc,C) \right] + C \delta\\
		&\le C \pars{ \nor{H}{\mathscr H_{\alpha,p}} + 1} 2^{-(1-\alpha) n} + C \delta.
	\end{align*}
	Lemmas \ref{L:K}\eqref{L:Kmonotone}-\eqref{L:Kmonotone2}, \ref{L:interp}\eqref{L:interpsum}, and \ref{L:decomposed}(a) then give, for some $C = C_{L,p,\alpha}$,
	\begin{equation}\label{unvn}
	\begin{split}
		&\nor{u_{1,n} - v_{n}}{\oo,\RR^d \times [0,T]} \\
		&\le C \pars{ \nor{H}{\mathscr H_{\alpha,p}} + 1} 2^{\alpha n} K \pars{ \frac{ \pars{ \nor{H}{\mathscr H_{\alpha,p}} + 1} 2^{-(1-\alpha) n} +  \delta}{ \pars{ \nor{H}{\mathscr H_{\alpha,p}} + 1} 2^{\alpha n}}, W_1, C_0,W^{1,1}_0}
		\\
		&\le C\pars{ \nor{H}{\mathscr H_{\alpha,p}} + 1} 2^{\alpha n}
K\pars{ 2^{-n}, W_1,C_0,W^{1,1}_0} \\
		&+ C \pars{ \nor{H}{\mathscr H_{\alpha,p}} + 1} 2^{\alpha n} K\pars{ \frac{\delta}{\pars{ \nor{H}{ \mathscr H_{\alpha,p} }+ 1} 2^{\alpha n}},W_1,C_0,W^{1,1}_0 }\\
		&\le C\pars{ \nor{H}{\mathscr H_{\alpha,p}} + 1}\pars{ 2^{\alpha n}
K\pars{ 2^{-n}, W, C_0, W^{1,1}_0} + \delta} \\
		&+ C \pars{ \nor{H}{\mathscr H_{\alpha,p}} + 1}^{1-\alpha} \pars{\nor{W}{\mathscr P_{\alpha,p'}}+1} 2^{\alpha(1-\alpha) n} \delta^\alpha.
	\end{split}
	\end{equation}
	Combining \eqref{unu}, \eqref{vnv}, and \eqref{unvn}, we find that, for any $n \in \NN$,
	\begin{equation}\label{u-v}
	\begin{split}
		\nor{u_1-v}{\oo,\RR^d \times [0,T]} 
		&\le C \pars{ \sum_{m = n}^{\oo} K(2^m,H,\dc,C)^p 2^{-mp\alpha} }^{1/p} \pars{ \sum_{m = n}^{\oo} K(2^{-m},W,C_0,W^{1,1}_0)^{p'} 2^{mp'\alpha}}^{1/p'}\\
		&+ C\pars{ \nor{H}{\mathscr H_{\alpha,p}} + 1} 2^{\alpha n}
K\pars{ 2^{-n}, W,C_0, W^{1,1}_0} \\
		&+ C \pars{ \nor{H}{\mathscr H_{\alpha,p}} + \nor{W}{\mathscr P_{\alpha,p'}} + 1}\delta\\
		&+ C \pars{ \nor{H}{\mathscr H_{\alpha,p}} + 1}^{1-\alpha} \nor{W}{\mathscr P_{\alpha,p'}} 2^{\alpha(1-\alpha) n} \delta^\alpha.
	\end{split}
	\end{equation}
	In view of Lemma \ref{L:interp}\eqref{L:interpsum}, the first two terms converge to $0$ as $n \to \oo$. Therefore, choosing first $n$ large and then $\delta$ small, depending only on $H$ and $W$, we achieve
	\[
		\nor{u_1 - v}{\oo} < \frac{\eps}{2}.
	\]
	
	{\it Step 2.} We now estimate $v - u_2$, using similar arguments as in Step 1, but this time invoking Lemma \ref{L:decomposed}(b).
	
	Let $(W_{1,n}, W_{2,n})_{n \in \NN} \subset W^{1,1}_0$ be such that
	\[
		\nor{W_1 - W_{1,n}}{\oo,[0,T]} + 2^{-n} \nor{\dot W_{1,n}}{L^1([0,T])} \le 2K(2^{-n},W_1,C_0,W^{1,1}_0)
	\]
	and
	\[
		\nor{W_2 - W_{2,n}}{\oo,[0,T]} + 2^{-n} \nor{\dot W_{2,n}}{L^1([0,T])} \le 2K(2^{-n},W_2,C_0,W^{1,1}_0).
	\]
	For $n \in \NN$, set $u_{2,n} := S(H_2, W_{2,n})$ and $\tilde v_n = S(H_2,W_{1,n})$. Then Lemma \ref{L:decomposed}(b) gives, for some $C = C_L > 0$,
	\begin{align*}
		\nor{u_{2,n} - u_{2,n+1}}{\oo,\RR^d \times [0,T]} &\le C \nor{W_{2,n} - W_{2,n+1}}{\oo,[0,T]} K\pars{ \frac{\nor{\dot W_{2,n}}{1,[0,T]} + \nor{\dot W_{2,n+1}}{1,[0,T]} }{ \nor{W_{2,n} - W_{2,n+1}}{\oo,[0,T]} } , H_2, \dc_\loc, C }\\
		&\le C K\pars{ 2^{-n}, W_2, C_0, W^{1,1}_0} K(2^n, H_2,\dc,C),
	\end{align*}
	and so, in view of H\"older's inequality and Lemma \ref{L:interp}\eqref{L:interpsum}, for all $n \in \NN$, we have
	\begin{equation}\label{tildeunu}
		\begin{split}
		&\nor{u_{2,n} - u_2}{\oo,\RR^d \times [0,T]}\\
		&\le C\pars{ \sum_{m = n}^{\oo} K(2^{-m},W_2,C_0,W^{1,1}_0)^{p'} 2^{mp'\alpha}}^{1/p'} \pars{ \sum_{m = n}^{\oo} K(2^m, H_2,\dc,C)^p 2^{-mp\alpha} }^{1/p} \\
		&\le C \pars{ \nor{W}{\mathscr P_{\alpha,p'}} + \nor{H}{\mathscr H_{\alpha,p} }  +1}\delta \\
		&+ C\pars{ \sum_{m = n}^{\oo} K(2^{-m},W,C_0,W^{1,1}_0)^{p'} 2^{mp'\alpha}}^{1/p'} \pars{ \sum_{m = n}^{\oo} K(2^m,H,\dc,C)^p 2^{-mp\alpha} }^{1/p}.
		\end{split}
	\end{equation}
	A similar argument gives
	\begin{equation}\label{tildevnv}
	\begin{split}
		&\nor{\tilde v_{n} - v}{\oo,\RR^d \times[ 0,T]}\\
		&\le C \pars{ \nor{W}{\mathscr P_{\alpha,p'}} + \nor{H}{\mathscr H_{\alpha,p} }  +1}\delta \\
		&+ C\pars{ \sum_{m = n}^{\oo} K(2^{-m},W,C_0,W^{1,1}_0)^{p'} 2^{mp'\alpha}}^{1/p'} \pars{ \sum_{m = n}^{\oo} K(2^m,H,\dc,C)^p 2^{-mp\alpha} }^{1/p}.
	\end{split}
	\end{equation}
	
	We next estimate $u_{2,n} - \tilde v_{n}$. Arguing as in Step 1, there exists $C = C_{p,\alpha} > 0$ such that
	\[
		\nor{W_{1,n} - W_{2,n}}{\oo,[0,T]} \le C\delta + C\pars{ \nor{W}{\mathscr P_{\alpha,p'}} + 1} 2^{-n \alpha}
	\]
	and
	\[
		\nor{\dot W_{1,n}}{1,[0,T]} + \nor{\dot W_{2,n}}{1,[0,T]} \le C 2^{n(1-\alpha)} \pars{ \nor{W}{\mathscr P_{\alpha,p'}} + 1}.
	\]
	Lemmas \ref{L:K}\eqref{L:Kscales},\eqref{L:Kmonotone},\eqref{L:Kmonotone2} and \ref{L:decomposed}(b) together give
	\begin{equation}\label{tildeunvn}
	\begin{split}
		&\nor{u_{2,n} - \tilde v_{n}}{\oo,\RR^d \times [0,T]} \\
		&\le C\pars{ \delta + 2^{-n\alpha} \pars{ \nor{W}{\mathscr{P}_{\alpha,p'}} + 1} }
		K \pars{ \frac{2^{n(1-\alpha)} \pars{ \nor{W}{\mathscr P_{\alpha,p'}}+1}}{\delta +  2^{-n\alpha} \pars{ \nor{W}{\mathscr{P}_{\alpha,p'}} + 1}} , H_2,\dc_\loc, C}\\
		&= C	2^{n(1-\alpha)} \pars{ \nor{W}{\mathscr P_{\alpha,p'}}+1} K \pars{ \frac{ \delta +  2^{-n\alpha} \pars{ \nor{W}{\mathscr{P}_{\alpha,p'}} + 1}}{ 2^{n(1-\alpha)} \pars{ \nor{W}{\mathscr P_{\alpha,p'}}+1}}, H_2, C, \dc_\loc}\\
		&\le C\pars{ \nor{W}{\mathscr P_{\alpha,p'}} + 1} 2^{n(1-\alpha)}
K\pars{ 2^{-n}, H_2,C, \dc_\loc} \\
		& \quad + C \pars{ \nor{W}{\mathscr P_{\alpha,p'}} + 1} 2^{n(1-\alpha)} K\pars{ \frac{\delta}{\pars{ \nor{W}{ \mathscr P_{\alpha,p'} }+ 1} 2^{n(1-\alpha)}}, H_2, C,\dc_\loc }\\
		&\le C\pars{ \nor{W}{\mathscr P_{\alpha,p'}} + 1} \pars{ 2^{n(1-\alpha)}
K\pars{ 2^{-n},  H,C, \dc_\loc} + \delta} \\
		& \quad + C \pars{ \nor{W}{\mathscr P_{\alpha,p'}} + 1}^{\alpha} \pars{ \nor{H}{\mathscr H_{\alpha,p}} + 1} 2^{n\alpha(1-\alpha)} \delta^{1-\alpha}.
	\end{split}
	\end{equation}
	Combining \eqref{tildeunu}, \eqref{tildevnv}, and \eqref{tildeunvn}, we find that
	\begin{equation}\label{tildeu-v}
	\begin{split}
		&\nor{u_2-  v}{\oo,\RR^d \times [0,T]} \\
		&\le C\pars{ \sum_{m = n}^{\oo} K(2^{-m},W,C_0,W^{1,1}_0)^{p'} 2^{mp'\alpha}}^{1/p'} \pars{ \sum_{m = n}^{\oo} K(2^m,H,\dc,C)^p 2^{-mp\alpha} }^{1/p}\\
		& \quad +C\pars{ \nor{W}{\mathscr P_{\alpha,p'}} + 1} 2^{n(1-\alpha)}
K\pars{ 2^{-n}, H,C, \dc_\loc} \\
		& \quad + C \pars{ \nor{W}{\mathscr P_{\alpha,p'}} + \nor{W}{\mathscr P_{\alpha,p'}}^\alpha + \nor{H}{\mathscr H_{\alpha,p}} + 1} 2^{n\alpha(1-\alpha)} \delta^{1-\alpha}.
	\end{split}
	\end{equation}
	Lemma \ref{L:interp}\eqref{L:interpreflexive} implies that $H \in (C, \dc)_{1-\alpha,p}$, and so, in view of Lemma \ref{L:interp}\eqref{L:interpsum}, the first two terms converge to $0$ as $n \to \oo$. Therefore, choosing first $n$ large and then $\delta$ small, depending only on $H$ and $W$, we can assure that
	\[
		\nor{u_2 - v}{\oo,\RR^d \times [0,T]} < \frac{\eps}{2}.
	\]
	and, hence,
	\[
		\nor{u_1 - u_2}{\oo,\RR^d \times [0,T]} < \eps.
	\]
\end{proof}

\begin{proof}[Proof of Theorem \ref{T:mainweak}]
	As in the proof of Theorem \ref{T:main}, part (a) is immediate upon proving parts (b) and (c), and so we may assume $u_0$ is fixed and $\nor{Du_0}{\oo} \le L$. We then write $S(H,W) := S(H,W,u_0)$, and all norms of Hamiltonians are taken over $B_L$. We prove only part (b), since the proof of part (c) is similar.
	
	Arguing similarly as in the proof of Theorem \ref{T:main}, the result follows once we show that, for fixed $\eps > 0$ and $R > 0$, there exists $\delta > 0$ such that, if $W_1,W_2 \in W^{1,1}_0$ satisfy
	\[
		\nor{W_1 - W}{\oo,[0,T]} \vee \nor{W_2 - W}{\oo,[0,T]} < \delta \quad \text{and} \quad \nor{W_1}{\mathscr P_{\alpha,p'}} \vee \nor{W_2}{\mathscr P_{\alpha,p'}} \le R,
	\]
	then
	\[
		\nor{S(H,W_1) - S(H,W_2)}{\oo,\RR^d \times [0,T]} < \eps.
	\]
	
	For $n \in \NN$ and $j = 1,2$, choose $W_{j,n} \in W^{1,1}_0$ such that
	\[
		\nor{W_j - W_{j,n}}{\oo,[0,T]} + 2^{-n} \nor{\dot W_{j,n}}{1,[0,T]} \le 2K(2^{-n}, W_j, C_0, W^{1,1}_0),
	\]
	and set
	\[
		u_j := S(H,W_j) \quad \text{and} \quad u_{j,n} := S(H,W_{j,n}).
	\]
	Then, arguing just as for \eqref{tildeunu} and \eqref{tildevnv} from the proof of Theorem \ref{T:main}, we obtain
	\begin{equation}\label{unmun}
		\begin{split}
		\nor{u_{j,n} - u_j}{\oo} &\le C \pars{ \sum_{m = n}^\oo K(2^{-m}, W_j,C_0,W_0^{1,1})^{p'} 2^{mp'\alpha}}^{1/p'} \pars{ \sum_{m=n}^\oo K(2^m, H, \dc, C)^p 2^{-m p \alpha}}^{1/p}\\
		&\le CR \pars{ \sum_{m=n}^\oo K(2^m, H, \dc, C)^p 2^{-m p \alpha}}^{1/p},
		\end{split}
	\end{equation}
	where, in the first line, the $\ell^{p'}$-norm is replaced with a supremum when $p' = \oo$.
	
	Next, by Lemma \ref{L:interp}\eqref{L:interpsum}, for $n \in \NN$,
	\begin{align*}
		\nor{W_{1,n} - W_{2,n}}{\oo} &\le \nor{W_1 - W_2}{\oo,[0,T]} + 2\left[ K(2^{-n}, W_1, C_0, W^{1,1}_0) + K(2^{-n} , W_2, C_0, W^{1,1}_0) \right]\\
		&\le 2\delta + CR 2^{-n\alpha}
	\end{align*}
	and
	\[
		\nor{\dot W_{1,n}}{L^1([0,T])} + \nor{\dot W_{1,n}}{L^1([0,T])} \le C R 2^{n(1-\alpha)}.
	\]
	Then Lemma \ref{L:decomposed}(b) and a similar argument as for \eqref{tildeunvn} give, for $n \in \NN$,
	\begin{equation}\label{unmum}
		\nor{u_{1,n} - u_{2,n}}{\oo,\RR^d \times [0,T]}
		\le CR2^{n(1-\alpha)}K(2^{-n} , H, C, \dc_\loc) + CR^\alpha \nor{H}{\mathscr H_{\alpha,p} }2^{n\alpha(1-\alpha)} \delta^{1-\alpha}.
	\end{equation}
	Combining \eqref{unmun} and \eqref{unmum} yields
	\begin{align*}
		\nor{u_1 - u_2}{\oo,\RR^d \times [0,T]} 
		&\le CR \pars{ \sum_{m=n}^\oo K(2^m, H, \dc, C)^p 2^{-m p \alpha}}^{1/p} \\
		&+ CR2^{n(1-\alpha)}K(2^{-n} , H, C, \dc_\loc) + CR^\alpha \nor{H}{\mathscr H_{\alpha,p} }2^{n\alpha(1-\alpha)} \delta^{1-\alpha}.
	\end{align*}
	Because $p < \oo$, the first two terms can be made less that $\eps/2$ by taking $n$ sufficiently large, and then, for sufficiently small $\delta$, the third term is less than $\eps/2$.
	\end{proof}

\section{The space $\mathscr H_{\alpha,p}$}\label{S:H}

In this section, we give various examples of spaces that embed continuously into the interpolation space given, for $\alpha \in (0,1)$ and $p \in [1,\oo]$, by
\[
	\mathscr H_{\alpha,p} = \pars{ \dc(\RR^d), C(\RR^d) \cap L^\oo(\RR^d)}_{\alpha,p;\loc},
\]
that is, $H \in \mathscr H_{\alpha,p}$ if and only if $H \in (\dc(B_L), C(B_L))_{\alpha,p}$ for all $L > 0$.

We do not know of a completely analytic characterization of the space $\dc$, let alone $\mathscr H_{\alpha,p}$. In what follows, we make use of the many properties of $\dc$ described in Appendix \ref{S:dc}.

In various points in this section, we regularize functions by convolving with a mollifier $\rho \in C^\oo(\RR^d)$ satisfying
\begin{equation}\label{mollifier}
		\rho \ge 0, \quad \supp \rho \subset B_1, \quad \rho(x) = \rho(-x) \text{ for all } x \in \RR^d, \quad \text{and} \quad \int_{\RR^d} \rho = 1.
\end{equation}

\subsection{A first sufficient criterion}

The fact that $C^{1,1} \subset \dc$ leads to a criterion for belonging to $\mathscr H_{\alpha,p}$ in terms of Besov regularity. 

In the lemma below, recall the definition of the second order difference operator $\Delta^2_y$ from subsection \ref{SS:besov} and the $K$-functional from subsection \ref{SS:interp}.

\begin{lemma}\label{L:KC11C}
	There exist universal constants $0 < c < C$ such that, for all $f \in C(\RR^d)$ and $t \in [1,\oo)$,
	\[
		c\pars{ \nor{f}{\oo} + t\sup_{|y| \le t^{-1/2}} \nor{\Delta^2_y f}{\oo} }
		\le K(t,f,C^{1,1}(\RR^d), C(\RR^d))
		\le C\pars{ \nor{f}{\oo} +t \sup_{|y| \le t^{-1/2}} \nor{\Delta^2_y f}{\oo} }.
	\]
\end{lemma}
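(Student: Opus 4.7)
The plan is to establish the two bounds separately, with the upper bound relying on a mollification decomposition and the lower bound on a Taylor-type estimate.

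For the lower bound, I would fix any decomposition $f = f_1 + f_2$ with $f_1 \in C^{1,1}(\RR^d)$ and $f_2 \in C(\RR^d)$. The bound on $\|f\|_\infty$ is immediate since $t \ge 1$ gives $\|f\|_\infty \le \|f_1\|_\infty + \|f_2\|_\infty \le \|f_1\|_{C^{1,1}} + t\|f_2\|_\infty$. For the second-difference term, a standard Taylor expansion yields
\[
\Delta^2_y f_1(x) = \int_0^1 (1-s) \bigl(\langle D^2 f_1(x+sy)y,y\rangle + \langle D^2 f_1(x-sy)y,y\rangle\bigr)\,ds,
\]
so $\|\Delta^2_y f_1\|_\infty \le \|D^2 f_1\|_\infty |y|^2 \le \|f_1\|_{C^{1,1}} |y|^2$, while trivially $\|\Delta^2_y f_2\|_\infty \le 4\|f_2\|_\infty$. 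Taking the supremum over $|y| \le t^{-1/2}$ and multiplying by $t$ yields $t \sup_{|y|\le t^{-1/2}}\|\Delta^2_y f\|_\infty \le \|f_1\|_{C^{1,1}} + 4t\|f_2\|_\infty$, and the infimum over decompositions gives the lower bound.

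For the upper bound, I would use a symmetric mollifier $\rho$ as in \eqref{mollifier}, set $\eps := t^{-1/2}$, and decompose $f = f_1 + f_2$ with $f_1 := f * \rho_\eps$ and $f_2 := f - f_1$. The evenness of $\rho$ yields the key identity
\[
f_2(x) = f(x) - f*\rho_\eps(x) = -\tfrac{1}{2}\int \rho_\eps(y) \Delta^2_y f(x)\,dy,
\]
so $\|f_2\|_\infty \le \tfrac{1}{2}\sup_{|y|\le\eps}\|\Delta^2_y f\|_\infty$. A parallel computation, using that the second derivatives of an even function are even and that $\int D^2 \rho_\eps = 0$ and $\int y\,D^2\rho_\eps(y)\,dy = 0$, gives
\[
D^2 f_1(x) = \tfrac{1}{2} \int \Delta^2_y f(x)\, D^2 \rho_\eps(y)\,dy,
\]
whence $\|D^2 f_1\|_\infty \le C\eps^{-2} \sup_{|y|\le\eps}\|\Delta^2_y f\|_\infty = Ct \sup_{|y|\le t^{-1/2}}\|\Delta^2_y f\|_\infty$. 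The term that requires care is $\|Df_1\|_\infty$, since the naive bound $\|f*D\rho_\eps\|_\infty \le C\eps^{-1}\|f\|_\infty$ does not match the target. This is the main technical point, and I would resolve it using the Landau interpolation inequality
\[
\|Df_1\|_\infty \le C\|f_1\|_\infty^{1/2}\|D^2 f_1\|_\infty^{1/2},
\]
combined with AM-GM, to obtain $\|Df_1\|_\infty \le C(\|f\|_\infty + t\sup_{|y|\le t^{-1/2}}\|\Delta^2_y f\|_\infty)$. Summing the bounds on $\|f_1\|_\infty$, $\|Df_1\|_\infty$, and $\|D^2 f_1\|_\infty$, and adding $t\|f_2\|_\infty$, produces the desired upper bound on $K(t,f,C^{1,1},C)$.

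The main obstacle is the intermediate-derivative estimate for $f_1 = f*\rho_\eps$: a direct control via $\|D\rho_\eps\|_1 \sim \eps^{-1}$ overshoots the target, so the Landau inequality (or an equivalent choice of $C^{1,1}$ norm involving only $\|\cdot\|_\infty$ and $\|D^2\cdot\|_\infty$) is essential. Everything else is routine manipulation of the symmetry of $\rho$ and a second-order Taylor expansion.
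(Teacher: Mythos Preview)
Your proposal is correct and follows essentially the same route as the paper: both arguments use a direct second-order Taylor estimate for the lower bound and, for the upper bound, decompose via convolution with an even mollifier at scale $\delta = t^{-1/2}$, exploit the evenness to express $f - f_\delta$ and $D^2 f_\delta$ in terms of $\Delta^2_y f$, and handle $\|Df_\delta\|_\infty$ with the Landau interpolation inequality followed by AM--GM.
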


\begin{proof}
	Let $f_1 \in C^{1,1}(\RR^d)$ and $f_2 \in C(\RR^d)$ be such that $f = f_1 + f_2$. Then, for any $y \in \RR^d$ with $|y| \le t^{-1/2}$ and $x \in \RR^d$,
	\[
		|\Delta^2_y f(x)| \le |\Delta^2_y f_1(x)| + |\Delta^2_y f_2(x)| \le \frac{1}{t} \nor{D^2 f_1}{\oo} + 4 \nor{f_2}{\oo}.
	\]
	Therefore,
	\[
		\nor{f}{\oo} + t\sup_{|y| \le t^{-1/2}} \nor{\Delta^2_y f}{\oo}
		\le \nor{f_1}{C^{1,1}(\RR^d)} + (1+4t)\nor{f_2}{\oo} \le 5 \pars{ \nor{f_1}{C^{1,1}(\RR^d)} + t \nor{f_2}{\oo}}.
	\]
	Taking the infimum over all such $f_1$ and $f_2$ yields the first inequality with $c = \frac{1}{5}$. 
	
	Next, let $\rho$ be as in \eqref{mollifier}, and, for $\delta > 0$ and $x \in \RR^d$, set 
	\[
		\rho_\delta(x) = \frac{1}{\delta^d} \rho\pars{ \frac{x}{\delta}}
	\]
	and $f_\delta = f * \rho_\delta$. Then, for all $x \in \RR^d$, because $\rho$ is even,
	\begin{align*}
		f_\delta(x) - f(x) &= \int_{B_\delta} \pars{ f(x-y) - f(x)} \rho_\delta(y)dy = \int_{B_\delta} \pars{ f(x+y) - f(x)} \rho_\delta(y)dy\\
		& = \frac{1}{2} \int_{B_\delta} \pars{ f(x+y) + f(x-y) - 2f(x)} \rho_\delta(y)dy.
	\end{align*}
	This gives
	\[
		\abs{ f_\delta(x) - f(x)} \le \frac{1}{2} \int_{B_\delta} \abs{ \Delta^2f_y(x)} \rho_\delta(y)dy \le \frac{1}{2} \sup_{|y| \le \delta}\nor{ \Delta^2 f_y}{\oo}.
	\]
	Similarly, for all $x \in \RR^d$,
	\[
		D^2 f_\delta(x) = \frac{1}{2} \int_{B_\delta} \Delta^2 f_y(x) D^2 \rho_\delta(y)dy,
	\]
	and so
	\[
		\abs{ D^2 f_\delta(x)} \le \frac{1}{2\delta^2} \int_{B_1} \abs{ D^2 \rho(y)}dy \sup_{|y| \le \delta} \nor{\Delta^2 f_y}{\oo}.
	\]
	We also have $\nor{f_\delta}{\oo} \le \nor{f}{\oo}$ and
	\[
		\nor{Df_\delta}{\oo} \le 2 \sqrt{ \nor{f_\delta}{\oo} \nor{D^2 f_\delta}{\oo} } \le \nor{f_\delta}{\oo} + \nor{D^2 f_\delta}{\oo}.
	\]
	Now fix $t \ge 1$ and set $\delta = t^{-1/2}$. Then
	\[
		K(t, f, C^{1,1}, C) \le \nor{f_\delta}{\oo} + \nor{Df_\delta}{\oo} + \nor{D^2 f_\delta}{\oo} + t \nor{f - f_\delta}{\oo} \le C\pars{  \nor{f}{\oo} + t \sup_{|y| \le t^{-1/2}} \nor{\Delta^2_y f}{\oo} }.
	\]
\end{proof}

Along with Theorem \ref{T:mainweak} and Propositions \ref{P:variationandHolder}, \ref{P:PapBesov}, and \ref{P:Brownian}, taking $p = 1$ in the result below leads to proofs of Theorems \ref{T:introBesovresult} and \ref{T:introBrownian} from the introduction.
\begin{proposition}\label{P:Hapgeneral}
	For any $\alpha \in (0,1)$ and $p \in [1,\oo]$,
	\[
		B^{2(1-\alpha)}_{\oo,p} \subset \mathscr H_{\alpha,p}.
	\]
\end{proposition}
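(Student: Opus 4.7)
The plan is to bootstrap through the intermediate space $C^{1,1}(\mathbb{R}^d)$, exploiting the embedding $C^{1,1} \subset \dc$ (Appendix \ref{S:dc}) to replace $\dc$ in the $K$-functional by something analyzable via Fourier/modulus-of-continuity techniques, and then invoke Lemma \ref{L:KC11C}. Fix $L > 0$; by definition of $\mathscr H_{\alpha,p}$, it suffices to prove $H \in (\dc(B_L), C(B_L))_{\alpha,p}$ with norm controlled by $\nor{H}{B^{2(1-\alpha)}_{\infty,p}(B_{2L})}$. First I would localize: choose $\eta \in C^\infty_c(\mathbb{R}^d)$ with $\eta \equiv 1$ on $B_L$ and $\supp \eta \subset B_{2L}$, and set $g := \eta \tilde H$ where $\tilde H$ extends $H|_{B_{2L}}$ to a global Besov function. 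Then $g$ agrees with $H$ on $B_L$, is globally bounded (since $2(1-\alpha) > 0$ gives the embedding $B^{2(1-\alpha)}_{\infty,p} \subset L^\infty$), and $\nor{g}{B^{2(1-\alpha)}_{\infty,p}(\mathbb{R}^d)} \le C_L \nor{H}{B^{2(1-\alpha)}_{\infty,p}(B_{2L})}$ by the pointwise-multiplier property of $\eta$.

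Next I would compare $K$-functionals. For any $g_1 \in C^{1,1}(\mathbb{R}^d)$ with $M := \nor{D^2 g_1}{\infty}$, the decomposition $g_1 = (g_1 + \tfrac{M}{2}|x|^2) - \tfrac{M}{2}|x|^2$ is a difference of convex functions and yields $\nor{g_1}{\dc(B_L)} \le \nor{g_1}{\infty,B_L} + ML^2 \le C_L \nor{g_1}{C^{1,1}(\mathbb{R}^d)}$. Thus any decomposition $g = g_1 + g_2$ with $g_1 \in C^{1,1}(\mathbb{R}^d)$, $g_2 \in C(\mathbb{R}^d)$ yields, upon restriction to $B_L$, a competitor for the $(\dc(B_L), C(B_L))$-decomposition, giving
\[
	K(t, g, \dc(B_L), C(B_L)) \le C_L \, K(t, g, C^{1,1}(\mathbb{R}^d), C(\mathbb{R}^d)).
\]
Combined with Lemma \ref{L:KC11C}, this yields, for every $n \ge 0$,
\[
	K(2^n, g, \dc(B_L), C(B_L)) \le C_L\!\left(\nor{g}{\infty} + 2^n \sup_{|y| \le 2^{-n/2}} \nor{\Delta^2_y g}{\infty}\right).
\]

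To convert this into the interpolation norm, I would invoke Lemma \ref{L:interp}\eqref{L:interpreflexive} to write $(\dc, C)_{\alpha,p} = (C, \dc)_{1-\alpha, p}$, then apply the discrete equivalence in Lemma \ref{L:interp}\eqref{L:interpsum} (valid since $\dc \subset C$) together with the scaling identity $K(2^{-n}, g, C, \dc) = 2^{-n} K(2^n, g, \dc, C)$ of Lemma \ref{L:K}\eqref{L:Kscales} to get
\[
	\nor{g}{(\dc(B_L), C(B_L))_{\alpha,p}}^p \sim \sum_{n \ge 0} K(2^n, g, \dc(B_L), C(B_L))^p \, 2^{-np\alpha}
\]
(with the usual $\sup$ interpretation if $p = \infty$). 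Inserting the pointwise bound, the $\nor{g}{\infty}$-contribution sums to a multiple of $\nor{g}{\infty}^p$ because $\alpha > 0$, while the remaining half-dyadic sum $\sum_{n \ge 0} 2^{np(1-\alpha)} \omega(2^{-n/2})^p$ (with $\omega(t) := \sup_{|y| \le t} \nor{\Delta^2_y g}{\infty}$) is, after splitting into its even and odd subsequences, equivalent to the fully dyadic sum $\sum_{j \ge 0} 2^{jps} \omega(2^{-j})^p$ with $s = 2(1-\alpha)$, which in turn is equivalent via standard dyadic decomposition to the integral form \eqref{equivalentbesov} of the Besov seminorm $\nor{g}{B^{2(1-\alpha)}_{\infty,p}}$.

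The genuinely delicate step is the bookkeeping in this last paragraph: matching the scale $2^{-n/2}$ forced by the square root in Lemma \ref{L:KC11C} against the $2^{-n}$ dyadic scale of the interpolation norm, and verifying that the resulting half-dyadic sum is still controlled by the modulus-of-continuity form of $\nor{g}{B^{2(1-\alpha)}_{\infty,p}}$. The monotonicity of $\omega$ makes this routine but is the only non-algebraic ingredient. All other steps are direct consequences of definitions or of the lemmas already proved.
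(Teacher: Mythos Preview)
Your proposal is correct and follows essentially the same route as the paper: reduce to $(C^{1,1},C)_{\alpha,p}$ via the embedding $C^{1,1}\subset\dc$, invoke Lemma~\ref{L:KC11C}, and match the $t^{-1/2}$ scale to the Besov seminorm. The only cosmetic differences are that the paper works with the integral form of the interpolation norm and handles the scale-matching by the substitution $s=t^{-1/2}$ rather than your even/odd dyadic split, and it leaves the localization implicit.
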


\begin{proof}
	By Proposition \ref{P:C11dc},
	\[
		\mathscr H_{\alpha,p} \supset (W^{2,\oo}, C \cap L^\oo)_{\alpha,p}.
	\]
	Given $f \in B^{2(1-\alpha)}_{\oo,p}$, we now use \eqref{equivalentbesov} and Lemmas \ref{L:K}\eqref{L:Kscales}, \ref{L:interp}\eqref{L:interpsum}, and \ref{L:KC11C} to write, for some constant $C = C_{p,\alpha}$ that changes from line to line,
	\begin{align*}
		\nor{f}{(W^{2,\oo},C_b)_{\alpha,p}}
		&\le C \left[ \int_1^\oo \frac{K(t,f,C^{1,1},C)^p}{t^{\alpha p+1}} dt \right]^{1/p}\\
		&\le C\nor{f}{\oo} + C \left[ \int_1^\oo \frac{t^p \sup_{|y| \le t^{-1/2}} \nor{\Delta^2_y f}{\oo}^p }{t^{\alpha p+1}} dt \right]^{1/p}\\
		&= C \nor{f}{\oo} + C \left[ \int_0^1 \frac{ \sup_{|y| \le s} \nor{\Delta^2_y f}{\oo}^p}{s^{2(1-\alpha)p + 1}}ds \right]^{1/p} \le C \nor{f}{B^{2(1-\alpha)}_{\oo,p}}.
	\end{align*}
\end{proof}

When $\alpha = 1/2$, Proposition \ref{P:Hapgeneral} implies that $B^1_{\oo, p ; \loc} \subset \mathscr H_{1/2,p}$. If $p > 1$, $B^1_{\oo,p;\loc}$ contains functions that are not Lipschitz. On the other hand, $B^1_{\oo, 1;\loc}$ is contained in $C^1$. This case is of particular interest, since $\mathscr P_{1/2,\oo}$ is the largest path space contained Brownian paths (see Proposition \ref{P:Brownian}).

We next explore ways to obtain weaker criteria by using further properties of the space $\dc$.

\subsection{One spatial dimension}

If $d = 1$, then, by Proposition \ref{P:1ddc},
\[
	(W^{2,1}, C \cap L^\oo)_{\alpha,p;\loc} \subset \mathscr H_{\alpha,p}.
\]
We do not know how to completely characterize this interpolation space. Here, we give an example of a particular subspace. In particular, while Proposition \ref{P:Hapgeneral} establishes that 
\[
	B^{2(1-\alpha)}_p(L^\oo)_\loc \subset \mathscr H_{\alpha,p},
\]
we show that, for $d = 1$, we can replace the underlying $L^\oo$-metric with a suitable Lorentz space (see subsections \ref{SS:LL} and \ref{SS:besov}), at the price of replacing the ``auxiliary'' exponent $p$ above with $1$.

\begin{proposition}\label{P:1dbesov}
	Assume that $\alpha \in (0,1)$, $p \in [1,\oo]$, and $d = 1$. Then
	\[
		B^{2(1-\alpha)}_1(L^{\frac{1}{1-\alpha},p}) \subset \mathscr H_{\alpha,p}.
	\]
\end{proposition}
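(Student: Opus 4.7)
My plan is to combine Proposition \ref{P:1ddc} (which gives $W^{2,1}(\RR) \subset \dc(\RR)$) with an explicit $K$-functional estimate. By monotonicity of interpolation in the first slot, the inclusion
\[
	(W^{2,1}, C \cap L^\oo)_{\alpha,p;\loc} \subset (\dc, C \cap L^\oo)_{\alpha,p;\loc} = \mathscr H_{\alpha,p}
\]
reduces the claim to showing
\[
	B^{2(1-\alpha)}_1(L^{1/(1-\alpha),p})_\loc \subset (W^{2,1}, L^\oo)_{\alpha, p;\loc}
\]
(since $W^{2,1}(U) \subset C(U)$ on bounded $U \subset \RR$, the $L^\oo$ slot may be replaced by $C \cap L^\oo$ for free). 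After localization via a smooth cutoff, I may assume $f$ has compact support. The cornerstone will be the standard Lorentz interpolation identity $(L^1(\RR), L^\oo(\RR))_{\alpha, p} = L^{1/(1-\alpha), p}(\RR)$, together with the Littlewood-Paley decomposition $f = \sum_{k = 0}^\oo L_k f$ from \eqref{LP}.

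For each $k \ge 0$ and $\tau > 0$, I will use the Lorentz identity to select a near-optimal split $L_k f = a_k^\tau + b_k^\tau$ with
\[
	\nor{a_k^\tau}{L^1} + \tau \nor{b_k^\tau}{L^\oo} \le 2 K(\tau, L_k f, L^1, L^\oo).
\]
Since $L_k$ is convolution with a Schwartz function of $L^1$-norm uniform in $k$, it is bounded on $L^1$ and on $L^\oo$ uniformly; applying $L_k$ to both sides of the split preserves these norm bounds and \emph{additionally} restores Fourier-localization in $\{|\xi| \sim 2^k\}$. Bernstein's inequality will then give $\nor{L_k a_k^\tau}{W^{2,1}} \lesssim 2^{2k}\nor{a_k^\tau}{L^1}$, while $\nor{L_k b_k^\tau}{L^\oo} \lesssim \nor{b_k^\tau}{L^\oo}$. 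For fixed $t > 0$ I will then choose $\tau_k := t 2^{-2k}$ and sum to produce the decomposition $f = P_t + Q_t$ with $P_t := \sum_k L_k a_k^{\tau_k} \in W^{2,1}$ and $Q_t := \sum_k L_k b_k^{\tau_k} \in L^\oo$, yielding the pointwise-in-$t$ estimate
\[
	K(t, f, W^{2,1}, L^\oo) \lesssim \sum_{k = 0}^\oo 2^{2k} K(t 2^{-2k}, L_k f, L^1, L^\oo).
\]

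To conclude, I will apply Minkowski's integral inequality with exponent $p$ to the outer $L^p(dt/t)$-integral defining $\nor{\cdot}{(W^{2,1}, L^\oo)_{\alpha, p}}$. Bringing the sum outside as an $\ell^1$ sum and substituting $s := t 2^{-2k}$ will convert each inner integral into $2^{2kp(1-\alpha)} \nor{L_k f}{L^{1/(1-\alpha), p}}^p$ via the Lorentz interpolation identity. Summing then produces the desired
\[
	\nor{f}{(W^{2,1}, L^\oo)_{\alpha,p}} \lesssim \sum_{k = 0}^\oo 2^{2k(1-\alpha)} \nor{L_k f}{L^{1/(1-\alpha),p}} = \nor{f}{B^{2(1-\alpha)}_1(L^{1/(1-\alpha),p})}.
\]
The role of the Besov index $q = 1$ is crucial here, since Minkowski allows the sum to be pulled out of the $L^p$-integral only as an $\ell^1$-sum.

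The main obstacle will be the technical verification that the series $P_t, Q_t$ converge in $W^{2,1}$ and $L^\oo$ respectively, and that the splits $a_k^\tau, b_k^\tau$ can be chosen measurably in $\tau$. I plan to handle these points by first truncating the Littlewood-Paley sum at a finite level $N$, establishing the bound uniformly in $N$, and then passing to the limit. A secondary subtlety is that the bare $(L^1, L^\oo)$-interpolation split of $L_k f$ need not preserve the Fourier annulus $\{|\xi| \sim 2^k\}$, so the re-projection by $L_k$ is what enables the use of Bernstein's inequality for the second derivative.
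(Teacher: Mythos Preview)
Your approach is correct in substance and amounts to a hands-on unwinding of the paper's abstract argument. The paper proceeds via the method of retracts: using $B^2_{1,1} \subset W^{2,1}$ and $B^0_{\oo,1} \subset C$, it reduces the problem to interpolating the weighted vector-valued sequence spaces $(\ell^{2,1}(L^1), \ell^{0,1}(L^\oo))_{\alpha,p}$, and then invokes the general inclusion $\ell^{2(1-\alpha),1}((L^1,L^\oo)_{\alpha,p}) \subset (\ell^{2,1}(L^1), \ell^{0,1}(L^\oo))_{\alpha,p}$ together with $(L^1,L^\oo)_{\alpha,p} = L^{1/(1-\alpha),p}$. Your plan carries out exactly this interpolation at the level of the $K$-functional: the Littlewood-Paley decomposition is the retract, Bernstein's inequality encodes the identification $B^2_{1,1} \leftrightarrow \ell^{2,1}(L^1)$, and your Minkowski step is precisely the sequence-space inclusion just cited (which, as the paper remarks after \eqref{Cwikelinclusion}, is itself a consequence of Minkowski's inequality). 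The paper's version is shorter because it quotes standard interpolation packages; yours is more self-contained and makes the mechanism visible.

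One technical point in your plan needs repair. Applying $L_k$ to the split $L_k f = a_k^\tau + b_k^\tau$ yields $L_k^2 f = L_k a_k^\tau + L_k b_k^\tau$, and since the operators $L_k$ of \eqref{LP} are not idempotent, summing over $k$ recovers $\sum_k L_k^2 f$, not $f$. The standard remedy is to re-project with a fattened multiplier $\tilde L_k$ whose symbol equals $1$ on the support of $\hat\phi_k$ (for instance $\tilde L_k := \sum_{|j-k|\le 2} L_j$), so that $\tilde L_k L_k = L_k$; then $L_k f = \tilde L_k a_k^\tau + \tilde L_k b_k^\tau$, the pieces remain Fourier-localized to $\{|\xi|\sim 2^k\}$, and both the Bernstein estimate and the $L^1$, $L^\oo$ bounds survive up to absolute constants. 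With this adjustment your argument goes through, and as a bonus each $\tilde L_k b_k^{\tau_k}$ is smooth, so the uniformly convergent series $Q_t$ lands in $C \cap L^\oo$ rather than merely $L^\oo$, handling the point you raised about that slot.
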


When $p = \frac{1}{1-\alpha}$, the space on the left-hand side above becomes a Besov space, so that, in particular,
\[
	B^{2(1-\alpha)}_{\frac{1}{1-\alpha},1;\loc} \subset \mathscr H_{\alpha,p} \quad \text{for all } p \ge \frac{1}{1-\alpha}.
\]
By the remark in subsection \ref{SS:besov}, we have, for all $r > \frac{1}{1-\alpha}$,
\[
	B^{2(1-\alpha)}_{r,1;\loc} \subset B^{2(1-\alpha)}_1(L^{\frac{1}{1-\alpha},p})_\loc \subset \mathscr H_{\alpha,p},
\]
which, when $p = 1$, gives Theorem \ref{T:intro1d} from the introduction.

\begin{proof}[Proof of Proposition \ref{P:1dbesov}]
	The embeddings $B^0_{\oo, 1} \subset C$ and $B^2_{1,1} \subset W^{2,1}$ (see \cite{T}) imply that
	\[
		\pars{ B^2_{1,1}, B^0_{\oo,1}}_{\alpha,p} \subset (W^{2,1}, C)_{\alpha,p}.
	\]
	With the method of retracts (see \cite{BL}), this interpolation space can be identified with
	\[
		\pars{ \ell^{2,1}(L^1), \ell^{0,1}(L^\oo)}_{\alpha,p},
	\]
	where $\ell^{s,p}(X)$ denotes the weighted sequence space discussed in subsection \ref{SS:sequence}. This gives, in turn,
	\begin{equation}\label{Cwikelinclusion}
		\ell^{2(1-\alpha), 1}( (L^1,L^\oo)_{\alpha,p}) \subset \pars{ \ell^{2,1}(L^1), \ell^{0,1}(L^\oo)}_{\alpha,p}.
	\end{equation}
	The proof is finished in view of the definition of the Besov-Lorentz space and the fact that $(L^1,L^\oo)_{\alpha,p} = L^{\frac{1}{1-\alpha}, p}$.
\end{proof}

The embedding \eqref{Cwikelinclusion} is a consequence of the more general fact that, if $A_0$ and $A_1$ are compatible normed spaces, $\alpha \in (0,1)$, and $1 \le p \le q \le \oo$, then
\[
	L^p((A_0,A_1)_{\alpha,q}) \subset \pars{ L^p(A_0), L^p(A_1) }_{\alpha,q},
\]
and the reverse holds when $q \le p$, as a consequence of Minkowski's integral inequality. Thus, the above inclusion (and the reverse for $q \ge p$) becomes an equality exactly when $p = q$ (see \cite{BL,LPinterp}). It is a result of Cwikel \cite{C} that, in general, if $p \ne q$, the inclusions are strict.

When $\alpha = 1/2$, Proposition \ref{P:1dbesov} gives
\[
	B^1_1\pars{ L^{2,p}}_\loc \subset \mathscr H_{1/2,p},
\]
and hence, when $d = 1$, $\mathscr H_{1/2,p}$ contains non-Lipschitz functions, even when $p = 1$. 

\subsection{Further results in multiple dimensions}

We now exploit more properties of $\dc$-functions in order to find other criteria for belonging to $\mathscr H_{\alpha,p}$. In particular, the results that follow establish Theorem \ref{T:introstructure} from the introduction.

The first such result exploits the fact that, with a strong structural assumption, $W^{2,q}$-functions are $\dc$ for sufficiently large $q$.

\begin{proposition}\label{P:radialHap}
	Assume that $r > \frac{d}{1-\alpha}$ and $f \in B^{2(1-\alpha)}_1(L^{r,p})_\loc$ is radial. Then $f \in \mathscr H_{\alpha,p}$.
\end{proposition}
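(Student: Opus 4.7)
Proposition~\ref{P:radialfdc} asserts that $W^{2,q}_{\rad}(\RR^d)\subset\dc(\RR^d)$ whenever $q>d$. Since the hypothesis $r>\frac{d}{1-\alpha}$ is equivalent to $r(1-\alpha)>d$, I set $q:=r(1-\alpha)$, so $q>d$ and $r=q/(1-\alpha)$. The plan is to mimic the proof of Proposition~\ref{P:1dbesov}, but with the underlying $L^1$-metric replaced by $L^q$ and with the one-dimensional input $W^{2,1}\subset\dc$ replaced by its higher-dimensional radial analogue $W^{2,q}_{\rad}\subset\dc$. This will give the chain
\[
B^{2(1-\alpha)}_1(L^{r,p})_{\rad,\loc} \hookrightarrow (W^{2,q}_{\rad},\; C_{\rad}\cap L^\infty)_{\alpha,p;\loc} \hookrightarrow (\dc,\; C\cap L^\infty)_{\alpha,p;\loc} = \mathscr H_{\alpha,p}.
\]

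\textbf{Step 1 (interpolation without radial restriction).} Use the standard embeddings $B^0_{\infty,1}\subset C\cap L^\infty$ and $B^2_{q,1}\subset W^{2,q}$ to obtain
\[
(B^2_{q,1},\; B^0_{\infty,1})_{\alpha,p} \subset (W^{2,q},\; C\cap L^\infty)_{\alpha,p}.
\]
The method of retracts identifies the left-hand side with $(\ell^{2,1}(L^q),\ell^{0,1}(L^\infty))_{\alpha,p}$, and the same one-sided Cwikel-type embedding invoked in the proof of Proposition~\ref{P:1dbesov} gives
\[
\ell^{2(1-\alpha),1}\!\bigl((L^q,L^\infty)_{\alpha,p}\bigr) \subset (\ell^{2,1}(L^q),\;\ell^{0,1}(L^\infty))_{\alpha,p}.
\]
Combined with the classical identity $(L^q,L^\infty)_{\alpha,p}=L^{q/(1-\alpha),p}=L^{r,p}$, this yields the non-radial embedding $B^{2(1-\alpha)}_1(L^{r,p})\subset (W^{2,q},C\cap L^\infty)_{\alpha,p}$.

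\textbf{Step 2 (radialization).} To promote Step~1 to the radial setting needed for Proposition~\ref{P:radialfdc}, introduce the rotation-averaging projection
\[
(\Pi f)(x):=\int_{SO(d)} f(Rx)\,dR,
\]
with $dR$ the normalized Haar measure. Because the norms on $W^{2,q}$, $C\cap L^\infty$, and $B^{2(1-\alpha)}_1(L^{r,p})$ are all rotationally invariant, $\Pi$ is a norm-one projection onto the respective radial subspace, and it commutes with the $K$-functional defining $(\,\cdot\,,\,\cdot\,)_{\alpha,p}$. Thus, given a radial $f$ (after multiplication by a radial cutoff $\eta\in C^\infty_c$ to reduce to the non-local statement) and a near-optimal decomposition $\eta f=g_0+g_1$ for the $K$-functional in $(W^{2,q},C\cap L^\infty)$, the decomposition $\eta f=\Pi g_0+\Pi g_1$ achieves the same $K$-bound and has $\Pi g_0\in W^{2,q}_{\rad}\subset\dc$ by the radial criterion. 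Passing back from $\eta f$ to $f$ via the locally finite covering used in Definition~\ref{D:dc} then places $f$ in $\mathscr H_{\alpha,p}$.

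\textbf{Main obstacle.} The only nontrivial point is verifying that $\Pi$ genuinely commutes with the $K$-functional, i.e., that averaging an arbitrary decomposition does not worsen the norms in the two endpoint spaces. This follows from Minkowski's integral inequality applied coordinate by coordinate (each endpoint norm is rotationally invariant and convex), but it needs to be stated carefully. Once Step~2 is in place, the proof is a transparent transcription of the argument for Proposition~\ref{P:1dbesov} with the exponent $1$ upgraded to $q=r(1-\alpha)>d$.
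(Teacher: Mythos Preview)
Your proposal is correct and follows essentially the same route as the paper: both set $q=(1-\alpha)r>d$, invoke Proposition~\ref{P:radialfdc} to get $W^{2,q}_{\rad}\subset\dc$, and then transcribe the Besov--Lorentz interpolation argument of Proposition~\ref{P:1dbesov} with $L^1$ replaced by $L^q$. The only cosmetic difference is how radiality is enforced in the retract step: the paper chooses the Littlewood--Paley function $\phi$ in \eqref{pou} to be radial so that the projections $L_kf$ stay radial throughout, whereas you work in the full (non-radial) spaces first and then apply the rotation-averaging projection $\Pi$ at the level of $K$-functional decompositions; both are standard retract devices and yield the same conclusion.
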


\begin{proof}
	We argue similarly as in Proposition \ref{P:1dbesov}. First, since $(1-\alpha) r > d$, Proposition \ref{P:radialfdc} gives
	\[
		(\dc, C)_{\alpha,p} \supset (W^{2,(1-\alpha) r}_{\rad}, C_\rad)_{\alpha,p}.
	\]
	The inclusions $W^{2,(1-\alpha) r}_{\rad} \supset B^2_{(1-\alpha)r, 1; \rad}$ and $C_\rad \supset B^0_{\oo,1;\rad}$ give
	\[
		(\dc, C)_{\alpha,p} \supset \pars{ B^2_{(1-\alpha)r, 1; \rad}, B^0_{\oo,1;\rad}}_{\alpha,p}.
	\]
	Without loss of generality, the partition-of-unity function $\phi$ in \eqref{pou} may be chosen to be radial. As a consequence, the functions $\psi$ and $(\phi_k)_{k \in \NN}$ are radial, and thus so are $f * \psi$ and $(f * \phi_k)_{k \in \NN}$. Therefore, the interpolation space $\pars{ B^2_{(1-\alpha)r, 1; \rad}, B^0_{\oo,1;\rad}}_{\alpha,p}$ can be identified with
	\[
		\pars{ \ell^{2,1}(L^{(1-\alpha)r}_\rad), \ell^{0,1}(L^\oo_\rad) }_{\alpha,p},
	\]
	which, by the same reasoning as in the proof of Proposition \ref{P:1dbesov}, contains $\ell^{2(1-\alpha),1}(L^{r,p}_\rad)$, as desired.
\end{proof}

If $\alpha < 1/2$, or if $\alpha = 1/2$ and $p = 1$, then the Besov regularity specified by Proposition \ref{P:Hapgeneral} is stronger than $C^1$-regularity. On the other hand, $\dc$ contains many non-$C^1$ functions. This indicates that the regularity condition can be relaxed if one interpolates to $\dc$ functions that are weaker that $C^{1,1}$. As an example, we show that the gradients of functions belonging to $\mathscr H_{\alpha,p}$ can be discontinuous across an affine hyperplane.

\begin{proposition}\label{P:Happlane}
	Fix $a \in \RR$ and $v \in S^{d-1}$, and define
	\[
		\Gamma := \left\{ x \in \RR^d : v \cdot x = a\right\}
		\quad \text{and} \quad
		U_\pm := \left\{ x \in \RR^d: \pm (v \cdot x - a) > 0 \right\}.
	\]
	Let $0 < \alpha \le 1/2$ and $1 \le p \le \oo$, and assume that $f \in C^{0,1}(\RR^d)$ is such that there exist $g_+,g_- \in B^{2(1-\alpha)}_{\oo,p}$ such that $f = g_\pm$ on $U_\pm \cup \Gamma$. Then $f \in \mathscr H_{\alpha,p}$.
\end{proposition}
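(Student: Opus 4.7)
The plan is to split $f$ into a Besov-regular piece plus a piece that carries the normal-gradient jump across $\Gamma$, and then combine Proposition~\ref{P:Hapgeneral} with an explicit $\dc$-analysis of the jump piece. After an affine change of coordinates I will assume $v = e_1$, $a = 0$, so that $\Gamma = \{x_1 = 0\}$ and $U_\pm = \{\pm x_1 > 0\}$; the continuity of $f$ together with $f = g_\pm$ on $U_\pm \cup \Gamma$ forces $g_+|_\Gamma = g_-|_\Gamma$, so $g_- - g_+$ vanishes identically on $\Gamma$. Defining
\[
	h(x) := \int_0^1 \partial_1(g_- - g_+)(tx_1, x')\, dt,
\]
the fundamental theorem of calculus gives $(g_- - g_+)(x) = x_1\, h(x)$, from which a case analysis on the sign of $x_1$ yields
\[
	f(x) = \frac{g_+(x) + g_-(x)}{2} - \frac{|x_1|}{2}\, h(x).
\]
Since $(g_+ + g_-)/2 \in B^{2(1-\alpha)}_{\oo,p;\loc}$, the first summand belongs to $\mathscr H_{\alpha,p}$ by Proposition~\ref{P:Hapgeneral}, and it remains to show that $|x_1|\, h \in \mathscr H_{\alpha,p}$, with $h$ inheriting regularity $B^{2(1-\alpha)-1}_{\oo,p;\loc}$ from $g_\pm$ (of non-negative order because $\alpha \le 1/2$).

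The core auxiliary ingredient I would establish is that, for any $\psi \in C^{1,1}(B_R)$, one has
\[
	\||x_1|\,\psi\|_{\dc(B_R)} \le C_R\,\|\psi\|_{C^{1,1}(B_R)}.
\]
Writing $\psi = (\psi + M) - M$ with $M := \|\psi\|_\oo$ so that $\psi + M \ge 0$, the singular part $2(\psi + M)\,e_1\otimes e_1\,\delta_\Gamma$ of the distributional Hessian of $|x_1|(\psi + M)$ is a non-negative matrix-valued measure; the remaining bounded Hessian contributions $2\sgn(x_1)\,e_1\otimes_s D\psi + |x_1|\,D^2\psi$ are absorbed by adding a convex potential, so that $|x_1|\psi$ can be exhibited as a difference of convex functions on $B_R$ with the stated norm bound. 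Applying this lemma to the mollifications $h_\eta := h * \rho_\eta$ and using the elementary estimate $\||x_1|(h - h_\eta)\|_{\oo,B_R} \le R\,\|h - h_\eta\|_\oo$, the decomposition $|x_1|h = |x_1|h_\eta + |x_1|(h - h_\eta)$ yields a bound on $K(t, |x_1|h, \dc, C)$ expressed through $\|h_\eta\|_{C^{1,1}}$ and the second-order Besov modulus of $h$. Optimizing in $\eta$ and integrating against $dt/t$, as in the proof of Proposition~\ref{P:Hapgeneral}, reduces the $(\dc, C)_{\alpha,p;\loc}$-norm of $|x_1|h$ to a multiple of $\|h\|_{B^{2(1-\alpha)-1}_{\oo,p;\loc}}$, which is finite.

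The main obstacle is the final scaling step. A naive application of the auxiliary lemma charges the full $C^{1,1}$-norm of $h_\eta$, which scales like $K'_\eta/\eta^2$, against the $\dc$-norm of $|x_1|h_\eta$, and leads to a $K$-functional bound that is only integrable for $\alpha > 1/2$, losing exactly one derivative's worth of regularity. To cover the full range $\alpha \in (0, 1/2]$ one must exploit the observation that in the Hessian of $|x_1|h_\eta$ the second derivatives of $h_\eta$ enter only multiplied by $|x_1|$, so that the bounded part of the Hessian can be absorbed by an $|x_1|$-weighted convex potential (for instance a cubic potential of the form $\lambda\,|x_1|^3/6$ in the $e_1$-direction, together with a quadratic term for the off-diagonal contributions coming from $D\psi$) rather than a uniform $\lambda|x|^2/2$. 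This refinement supplies the extra factor of $|x_1|$ needed to match the target Besov order, and it is precisely the condition $\alpha \le 1/2$ that makes it available by ensuring $h$ lies in a Besov space of non-negative index.
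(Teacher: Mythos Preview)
Your decomposition $f = \tfrac12(g_+ + g_-) - \tfrac12|x_1|\,h$ is correct, and reducing matters to the jump term $|x_1|\,h$ is a natural idea. However, the argument as sketched has two genuine gaps.

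First, the claim that $h$ inherits $B^{2(1-\alpha)-1}_{\oo,p}$ regularity is not justified. Your integral formula for $h$ presupposes that $\del_1(g_--g_+)$ exists as a bounded function; when $\alpha = 1/2$ and $p > 1$, elements of $B^1_{\oo,p}$ need not be differentiable, and the quotient $(g_--g_+)/x_1$ can fail to lie in $L^\oo$ (consider $x_1 \log|x_1|$, which is Zygmund). The Lipschitz hypothesis on $f$ constrains $g_+$ only on $\overline{U_+}$ and $g_-$ only on $\overline{U_-}$, not $g_- - g_+$ globally, so it does not directly rescue this step.

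Second, your proposed refinement of the auxiliary lemma does not work as described. You need a convex potential whose Hessian dominates $|x_1|\,D^2\psi$; the cubic $\lambda|x_1|^3/6$ has Hessian $\lambda|x_1|\,e_1\otimes e_1$, which is rank one and cannot absorb a full-rank matrix $|x_1|\,D^2\psi$. A full-rank candidate such as $|x_1|\,|x|^2$ is not convex, since its Hessian contains the indefinite cross term $\sgn(x_1)(e_1\otimes x + x\otimes e_1)$. Without such a potential the $K$-functional estimate you obtain closes only for $\alpha > 1/2$, outside the stated range.

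The paper avoids both difficulties by a more direct route. Rather than extract the Hadamard quotient, it first subtracts $g_-$ (already in $\mathscr H_{\alpha,p}$ by Proposition~\ref{P:Hapgeneral}) to reduce to $g_- \equiv 0$, so that $f$ equals $g := g_+$ on $\overline{U_+}$ and vanishes on $\overline{U_-}$. It then constructs an explicit approximant $f_\delta$ that equals the mollification $g*\rho_\delta$ on $\{x_1 \ge 0\}$ and the constant-in-$x_1$ trace $(g*\rho_\delta)(0,x')$ on $\{x_1 < 0\}$. This $f_\delta$ is continuous across $\Gamma$ and $C^{1,1}$ on each half-space, so Proposition~\ref{P:piecewisedc} (piecewise-$\dc$ functions on a convex tessellation) places it in $\dc$ with norm controlled by $\delta^{-2}\sup_{|y|\le\delta}\nor{\Delta^2_y g}{\oo}$ plus a Lipschitz term; evenness of $\rho$ gives $\nor{f - f_\delta}{\oo} \le \tfrac12\sup_{|y|\le\delta}\nor{\Delta^2_y g}{\oo}$. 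Optimizing $\delta = t^{-1/2}$ then yields the $B^{2(1-\alpha)}_{\oo,p}$ bound directly, with no division by $x_1$ and no weighted potentials.
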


\begin{proof}
	We assume, without loss of generality, that $v = (1,0,\ldots,0)$ and $a = 0$, and we introduce the notation $x = (x_1,x') \in \RR^d$, where $x_1 \in \RR$ and $x' = (x_2,x_3,\ldots, x_d) \in \RR^{d-1}$. Then
	\[
		\Gamma := \left\{ x \in \RR^d : x_1 = 0 \right\}
		\quad \text{and} \quad
		U_\pm := \left\{ x \in \RR^d: \pm x_1 > 0 \right\}.
	\]
	By Proposition \ref{P:Hapgeneral}, $g_- \subset \mathscr H_{\alpha,p}$. Therefore, by considering the function $f - g_-$, we may also assume that $g_- = 0$. This implies that $f(0,x') = 0$ for all $x' \in \RR^{d-1}$. For ease of notation below, we set $g := g_+$.
	
	For $\rho$ as in \eqref{mollifier} and for $\delta > 0$, set $\rho_\delta(x) := \frac{1}{\delta^d} \rho\pars{ \frac{x}{\delta}}$ and
	\[
		f_\delta(x) :=
		\begin{dcases}
			(g * \rho_\delta)(x) & \text{if } x_1 \ge 0,\\
			(g * \rho_\delta)(0,x') & \text{if } x_1 < 0.
		\end{dcases}
	\]
	For $x \in \RR^d$ with $x_1 \ge 0$, the evenness of $\rho$ gives
	\[
		|f_\delta(x) - f(x)| = |(g*\rho_\delta)(x) - g(x)| = \frac{1}{2} \abs{ \int_{B_\delta} \Delta^2 g_y(x) \rho_\delta(y)dy} \le \frac{1}{2} \sup_{|y| \le \delta} \nor{\Delta^2 g_y}{\oo}.
	\]
	Similarly, if $x_1 < 0$,
	\[
		|f_\delta(x) - f(x)| = |(g*\rho_\delta)(0,x')| = |(g*\rho_\delta)(0,x') - g(0,x')| \le \frac{1}{2} \sup_{|y| \le \delta} \nor{\Delta^2 g_y}{\oo},
	\]
	which yields
	\[
		\nor{f_\delta - f}{\oo} \le \frac{1}{2} \sup_{|y| \le \delta} \nor{\Delta^2 g_y}{\oo}.
	\]
	For $x \in \RR^d$, set
	\[
		f^1_\delta(x) := (g*\rho_\delta)(x) \quad \text{and} \quad f^2_\delta(x) := (g*\rho_\delta)(0,x');
	\]
	that is, $f_\delta = f^1_\delta$ in $U_+ \cup \Gamma$ and $f_\delta = f^2_\delta$ in $U_- \cup \Gamma$.
	
	Computations similar to those in the proof of Lemma \ref{L:KC11C} give, for $j = 1,2$ and some constant $C > 0$ independent of $\delta$,
	\[
		\nor{Df^j_\delta}{\oo} \le \nor{Dg}{\oo} \quad \text{and} \quad \nor{D^2 f^j_\delta}{\oo} \le \frac{C}{\delta^2} \sup_{|y| \le \delta} \nor{\Delta^2_y g}{\oo}.
	\]
	Therefore, by Proposition \ref{P:piecewisedc},
	\[
		\nor{f_\delta}{\dc} \le C\pars{ \frac{\nor{\Delta^2_y g}{\oo}}{\delta^2} + \nor{Dg}{\oo}}.
	\]
	Now, for $t \in [1,\oo)$, set $\delta := t^{-1/2}$. Then
	\[
		K(t) := K(t,f,\dc, C) \le \nor{f_\delta}{\dc} + t \nor{f - f_\delta}{\oo} \le C\pars{ t \sup_{|y| \le t^{-1/2}} \nor{\Delta^2_y g}{\oo} +  \nor{Dg}{\oo}},
	\]
	and therefore, for a constant $C = C_{p,\alpha} > 0$ that changes from line to line,
	\begin{align*}
		\int_1^\oo \pars{ \frac{K(t)}{t^\alpha}}^p \frac{dt}{t}
		&\le C \nor{Dg}{\oo} + C\int_1^\oo \pars{ \frac{t \sup_{|y| \le t^{-1/2}} \nor{\Delta^2_y g}{\oo}}{t^\alpha}}^p \frac{dt}{t}\\
		&= C \nor{Dg}{\oo} + \frac{C}{2} \int_0^1 \pars{ \frac{ \sup_{|y| \le s} \nor{\Delta^2_y g}{\oo}}{s^{2(1-\alpha)}}} \frac{ds}{s} \\
		&\le C \pars{ \nor{Dg}{\oo} + [g]_{B^{2(1-\alpha)}_{\oo,p} } }.
	\end{align*}
	The result follows from Lemma \ref{L:interp}\eqref{L:interpsum}.
\end{proof}

We close this section by considering the function given, for $a: S^{d-1} \to \RR$ and $x \in \RR^d$, by
\begin{equation}\label{front}
	f(x) = a\pars{ \frac{x}{|x|}}|x|.
\end{equation}
If $\alpha < 1/2$, or $\alpha = 1/2$ and $p = 1$, then, in general, $f$ fails to have the regularity specified by Proposition \ref{P:Hapgeneral}, even if $a$ is smooth. 

\begin{proposition}\label{P:Hapfront}
	Let $a: S^{d-1} \to \RR$, $0 < \alpha < 1$, and $1 \le p \le \oo$, and set
	\[
		g(x) := a \pars{ \frac{x}{|x|}}.
	\]
	Assume that, for some $\delta_0 \in (0,1/2)$,
	\[
		g \in B^{2(1-\alpha)}_{\oo,p} (\{x : 1 - \delta_0 < |x| < 1 + \delta_0 \}).
	\]
	Then the function $f$ defined in \eqref{front} belongs to $\mathscr H_{\alpha,p}$.
\end{proposition}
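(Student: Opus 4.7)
The plan is to approximate $f$ by positively $1$-homogeneous functions with smooth angular profiles, and then conclude with the $K$-functional template used in Propositions \ref{P:Hapgeneral} and \ref{P:Happlane}. With $\rho$ a mollifier as in \eqref{mollifier} and $\delta \in (0, \delta_0/2]$, set $g^\delta := g * \rho_\delta$ on the shell $\{1 - \delta_0/2 < |x| < 1 + \delta_0/2\}$, define $a_\delta := g^\delta|_{S^{d-1}}$, and let $f_\delta(x) := a_\delta(x/|x|)|x|$ with $f_\delta(0) := 0$. Because $\rho$ is even, the computation from the proof of Lemma \ref{L:KC11C} yields
\[
	\nor{f - f_\delta}{\oo, B_L} \le L \nor{g - g^\delta}{\oo, S^{d-1}} \le \tfrac{L}{2} \sup_{|y| \le \delta} \nor{\Delta^2_y g}{\oo},
\]
and the usual mollification bounds give $\nor{a_\delta}{C^2(S^{d-1})} \le C\pars{\nor{g}{\oo} + \delta^{-2} \sup_{|y|\le\delta} \nor{\Delta^2_y g}{\oo}}$.

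\textbf{A quantitative $\dc$ decomposition of $f_\delta$.} The heart of the argument is to show that, for some dimensional constant $C_0$, $h_\delta := f_\delta + M_\delta |x|$ is convex on all of $\RR^d$ whenever $M_\delta \ge C_0 \nor{a_\delta}{C^2(S^{d-1})}$. Off the origin, $f_\delta$ is $C^\oo$ and positively $1$-homogeneous, so Euler's identity gives $D^2 f_\delta(x) \cdot x = 0$; hence $D^2 h_\delta(x)$ annihilates the radial direction and nonnegativity reduces to the tangential estimate $v^\top D^2 f_\delta(x) v \ge -C \nor{a_\delta}{C^2(S^{d-1})}/|x|$, which is dominated by $v^\top D^2|x| v = |v|^2/|x|$ once $M_\delta$ is large. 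To cross the origin I check convexity along every line through $0$: along $\{tv : t \in \RR\}$ the right and left derivatives of $f_\delta$ at $0$ are $|v|a_\delta(v/|v|)$ and $-|v|a_\delta(-v/|v|)$, so the slope mismatch is $-|v|(a_\delta(v/|v|) + a_\delta(-v/|v|)) \le 2|v|\nor{a_\delta}{\oo}$, which is absorbed by the $2M_\delta|v|$ slope jump of $M_\delta|x|$. Since $h_\delta$ is continuous, convex on every line not through $0$ (which lies in $\RR^d \setminus \{0\}$ where $D^2 h_\delta \ge 0$), and convex on every line through $0$, it is convex on $\RR^d$. Therefore
\[
	\nor{f_\delta}{\dc(B_L)} \le \nor{f_\delta}{\oo,B_L} + 2M_\delta L \le CL \pars{ 1 + \delta^{-2} \sup_{|y|\le\delta} \nor{\Delta^2_y g}{\oo} }.
\]

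\textbf{Conclusion and main obstacle.} Setting $\delta = t^{-1/2}$ for $t \ge 1$ gives
\[
	K(t, f, \dc(B_L), C(B_L)) \le \nor{f_\delta}{\dc(B_L)} + t\nor{f - f_\delta}{\oo,B_L} \le CL\pars{ 1 + t \sup_{|y|\le t^{-1/2}} \nor{\Delta^2_y g}{\oo} },
\]
and the change of variables $s = t^{-1/2}$ used at the end of the proof of Proposition \ref{P:Hapgeneral} converts $\int_1^\oo (K(t)/t^\alpha)^p \, dt/t$ into the $B^{2(1-\alpha)}_{\oo,p}$-seminorm of $g$ on the shell, whence Lemma \ref{L:interp}\eqref{L:interpsum} finishes the proof. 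The main obstacle is the convexity of $h_\delta$ at the origin: the Hessian criterion only yields convexity on $\RR^d \setminus \{0\}$, and the positive $1$-homogeneity of $f_\delta$, not just its regularity, must be exploited to control the slope mismatch at $0$ in terms of $\nor{a_\delta}{\oo}$. This is precisely why a $C^2$ bound on the angular profile $a_\delta$ is sufficient, whereas an ambient $C^{1,1}$ bound on $f_\delta$ near $0$ cannot be expected.
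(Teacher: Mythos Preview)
Your proposal is correct and follows essentially the same approach as the paper: mollify the angular profile, bound $\nor{f-f_\delta}{\oo}$ and $\nor{f_\delta}{\dc}$ in terms of $\sup_{|y|\le\delta}\nor{\Delta^2_y g}{\oo}$, and finish with the $K$-functional computation from Proposition \ref{P:Hapgeneral}. The only difference is that the paper obtains the $\dc$ bound $\nor{f_\delta}{\dc(B_R)}\le CR\nor{a_\delta}{C^{1,1}}$ by simply invoking Proposition \ref{P:dclevelset}, whereas you reprove that proposition inline (with the extra care at the origin); this is fine, but you could have cited it directly and shortened the argument considerably.
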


\begin{proof}
	For $\rho$ as in \eqref{mollifier} and $\delta > 0$, set $\rho_\delta(x) := \frac{1}{\delta^d} \rho\pars{ \frac{x}{\delta}}$, and, for $x \in S^{d-1}$, define
	\[
		a_\delta(x) := \int_{B_\delta(x)} a\pars{ \frac{y}{|y|}}\rho_\delta(x-y)\;dy
	\quad \text{and} \quad
		f_\delta(x) := a_\delta\pars{ \frac{x}{|x|}}|x|.
	\]
	Set also
	\[
		U := \left\{ x \in \RR^d: 1-\delta_0 < |x| < 1 + \delta_0 \right\}.
	\]
	Then, for all $R > 0$ and $\delta \in (0,\delta_0)$,
	\[
		\nor{f_\delta - f}{\oo,B_R} \le R \nor{a_\delta - a}{S^{d-1}} \le \frac{R}{2} \sup_{|y| \le \delta} \nor{\Delta^2_y f}{\oo,U}
	\]
	and, by Proposition \ref{P:dclevelset}, for some constant $C > 0$ independent of $R$, $a$, and $\delta$,
	\[
		\nor{H_\delta}{\dc,B_R} \le CR \nor{a_\delta}{C^{1,1}} \le \frac{CR}{\delta^2} \sup_{|y| \le \delta} \nor{\Delta^2_y g}{\oo,U}.
	\]
	The result then follows along similar lines as in Lemma \ref{L:KC11C}.
\end{proof}

\section{The space $\mathscr P_{\alpha,p}$}\label{S:P}

We next study, for $\alpha \in (0,1)$ and $p \in [1,\oo]$, the interpolation space
\[
	\mathscr P_{\alpha,p} = (C_0([0,T]), W^{1,1}_0([0,T])_{\alpha,p} \quad \text{for } \alpha \in (0,1), \; p \in [1,\oo].
\]
As in Section \ref{S:H}, we do not know of a simple characterization of this space, and the focus is on finding examples of embeddings. We also identify the parameters for which Brownian paths belong to 
$\mathscr P_{\alpha,p}$ with probability one. 

The hypotheses in Theorem \ref{T:mainweak} deal with sequences of paths converging uniformly while being bounded in $\mathscr P_{\alpha,\oo}$, and therefore, we place special emphasis in this section on identifying such paths. 

\subsection{H\"older and variation spaces}

We first prove a general criterion, using a particular method for measuring the variation of a continuous path. Given $W: [0,T] \to \RR^m$ and a partition
\[
	\mcl P := \left\{ 0 = t_0 < t_1 < \cdots < t_N = T \right\}
\]
of $[0,T]$, define
\[
	\osc(W,\mcl P) := \max_{i=1,2,\ldots,N} \osc(W, [t_i,t_{i+1}]), \quad \# P := N,
\]
and, for $\delta > 0$,
\begin{equation}\label{varcounter}
	N(\delta,W) := \inf\left\{ N \in \NN : \text{there exists $\mcl P$ such that } \osc(W,\mcl P) \le \delta \text{ and } \# P = N \right\}.
\end{equation}
The variation of $W$ can then be quantified in terms of the rate at which $N(\delta,W) \to \oo$ as $\delta \to 0$.

\begin{lemma}\label{L:variation}
	Let $W \in C_0([0,T])$, $\alpha \in (0,1)$, and $p \in [1,\oo]$. Assume that, for some sequence $(\delta_n)_{n=0}^\oo$ satisfying
	\[
		S := \pars{ \sum_{n=0}^\oo 2^{n\alpha p} \delta_n^p }^{1/p} < \oo,
	\]
	there exists $C > 0$ such that, for all $n \in \NN$,
	\[
		N(\delta_n,W) \le C 2^{n}.
	\]
	\begin{enumerate}[(a)]
	\item\label{L:variationcrit} Then $W \in \mathscr P_{\alpha,p}$ and $\nor{W}{\mathscr P_{\alpha,p}} \le (C+1)S$.
	
	\item\label{L:variationapprox} If $p = \oo$ and $\delta_n = 2^{-n\alpha}$ for all $n \in \NN$, then there exists a sequence of piecewise-linear paths $(W_n)_{n=1}^\oo: [0,T] \to \RR^m$ such that
	\[
		\lim_{n\to \oo} \nor{W_n - W}{\oo,[0,T]} = 0 \quad \text{and} \quad \sup_{n \in \NN} \nor{W_n}{\mathscr P_{\alpha,\oo}} < \oo.
	\]
	\end{enumerate}
\end{lemma}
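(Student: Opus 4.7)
The plan is to use piecewise-linear interpolants of $W$ at the partitions $\mcl P_n$ as near-optimal elements in the $K$-functional $K(\cdot, W, C_0, W^{1,1}_0)$. For part (a), let $W_n \in W^{1,1}_0$ be the piecewise-linear interpolant of $W$ at a partition $\mcl P_n$ achieving the hypothesis. On each sub-interval $[t_i, t_{i+1}]$ of $\mcl P_n$, both $W$ and $W_n$ take values in $[\min_{[t_i,t_{i+1}]} W, \max_{[t_i,t_{i+1}]} W]$, since $W_n$ is the linear interpolation between $W(t_i)$ and $W(t_{i+1})$; hence $\nor{W - W_n}{\oo,[0,T]} \le \delta_n$. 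Moreover, $\nor{\dot W_n}{L^1([0,T])} = \sum_i |W(t_{i+1}) - W(t_i)| \le C\cdot 2^n \delta_n$. Using $W = (W - W_n) + W_n$ gives
\[
	K(2^{-n}, W, C_0, W^{1,1}_0) \le \nor{W - W_n}{\oo,[0,T]} + 2^{-n} \nor{\dot W_n}{L^1} \le (C+1)\delta_n.
\]
Raising to the $p$-th power, multiplying by $2^{np\alpha}$, summing, and invoking Lemma~\ref{L:interp}\eqref{L:interpsum} (applicable since $W^{1,1}_0$ embeds continuously into $C_0$) gives $\nor{W}{\mathscr P_{\alpha,p}} \le (C+1) S$ up to the implicit equivalence of norms.

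For part (b), I first reduce to nested partitions: replacing $\mcl P_n$ by $\mcl Q_n := \bigcup_{k\le n} \mcl P_k$ preserves the oscillation estimate $\osc(W, \mcl Q_n) \le \delta_n$ (refining only decreases oscillation) and inflates the partition size by at most a factor of $2$. Set $W_n \in W^{1,1}_0$ to be the piecewise-linear interpolant of $W$ at $\mcl Q_n$; the uniform convergence $\nor{W_n - W}{\oo,[0,T]} \le \delta_n = 2^{-n\alpha}\to 0$ follows from the part (a) estimate. To bound $\sup_n \nor{W_n}{\mathscr P_{\alpha,\oo}}$, I estimate $2^{k\alpha} K(2^{-k}, W_n, C_0, W^{1,1}_0)$ uniformly in $k$ by splitting into two cases. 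For $k \ge n$, the trivial decomposition $W_n = 0 + W_n$ yields
\[
	2^{k\alpha} K(2^{-k}, W_n) \le 2^{k\alpha} \cdot 2^{-k} \nor{\dot W_n}{L^1} \le C \cdot 2^{(n-k)(1-\alpha)} \le C.
\]
For $k < n$, I use the coarser interpolant $W_k$ of $W$ at $\mcl Q_k$ as the $W^{1,1}_0$-component: nestedness ensures $W_n$ agrees with $W$ at every point of $\mcl Q_k$, so on each sub-interval of $\mcl Q_k$ both $W_n$ and $W_k$ take values in $[\min W, \max W]$ over that sub-interval, giving $\nor{W_n - W_k}{\oo,[0,T]} \le \delta_k$. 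The decomposition $W_n = (W_n - W_k) + W_k$ then gives $2^{k\alpha} K(2^{-k}, W_n) \le 2^{k\alpha}(C+1)\delta_k = (C+1)$, and taking the supremum over $k$ completes the argument.

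The main technical subtlety is the case $k < n$ in part (b): a naive application of part (a) to $W_n$ would yield bounds involving the total variation of $W_n$, which grows with $n$ and does not give a uniform estimate. The nestedness of the partitions is the essential device that lets the coarser interpolant $W_k$ serve as a near-optimal $W^{1,1}_0$-approximant of $W_n$ at scale $2^{-k}$, recovering for $W_n - W_k$ the very same oscillation bound $\delta_k$ that controlled $W - W_k$. With this in hand, the two regimes $k < n$ and $k \ge n$ interpolate cleanly, and both parts of the lemma follow.
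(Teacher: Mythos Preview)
Your proof of part~(a) is correct and essentially identical to the paper's.

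For part~(b), your argument is correct but takes a genuinely different route. The paper does \emph{not} pass to nested partitions; instead, for each fixed $n$ it verifies the hypothesis of part~(a) for the interpolant $W_n$ by constructing, for every scale $m>n$, a uniform refinement of $\mcl P_n$ (subdividing each subinterval into roughly $2^{\alpha(m-n)}$ equal pieces) and checking that $N(2^{-\alpha m},W_n)\le C2^m$. The paper then invokes part~(a). The regime $m\le n$ is not addressed explicitly there; it is easily handled (e.g.\ via $K(2^{-m},W_n)\le K(2^{-m},W)+\nor{W_n-W}{\oo}$), but your treatment of this case is more transparent. Your device of nesting the partitions so that $\mcl Q_k\subset\mcl Q_n$ for $k<n$ is what makes the coarser interpolant $W_k$ a legitimate $W^{1,1}_0$-approximant of $W_n$ at scale $2^{-k}$; without nestedness the bound $\nor{W_n-W_k}{\oo}\le\delta_k$ would fail in general. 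Thus your approach directly exhibits near-optimal $K$-functional decompositions at every scale, while the paper's approach reduces to re-verifying the counting hypothesis of part~(a). Both work; yours is arguably cleaner for the low-frequency scales, while the paper's refinement idea is perhaps more natural for the high-frequency ones. A minor quibble: replacing $\mcl P_n$ by $\mcl Q_n=\bigcup_{k\le n}\mcl P_k$ gives $\#\mcl Q_n\le 2C\cdot 2^n$ rather than ``at most a factor of $2$'' over $\#\mcl P_n$, so your constants should read $2C$ and $2C+1$ rather than $C$ and $C+1$, but this does not affect the argument.
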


\begin{proof}
	\eqref{L:variationcrit} Fix $n = 0,1,2,\ldots$, let
	\[
		\mcl P_n = \left\{ 0 = t_0^n < t_1^n < \cdots < t_{N_n}^n = T \right\}
	\]
	be a partition of $[0,T]$ such that $\osc(W,\mcl P_n) \le \delta_n$, and let $W_n$ be the piecewise linear interpolation of $W$ over $\mcl P_n$. Then
	\[
		\nor{W - W_n}{\oo,[0,T]} \le \delta_n
	\]
	and
	\[
		\nor{\dot{W_n}}{1,[0,T]} = \sum_{i=1}^{N_n} \abs{ W_n(t_i) - W_n(t_{i-1})} \le N_n \delta_n =  (\# \mcl P_n)\delta_n.
	\]
	Taking the infimum over all such partitions yields 
	\[
		K(2^{-n}, W, C_0, W^{1,1}_0) \le \delta_n + 2^{-n} N(\delta_n,W)\delta_n  \le (C+1) \delta_n,
	\]
	and the result follows from Lemma \ref{L:interp}\eqref{L:interpsum}.
	
	\eqref{L:variationapprox} Let $\mcl P_n$ and $(W_n)_{n=1}^\oo$ be as in part \eqref{L:variationcrit}, so that, in particular, $\# \mcl P_n \le C2^n$. For $k = 1,2,\ldots, N_n$, set $\Delta_k^n := t_k^n - t_{k-1}^n$.
	
	Fix $m > n$, and let $K_n^m \in \NN$ satisfy
	\[
		2^{\alpha(m-n)}  < K_n^m \le 2^{\alpha(m-n)} + 1.
	\]
	We define a refinement of $\mcl P_n$ by setting
	\[
		\mcl P_n^m := \mcl P_n \cup \bigcup_{k=1}^{N_n}\pars{ t_k^n + \frac{j}{K_n^m}}_{j=0}^{K_n^m}.
	\]
	Then
	\[
		\osc(W_n, \mcl P_n^m) = \frac{1}{K_n^m} \osc(W_n, \mcl P_n) \le \frac{2^{-n\alpha}}{2^{\alpha(m-n)}} = 2^{-\alpha m} \quad \text{and} \quad 
		\# \mcl P_n^m = N_n(K_n^m -1) \le 2^{\alpha(m-n)} \# \mcl P_n,
	\]
	and so
	\[
		N(2^{-\alpha m}, W_n) \le 2^{\alpha(m-n)} \# \mcl P_n \le C 2^{\alpha m + (1-\alpha)n} \le C 2^m.
	\]
	The conclusion follows from part \eqref{L:variationcrit}.
\end{proof}

As a corollary of Lemma \ref{L:variation}, we have the following.

\begin{proposition}\label{P:variationandHolder}
For any $p \in [1,\oo]$, the following inclusions are continuous:
\[
	C^{0,1/p}_0([0,T],\RR^m) \subset V_{p,0}([0,T],\RR^m) \subset \mathscr P_{1/p,\oo}.
\]
\end{proposition}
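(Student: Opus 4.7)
The first inclusion $C^{0,1/p}_0 \subset V_{p,0}$ is a direct telescoping estimate. For any partition $\{t_i\}_{i=0}^N$ of $[0,T]$, one has
\[
\sum_{i=1}^N |W(t_i) - W(t_{i-1})|^p \le [W]_{C^{0,1/p}}^p \sum_{i=1}^N (t_i - t_{i-1}) = T\,[W]_{C^{0,1/p}}^p,
\]
giving $\nor{W}{V_p} \le T^{1/p}\,[W]_{C^{0,1/p}}$ and thus a continuous embedding.

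For the second inclusion $V_{p,0} \subset \mathscr P_{1/p,\oo}$, the plan is to invoke Lemma~\ref{L:variation}\eqref{L:variationcrit} with $\alpha = 1/p$ and the lemma's exponent taken to be $\oo$. The heart of the argument is to estimate the counting quantity $N(\delta,W)$ from \eqref{varcounter} in terms of $M := \nor{W}{V_p}$. I plan to do this via a greedy exhaustion: starting from $t_0 = 0$, define recursively $t_{k+1} := \inf\{t \in (t_k, T] : |W(t) - W(t_k)| \ge \delta/2\}$, terminating with $t_{k+1} := T$ when this set is empty. Continuity of $W$ guarantees $|W(t_{k+1}) - W(t_k)| = \delta/2$ whenever $t_{k+1} < T$, and also $\osc(W, [t_k, t_{k+1}]) \le \delta$ on every subinterval. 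Summing the $p$-th powers of the non-terminal increments and bounding by $\nor{W}{V_p}^p$ yields
\[
N(\delta, W) \;\le\; 2^p M^p \delta^{-p} + 1.
\]

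To conclude, I will apply the counting bound to the test sequence $\delta_n := M \cdot 2^{-n/p}$ (the case $M = 0$ being trivial). With this choice, $2^{n/p}\delta_n \equiv M$, so the quantity $S$ in the lemma equals $M$, while the previous display gives $N(\delta_n, W) \le (2^p + 1) 2^n$; that is, the constant $C$ in Lemma~\ref{L:variation}\eqref{L:variationcrit} may be taken as $C_0 := 2^p + 1$, crucially independent of $W$. The lemma then produces
\[
\nor{W}{\mathscr P_{1/p,\oo}} \;\le\; (C_0 + 1)\,S \;=\; (2^p + 2)\,\nor{W}{V_p},
\]
which is the desired continuous embedding. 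The only nonroutine ingredient is the greedy exhaustion, and no obstacle is expected there beyond careful use of continuity of $W$ at the stopping times. The endpoint case $p = \oo$ (so that $1/p = 0$) is degenerate: the three spaces all reduce to spaces of continuous paths on $[0,T]$, and the inclusions are trivial identifications.
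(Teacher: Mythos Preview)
Your argument is correct and follows essentially the same route as the paper: a greedy stopping-time construction to bound $N(\delta,W)$, followed by an appeal to Lemma~\ref{L:variation}\eqref{L:variationcrit}. Your choice of threshold $\delta/2$ (rather than $\delta$) is in fact slightly more careful than the paper's version, since it cleanly guarantees $\osc(W,[t_k,t_{k+1}])\le\delta$ via the triangle inequality; the paper's stopping rule at level $2^{-n/p}$ only gives oscillation $\le 2\cdot 2^{-n/p}$ on each subinterval, a harmless factor that is glossed over there.
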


\begin{proof}
	The proof of the first inclusion is standard. For the second, let $W \in V_{p,0}$ and assume that
	\[
		\nor{W}{V_p} = 1.
	\]
	Fix $n \in \NN$ and define a partition $\mcl P := \{0 = t_0 < t_1 < \cdots < t_N := T \}$ by $t_0 = 0$ and
	\[
		t_k := \inf \{ t > t_{k-1} : |W_{t_{k-1}} - W_{t}| = 2^{-n/p} \} \wedge T.
	\]
	Then
	\[
		1 = \nor{W}{V_p}^p \ge \sum_{k=1}^N |W_{t_{k-1}} - W_{t_k}|^p = N2^{-n},
	\]
	and $\osc(W, \mcl P) \le 2^{-n/p}$, whence $N(2^{-n/p}, W) \le 2^n$. The result now follows from Lemma \ref{L:variation}\eqref{L:variationcrit}.
\end{proof}

Proposition \ref{P:variationandHolder} implies that, if $(W,W_n)_{n=1}^\oo$ is bounded in either $C^{0,\alpha}$ or $V_{1/\alpha}$, and, as $n \to \oo$, $W_n$ converges uniformly to $W$, then the hypotheses of Theorem \ref{T:mainweak} are satisfied for $\mathscr P_{\alpha,\oo}$. In either case, $W_n$ can be defined, for example, as the convolution of $W$ with a standard mollifier, or as a piecewise linear interpolation along a partition with vanishing mesh size as $n \to \oo$.

\subsection{Besov regularity}

As in the case of $\mathscr H_{\alpha,p}$, we can use Besov, or Besov-Lorentz, criteria for belonging to $\mathscr P_{\alpha,p}$. We shall also provide a necessary condition, to be used in the next sub-section.

\begin{proposition}\label{P:PapBesov}
	Let $\alpha \in (0,1)$ and $p \in [1,\oo]$. Then
	\[
		B^{\alpha}_1(L^{1/\alpha, p})_0([0,T],\RR)^m \subset \mathscr P_{\alpha,p} \subset B^\alpha_p(L^1)_0([0,T],\RR)^m.
	\]
\end{proposition}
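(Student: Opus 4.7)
The argument closely parallels the proof of Proposition \ref{P:1dbesov}, using real interpolation of standard Besov--Sobolev embeddings combined with the method of retracts. I split the proof into the two embeddings.

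\emph{First inclusion: $B^\alpha_1(L^{1/\alpha,p})_0 \subset \mathscr P_{\alpha,p}$.} The starting point is the pair of continuous embeddings $B^0_{\oo,1} \subset C$ and $B^1_{1,1} \subset W^{1,1}$, both classical (see \cite{T}). Restricting to the subspaces of paths satisfying $W(0) = 0$ and applying real interpolation yields
\[
    \pars{ (B^0_{\oo,1})_0, (B^1_{1,1})_0}_{\alpha,p} \subset (C_0, W^{1,1}_0)_{\alpha,p} = \mathscr P_{\alpha,p}.
\]
By the method of retracts applied to the Littlewood--Paley projections from \eqref{LP} (see \cite{BL}), the left-hand side is isomorphic to a retract of
\[
    \pars{ \ell^{0,1}(L^\oo), \ell^{1,1}(L^1)}_{\alpha,p}.
\]
The same weighted-$\ell$ inclusion used in the proof of Proposition \ref{P:1dbesov} then gives
\[
    \ell^{\alpha,1}\pars{(L^\oo, L^1)_{\alpha,p}} \subset \pars{ \ell^{0,1}(L^\oo), \ell^{1,1}(L^1)}_{\alpha,p},
\]
and combining with the standard identity $(L^\oo, L^1)_{\alpha,p} = (L^1, L^\oo)_{1-\alpha,p} = L^{1/\alpha,p}$ identifies the preimage of the left-hand side, via the Littlewood--Paley retract, with $B^\alpha_1(L^{1/\alpha,p})_0$, completing this embedding.

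\emph{Second inclusion: $\mathscr P_{\alpha,p} \subset B^\alpha_p(L^1)_0$.} On the bounded interval $[0,T]$, the continuous embeddings $C_0 \subset L^\oo \subset L^1$ and $W^{1,1}_0 \subset W^{1,1}$ hold, so interpolation gives
\[
    \mathscr P_{\alpha,p} = (C_0, W^{1,1}_0)_{\alpha,p} \subset (L^1, W^{1,1})_{\alpha,p}.
\]
The classical real-interpolation identity for Sobolev spaces (\cite{BL,T}) yields $(L^1, W^{1,1})_{\alpha,p} = B^\alpha_{1,p} = B^\alpha_p(L^1)$. Finally, because $\mathscr P_{\alpha,p} \subset C_0 + W^{1,1}_0 = C_0$, every element of the left-hand side is continuous with value $0$ at the origin, so the embedding lands in $B^\alpha_p(L^1)_0$.

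\emph{Main obstacle.} The key technical step is the weighted vector-valued inclusion $\ell^{\alpha,1}((L^\oo, L^1)_{\alpha,p}) \subset (\ell^{0,1}(L^\oo), \ell^{1,1}(L^1))_{\alpha,p}$ used in the first part. As noted in the discussion following Proposition \ref{P:1dbesov}, the corresponding equality fails in general for varying inner Lebesgue exponents by Cwikel's theorem \cite{C}; however, the direction we require follows from the same Minkowski-style argument applied to the weighted sequence spaces as in that proposition. The bookkeeping around the $(\cdot)_0$ boundary condition is secondary, since in every pair of spaces considered the smoother component embeds into the rougher one and the rougher one lies in $C_0$, so the interpolation space automatically consists of continuous paths vanishing at $t=0$.
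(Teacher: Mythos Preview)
Your overall strategy matches the paper's, but the first inclusion has a real technical gap. The Littlewood--Paley projections \eqref{LP} are defined via the Fourier transform on $\RR$, not on $[0,T]$, so you cannot invoke the retract directly on $((B^0_{\oo,1})_0, (B^1_{1,1})_0)_{\alpha,p}$ as spaces of functions on the interval. The paper handles this by first constructing a bounded extension operator $\mcl E: C_0([0,T]) \to C_\odd(\RR)$ (and $W^{1,1}_0 \to W^{1,1}_\odd$) via odd reflection across $0$, $T$, $-T$ followed by multiplication by an even smooth cutoff; the restriction $\mcl R$ back to $[0,T]$ satisfies $\mcl R\circ\mcl E = \Id$, so by retracts one reduces to interpolating $(C_\odd(\RR), W^{1,1}_\odd(\RR))$ on the full line, where the Besov embeddings and the Littlewood--Paley machinery are available. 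The oddness is not decorative: it encodes the condition $W(0)=0$, and since the partition-of-unity function $\phi$ in \eqref{pou} can be chosen even, the convolutions $f\mapsto \psi*f$, $f\mapsto \phi_k*f$ preserve oddness, so the Littlewood--Paley retract respects the constraint all the way down to $\ell^{\alpha,1}(L^{1/\alpha,p}_\odd)$. Your remark that ``bookkeeping around the $(\cdot)_0$ condition is secondary'' dismisses exactly the step that requires work.

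For the second inclusion you cite the identity $(L^1,W^{1,1})_{\alpha,p} = B^\alpha_{1,p}$ as classical. The paper instead computes the $K$-functional for $(L^1,W^{1,1}_0)$ directly in terms of the first-order $L^1$ modulus of continuity, by an argument parallel to Lemma~\ref{L:KC11C}, and then reads off the Besov norm via \eqref{equivalentbesov2}. Both routes are valid, so there is no issue on that side.
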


\begin{proof}
	For ease of notation, we consider the case $m = 1$. Let $\mcl E$ be a continuous operator on the spaces
	\[
		\mcl E :C_0([0,T]) \to  C_\odd((-\oo,\oo)), \quad \mcl E: W^{1,1}_0([0,T]) \to W^{1,1}_\odd((-\oo,\oo)),
	\]
	such that, for any $W \in C_0([0,T])$, $\mcl E W = W$ on $[0,T]$ and $\mcl E$ has compact support. The operator $\mcl E$ can be constructed by first extending to $[-2T,2T]$ through odd reflections across the points $0$, $T$, and $-T$, and then multiplying with a smooth, even, nonnegative cutoff function $\eta$ with support in $[-2T,2T]$ such that $\eta \equiv 1$ on $[-T,T]$. If $\mcl R$ is the restriction map
	\[
		\mcl R : C_\odd((-\oo,\oo)) \to C_0([0,T]),
	\]
	then it is clear that $\mcl R \circ \mcl E$ is the identity operator on $C_0([0,T])$. Therefore, through the method of retracts (see \cite{BL}), it suffices to consider the interpolation space $(C_\odd(\RR), W^{1,1}_\odd(\RR))_{\alpha,p}$, which contains (see \cite{T})
	\begin{equation}\label{oddBesovs}
		(B^0_{\oo,1;\odd}(\RR), B^1_{1,1; \odd}(\RR))_{\alpha,p}.
	\end{equation}
	As was argued in Proposition \ref{P:1dbesov}, we may assume without loss of generality that the function $\phi$ defined in \eqref{pou} is even. This implies that, if $f:\RR \to \RR$ is odd, then $f * \psi$ and $f* \phi_k$ are odd for all $k \in \NN$, where $\psi$ and $\phi_k$ are the Schwartz functions defined in subsection \ref{SS:besov}. As a consequence, the method of retracts allows us to relate \eqref{oddBesovs} to the space
	\[
		(\ell^{0,1}(L^\oo_\odd), \ell^{1,1}(L^1_\odd) )_{\alpha,p}.
	\]
	Arguing again as in the proof of Proposition \ref{P:1dbesov}, this contains $\ell^{\alpha,1}(L^{1/\alpha,p}_\odd)$, and we conclude.
	
	For the second inclusion, we note that $\mathscr P_{\alpha,p} \subset (L^1, W^{1,1}_0)_{\alpha,p}$. A similar argument as in Lemma \ref{L:KC11C} gives universal constants $0 < c < C$ that are independent of $t > 0$ and $W \in C_0([0,T])$ such that
	\[
		c\sup_{|h| \le t} \nor{W(\cdot + h) - W}{L^1([0,T])} \le
		K(t,W,L^1_0,W^{1,1}_0) \le C \sup_{|h| \le t} \nor{W(\cdot + h) - W}{L^1([0,T])},
	\]
	which gives the claim in view of \eqref{equivalentbesov2} and Lemma \ref{L:interp}\eqref{L:interpsum}.
\end{proof}

\subsection{Brownian paths}

Assume that
\begin{equation}\label{Brownianmotion}
	W: [0,T] \times \Omega \to \RR \quad \text{is a standard Brownian motion over the probability space } (\Omega,\mbf F, \mbf P).
\end{equation}
The previous results imply that, with probability one, $W$ belongs to $\mathscr P_{\alpha,p}$ whenever $0 < \alpha < 1/2$ and $1 \le p \le \oo$, as can be seen from either the H\"older \cite{SV}, variation \cite{FV}, or Besov \cite{B,CKR,R} regularity of Brownian paths. As a consequence, many types of approximations can be constructed that satisfy the assumptions of Theorem \ref{T:mainweak}.

In the remainder of this section, we show that, in fact, Brownian paths belong to $\mathscr P_{1/2,\oo}$, and fail to belong to $\mathscr P_{1/2,p}$ if $p < \oo$. We also present two particular types of approximations that are bounded in $\mathscr P_{1/2,\oo}$. The first is a piecewise linear interpolation as in Lemma \ref{L:variation}\eqref{L:variationapprox}, and the second is a family of appropriately scaled random walks.

\begin{proposition}\label{P:Brownian}
	Let $W$ be a standard Brownian motion as in \eqref{Brownianmotion}.  
	\begin{enumerate}[(a)]
	\item If $\alpha < 1/2$ and $p \in [1,\oo]$, or if $\alpha = 1/2$ and $p = \oo$, then $W(\cdot,\omega) \in \mathscr P_{\alpha,p}$ for $\mbf P$-almost every $\omega \in \Omega$.
	\item If $\alpha = 1/2$ and $p < \oo$, or if $\alpha > 1/2$ and $p \in [1,\oo]$, then $W(\cdot,\omega) \notin \mathscr P_{\alpha,p}$ for $\mbf P$-almost every $\omega \in \Omega$.
	\end{enumerate}
\end{proposition}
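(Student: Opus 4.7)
The plan is to handle parts (a) and (b) via separate arguments, reducing each to one main technical step.

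For part (a), the subcase $\alpha < 1/2$ is immediate: pick $\beta \in (\alpha, 1/2)$, use the classical almost-sure $\beta$-H\"older regularity of Brownian paths to get $W(\cdot,\omega) \in C^{0,\beta}_0$, and invoke Proposition \ref{P:variationandHolder} together with the monotonicity in Lemma \ref{L:interp}\eqref{L:interpmonotone} to conclude $W \in C^{0,\beta}_0 \subset \mathscr P_{\beta,\oo} \subset \mathscr P_{\alpha,p}$. The critical subcase $\alpha = 1/2$, $p = \oo$ is the main content; I would apply Lemma \ref{L:variation}\eqref{L:variationcrit} with $\delta_n = 2\cdot 2^{-n/2}$, using the first-passage stopping times $\tau^n_0 := 0$ and $\tau^n_{k+1} := \inf\{t > \tau^n_k : |W(t) - W(\tau^n_k)| = 2^{-n/2}\}$. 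The oscillation of $W$ on $[\tau^n_{k-1}, \tau^n_k]$ is at most $2\cdot 2^{-n/2}$, hence $N(2\cdot 2^{-n/2}, W) \le M_n + 1$ for $M_n := \#\{k \ge 1 : \tau^n_k \le T\}$. By the strong Markov property and Brownian scaling, the increments $\tau^n_k - \tau^n_{k-1}$ are i.i.d.\ with law $2^{-n}\eta$, where $\eta$ is the exit time of standard Brownian motion from $[-1,1]$, so $\EE\eta = 1$ and $\eta$ has exponential moments. A Cram\'er-type large-deviation bound then gives $\PP(M_n \ge 2T\cdot 2^n) \le e^{-c\cdot 2^n}$, and Borel-Cantelli produces a random $C(\omega) < \oo$ with $M_n \le C(\omega)\, 2^n$ for all $n$, almost surely. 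Lemma \ref{L:variation}\eqref{L:variationcrit} with $p=\oo$ then yields $W \in \mathscr P_{1/2,\oo}$ a.s.

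For part (b), I would first reduce all cases to the statement $W(\cdot,\omega) \notin \mathscr P_{1/2,p}$ a.s.\ for each $p \in [1,\oo)$. For $\alpha > 1/2$ and any $p \in [1,\oo]$, the inclusion $\mathscr P_{\alpha,p} \subset \mathscr P_{\alpha,\oo} \subset \mathscr P_{1/2,1}$ holds: if $K(t,W,C_0,W^{1,1}_0) \le Ct^\alpha$ for $t \in (0,1]$ and is bounded by $\nor{W}{C_0}$ for $t \ge 1$, then $K(t,W,C_0,W^{1,1}_0)/t^{3/2}$ is integrable on $(0,\oo)$ at both endpoints. By Proposition \ref{P:PapBesov}, $\mathscr P_{1/2,p} \subset B^{1/2}_p(L^1)_0$, so it suffices to show $W \notin B^{1/2}_p(L^1)$ a.s.\ for $p < \oo$. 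Using the equivalent seminorm $\bigl(\int_0^1 (t^{-1/2} \sup_{|h| \le t} \nor{\Delta^2_h W}{L^1})^p \, dt/t\bigr)^{1/p}$, the key is the almost-sure lower bound $\nor{\Delta^2_{h_k} W}{L^1([0,T])} \ge c\sqrt{h_k}$ along a dyadic sequence $h_k = 2^{-k}$, for all large $k$. Since $\Delta^2_h W(t) \sim \mcl N(0, 2h)$ for $t \in (h, T-h)$, the expected $L^1$-norm is of order $\sqrt{h}$; Gaussian concentration for the $L^1$-Lipschitz functional $W \mapsto \nor{\Delta^2_h W}{L^1}$ on Wiener space, combined with Borel-Cantelli along dyadic scales, yields the lower bound. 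This forces $t^{-1/2} \sup_{|h| \le t} \nor{\Delta^2_h W}{L^1} \ge c > 0$ for all small $t$, and the seminorm diverges via $\int_0^1 c^p\, dt/t = \oo$.

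The main obstacles are the two uniform-in-scale tail estimates: bounding $M_n = O(2^n)$ simultaneously in $n$ in part (a), and securing a uniform-in-$k$ lower bound on $\nor{\Delta^2_{2^{-k}} W}{L^1}$ in part (b). Both arise from concentration-of-measure estimates (exponential moments of the Brownian exit time in the first case, Gaussian concentration on Wiener space in the second) combined with Borel-Cantelli along a dyadic sequence, and the scaling relationship between $\delta_n$ and $2^n$ (resp.\ $h$ and $\sqrt{h}$) is the precise matching that places Brownian motion on the boundary $\alpha = 1/2$, $p = \oo$.
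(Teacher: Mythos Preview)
Your argument for part (a) is essentially the paper's: both reduce to $\alpha=1/2$, $p=\infty$ via monotonicity, control $N(2^{-n/2},W)$ by counting stopping-time intervals, and close with Borel--Cantelli and Lemma~\ref{L:variation}\eqref{L:variationcrit}. The only differences are cosmetic---you use first-passage times $|W(t)-W(\tau^n_k)|=2^{-n/2}$ rather than oscillation-based ones, and a Cram\'er bound rather than Chebyshev; both choices work.

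For part (b) the paper proceeds exactly as you do up to the inclusion $\mathscr P_{1/2,p}\subset B^{1/2}_p(L^1)$ from Proposition~\ref{P:PapBesov}, but then simply cites Roynette~\cite{R} for the fact that Brownian paths lie outside $B^{1/2}_{1,p}$ when $p<\infty$. Your direct argument is a legitimate alternative, and the strategy---show $\|\Delta^2_{h_k}W\|_{L^1}\gtrsim\sqrt{h_k}$ along a dyadic sequence, then observe that the seminorm integral $\int_0^1 t^{-1}\,dt$ diverges---is the right one. There is, however, a quantitative mismatch in the concentration step as you have written it. The Cameron--Martin Lipschitz constant of $W\mapsto\|\Delta^2_h W\|_{L^1}$ is of order $\sqrt{h}$ (since $|\Delta^2_h\phi(t)|\le 2\sqrt{h}\,\|\dot\phi\|_{L^2}$), while $\mathbf E\|\Delta^2_h W\|_{L^1}$ is also of order $\sqrt{h}$; Gaussian Lipschitz concentration therefore yields only $\mathbf P(\|\Delta^2_h W\|_{L^1}<c\sqrt{h})\le e^{-c'}$, a bound independent of $h$ and hence not summable over $h_k=2^{-k}$. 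What you need instead is the variance bound $\mathrm{Var}\,\|\Delta^2_h W\|_{L^1}=O(h^2)$, which follows from the fact that $\Delta^2_h W(s)$ and $\Delta^2_h W(t)$ are independent once $|s-t|>2h$; Chebyshev then gives a tail of order $O(h)$, and Borel--Cantelli applies. With that correction your direct proof goes through, and has the merit of being self-contained where the paper defers to the literature.
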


\begin{proof}
	In view of Lemma \ref{L:interp}\eqref{L:interpmonotone}, it suffices to consider $\mathscr P_{1/2,\oo}$ and $\mathscr P_{1/2,p}$ with $p < \oo$ in order to prove, respectively, parts (a) and (b).

	(a) Fix $n \in \NN$, and define $t^n_0 = 0$ and
	\begin{equation}\label{BMstopping}
		t^n_{k+1} = \inf\left\{ t > t^n_k : \osc(W,[t^n_k,t]) = 2^{n/2} \right\} \wedge T \quad \text{for } k = 1,2,\ldots.
	\end{equation}
	Let $N_n \in \NN$ be the first integer for which $t^n_{N_n} = T$. 
	
	Observe that the $(t^n_k)_{k=0}^{N_n}$ are stopping times,
	\[
		\left\{
		\begin{split}
		&\mbf E (t^n_{k+1} - t^n_k) = 2^{-n} \quad \text{for} \quad k = 0,1,2,\ldots, N_n-2,\\
		&\mbf E (t^n_N - t^n_{N-1}) \le 2^{-n}, \quad \text{and}\\
		&\mbf E |t^n_{k+1} - t^n_k - 2^{-n}|^2 \le C 2^{-2n} \quad \text{for some $C > 0$ and all } k = 0,1,2,\ldots,N_n - 1.
		\end{split}
		\right.
	\]
	In view of Markov's inequality, there exists $C > 0$ such that, for any $\lambda > 1$ and for sufficiently large $n$ depending only on $\lambda$,
	\begin{align*}
		\mbf P \pars{ N_n > 2^n \lambda T }
		&= \mbf P \pars{ \sum_{k=0}^{ \floor{2^n \lambda T } - 1} (t^n_{k+1} - t^n_k) < T } \\
		&\le \mbf P \pars{ \sum_{k=0}^{ \floor{2^n \lambda T } - 1} \pars{ t^n_{k+1} - t^n_k - 2^{-n} } < -(\lambda -1)T + 2^{-n} } \le \frac{C}{2^n T},
	\end{align*}
	and, therefore, the Borel-Cantelli lemma yields $n^*: \Omega \to \NN$ such that $n^*(\omega) < \oo$ for $\mbf P$-almost every $\omega \in \Omega$ and $N_{n} \le 2^n \lambda T$ for all $n \ge n^*(\omega)$. As a consequence, 
	\[
		\mbf P \pars{ \sup_{n \in \NN} 2^{-n}N(2^{-n/2},W(\cdot,\omega)) < \oo } = 1,
	\]
	where $N(\delta,W)$ is defined as in \eqref{varcounter}, and so, by Lemma \ref{L:variation}\eqref{L:variationcrit}, for $\mbf P$-almost every $\omega \in \Omega$, $W(\cdot,\omega) \in \mathscr P_{1/2,\oo}$.
	
	(b) Let $p \in [1,\oo)$. Then, by Proposition \ref{P:PapBesov}, $\mathscr P_{1/2,p} \subset B^{1/2}_{1,p}([0,T])$. It is shown in \cite{R} that, with probability one, Brownian paths do not belong to $B^{1/2}_{1,p}$ if $p < \oo$, and the result follows.
\end{proof}

As a consequence of Theorem \ref{T:mainweak} and Proposition \ref{P:Brownian}, for any $H \in \mathscr H_{1/2,1}$ and $u_0 \in UC(\RR^d)$, there exists a well-defined probability measure $u: \Omega \to UC(\RR^d \times [0,T])$ that is understood to be the solution of 
\begin{equation}\label{E:Strat}
	du = H(Du) \circ dW \quad \text{in } \RR^d \times (0,T] \quad \text{and} \quad u(\cdot,0) = u_0 \quad \text{on } \RR^d.
\end{equation}
The use of the Stratonovich notation ``$\circ$'' is motivated by the fact that $u$ is obtained as limits of solutions of
\begin{equation}\label{E:BMapprox}
		u_{n,t} = H(Du_n) \dot W_n(t) \quad \text{in } \RR^d \times (0,T] \quad \text{and} \quad u(\cdot,0) = u_0 \quad \text{on } \RR^d
\end{equation}
for regularizations $(W_n)_{n \in \NN}$ of $W$ that satisfy the hypotheses of Theorem \ref{T:mainweak} for $\mathscr P_{1/2,\oo}$. One such approximation can be constructed along the same lines as in Lemma \ref{L:variation}\eqref{L:variationapprox}. 

\begin{proposition}\label{P:almostsure}
	Fix $H \in \mathscr H_{1/2,1}$ and $u_0 \in UC(\RR^d)$. For $n \in \NN$, let $\mcl P_n$ be the partition of stopping times defined in \eqref{BMstopping}, let $W_n$ be the piecewise linear interpolation of $W$ over $\mcl P_n$, and let $u_n$ and $u$ solve respectively \eqref{E:BMapprox} and \eqref{E:Strat}. Then, with probability one,
	\[
		\lim_{n \to \oo} \nor{u_n - u}{\oo,\RR^d \times [0,T]} = 0.
	\]
\end{proposition}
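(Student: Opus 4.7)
The plan is to verify the hypotheses of Theorem \ref{T:mainweak}(b) with $\alpha = 1/2$ and $p = 1$ (so $p' = \oo$), for $\mbf P$-a.e.\ $\omega$. Since $H \in \mathscr H_{1/2,1}$ by assumption and $W(\cdot,\omega) \in \mathscr P_{1/2,\oo}$ almost surely by Proposition \ref{P:Brownian}(a), it suffices to prove that, almost surely,
\[
    \lim_{n \to \oo} \nor{W_n - W}{\oo,[0,T]} = 0 \quad \text{and} \quad \sup_{n \in \NN} \nor{W_n}{\mathscr P_{1/2,\oo}} < \oo.
\]
The uniform convergence is immediate: on each interval $[t^n_k, t^n_{k+1}]$ of $\mcl P_n$ the path $W$ has oscillation at most $2^{-n/2}$ and $W_n$ interpolates $W$ at the endpoints, so $\nor{W - W_n}{\oo,[0,T]} \le 2 \cdot 2^{-n/2}$.

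The main content is to obtain the uniform $\mathscr P_{1/2,\oo}$-bound. I would apply Lemma \ref{L:variation}\eqref{L:variationcrit} with $\delta_k = 3 \cdot 2^{-k/2}$, so the task reduces to finding, for $\mbf P$-a.e.\ $\omega$, a constant $C(\omega)$ such that $N(\delta_k, W_n(\cdot,\omega)) \le C(\omega) 2^k$ for all $k, n \in \NN$. The Borel--Cantelli argument already carried out in the proof of Proposition \ref{P:Brownian}(a) furnishes $n^*(\omega) < \oo$ almost surely with $N_m = \#\mcl P_m \le C 2^m$ for every $m \ge n^*(\omega)$; only this ingredient from the randomness of $W$ is needed, and finitely many small $k$ can be absorbed into the constant.

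For $k \ge n^*(\omega)$ I would split into two cases. If $k \ge n$, refine $\mcl P_n$ by subdividing each interval $[t^n_j, t^n_{j+1}]$ into $K_n^k := \lceil 2^{(k-n)/2}\rceil$ equal pieces. Since $W_n$ is affine on $[t^n_j, t^n_{j+1}]$ with oscillation $\le 2^{-n/2}$, each refined piece has oscillation $\le 2^{-n/2}/K_n^k \le 2^{-k/2}$, and the total number of pieces is $N_n K_n^k \le C 2^n (2^{(k-n)/2} + 1) \le C' 2^k$. If $k < n$, use the partition $\mcl P_k$ directly: on any interval $[a,b]$, the value $W_n(t)$ is a convex combination of two values of $W$ at stopping times of $\mcl P_n$ differing by at most $2^{-n/2}$, so a short argument gives
\[
    \osc(W_n, [a,b]) \le \osc(W, [a,b]) + 2 \cdot 2^{-n/2}.
\]
Applied with $[a,b] = [t^k_j, t^k_{j+1}]$ this yields $\osc(W_n, \mcl P_k) \le 2^{-k/2} + 2 \cdot 2^{-n/2} \le 3 \cdot 2^{-k/2}$, with $\#\mcl P_k \le C 2^k$. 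In both cases $N(\delta_k, W_n) \le C(\omega) 2^k$, uniformly in $n$, and Lemma \ref{L:variation}\eqref{L:variationcrit} gives the desired uniform bound.

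The main obstacle is the case $k < n$, since the partitions $(\mcl P_m)_{m \in \NN}$ are not nested, so one cannot simply read off the oscillation of $W_n$ from the partition used to define it. The auxiliary inequality above is the key observation that handles this, after which the almost sure conclusion follows by combining the two cases, verifying the hypotheses of Theorem \ref{T:mainweak}(b), and invoking that theorem to conclude $\nor{u_n - u}{\oo, \RR^d \times [0,T]} \to 0$ with probability one.
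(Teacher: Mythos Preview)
Your proposal is correct and follows the same approach as the paper: verify the hypotheses of Theorem \ref{T:mainweak}(b) by showing almost-sure uniform convergence of $W_n$ to $W$ together with a uniform $\mathscr P_{1/2,\oo}$-bound, the latter via Lemma \ref{L:variation}\eqref{L:variationcrit} and the refinement construction of Lemma \ref{L:variation}\eqref{L:variationapprox}. In fact you are somewhat more explicit than the paper, which simply cites Lemma \ref{L:variation}\eqref{L:variationapprox}: your separate treatment of the case $k<n$ via the inequality $\osc(W_n,[a,b])\le \osc(W,[a,b])+2\nor{W_n-W}{\oo}$ and the partition $\mcl P_k$ fills in a step that the proof of that lemma leaves implicit.
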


\begin{proof}
	The result follows from Proposition \ref{P:Brownian}, Theorem \ref{T:mainweak}, and the fact that, as in the proof of Lemma \ref{L:variation}\eqref{L:variationapprox}, we have, with probability one,
	\[
		\lim_{n \to \oo} \nor{W_n - W}{\oo,[0,T]} = 0 \quad \text{and} \quad \sup_{n \in \NN} \nor{W_n}{\mathscr P_{1/2,\oo}} < \oo.
	\]
\end{proof}

We now introduce a family of random walk approximations. Let $\pars{ X_k}_{k \in \NN}: \Omega \to \{-1,1\}$ be a collection of independent Rademacher random variables, define $\zeta: [0,\oo) \to \RR$ by 
\[
	\zeta(0) = 0 \quad \text{and} \quad \dot \zeta(t) = X_k \quad \text{for } t \in [k-1, k) \quad \text{for } k = 1,2,\ldots,
\]
and, for $n \in \NN$, set
\begin{equation}\label{scaledrw}
	W_n(t) = \frac{1}{n} \zeta(n^2 t) \quad \text{for } t \in [0,T].
\end{equation}\

Recall that a sequence of Borel probability measures $(\mu_n)_{n \in \NN}$ on a Polish space $X$ converges weakly, as $n \to \oo$, to a Borel probability measure $\mu$ if
\[
	\lim_{n \to \oo} \int_{X} f(x) \mu_n(dx) = \int_X f(x) \mu(dx) \quad \text{for all bounded and continuous } f: X \to \RR,
\]
and a sequence $(A_n)_{n \in \NN}$ of $X$-valued random variables, not necessarily defined over the same probability space, is said to converge in law, or in distribution, as $n \to \oo$, to another $X$-valued random variable $A$ if, as $n \to \oo$, the law of $A_n$ converges weakly to the law of $A$. It is a classical fact (see Billingsley \cite{Bill}) that, in the topology of $C([0,T])$, as $n \to \oo$, the path $W_n$ defined in \eqref{scaledrw} converges in law to a Brownian motion $W$ as in \eqref{Brownianmotion}.

\begin{proposition} \label{P:randomwalk}
	Assume that $H \in \mathscr H_{1/2,1}$, $u_0 \in UC(\RR^d)$, and $W$ is a standard Brownian motion. For $n \in \NN$, let $W_n$ be the scaled random walk as in \eqref{scaledrw} and let $u_n$ be the viscosity solution of \eqref{E:BMapprox}. Then, as $n \to \oo$, $u_n$ converges locally uniformly and in law to the solution of \eqref{E:Strat}.
\end{proposition}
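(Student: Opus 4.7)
The plan is to combine Donsker's invariance principle with the continuity statement of Theorem \ref{T:mainweak}(b), via the Skorokhod representation theorem. The technical heart of the argument is a uniform-in-$n$ tightness estimate for the $\mathscr P_{1/2,\oo}$-norms of the approximating paths, which will allow Theorem \ref{T:mainweak}(b) to apply pathwise after Skorokhod coupling.

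\textbf{Step 1 (tightness of the path norms).} I will show that the random variables $\kappa_n := \|W_n\|_{\mathscr P_{1/2,\oo}}$ are tight. This is achieved by applying Lemma \ref{L:variation}(a) with $\alpha = 1/2$, $p = \oo$, and $\delta_m = 2^{-m/2}$, which requires bounding $N(2^{-m/2}, W_n)$ uniformly in both $n$ and $m$. Two regimes appear. When $m \ge 2\log_2 n$, the scale $2^{-m/2}$ is below the step size $1/n$ of the piecewise-linear walk $W_n$; since $W_n$ has slope $\pm n$ on each linear piece, an elementary deterministic covering argument yields $N(2^{-m/2}, W_n) \le C(T) 2^m$. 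When $m < 2\log_2 n$, set $K := \lceil n\,2^{-m/2}\rceil$ and introduce stopping times $\sigma_0 = 0$, $\sigma_{j+1} := \inf\{s > \sigma_j : \osc(\zeta,[\sigma_j,s]) \ge K\}$ for the unscaled random walk $\zeta$. Classical results for the simple random walk give $\mbf E(\sigma_{j+1}-\sigma_j) \asymp K^2$ together with exponential concentration for the i.i.d.\ increments, leading to an estimate of the form $\mbf P(N(2^{-m/2}, W_n) \ge \lambda T 2^m) \le C e^{-c\lambda}$ that is uniform in $n$ and $m$. A union bound over $m \in \NN$ then produces, for every $\eps > 0$, a constant $R_\eps > 0$ such that $\sup_n \mbf P(\kappa_n > R_\eps) < \eps$.

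\textbf{Step 2 (Skorokhod representation and passage to the limit).} Donsker's theorem \cite{Bill} yields $W_n \Rightarrow W$ in $C_0([0,T])$. Combined with Step 1, the pairs $(W_n, \kappa_n)$ are tight in the Polish space $C_0([0,T]) \times [0,\oo)$. Given any subsequence, I extract a further subsequence $(n_k)$ along which these pairs converge jointly in law to some $(W, \kappa)$ with $\mbf P(\kappa < \oo) = 1$. Skorokhod representation then produces copies $(\tilde W_{n_k}, \tilde \kappa_{n_k})$ and $(\tilde W, \tilde \kappa)$ on a common probability space with $\tilde W_{n_k} \to \tilde W$ uniformly and $\tilde\kappa_{n_k} \to \tilde\kappa < \oo$ a.s.; because $\tilde\kappa_{n_k} = \|\tilde W_{n_k}\|_{\mathscr P_{1/2,\oo}}$, we obtain $\sup_k \|\tilde W_{n_k}\|_{\mathscr P_{1/2,\oo}} < \oo$ almost surely. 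Theorem \ref{T:mainweak}(b) (with $\alpha = 1/2$, $p = 1$) now applies pathwise and gives $S(H,\tilde W_{n_k}, u_0) \to S(H,\tilde W, u_0)$ uniformly on $\RR^d \times [0,T]$ a.s., hence in law. The limit $u := S(H,W,u_0)$ is the solution of \eqref{E:Strat} and does not depend on the subsequence, so by a standard subsubsequence argument the full sequence $u_n$ converges in law to $u$, which implies the locally uniform convergence in law asserted in the statement.

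\textbf{Main obstacle.} The crux is Step 1. A polynomial Markov-inequality bound of the form $\mbf P(N(2^{-m/2}, W_n) \ge \lambda T 2^m) \lesssim \lambda^{-1}$ is insufficient because the union bound over $m \in \NN$ does not close. What is required is an exponential-tail estimate for the partial sums $\sum_j (\sigma_{j+1}-\sigma_j)$, which follows because these increments are i.i.d.\ with a sub-exponential distribution, uniformly in the scale $K$. Equally, the patching at $m \approx 2\log_2 n$ between the deterministic piecewise-linear regime and the stochastic hitting-time regime must be arranged so that all constants remain genuinely uniform in $n$; this is the one place where the specific structure of the simple random walk, as opposed to a general approximation of Brownian motion, enters the proof.
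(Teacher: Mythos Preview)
Your overall strategy is correct and leads to a valid proof, but it differs from the paper's in two ways that are worth noting.

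For Step 2, the paper avoids Skorokhod representation entirely. Instead it isolates an abstract ``mapping lemma'' (Lemma~\ref{L:mapping}): if $\mu_n \Rightarrow \mu$ on a space $\tilde X$, the $\mu_n$ are uniformly tight in a smaller normed space $X \subset \tilde X$, and $\Phi : X \to Y$ is sequentially continuous along $\tilde X$-convergent, $X$-bounded sequences, then $\Phi_\sharp \mu_n \Rightarrow \Phi_\sharp \mu$. The proof is a short Portmanteau argument. Your Skorokhod route is equally standard; the mapping-lemma formulation has the advantage of being reusable and of sidestepping the need to argue that $\|\cdot\|_{\mathscr P_{1/2,\oo}}$ is Borel measurable on $C_0$ and that the coupled $\tilde\kappa_{n_k}$ coincide a.s.\ with $\|\tilde W_{n_k}\|_{\mathscr P_{1/2,\oo}}$.

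More substantively, your diagnosis of the ``main obstacle'' in Step~1 is off. You claim a polynomial Markov bound of the form $\mbf P(N(2^{-m/2},W_n) \ge \lambda T 2^m) \lesssim \lambda^{-1}$ cannot close the union bound over $m$, forcing exponential tails. In fact the paper uses nothing more than Chebyshev's inequality on the i.i.d.\ increments $\tau^M_k - \tau^M_{k-1}$ (Lemma~\ref{L:stoppingtimes}): because $\mbf E[\tau^M_1] = M^2$ and $\Var[\tau^M_1] \asymp M^4$, and because the number of summands in the relevant Chebyshev application is $\asymp \lambda T 2^m$, the resulting bound carries an extra geometric factor and reads $\mbf P(N(2^{-m/2},W_n) > R T 2^m) \le C(RT)^{-1} 2^{-m}$, uniformly in $n$ in the regime $2^{m/2} \le n/2$. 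This is summable over $m$, so the union bound closes with only a second-moment argument. Your exponential-tail approach would also work, but it is stronger than necessary; the paper's argument shows that the specific random-walk structure enters only through the first two moments of the hitting times.
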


In view of Proposition \ref{P:dcextension}, if $H \in \dc(\RR^d)$, then the result of Proposition \ref{P:randomwalk} is a consequence of the classical Mapping theorem. That is, if, for another Polish space $Y$, the map $\Phi: X \to Y$ is continuous, then $\Phi_\sharp$ is continuous with respect to weak convergence of measures \cite{Bill}. Here, for a Borel probability measure $\mu$ on $X$ and a Borel measurable map $\Phi: X \to Y$, $\Phi_\sharp \mu$ denotes the Borel probability measure on $Y$ given by
\[
	(\Phi_\sharp \mu)(A) = \mu(\Phi^{-1}(A)) \quad \text{for all Borel sets } A \subset Y.
\]
In order to prove Proposition \ref{P:randomwalk} for $H \in \mathscr H_{1/2,1}$, we need the following generalization of the Mapping theorem.

\begin{lemma}\label{L:mapping}
	For normed spaces $X \subset \tilde X$ and $Y$, a sequence of Borel probability measures $(\mu_n)_{n \in \NN}$ on $\tilde X$, and a map $\Phi: X \to Y$, assume that
	\begin{enumerate}[(a)]
	\item\label{L:mappingmeas} as $n \to \oo$, in the topology of $\tilde X$, $\mu_n$ converges weakly to another probability measure $\mu$ on $\tilde X$,
	\item\label{L:mappingtight} for all $\eps > 0$, there exists $C_\eps > 0$ such that
	\[
		\sup_{n \in \NN} \mu_n \pars{ \left\{ x \in X: \nor{x}{X} > C_\eps\right\} } \le \eps,
	\]
	and
	\item\label{L:mappingweakcts} if $(x_n)_{n \in \NN} \subset X$ and $x \in \tilde X$,
	\[
		\lim_{n \to \oo} \nor{x_n - x}{\tilde X} = 0, \quad \text{and} \quad \sup_{n \in \NN} \nor{x_n}{X} \le R,
	\]
	then $x \in X$, $\nor{x}{X} \le R$, and $\lim_{n \to \oo} \Phi(x_n) = \Phi(x)$.
	\end{enumerate}
	Then, as $n \to \oo$, in the topology of $Y$, $\Phi_\sharp \mu_n$ converges weakly to $\Phi_\sharp \mu$.
\end{lemma}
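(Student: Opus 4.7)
The plan is to reduce the statement to the standard bounded-continuous-function characterization of weak convergence on $\tilde X$, after confining most of the mass to a large $X$-ball and using a Tietze-type extension to pass between $X$ and $\tilde X$.

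First I would extract two geometric consequences of hypothesis (c). Setting $B^X_R := \{x \in X : \nor{x}{X} \le R\}$ and viewing it as a subset of $\tilde X$, condition (c) says $B^X_R$ is sequentially closed in $\tilde X$ and that $\Phi|_{B^X_R}$ is sequentially continuous with respect to the $\tilde X$-subspace topology. Because $\tilde X$ is a normed, hence metric, space, these sequential notions coincide with the topological ones, so $B^X_R$ is genuinely closed in $\tilde X$ and $\Phi|_{B^X_R}$ is genuinely continuous. Writing $X = \bigcup_{k \in \NN} B^X_k$ then shows $\Phi: X \to Y$ is Borel measurable, so $\Phi_\sharp \mu_n$ and $\Phi_\sharp \mu$ are well-defined Borel probability measures on $Y$, once we know that $\mu$ is concentrated on $X$.

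Next I would upgrade hypothesis (b) to a joint tightness statement. For any $\eps > 0$, set $R := C_\eps$. Since $\tilde X \setminus B^X_R$ is open in $\tilde X$, the Portmanteau theorem gives
\[
	\mu(\tilde X \setminus B^X_R) \le \liminf_{n \to \oo} \mu_n(\tilde X \setminus B^X_R) \le \eps,
\]
so that $\mu(X) = 1$ (by letting $\eps \to 0$) and both $\mu_n(B^X_R) \ge 1-\eps$ for every $n$ and $\mu(B^X_R) \ge 1-\eps$. This achieves simultaneous control of the ``bad'' set for the entire family $\{\mu_n\} \cup \{\mu\}$.

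The core estimate is then obtained by testing against a fixed bounded continuous $f:Y \to \RR$ with $M := \nor{f}{\oo}$. The composition $g := f \circ \Phi$ is continuous on $B^X_R$ in the $\tilde X$-subspace topology and bounded by $M$. Since $B^X_R$ is a closed subset of the metrizable space $\tilde X$, the Tietze extension theorem yields a continuous $\tilde g : \tilde X \to \RR$ agreeing with $g$ on $B^X_R$ and satisfying $\nor{\tilde g}{\oo} \le M$. Splitting
\[
	\int_Y f \, d(\Phi_\sharp \mu_n) = \int_{B^X_R} \tilde g \, d\mu_n + \int_{X \setminus B^X_R} g \, d\mu_n,
\]
and similarly for $\mu$, the two outer integrals contribute at most $M\eps$ apiece, and each of the first terms differs from $\int_{\tilde X} \tilde g \, d\mu_n$, respectively $\int_{\tilde X} \tilde g \, d\mu$, by at most $M\eps$. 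Applying the weak convergence $\mu_n \to \mu$ on $\tilde X$ to $\tilde g$, taking $\limsup_{n \to \oo}$, and then $\eps \to 0$ gives $\int_Y f \, d(\Phi_\sharp \mu_n) \to \int_Y f \, d(\Phi_\sharp \mu)$, which is weak convergence on $Y$. The main subtlety I expect is the interplay between the two topologies: one must be sure that condition (c) really provides topological (not merely sequential) continuity and closedness, and that the Tietze extension is available; both points rely only on $\tilde X$ being a normed space, after which the rest is bookkeeping of the mass outside $B^X_R$.
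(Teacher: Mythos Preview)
Your argument is correct and follows the same overall strategy as the paper: confine mass to the closed $X$-ball $B^X_R$ using (b), use (c) to get closedness of $B^X_R$ in $\tilde X$ and continuity of $\Phi$ there, extend an object from $B^X_R$ to all of $\tilde X$, apply weak convergence on $\tilde X$, and control the $\eps$-tail. The difference is only in which equivalent formulation of weak convergence is invoked. You test against bounded continuous $f:Y\to\RR$, extend $f\circ\Phi$ from $B^X_R$ to $\tilde X$ via Tietze, and compare integrals; the paper instead uses the closed-set version of Portmanteau, observing that for closed $F\subset Y$ the set $A_\eps\cap\Phi^{-1}(F)$ is already closed in $\tilde X$ (since $A_\eps=B^X_{C_\eps}$ is closed there), so no extension theorem is needed. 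Your route is slightly heavier in that it calls on Tietze, but it has the merit of being more explicit about why $\Phi$ is Borel and why $\mu(X)=1$, points the paper leaves implicit. Either way the bookkeeping of the mass outside $B^X_R$ is identical.
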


Observe that property \eqref{L:mappingtight}, which says that the sequence $(\mu_n)_{n \in \NN}$, when restricted to $X$, is tight, does not follow directly from property \eqref{L:mappingmeas}.

\begin{proof}
	By the Portmanteau theorem (see \cite{Bill}), it suffices to show that, for all closed sets $F \subset Y$,
	\[
		\limsup_{n \to \oo} \mu_n\pars{ \Phi^{-1}(F)} \le \mu\pars{ \Phi^{-1}(F)}.
	\]
	Fix $\eps > 0$, let $C_\eps > 0$ be given as in part \eqref{L:mappingtight}, and define
	\[
		A_\eps := \left\{ x \in X: \nor{x}{X} \le C_\eps \right\}.
	\]
	Property \eqref{L:mappingweakcts} implies that $A_\eps \cap \Phi^{-1}(F)$ is a closed subset of $A_\eps$ in the subspace topology inherited from $\tilde X$, and, therefore, there exists a closed set $H \subset \tilde X$ such that
	\[
		A_\eps \cap \Phi^{-1}(F) = A_\eps \cap H.
	\]
	The Portmanteau theorem and property \eqref{L:mappingmeas} give
	\[
		\limsup_{n \to \oo} \mu_n(H) \le \mu(H),
	\]
	and, therefore,
	\begin{align*}
		\limsup_{n \to \oo} \mu_n \pars{ \Phi^{-1}(F)} 
		&\le \liminf_{n \to \oo} \mu_n \pars{ A_\eps \cap H} + \eps\\
		&\le \limsup_{n \to \oo} \mu_n(H) + \eps
		\le \mu(H) + \eps\\
		&\le \mu(A_\eps \cap \Phi^{-1}(F)) + 2\eps
		\le \mu(\Phi^{-1}(F)) + 2\eps.
	\end{align*}
	The proof is finished because $\eps$ was arbitrary.
\end{proof}

The next result gives a uniform estimate on the probability tails of $(W_n)_{n \in \NN}$ in $\mathscr P_{1/2,\oo}$.

\begin{lemma}\label{L:boundedinlaw}
	Let $(W_n)_{n \in \NN}$ be defined by \eqref{scaledrw}. Then, for all $\eps > 0$, there exists $R = R_\eps > 0$ such that
	\[
		\sup_{n \in \NN} \mbf P \pars{ \nor{W_n}{\mathscr P_{1/2,\oo}} > RT} < \eps.
	\]
\end{lemma}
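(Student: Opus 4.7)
The plan is to apply Lemma~\ref{L:variation}(a) with $\alpha = 1/2$, $p = \infty$, and $\delta_k := 2^{-k/2}$ (so $S = \sup_{k \ge 0} 2^{k/2}\delta_k = 1$), which reduces the claim to establishing a uniform tail bound
\[
	\sup_{n \in \NN} \mbf P\pars{ C_n > C_\eps T } < \eps,
	\qquad \text{where} \quad
	C_n := \sup_{k \ge 0} 2^{-k} N\pars{2^{-k/2}, W_n},
\]
and $N(\delta, W_n)$ is the partition counting quantity from \eqref{varcounter}. I will split the supremum over $k$ at the threshold $k_n := \lfloor 2\log_2 n\rfloor - 2$, which distinguishes whether the oscillation scale $2^{-k/2}$ is above or below the dyadic grid spacing $1/n$ on which $W_n$ lives.

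For $k > k_n$, I argue deterministically. Since $W_n$ is affine with slope of modulus $n$ on each interval $[j/n^2, (j+1)/n^2]$, subdividing each such interval into pieces of length $2^{-k/2}/n$ produces a partition of $[0,T]$ with oscillation at most $2^{-k/2}$ and total size at most $n^2 T(2^{k/2}/n + 1) = nT\,2^{k/2} + n^2 T$; the inequality $n \le c \cdot 2^{k/2}$, valid for $k > k_n$, forces both terms to be bounded by $CT\cdot 2^k$, so $2^{-k} N(2^{-k/2}, W_n) \le CT$ in this regime. For $k \le k_n$, I instead use a random partition defined by stopping times of $\zeta$: with $M_k := \lfloor n \cdot 2^{-(k+2)/2}\rfloor \ge 1$, set
\[
	\tau_0^{(k)} := 0,
	\qquad
	\tau_{j+1}^{(k)} := \inf\left\{i \in \NN : i > \tau_j^{(k)},\; \abs{\zeta(i) - \zeta(\tau_j^{(k)})} = M_k\right\},
\]
and let $N_k := \inf\{j : \tau_j^{(k)} \ge n^2 T\}$. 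Piecewise-linearity of $W_n$ between integer times of $\zeta$ gives $\osc\pars{W_n, [\tau_j^{(k)}/n^2,\tau_{j+1}^{(k)}/n^2]} \le 2M_k/n \le 2^{-k/2}$, hence $N(2^{-k/2}, W_n) \le N_k$.

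The increments $\tau_{j+1}^{(k)} - \tau_j^{(k)}$ are i.i.d.\ exit times of a simple random walk from an interval of half-width $M_k$, whence $\mbf E[\tau_1^{(k)}] = M_k^2$ and $\Var[\tau_1^{(k)}] \le C_0 M_k^4$ by standard estimates. Setting $J := \lceil RT \cdot 2^k\rceil$ and using $M_k^2 \ge n^2 2^{-k}/16$, one gets $\mbf E[\tau_J^{(k)}] \ge RTn^2/16$; for $R \ge 32$, Chebyshev's inequality yields
\[
	\mbf P(N_k > J)
	\le \mbf P\pars{\tau_J^{(k)} < n^2 T}
	\le \frac{C_0\, J\, M_k^4}{(RTn^2/32)^2}
	\le \frac{C_1}{RT \cdot 2^k},
\]
where I have used $J M_k^4 \le C \, RT n^4 \, 2^{-k}$. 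Summing this tail over $k \in \{0, 1, \ldots, k_n\}$ gives a total of at most $2 C_1/(RT)$, and combining with the deterministic bound in the complementary regime produces $\mbf P(C_n > RT) \le 2C_1/(RT)$ once $R$ is large enough. Choosing $R = R_\eps$ accordingly then finishes the proof. The main obstacle is ensuring that the Chebyshev tail decays geometrically in $k$ so that the union bound sums uniformly in $n$: this hinges on the precise balance $JM_k^4 \sim RT n^4 \cdot 2^{-k}$, which ultimately stems from the variance of a simple random-walk exit time being of order $M_k^4$; everything else (the partition construction, the oscillation estimate, and the moment bounds) is routine.
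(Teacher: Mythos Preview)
Your proposal is correct and follows essentially the same strategy as the paper's proof: apply Lemma~\ref{L:variation}\eqref{L:variationcrit} with $\alpha=1/2$, $p=\infty$, $\delta_k=2^{-k/2}$; split the dyadic scales at $2^{k/2}\sim n$; for fine scales use the Lipschitz bound $\nor{\dot W_n}{\oo}=n$ to get a deterministic estimate; and for coarse scales bound $N(2^{-k/2},W_n)$ by the hitting-time counting variable for the simple random walk and control it via Chebyshev (the paper packages this last step as Lemma~\ref{L:stoppingtimes}), yielding the summable tail $C\,2^{-k}/(RT)$. Your oscillation bound $\osc\le 2M_k/n$ is in fact more careful than the paper's stated inequality $\osc\le M/n$, which overlooks that the walk can visit both sides of the interval $(-M,M)$ before exit; this factor of two is immaterial to the argument.
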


The proof of Lemma \ref{L:boundedinlaw} relies on the construction of useful stopping times, as in the proof of Proposition \ref{P:Brownian}. For $M \in \NN$, define
\begin{equation}\label{rwtimes}
	\tau^M_0 = 0 \quad \text{and} \quad \tau^M_k := \inf \left\{ t \in \NN, \; t > \tau^M_{k-1} : |\zeta(t) - \zeta(\tau^M_{k-1})| = M \right\}\quad \text{for } k = 1,2,\ldots,
\end{equation}
and, for $t > 0$, define
\[
	K^M(t) := \inf\left\{ k \in \NN : \tau^M_k \ge t \right\}.
\]

\begin{lemma}\label{L:stoppingtimes}
	Fix $M \in \NN$ and $t > 0$. Then
	\[
		\mbf P \pars{ K^M(t) > \frac{ \lambda t}{ M^2} } \le \frac{2\lambda (M^2-1)t}{3((\lambda-1)t - M^2)^2} \quad \text{for all } \lambda > 1 + \frac{M^2}{t}.
	\]
\end{lemma}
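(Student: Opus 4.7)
The plan is to exploit the iid structure of the increments $\tau^M_k - \tau^M_{k-1}$. By the strong Markov property of $\zeta$, these are independent copies of the hitting time $T_M := \inf\{ n \in \NN : |\zeta(n)| = M \}$ of $\{-M,M\}$ by the simple random walk started at $0$. Since $K^M(t)$ is integer-valued, the event $\{ K^M(t) > \lambda t/M^2 \}$ equals $\{ K^M(t) \ge \lfloor \lambda t/M^2 \rfloor + 1 \} = \{ \tau^M_N < t \}$ for $N := \lfloor \lambda t/M^2 \rfloor$, so that
\[
    \mbf P\pars{ K^M(t) > \tfrac{\lambda t}{M^2}} \;=\; \mbf P(\tau^M_N < t) \;=\; \mbf P\bigl( NM^2 - \tau^M_N > NM^2 - t\bigr),
\]
and the hypothesis $\lambda > 1 + M^2/t$ together with $N \ge \lambda t/M^2 - 1$ ensures $NM^2 - t \ge (\lambda-1)t - M^2 > 0$, so this is a genuine large-deviation event below the mean.

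The next step is to compute the mean and variance of $T_M$. The classical mean $\mbf E[T_M] = M^2$ follows immediately from optional stopping applied to the martingale $\zeta_n^2 - n$. For the variance, I would solve the Poisson equation satisfied by $v(x) := \mbf E_x[T_M^2]$ on $\{-M,\ldots,M\}$, namely
\[
    v(x) - \tfrac{1}{2}\bigl( v(x-1) + v(x+1)\bigr) = 2u(x) - 1, \qquad v(\pm M) = 0,
\]
where $u(x) = M^2 - x^2 = \mbf E_x[T_M]$. A polynomial ansatz $v(x) = \frac{1}{3}x^4 + \alpha x^2 + \beta$ matches the right-hand side and the boundary condition with $\alpha = \frac{2}{3} - 2M^2$ and $\beta = \frac{M^2(5M^2 - 2)}{3}$, yielding
\[
    \mbf E[T_M^2] = v(0) = \tfrac{M^2(5M^2-2)}{3} \qquad \text{and hence} \qquad \Var(T_M) = \tfrac{2M^2(M^2 - 1)}{3}.
\]

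Finally, I would apply Chebyshev's inequality to the centered sum $\tau^M_N - NM^2$, which has variance $N \Var(T_M)$ by independence, obtaining
\[
    \mbf P(\tau^M_N < t) \;\le\; \frac{N \Var(T_M)}{(NM^2 - t)^2} \;\le\; \frac{(\lambda t / M^2) \cdot \frac{2 M^2(M^2 - 1)}{3}}{\bigl((\lambda - 1)t - M^2\bigr)^2} \;=\; \frac{2\lambda(M^2 - 1)t}{3\bigl((\lambda - 1) t - M^2\bigr)^2},
\]
which is the claimed bound. The main obstacle is the variance computation; the reduction to Chebyshev is routine, but the precise constant $\frac{2M^2(M^2-1)}{3}$ in $\Var(T_M)$ has to be produced correctly to match the stated inequality. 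One could alternatively obtain it via a quartic martingale for the simple random walk, but the Poisson-equation route above is self-contained and quickest.
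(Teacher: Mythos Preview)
Your proof is correct and follows essentially the same route as the paper: identify the increments $\tau^M_k-\tau^M_{k-1}$ as iid copies of $T_M$ via the strong Markov property, compute $\mbf E[T_M]=M^2$ and $\Var(T_M)=\tfrac{2}{3}M^2(M^2-1)$, reduce $\{K^M(t)>\lambda t/M^2\}$ to $\{\tau^M_N<t\}$ with $N=\lfloor\lambda t/M^2\rfloor$, and apply Chebyshev. The only difference is that the paper cites optional stopping for both moments without detail, whereas you obtain the variance by solving the discrete Poisson equation for $v(x)=\mbf E_x[T_M^2]$; this is an equivalent and fully explicit alternative.
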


\begin{proof}
In view of the strong Markov property, the random variables $( \tau^M_k - \tau^M_{k-1})_{k \in \NN}$ are independent and identically distributed, and the optimal stopping theorem gives 
\[
	\mbf E \left[ \tau^M_k - \tau^M_{k-1} \right] = M^{2} \quad \text{and} \quad \Var \left[ \tau^M_k - \tau^M_{k-1} \right] = \frac{2}{3}M^2(M^2 -1) \quad \text{for all } M \in \NN, k \in \NN.
\]
The result then follows from Markov's inequality and the chain of inequalities
\begin{align*}
	\mbf P ( K^M(t) > \lambda tM^{-2})
	&\le \mbf P \pars{ \tau^M_{\floor{\lambda tM^{-2}}} < t}\\
	&= \mbf P \pars{ \sum_{k=1}^{\floor{\lambda tM^{-2}}} (\tau^M_k - \tau^M_{k-1}) < t } \\
	&\le \mbf P \pars{ \sum_{k=1}^{\floor{\lambda tM^{-2}}} (\tau^M_k - \tau^M_{k-1} - M^{2} ) < -(\lambda-1)t +M^2}\\
	&\le \frac{2\lambda (M^2-1)t}{3((\lambda-1)t - M^2)^2}.
\end{align*}
\end{proof}

\begin{proof}[Proof of Lemma \ref{L:boundedinlaw}]
	Fix $n \in \NN$. For two different ranges of $m \in \NN$, we estimate the quantity
	\[
		2^{-m} N(2^{-m/2}, W_n),
	\]
	where $N(\delta,W)$ is defined as in \eqref{varcounter}. Throughout, we assume, without loss of generality, that $T > 1$.
	
	{\it Case 1.} Assume that $m \in \NN$ and $2^{m/2} \le \frac{n}{2}$. Set 
	\[
		M := \left\lfloor n 2^{-m/2} \right\rfloor,
	\]
	and note that
	\[
		2 \vee \frac{1}{2} n 2^{-m/2} \le M \le n2^{-m/2}.
	\]
	For $k = 0,1,2,\ldots$, define
	\[
		\tilde \tau_k := \frac{1}{n^2} \tau^M_k,
	\]
	where $\tau^M_k$ is defined as in \eqref{rwtimes}. Then, for all $k = 1,2,\ldots,$
	\[
		\osc(W_n, [\tilde \tau_{k-1}, \tilde \tau_k]) \le \frac{1}{n} \abs{ \zeta(\tau^M_k) - \zeta(\tau^M_{k-1})} = \frac{M}{n} \le 2^{-m/2}.
	\]
	Therefore, 
	\[
		N(2^{-m/2},W_n) \le \inf\{ k \in \NN: \tilde \tau_k \ge T \} = K^M(n^2T).
	\]
	Fix $R \ge 8$ and set
	\[
		\lambda := \frac{R M^2 2^m}{n^2}.
	\]
	Then
	\[
		\lambda > R\pars{ 1 - 2^{m/2}n^{-2}}^2 \ge 2 \ge 1 + \frac{2^{-m}}{T} \ge 1 + \frac{M^2}{n^2 T},
	\]
	and so, in view of Lemma \ref{L:stoppingtimes}, for some universal constant $C > 0$,
	\begin{align*}
		\mbf P \pars{ N(2^{-m/2}, W_n) > RT 2^{m}}
		&\le \mbf P \pars{ K^M(n^2 T) > RT 2^{m}} \\
		&= \mbf P \pars{ K^M(n^2 T) > \lambda n^2 T M^{-2}} \\
		&\le \frac{2 \lambda (M^2 - 1)n^2 T}{3((\lambda - 1)n^2 T - M^2)^2}\\
		&= \frac{2RM^2(M^2-1)2^m T}{((RM^2 2^m - n^2)T - M^2)^2} \le \frac{C}{RT} 2^{-m}.
	\end{align*}
	We conclude that
	\begin{equation}\label{smallm}
		\mbf P \pars{ \sup \left\{ 2^{-m} N(2^{-m/2},W_n) : m \in \NN, \; 2^{m/2} \le \frac{n}{2} \right\} > RT} \le \frac{C}{RT}.
	\end{equation}
	
	{\it Case 2.} Assume now that $m \in \NN$ and $2^{m/2} \ge \frac{n}{2}$, and define
	\[
		\mcl P := \pars{ \frac{k}{n 2^{m/2}}}_{k = 0,1,2,\ldots}.
	\]
	Then
	\[
		\osc\pars{ W_n, \mcl P} \le \frac{ \nor{\dot W_n}{\oo}}{n 2^{m/2}} \le 2^{-m/2},
	\]
	and so
	\begin{equation}\label{largem}
		2^{-m} N(2^{-m/2}, W_n) \le T n 2^{-m/2} \le 2T \quad \text{whenever } 2^{m/2} \ge \frac{n}{2}.
	\end{equation}
	
	Combining \eqref{smallm} and \eqref{largem} gives, for some universal constant $C > 0$, for all $n \in \NN$, and for all $R \ge 8 \vee \frac{C}{\eps T}$,
	\[
		\mbf P \pars{ \sup_{m \in \NN} 2^{-m} N(2^{-m/2},W_n) > RT} \le \frac{C}{RT} < \eps.
	\]
	The result then follows from Lemma \ref{L:variation}\eqref{L:variationcrit}.
\end{proof}

\begin{proof}[Proof of Proposition \ref{P:randomwalk}]
	Set $\Phi = S(H,\cdot,u_0)$, where $S$ is the solution operator from \eqref{solutionmap}, and define
	\[
		X = \mathscr P_{1/2,\oo}, \quad \tilde X = C([0,T]), \quad \text{and} \quad Y \in C(\RR^d \times [0,T]).
	\]
	For $n \in \NN$, let $\mu_n$ denote the law of $W_n$ on $\tilde X$, and let $\mu$ denote the Wiener measure on $\tilde X$, that is, the law of $W$. Then the hypotheses of Lemma \ref{L:mapping} are satisfied. Indeed, property \eqref{L:mappingmeas} is a restatement of Donsker's invariance principle \cite{Bill}, property \eqref{L:mappingtight} follows from Lemma \ref{L:boundedinlaw}, and, in view of Lemma \ref{L:oostability} and Theorem \ref{T:mainweak}, $\Phi$ is a well-defined map on $\mathscr P_{1/2,\oo}$ that satisfies property \eqref{L:mappingweakcts}.
	
	As a consequence of Lemma \ref{L:mapping}, as $n \to \oo$, $\Phi_\sharp \mu_n$ converges weakly to $\Phi_\sharp \mu$, which is the desired conclusion.
\end{proof}

\section{A remark on the sharpness of the main results}\label{S:sharp}

We present an example to show that the assumptions of the main theorems, in particular, Theorem \ref{T:mainweak}, cannot be relaxed.

The focus is on describing the behavior of solutions of the approximate problem given, for $\beta \in (0,1)$ and a sequence $(W_n)_{n \in \NN} \subset W^{1,1}([0,T])$, by
\begin{equation}\label{E:counterexample}
	u_{n,t} = |Du|^\beta \dot W_n(t) \quad \text{in } \RR^d \times (0,T] \quad \text{and} \quad u(x,0) = |x| \quad \text{on } \RR^d,
\end{equation}
that is, the Hamiltonian of interest is
\begin{equation}\label{counterexample}
	H_\beta(p) = |p|^\beta.
\end{equation}

\begin{theorem}\label{T:counterexample}
	Let $\beta \in (0,1)$. Then $H_\beta \in \mathscr H_{\alpha,1}$ if and only if $\alpha + \beta > 1$. Moreover, if $\alpha \in (0,1)$ and $u_0(x) = |x|$ for $x \in \RR^d$, then the following hold:
	\begin{enumerate}[(a)]
	\item If $\alpha + \beta < 1$, then there exists a sequence of paths $(W_n)_{n \in \NN} \subset W^{1,1}_0([0,T],\RR)$ such that
	\begin{equation}\label{badpaths}
		\lim_{n \to \oo} W_n = 0 \text{ uniformly,} \quad \sup_{n \in \NN} \nor{W_n}{\mathscr P_{\alpha,\oo}} < \oo,
	\end{equation}
	and, as $n \to \oo$, the solution $u_n$ of \eqref{counterexample} converges to $+\oo$.
	
	\item If $\alpha + \beta = 1$, then for any $c_0 > 0$, there exists a sequence of paths $(W_n)_{n \in \NN} \subset W^{1,1}([0,T])$ satisfying \eqref{badpaths} such that, if $u_n$ solves \eqref{E:counterexample}, then
	\[
		\lim_{n \to \oo} \sup_{(x,t) \in \RR^d \times [0,T]} |u_n(x,t) - (|x| \vee c_0 t^\alpha)| = 0.
	\]
	\end{enumerate}
\end{theorem}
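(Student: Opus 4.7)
The theorem has three pieces: the characterization $H_\beta \in \mathscr H_{\alpha,1}$ iff $\alpha+\beta > 1$, and the constructions in parts (a) and (b). The plan is to handle the characterization via K-functional estimates, then handle the constructions using explicit viscosity solutions of the phase equations $u_t = \pm|Du|^\beta$.

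For the characterization, I would estimate $K(t) := K(t, H_\beta, \dc(B_L), C(B_L))$ for $t \ge 1$. For the upper bound, I approximate $|p|^\beta$ by $\tilde H_\eps(p) := g_\eps(|p|)$, where $g_\eps(r) = r^\beta$ for $r \ge \eps$ and $g_\eps$ is a quadratic cap on $[0,\eps]$ matching $g_\eps$ and $g_\eps'$ at $r=\eps$; then $\nor{H_\beta - \tilde H_\eps}{\oo,B_L} \lesssim \eps^\beta$. To bound $\nor{\tilde H_\eps}{\dc(B_L)}$ sharply, I use a radial convex corrector $\mu$ defined by $\mu''(r) = \beta(1-\beta) r^{\beta-2}\ind_{r > \eps}$. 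A direct computation of the Hessian eigenvalues in polar coordinates (radial $g_\eps''+\mu''$, tangential $(g_\eps'+\mu')/r$) shows both $\tilde H_\eps + \mu(|\cdot|)$ and $\mu(|\cdot|)$ are convex on $\RR^d$, and integration yields $\nor{\mu(|\cdot|)}{\oo,B_L} \lesssim L \eps^{\beta-1}$. Optimizing in $\eps \sim L/t$ gives $K(t) \lesssim L^\beta t^{1-\beta}$. For the lower bound, any $f \in \dc(B_L)$ with $\nor{f - H_\beta}{\oo,B_L} \le \delta$ is Lipschitz with constant $\lesssim \nor{f}{\dc(B_L)}/L$ on $B_{L/2}$; testing at $|p| = (4\delta)^{1/\beta}$ forces $\nor{f}{\dc(B_L)} \gtrsim L\delta^{1-1/\beta}$, giving $K(t) \gtrsim L^\beta t^{1-\beta}$. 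Lemma~\ref{L:interp}\eqref{L:interpsum} then yields $\nor{H_\beta}{\mathscr H_{\alpha,1}} < \oo$ iff $\int_1^\oo t^{-\alpha-\beta}\,dt < \oo$, iff $\alpha + \beta > 1$.

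For parts (a) and (b), I take $W_n$ a piecewise-linear zigzag on $[0,T]$: in each cycle of length $2\delta_n$, $W_n$ drops linearly from $0$ to $-a_n$ and returns linearly to $0$, with $a_n := c \delta_n^\alpha$. This gives $\nor{W_n}{\oo} = a_n \to 0$ and $\nor{W_n}{C^{0,\alpha}} \lesssim c$, hence $\sup_n \nor{W_n}{\mathscr P_{\alpha,\oo}} < \oo$ by Proposition~\ref{P:variationandHolder}. The heart of the argument is the per-cycle plateau lift at origin. Using the explicit viscosity solutions $u = (|x| + W(t))_+ \vee U$ during a decreasing phase (plateau value $U$ fixed, plateau radius expanding at unit rate in rescaled time) and the inward-moving fan of characteristics from the plateau boundary during the increasing phase (which invades the plateau and raises its value), a direct computation gives the per-cycle lift $\Delta U = (1-\beta) \beta^{\beta/(1-\beta)}\, a_n^{1/(1-\beta)} (U + a_n)^{-\beta/(1-\beta)}$. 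Passing to the ODE limit $dU/dt \asymp \Delta U/\delta_n$ and solving yields $U(t) \sim c'(\beta) c^{1/(1-\beta)} t^{1-\beta} \delta_n^{\alpha/(1-\beta) - 1}$. In case (a) ($\alpha+\beta<1$), the exponent of $\delta_n$ is negative, so $U(t_0) \to +\oo$ for every $t_0 > 0$; in case (b) ($\alpha+\beta=1$), the exponent vanishes and one picks $c$ so that $U(t) \to c_0 t^\alpha$ at origin. Away from origin, classical characteristics give $u_n(x,t) = |x| + O(\eps_n) \to |x|$, and viscosity stability yields the claimed uniform limit $u_n \to |x| \vee c_0 t^\alpha$.

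The \textbf{main obstacle} is the precise analysis of the plateau lift and its aggregation into an ODE. One must compute carefully the fan of characteristics emanating from the plateau boundary during each increasing half-cycle, verify that it invades the plateau cleanly to produce the stated $\Delta U$, and then chain the lifts across cycles into a continuum ODE approximation with explicit error control. This asymmetric plateau-formation mechanism---absent for $\dc$-Hamiltonians by Proposition~\ref{P:dcextension}---is precisely what drives the blow-up and non-uniqueness phenomena exhibited in (a) and (b).
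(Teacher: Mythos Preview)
Your overall strategy is sound and parallels the paper's, but there are genuine differences in execution worth noting, and one place where your route is harder than it needs to be.

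\textbf{On the characterization.} Your $K$-functional upper bound via the quadratic-capped approximant plus a radial convex corrector is correct and gives the same rate $K(t)\asymp t^{1-\beta}$ as the paper. For the converse, however, you and the paper diverge: you give a direct analytic lower bound on $K(t)$ by exploiting the Lipschitz constant of $\dc$-functions (Lemma~\ref{L:dcLip}), whereas the paper argues indirectly, deducing $\alpha+\beta>1$ from parts (a)--(b) together with Theorem~\ref{T:mainweak}. Your route is more self-contained and avoids the apparent circularity; the paper's route is shorter once (a)--(b) are in hand.

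\textbf{On the per-cycle lift.} Your formula $\Delta U = (1-\beta)\beta^{\beta/(1-\beta)} a_n^{1/(1-\beta)}(U+a_n)^{-\beta/(1-\beta)}$ is correct for the down-up ordering. But your proposed derivation via ``inward-moving fan of characteristics'' is the hard way: the Hamiltonian $|p|^\beta$ is not $C^1$ at $p=0$, so classical characteristics through the plateau are ill-defined, and making the rarefaction-fan picture rigorous requires care. The paper instead uses the Hopf formula (Lemma~\ref{L:Hopf}): because the initial datum $|x|$ is convex and remains convex under the flow, the solution is computed exactly by tracking the Legendre transform $u^*(p,t)$ and taking convex envelopes (Lemma~\ref{L:onetooth}). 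This is both cleaner and immediately rigorous; I recommend you switch to it.

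\textbf{On the aggregation.} You propose to pass to a continuum ODE for $U(t)$ and control the discretization error. The paper instead keeps the exact discrete recursion $a_{k+1}=a_k+\frac{1-\beta}{\beta}a_k^{-\beta/(1-\beta)}$ and proves matching two-sided bounds $\beta^{-(1-\beta)}k^{1-\beta}\le a_k\le (\beta^{-1}k+C\log k)^{1-\beta}$ by elementary manipulation (Lemma~\ref{L:sequence}). Together with the per-cycle sup-norm control from Lemma~\ref{L:onetooth}, this yields the uniform two-sided bounds on $u$ in Lemma~\ref{L:solutionformula}, which after rescaling $u_n(x,t)=2^{-n\alpha}u(2^{n\alpha}x,2^n t)$ give both the blow-up in (a) and the sharp uniform limit $|x|\vee c_0 t^\alpha$ in (b). Your ODE-limit argument would reproduce the leading asymptotics but needs extra work to get the uniform-in-$(x,t)$ convergence required in (b); the discrete bounds give this for free.
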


We first recall the Hopf formula for solutions of Hamilton-Jacobi equations with convex initial data. For a proof, see \cite{LR}.

\begin{lemma}\label{L:Hopf}
Let $H \in C(\RR^d)$, and assume that $u_0 \in UC(\RR^d)$ is convex. Then the unique viscosity solution of
\[
	u_t = H(Du) \quad \text{in } \RR^d \times (0,\oo) \quad \text{and} \quad u(\cdot,0) = u_0 \quad \text{in } \RR^d
\]
is given by
\begin{equation}\label{Hopf}
	u(x,t) = \sup_{p \in \RR^d} \left\{ p \cdot x - u_0^*(p) + tH(p) \right\} \quad \text{for } (x,t) \in \RR^d \times [0,\oo).
\end{equation}
\end{lemma}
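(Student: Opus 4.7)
Define $\Phi(x,t) := \sup_{p \in \RR^d}\{p\cdot x - u_0^*(p) + tH(p)\}$. The initial condition is immediate: at $t = 0$, $\Phi(x, 0) = u_0^{**}(x) = u_0(x)$ by the Fenchel-Moreau theorem, since $u_0$ is convex and lower semi-continuous. Because $u_0 \in UC(\RR^d)$ is convex, it is also globally Lipschitz, so $u_0^* \equiv +\oo$ outside $B_{\Lip(u_0)}$; in particular, the supremum in $\Phi$ is effectively over a compact set and is attained, and $\Phi$ is continuous.

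For the viscosity subsolution property, I would observe that for each fixed $p \in \RR^d$, the affine-in-$x$, linear-in-$t$ function $v_p(x, t) := p\cdot x - u_0^*(p) + tH(p)$ is a smooth classical solution of $\partial_t v = H(Dv)$, since $Dv_p \equiv p$ and $\partial_t v_p \equiv H(p) = H(Dv_p)$. Hence $\Phi = \sup_p v_p$ is a viscosity subsolution of $u_t = H(Du)$ by the standard stability of subsolutions under suprema.

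For the supersolution property, the natural route is by approximation. I would replace $u_0$ by smooth, strictly convex $u_0^\eps$ and $H$ by smooth $H^\eps$ converging locally uniformly. In this smooth, strictly convex regime the Legendre transform $(u_0^\eps)^*$ is smooth, and the first-order condition for the maximizer $p^*(x,t)$ reads $x = \nabla (u_0^\eps)^*(p^*) - t DH^\eps(p^*)$; this parametrizes the characteristics of $u_t = H^\eps(Du)$ along which $p^*$ is constant, so the Hopf formula is easily checked to give the classical, and hence viscosity, solution of the $\eps$-problem. Stability of viscosity solutions under locally uniform convergence of the data (see \cite{CIL}) then transfers the identity to the limit. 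Uniqueness follows from the Crandall-Ishii-Lions comparison principle \cite{CIL}.

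The main obstacle in a direct (non-approximation) argument is precisely the supersolution verification: since $p \mapsto u_0^*(p) - tH(p)$ need not be convex ($H$ is not assumed concave), its bidual is a strict convex envelope, and the convex subdifferential $\partial_x \Phi$ need not consist only of sup-optimizers $(p^*, H(p^*))$. The ``sup-of-classical-solutions'' stability then yields only the subsolution inequality directly, which is why the regularization detour above is the convenient route to the full viscosity identity.
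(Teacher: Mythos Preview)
The paper does not prove this lemma; it simply states it and refers the reader to \cite{LR} for a proof. So there is no ``paper's own proof'' to compare against.

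Your outline is a standard and essentially correct route to the Hopf formula. The subsolution half is clean: each $v_p(x,t)=p\cdot x-u_0^*(p)+tH(p)$ is a smooth solution, and the supremum of (sub)solutions is a subsolution. The observation that $u_0\in UC(\RR^d)$ convex forces $u_0$ to be globally Lipschitz, so that the effective domain of $u_0^*$ is contained in $\overline{B_{\Lip(u_0)}}$, is the right way to get finiteness and continuity of $\Phi$, and it also guarantees the comparison principle applies.

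The supersolution half via regularization is where you are hand-waving the most. The sentence ``the Hopf formula is easily checked to give the classical, and hence viscosity, solution of the $\eps$-problem'' hides the real work: even with smooth $H^\eps$ and smooth strictly convex $u_0^\eps$, the map $p\mapsto \nabla(u_0^\eps)^*(p)-tDH^\eps(p)$ need not be a diffeomorphism for all $t>0$ (nothing prevents characteristics from crossing, since $H^\eps$ is not assumed convex), so you do not automatically get a \emph{classical} solution globally in time. What you can do instead, and what the standard references actually do, is verify directly that $\Phi^\eps$ is a viscosity supersolution using the convexity of $\Phi^\eps(\cdot,t)$: at a point of contact from below by a smooth test function $\phi$, the gradient $D\phi(x_0,t_0)$ lies in $\partial_x\Phi^\eps(x_0,t_0)$, and one shows (using that $\Phi^\eps(\cdot,t)$ is the convex envelope of $p\mapsto u_0^*(p)-tH^\eps(p)$ evaluated via Legendre duality) that $\phi_t(x_0,t_0)\ge H^\eps(D\phi(x_0,t_0))$. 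Once that is in place, your stability-and-comparison conclusion is fine.
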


We use this formula to make some computations and bounds for a simple driving path.

\begin{lemma}\label{L:onetooth}
	For $t \in [0,2]$, set $W(t) = 1-|t-1|$, let $\beta \in (0,1)$, and, for $a > 0$, let $u$ solve
	\[
		u_t = \frac{1}{\beta} |Du|^\beta \dot W(t) \quad \text{in } \RR^d \times [0,2] \quad \text{and} \quad u(x,0) = |x| \vee a \quad \text{for } x \in \RR^d.
	\]
	Then, for all $(x,t) \in \RR^d \times [0,2]$,
	\[
		|x| \vee a \le u(x,t) \le \pars{ |x| \vee a } + \frac{1}{\beta} \quad \text{and} \quad u(x,2) = |x| \vee \left[ a + \frac{1-\beta}{\beta} a^{-\beta/(1-\beta)} \right].
	\]
	\end{lemma}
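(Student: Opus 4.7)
The plan is to apply the Hopf formula (Lemma~\ref{L:Hopf}) separately on the two intervals $[0,1]$ and $[1,2]$ on which $\dot W$ is constant; the key observation that makes this work twice is that $v:=u(\cdot,1)$ will itself be convex and hence valid initial data for a second application of Hopf.

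On $[0,1]$ the equation is $u_t=\tfrac{1}{\beta}|Du|^\beta$ with convex datum $u_0(x)=|x|\vee a$, whose Legendre transform is $u_0^*(p)=a(|p|-1)$ on $\{|p|\le 1\}$ and $+\infty$ otherwise. Hopf yields
\[
u(x,t) = a+\sup_{|p|\le 1}\Bigl\{p\cdot x-a|p|+\tfrac{t}{\beta}|p|^\beta\Bigr\},
\]
and by radial symmetry this reduces to the scalar optimization $\sup_{r\in[0,1]}\{(|x|-a)r+(t/\beta)r^\beta\}$. A routine calculus analysis (the interior critical point $r^{\ast}=(t/(a-|x|))^{1/(1-\beta)}$ lies in $(0,1]$ exactly when $|x|\le a-t$) produces the two-piece formula $u(x,t)=a+\tfrac{1-\beta}{\beta}t^{1/(1-\beta)}(a-|x|)^{-\beta/(1-\beta)}$ on the ``lifted plateau'' region $|x|\le a-t$, and $u(x,t)=|x|+t/\beta$ otherwise. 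At $t=1$ the two pieces match in value and gradient at $|x|=a-1$, so $v=u(\cdot,1)$ is convex with apex value $v(0)=b$.

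On $[1,2]$ I would apply Hopf once more to the convex datum $v$, yielding
\[
u(x,2) = \sup_{|p|\le 1}\Bigl\{p\cdot x-v^{\ast}(p)-\tfrac{1}{\beta}|p|^\beta\Bigr\},
\]
and identify $v^{\ast}$ with the convex envelope in $p$ of the radial function $F(p):=u_0^{\ast}(p)-\tfrac{1}{\beta}|p|^\beta=a(|p|-1)-\tfrac{1}{\beta}|p|^\beta$: this is the ``iterated-Hopf'' principle, reflecting that $v$ is by construction the Legendre dual of $F$ and so $v^{\ast}=F^{\ast\ast}$ is precisely the convex lower semicontinuous envelope. Because $F$ attains its interior minimum at $|p|=r^{\ast}:=a^{-1/(1-\beta)}$ with $F(r^{\ast})=-b$, this envelope equals the constant $-b$ on $\{|p|\le r^{\ast}\}$ and coincides with $F$ on $\{r^{\ast}\le|p|\le 1\}$. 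Substituting into the phase-$2$ Hopf supremum and splitting across these two $p$-regions, the ``flat'' piece contributes the constant candidate $b$ (taking $p=0$) while the ``slanted'' piece contributes the linear candidate $|x|$ (attained at $|p|=1$); comparing the two yields $u(x,2)=|x|\vee b$.

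Finally, for the pointwise bounds, the lower bound $u(x,t)\ge|x|\vee a$ follows by testing $p=0$ (giving $u\ge a$) and $|p|=1$ aligned with $x$ (giving $u\ge|x|$) as specific witnesses in each Hopf supremum, and the upper bound $u(x,t)\le(|x|\vee a)+1/\beta$ follows from comparison with the explicit viscosity supersolution $\psi(x,t):=(|x|\vee a)+\tfrac{1}{\beta}\max_{s\in[0,t]} W(s)$, which matches $u_0$ at $t=0$ and satisfies $\psi_t\ge\tfrac{1}{\beta}|D\psi|^\beta\dot W$ in both the cone and plateau regions (verified directly, noting that $\psi_t=\dot W/\beta$ on $[0,1]$ and $\psi_t=0$ on $[1,2]$, while at the corner $|x|=a$ the condition holds for every $p$ in the spatial subdifferential of $\psi$). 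The principal technical obstacle is the identification of $v^{\ast}$ with the convex envelope of $F$ and the subsequent case-split of the phase-$2$ Hopf supremum across the two $p$-regimes; once both are in hand, the break-even point between the flat and slanted candidates occurs precisely at $|x|=b$, which is what gives the plateau-cone structure of $u(\cdot,2)$.
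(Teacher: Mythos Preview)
Your approach mirrors the paper's: iterate the Hopf formula, identifying $v^{*}=u^{*}(\cdot,1)$ with the convex envelope of $F(p)=a(|p|-1)-\tfrac{1}{\beta}|p|^\beta$ on $\overline{B_1}$, then run Hopf again on $[1,2]$. The paper stays entirely in the dual, reading the pointwise bounds off the monotonicity of $t\mapsto u^{*}(\cdot,t)$ on each sub-interval; your explicit formula for $u$ on $[0,1]$ and your comparison supersolution for the upper bound are acceptable alternatives that cost a little more computation but require no new ideas.

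There is, however, a genuine gap in your phase-$2$ computation. Splitting the Hopf supremum across the two $p$-regions and asserting that the flat region ``contributes $b$'' and the slanted region ``contributes $|x|$'' is not correct as stated: on $\{|p|\le r^{*}\}$ the integrand $|x|r+b-\tfrac{1}{\beta}r^\beta$ is \emph{convex} in $r$, so for $|x|>a/\beta$ its maximum on $[0,r^{*}]$ is attained at the right endpoint and strictly exceeds $b$. The missing observation is that the combined radial profile
\[
\psi(r):=v^{*}(r\omega)+\tfrac{1}{\beta}r^\beta
=\begin{cases}-b+\tfrac{1}{\beta}r^\beta,&0\le r\le r^{*},\\ a(r-1),&r^{*}\le r\le 1,\end{cases}
\]
is \emph{concave} on $[0,1]$ (concave on the first piece, affine on the second, with $C^1$ matching at $r^{*}$). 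Hence $r\mapsto |x|r-\psi(r)$ is convex on $[0,1]$ and its supremum is attained at an endpoint, yielding exactly $b$ (at $r=0$) or $|x|$ (at $r=1$), so $u(x,2)=|x|\vee b$. This is precisely what the paper does: it notes $\psi$ is concave and increasing, so the convex envelope of $p\mapsto\psi(|p|)$ on $\overline{B_1}$ is the cone $b(|p|-1)$, whose Legendre transform is $|x|\vee b$. Once you insert this concavity step in place of the region-by-region split, your argument is complete.
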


\begin{proof}
	We first compute
	\[
		u^*(p,0) =
		\begin{dcases}
			a \pars{ |p| - 1} & \text{if } |p| \le 1, \text{ and} \\
			+\oo & \text{if } |p| > 1.
		\end{dcases}
	\]
	The contractive property of the equation implies that $\nor{Du}{\oo} \le 1$, and, therefore, for all $t \ge 0$, $u^*(p,t)$ is finite if and only if $|p| \le 1$. 
	
	By Lemma \ref{L:Hopf}, for $|p| \le 1$ and $t \in [0,1]$,
	\[
		u^*(p,t) = \pars{ u^*(\cdot,0) - \frac{t}{\beta} |\cdot|^\beta}^{**}.
	\]
	This implies that, on $[0,1]$, $t \mapsto u^*(\cdot,t)$ is nonincreasing, and therefore, $t \mapsto u(\cdot,t)$ is nondecreasing. Moreover, $u^*(\cdot,1)$ is equal to the lower convex envelope of the radial function 
	\[
		\oline{B_1} \ni p \mapsto a\pars{|p| - 1} - \frac{1}{\beta}|p|^\beta =: \phi(|p|).
	\]
	The function $\phi: [0,1] \to \RR$ is convex and attains a global minimum at $a^{-1/(1-\beta)}$, where it achieves the value
	\[
		- \frac{1-\beta}{\beta} a^{- \beta/(1-\beta)} - a,
	\]
	and, therefore,
	\[
		u^*(p,1)
		:= 
		\begin{dcases}
			- \frac{1-\beta}{\beta} a^{- \beta/(1-\beta)} - a & \text{if } |p| \le a^{-1/(1-\beta)}, \text{ and}\\
			a \pars{|p| - 1} - \frac{1}{\beta}|p|^\beta & \text{if } a^{-1/(1-\beta)} < |p| \le 1,
		\end{dcases}
	\]
	For $|p| \le 1$ and $t \in [0,1]$, we then have the bounds
	\[
		a\pars{ |p| - 1} - \frac{1}{\beta} \le u^*(p,t) \le a \pars{ |p| - 1}.
	\]
	Taking the Legendre transform gives the desired bounds for $u$ on $\RR^d \times [0,1]$.
	
	Now, $u^*(\cdot,2)$ is the lower convex envelope of the radial function
	\[
		p \mapsto u^*(p,1) + \frac{1}{\beta}|p|^\beta := \psi(|p|),
	\]
	where
	\[
		\psi(r) := 
		\begin{dcases}
			- \frac{1-\beta}{\beta} a^{- \beta/(1-\beta)} - a + \frac{1}{\beta}r^\beta & \text{if } 0 \le r \le a^{-1/(1-\beta)}, \text{ and}\\
			a \pars{r - 1} & \text{if } a^{-1/(1-\beta)} < r \le 1.
		\end{dcases}
	\]
	The function $\psi$ is concave and increasing on $[0,1]$, and, therefore, the lower semicontinuous envelope of $p \mapsto \psi(|p|)$ is
	\[
		u^*(p,2) = \pars{ a + \frac{1-\beta}{\beta} a^{-\beta/(1-\beta)} }(|p| - 1).
	\]
	Upon taking the Legendre transform, this gives the desired formula for $u(\cdot,2)$, and, in view of the fact that $t \mapsto u^*(\cdot,t)$ is nondecreasing on $[1,2]$, the claimed bounds for $u$ on $\RR^d \times [1,2]$.
\end{proof}

The following lemma characterizes the long term behavior of a certain recursively-defined sequence that arises in the coming proofs.

\begin{lemma}\label{L:sequence}
Assume that $0 < \beta < 1$, $a_1 \ge \beta^{-(1-\beta)}$, and, for $k \in \NN$,
\begin{equation}\label{recursive}
	a_{k+1} = a_k + \frac{1-\beta}{\beta} a_k^{- \beta/(1-\beta)}.
\end{equation}
Then there exists a constant $C = C_\beta > 0$ such that, for all $k \in \NN$,
\[
	\beta^{-(1-\beta)} k^{1-\beta} \le a_k \le \left[ a_1^{- 1/(1-\beta)} + \beta^{-1} (k-1) + C \log k\right]^{1-\beta}.
\]
\end{lemma}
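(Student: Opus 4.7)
The natural first move is to remove the awkward exponent $\beta/(1-\beta)$ by setting $b_k := a_k^{1/(1-\beta)}$, so that $a_k = b_k^{1-\beta}$. Raising the recurrence \eqref{recursive} to the power $1/(1-\beta)$ one shows, after factoring $b_k$ out, that
\[
	b_{k+1} = b_k \pars{ 1 + \frac{1-\beta}{\beta b_k} }^{1/(1-\beta)}.
\]
The hypothesis $a_1 \ge \beta^{-(1-\beta)}$ becomes $b_1 \ge \beta^{-1}$, and the bounds to be proved read $k\beta^{-1} \le b_k \le b_1 + \beta^{-1}(k-1) + C\log k$ (interpreting the $-1$ in the exponent of $a_1$ in the statement as a typo, in line with the $k=1$ case).

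For the lower bound, I would apply Bernoulli's inequality $(1+x)^r \ge 1 + rx$, which holds for all $r \ge 1$ and $x \ge 0$, with $r = 1/(1-\beta)$ and $x = (1-\beta)/(\beta b_k)$. This yields $b_{k+1} \ge b_k + \beta^{-1}$, and hence, inductively, $b_k \ge b_1 + (k-1)\beta^{-1} \ge k\beta^{-1}$. Taking the $(1-\beta)$ power returns the advertised lower bound $a_k \ge \beta^{-(1-\beta)} k^{1-\beta}$.

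For the upper bound, I would use a second-order Taylor estimate: there exists $C_\beta > 0$ such that
\[
	(1+x)^{1/(1-\beta)} \le 1 + \frac{x}{1-\beta} + C_\beta x^2 \quad \text{for all } x \in [0,1-\beta].
\]
Since $b_k \ge \beta^{-1}$, we have $x_k := (1-\beta)/(\beta b_k) \le 1-\beta$, so the bound applies uniformly in $k$ and produces
\[
	b_{k+1} \le b_k + \frac{1}{\beta} + \frac{C_\beta'}{b_k}
\]
for a new constant $C_\beta' > 0$. Telescoping from $1$ to $k-1$ gives $b_k \le b_1 + (k-1)\beta^{-1} + C_\beta' \sum_{j=1}^{k-1} b_j^{-1}$, and inserting the already-proved lower bound $b_j \ge j\beta^{-1}$ bounds the sum by $\beta \sum_{j=1}^{k-1} j^{-1} \le C \log k$. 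Raising to the $(1-\beta)$ power finishes the argument.

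The only step requiring genuine care is verifying the Taylor inequality uniformly in $k$, but this is guaranteed once one records that $b_k$ is uniformly bounded below by $\beta^{-1}$ from the very first step of the induction; after that, both bounds follow by straightforward telescoping.
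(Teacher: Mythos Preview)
Your proof is correct and follows the same outline as the paper: substitute $b_k = a_k^{1/(1-\beta)}$, use Bernoulli/concavity to get $b_{k+1} \ge b_k + \beta^{-1}$ for the lower bound, and then control the second-order error for the upper bound and telescope using the already-established lower bound. The one technical difference is in the upper bound step: the paper uses a secant-line argument based on the concavity of $x \mapsto (1+x)^{1-\beta}$, introducing auxiliary constants $\lambda_k = k\bigl[(1+\tfrac{1-\beta}{k})^{1/(1-\beta)}-1\bigr] \le 1 + C/k$ to obtain $b_{k+1} \le b_k + \lambda_k \beta^{-1}$, whereas you use a direct second-order Taylor bound on $(1+x)^{1/(1-\beta)}$ to get $b_{k+1} \le b_k + \beta^{-1} + C_\beta'/b_k$. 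Both yield the same telescoped estimate, and your version is slightly more streamlined. Your observation that the exponent of $a_1$ in the stated upper bound should be $1/(1-\beta)$ rather than $-1/(1-\beta)$ is also correct and matches what the paper's own proof actually establishes.
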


\begin{proof}
For $x \ge 0$, define
\[
	f(x) := (1+x)^{1-\beta}
\]
and, for $k \in \NN$, set $b_k := a_k^{1/(1-\beta)}$. The concavity of $f$ implies that, for all $x \ge 0$, $f(x) \le 1 + (1-\beta)x$, and so, for all $k \in \NN$,
\[
	a_{k+1} = a_k \pars{ 1 + (1-\beta) \frac{b_k^{-1}}{\beta}}
	\ge a_k \pars{ 1 + \beta^{-1} b_k^{-1}}^{1-\beta}
	= \pars{ b_k + \beta^{-1}}^{1-\beta}.
\]
Therefore, $b_{k+1} \ge b_k + \beta^{-1}$, and the first inequality follows from an induction argument.

Now, for $k \in \NN$, set
\[
	\lambda_k := k \left[ \pars{ 1 + \frac{1-\beta}{k}}^{1/(1-\beta)} - 1 \right],
\]
and observe that, for some $C = C_\beta > 0$,
\[
	\lambda_k \le 1 + \frac{C}{k}.
\]
The concavity of $f$ implies that
\[
	1 + (1-\beta)x \le f(\lambda_k x) \quad \text{for all } 0 \le x \le \frac{1}{k}.
\]
For any $k \in \NN$, the first inequality gives
\[
	0 < \frac{a_k^{-1/(1-\beta)}}{\beta} \le \frac{1}{k},
\]
and, therefore,
\[
	a_{k+1} = a_k \pars{ 1 + (1-\beta) \frac{a_k^{-1/(1-\beta)}}{\beta} } \le a_k \pars{ 1 + \lambda_k \beta^{-1} b_k}^{1-\beta} = \pars{ b_k + \lambda_k \beta^{-1} }^{1-\beta}.
\]
As a consequence, $b_{k+1} \le b_k + \lambda_k \beta^{-1}$, and so
\[
	b_k \le b_1 + \beta^{-1} \sum_{j=2}^k \lambda_j.
\]
The result now follows, since, for some $C = C_\beta > 0$ that changes from line to line,
\[
	b_k \le b_1 + \beta^{-1}(k-1) + C \sum_{j=2}^k \frac{1}{j} \le b_1 + \beta^{-1} (k-1) + C \log k.
\]
\end{proof}

As a consequence of Lemmas \ref{L:onetooth} and \ref{L:sequence}, we have the following result.

\begin{lemma}\label{L:solutionformula}
	Set $W(0) = 0$ and, for $k = 0,1,2,\ldots,$
	\begin{equation}\label{teeth}
		\dot W(t) :=
		\begin{dcases}
			+1 & \text{if } t \in (2k,2k+1), \text{ and}\\
			-1 & \text{if } t \in (2k+1,2k+2),
		\end{dcases}
	\end{equation}
	and let $u$ be the solution of
	\begin{equation}\label{E:unscaledteeth}
		u_t =  \frac{1}{\beta}|Du|^\beta \dot W(t) \quad \text{in } \RR^d \times (0,\oo) \quad \text{and} \quad u(x,0) = |x| \quad \text{on } \RR^d.
	\end{equation}
	Then, for some $C = C_\beta > 0$, and for all $k \in 0,1,2,\ldots$ and $(x,t) \in \RR^d \times [2k,2k+2]$,
	\[
		|x| \vee \pars{ \beta^{-(1-\beta)} k^{1-\beta}} 
		\le u(x,t)
		\le |x| \vee \left[ \beta^{-1/(1-\beta)} + \beta^{-1} (k-1) + C \log k \right]^{1-\beta} + \frac{1}{\beta}.
	\]
\end{lemma}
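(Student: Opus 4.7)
The plan is to iterate Lemma \ref{L:onetooth} across the successive teeth of $W$, with the resulting recursion controlled by Lemma \ref{L:sequence}. Since $W$ restricted to $[2k, 2k+2]$ is the translate of $W$ restricted to $[0,2]$, and the equation is autonomous in $(x,u)$, the restriction of $u$ to $[2k, 2k+2]$ coincides with the solution on $[0,2]$ of the same problem with initial datum $u(\cdot, 2k)$. So it suffices to establish, by induction on $k$, that $u(\cdot, 2k) = |\cdot| \vee a_k$ for a sequence $(a_k)_{k \ge 0}$ with $a_0 = 0$, $a_1 = 1/\beta$, and $a_{k+1} = a_k + \frac{1-\beta}{\beta}a_k^{-\beta/(1-\beta)}$ for $k \ge 1$.

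The main obstacle, and the one step requiring genuine work, is the base case $k = 0$, since Lemma \ref{L:onetooth} is stated only for $a > 0$ while here $a_0 = 0$. To handle this I would apply the Hopf formula (Lemma \ref{L:Hopf}) directly. On $[0,1]$, $\dot W \equiv 1$, and $u^*(p,0)$ is the convex indicator of $\oline{B_1}$, so at time $t \in [0,1]$ the dual equals $-\tfrac{t}{\beta}|p|^\beta$ on $\oline{B_1}$ and $+\oo$ outside; its convex envelope on $\oline{B_1}$ is the affine secant, and Legendre-transforming gives $u(x,t) = |x| + \tfrac{t}{\beta}$. On $[1,2]$, $\dot W \equiv -1$, so $u^*(p,t) = -\tfrac{1}{\beta} + \tfrac{t-1}{\beta}|p|^\beta$ on $\oline{B_1}$; the same convex-envelope-plus-Legendre procedure produces $u(x,t) = \pars{ |x| - \tfrac{t-1}{\beta}}_+ + \tfrac{1}{\beta}$, whence $u(\cdot, 2) = |\cdot| \vee (1/\beta)$ and thus $a_1 = 1/\beta$. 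The elementary inequality $\beta^{-1} \ge \beta^{-(1-\beta)}$, equivalent to $\beta \le 1$, verifies the hypothesis $a_1 \ge \beta^{-(1-\beta)}$ of Lemma \ref{L:sequence}.

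The inductive step for $k \ge 1$ is then an immediate application of Lemma \ref{L:onetooth} with $a = a_k > 0$: it delivers both $u(\cdot, 2k+2) = |\cdot| \vee a_{k+1}$ with $a_{k+1}$ obeying the claimed recursion, and the within-tooth sandwich
\[
	|x| \vee a_k \le u(x,t) \le (|x| \vee a_k) + \tfrac{1}{\beta} \quad \text{for } (x,t) \in \RR^d \times [2k, 2k+2].
\]
Inserting the two-sided bounds on $a_k$ from Lemma \ref{L:sequence} completes the proof; for the upper bound I note that $a_1 = 1/\beta$ gives $a_1^{-1/(1-\beta)} = \beta^{1/(1-\beta)} \le \beta^{-1/(1-\beta)}$ (using $\beta \le 1$ once more and monotonicity of $t \mapsto t^{1-\beta}$), which weakens the output of Lemma \ref{L:sequence} just enough to match the stated form. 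The $k = 0$ estimate reduces to the trivial bounds $|x| \le u(x,t) \le |x| + 1/\beta$ already obtained from the Hopf computation; the constant $C = C_\beta$ can be enlarged to absorb the first few iterates where $\log k$ is small or ill-defined.
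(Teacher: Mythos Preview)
Your proposal is correct and follows essentially the same approach as the paper: handle the base interval $[0,2]$ by a direct Hopf computation (your formula $(|x|-\tfrac{t-1}{\beta})_+ + \tfrac{1}{\beta}$ on $[1,2]$ is algebraically identical to the paper's $|x|\vee\tfrac{t-1}{\beta} + \tfrac{2-t}{\beta}$), then iterate Lemma~\ref{L:onetooth} and invoke Lemma~\ref{L:sequence}. If anything you are slightly more careful than the paper in verifying the hypothesis $a_1 \ge \beta^{-(1-\beta)}$ and in flagging the small-$k$ edge cases.
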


\begin{proof}
	We first consider $t \in [0,2]$. As in the proof of Lemma \ref{L:onetooth}, $u^*(p,t)$ is finite if and only if $|p| \le 1$.

	For all $t \in [0,1]$,
	\[
		u^*(\cdot,t) = \pars{ -  t\beta^{-1} |\cdot|^\beta}^{**},
	\]
	that is, $u^*(\cdot,t)$ is the lower convex envelope of the radial function $p \mapsto -t \beta^{-1} |p|^\beta$ on $\oline{B_1}$, which yields, for $|p| \le 1$ and $t \in [0,1]$,
	\[
		u^*(p,t) = -t \beta^{-1}.
	\]
	As a consequence, for $(x,t) \in \RR^d \times [0,1]$,
	\[
		u(x,t) = |x| + \frac{t}{\beta},
	\]
	and the claimed bounds are immediate.

	Next, for $t \in [1,2]$, we have $u^*(\cdot,t) = ( -\beta^{-1} + \beta^{-1}(t-1) |\cdot|^\beta)^{**}$, and so, for $|p| \le 1$ and $t \in [1,2]$,
	\[
		u^*(p,t) =  \frac{t-2}{\beta} + \frac{t-1}{\beta} (|p| - 1),
	\]
	which gives, for $(x,t) \in \RR^d \times [1,2]$,
	\[
		u(x,t) = |x| \vee \pars{ \frac{t-1}{\beta} } + \frac{2-t}{\beta}.
	\]
	This yields the desired bounds for $t \in [1,2]$. 
	
	Finally, we have $u(x,2) = |x| \vee \beta^{-1}$, and so, because $\beta^{-1} > \beta^{-(1-\beta)}$, the rest of the proof follows from an inductive argument and Lemmas \ref{L:onetooth} and \ref{L:sequence}.
\end{proof}

\begin{proof}[Proof of Theorem \ref{T:counterexample}]
(a) We first show that, if $\alpha + \beta > 1$, then $H_\beta \in \mathscr H_{\alpha,1}$.

For $\delta > 0$, define $H_{\beta,\delta}(p) := |p|^\beta \vee \delta^\beta$. Then $\nor{H_\beta - H_{\beta,\delta}}{\oo} = \delta^\beta$, and
\[
	H_{\beta,\delta} = H_1 - H_2, \quad \text{where} \quad H_1(p) = \beta\delta^{\beta-1}\pars{ |p| - \delta}_+ \quad \text{and $H_2$ is convex}.
\]
Therefore, for some constant $C > 0$ and any $L > 0$,
\[
	\nor{H_{\beta,\delta}}{\dc(B_L)} \le \nor{H_1}{\oo,B_L} + \nor{H_2}{\oo,B_L} \le CL\delta^{\beta -1}.
\]
Now, for $n \in \NN$, set $\delta = 2^{-n}$. Then
\[
	K(2^n,H_\beta, \dc(B_L),C(B_L)) \le \nor{H_{\beta,\delta}}{\dc(B_L)} + 2^n \nor{H_\beta - H_{\beta,\delta}}{\oo,B_L} \le C L2^{n(1-\beta)}.
\]
Then $H_\beta \in \mathscr H_{\alpha,1}$ as a consequence of Lemma \ref{L:interp}\eqref{L:interpsum}, the fact that $\alpha + \beta > 1$, and
\[
	\sum_{n=1}^\oo 2^{-n\alpha}K(2^n, H_\beta, \dc(B_L),C(B_L)) \le CL \sum_{n=1}^\oo 2^{-n(\alpha + \beta - 1)} < \oo.
\]

Conversely, if $H_\beta \in \mathscr H_{\alpha,1}$, then the conclusions of Theorem \ref{T:mainweak} hold, and, therefore, $\alpha + \beta > 1$ in view of the examples in parts (b) and (c) below.

(b) Let $W$ be as in \eqref{teeth} and let $u$ be the solution of \eqref{E:unscaledteeth}. For $n \in \NN$, define
\[
	u_n(x,t) := 2^{-n\alpha} u(2^{n\alpha}x, 2^n t) \quad \text{and} \quad W_n(t) := \beta^{-1} 2^{-n\alpha} W(2^n t).
\]
Then $u_n$ solves \eqref{E:counterexample}. Note that $\lim_{n \to \oo} W_n = 0$ uniformly on $[0,T]$, and, by Proposition \ref{P:variationandHolder}, for some constant $C = C_{\alpha} > 0$,
\[
	\sup_{n \in \NN} \nor{W_n}{\mathscr P_{\alpha,\oo}} \le C \sup_{n \in \NN} \nor{W_n}{C^\alpha} < \oo.
\]
Now let $(x,t) \in \RR^d \times [0,T]$ and let $k \in \NN$ be such that $k \le 2^{n-1}t < k+1$. If $n$ is sufficiently large, then $k \ge 2$, and so, by Lemma \ref{L:solutionformula}, for some constant $c = c_{\alpha,\beta} > 0$,
\[
	u^n(x,t) = 2^{-n\alpha} u(2^{n\alpha}x, 2^{n} t) \ge \beta^{-(1-\beta)} k^{1-\beta} 2^{-n\alpha} \ge c2^{n(1-\alpha-\beta)} t^{1-\beta} \xrightarrow{n \to \oo} +\oo,
\]
as desired.

(c) As in part (b), let $W$ be as in \eqref{teeth} and let $u$ be the solution of \eqref{E:unscaledteeth} with $\beta = 1-\alpha$. For $n \in \NN$, $(x,t) \in \RR^d \times [0,T]$, and a constant $\lambda > 0$ to be determined, define
\[
	u_n(x,t) := \lambda 2^{-n \alpha} u(\lambda^{-1} 2^{n\alpha} x, 2^n t) \quad \text{and} \quad W_n(t) = \frac{\lambda}{1-\alpha} 2^{-n\alpha} W(2^n t).
\]
Then $u_n$ solves \eqref{E:counterexample} with $\beta = 1-\alpha$, and, once again,
\[
	\lim_{n \to \oo} \nor{W_n}{\oo,[0,T]} = 0 \quad \text{and} \quad \sup_{n \in \NN} \nor{W_n}{\mathscr P_{\alpha,\oo}} < \oo.
\]
For $(x,t) \in \RR^d \times [0,T]$ and $n \in \NN$, let $k = 0,1,2,\ldots$ be such that $k \le 2^{n-1} t \le k+1$. Then, by Lemma \ref{L:solutionformula},
\begin{equation}\label{unlower}
	\begin{split}
	u_n(x,t) &\ge |x| \vee \left[ (1-\alpha)^{-\alpha} \lambda 2^{-n\alpha} k^\alpha \right]\\
	&\ge |x| \vee \left[ (1-\alpha)^{-\alpha} \lambda 2^{-n\alpha} (2^{n-1} t - 1)_+^\alpha \right]\\
	&= |x| \vee \left[ (1-\alpha)^{-\alpha} \lambda \pars{ \frac{t}{2} - 2^{-n} }_+^\alpha \right],
	\end{split}
\end{equation}
and, for some $C = C_\alpha > 0$, 
\begin{equation}\label{unupper}
	\begin{split}
	u_n(x,t) &\le |x| \vee \left[ \lambda 2^{-n\alpha} \pars{ (1-\alpha)^{-1/\alpha} + \frac{k-1}{1-\alpha} + C \log k}_+^\alpha \right] + \frac{\lambda}{2^{n\alpha}(1-\alpha)}\\
	&\le |x| \vee \left[ \lambda \pars{ 2^{-n} (1-\alpha)^{-1/\alpha} + \frac{t}{2(1-\alpha)} + C(\log 2)n2^{-n} + C2^{-n} \log t }_+^\alpha \right] + \frac{\lambda}{2^{n\alpha}(1-\alpha)}.
	\end{split}
\end{equation}
The bounds \eqref{unlower} and \eqref{unupper} together give
\[
	\lim_{n \to \oo} \sup_{(x,t) \in \RR^d \times [0,T]} \abs{ u_n(x,t) - |x| \vee \pars{ \frac{\lambda}{2^\alpha(1-\alpha)^\alpha} t^\alpha} } = 0,
\]
and the proof is finished upon setting $\lambda := 2^\alpha(1-\alpha)^\alpha c_0$.
\end{proof}

\appendix

\section{The extension property for $\dc$-Hamiltonians}\label{S:LS}

We give a stability estimate for pathwise Hamilton-Jacobi equations in the regime where the Hamiltonian belongs to $\dc$. The arguments are essentially the same as those in \cite{LS2, Snotes}.

\begin{proposition}\label{P:dcextension}
Assume that, for each $i = 1,2,\ldots,m$, $H^i \in \dc_\loc$. Then, for all $L > 0$, there exists $C = C_L > 0$ such that, if $\nor{Du_0}{\oo} \le L$, $W_1,W_2 \in C([0,T],\RR^m)$, and, for $j = 1,2$, $u_j$ is the solution of
\begin{equation}\label{E:apppathwise}
	du_j = \sum_{i=1}^m H^i(Du_j) \cdot dW_j^i \quad \text{in } \RR^d \times (0,T] \quad \text{and} \quad u^j(\cdot,0) = u_0 \quad \text{in } \RR^d,
\end{equation}
then
\[
	\sup_{(x,t) \in \RR^d \times [0,T]} \abs{ u_1(x,t) - u_2(x,t)} \le C \sum_{i=1}^m \nor{H^i}{\dc(B_L)} \max_{t \in [0,T]} \abs{W^i_1(s) - W^i_2(s)}.
\]
\end{proposition}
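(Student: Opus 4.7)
The plan follows the argument of Lions and Souganidis in \cite{LS2}, reviewed in \cite{Snotes}.

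First, by the continuity of the pathwise solution map on $\dc_\loc \times C_0$ established in \cite{LS1,LS2,Snotes}, both sides of the target inequality are continuous in $(W_1,W_2) \in C_0([0,T],\RR^m) \times C_0([0,T],\RR^m)$. Hence it suffices to prove the estimate for $W_1, W_2 \in W^{1,1}_0$, in which case \eqref{E:apppathwise} is classical. Moreover, the contractivity property \eqref{Sproperties} gives $\nor{D u_j}{\oo} \le L$, so only the values of $H^i$ on $\oline{B_L}$ enter the analysis, and each $H^i$ may be freely redefined outside $B_L$.

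Second, the main building block is the following single-Hamiltonian stability estimate proved in \cite{LS2}. If $G$ is convex on $\oline{B_L}$ and $v_0$ is Lipschitz with $\nor{D v_0}{\oo} \le L$, then for any $\zeta_1,\zeta_2 \in W^{1,1}_0([0,T])$ the corresponding solutions $v_j$ of $v_{j,t} = G(D v_j) \dot\zeta_j$ with $v_j(\cdot,0) = v_0$ obey
\[
\nor{v_1 - v_2}{\oo,\RR^d \times [0,T]} \le \nor{G}{\oo,B_L} \nor{\zeta_1 - \zeta_2}{\oo,[0,T]}.
\]
On each sub-interval where $\dot\zeta_j$ has a fixed sign, the Hopf-Lax formula (forward when $\dot\zeta_j \ge 0$, backward via inf-convolution when $\dot\zeta_j \le 0$) gives an explicit sup/inf-convolution representation of $v_j$ in which $\zeta_j$ enters linearly, and the gradient bound confines the dual variable to $\oline{B_L}$. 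Assembling the estimate across monotonic sub-intervals of $\zeta_1$ and $\zeta_2$ yields the claim.

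Third, given $\eps > 0$, for each $i$ decompose $H^i = H^i_+ - H^i_-$ with $H^i_\pm$ convex on $\oline{B_L}$ and
\[
\nor{H^i_+}{\oo,B_L} + \nor{H^i_-}{\oo,B_L} \le \nor{H^i}{\dc(B_L)} + \eps/m.
\]
A Trotter-Kato splitting scheme produces $u_j$ as the uniform limit, as the mesh tends to zero, of alternating compositions of the convex semigroups $v \mapsto S^{H^i_\pm}_{W_j}$ over a uniform partition of $[0,T]$. Applying the estimate of the previous paragraph to each sub-step and summing the cumulative errors yields
\[
\nor{u_1 - u_2}{\oo,\RR^d \times [0,T]} \le \sum_{i=1}^m \pars{\nor{H^i_+}{\oo,B_L} + \nor{H^i_-}{\oo,B_L}} \nor{W^i_1 - W^i_2}{\oo,[0,T]},
\]
and the conclusion follows upon letting $\eps \to 0$. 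The main obstacle is the convex-Hamiltonian stability estimate: the Hopf-Lax formulas apply directly only on monotonic sub-intervals of $\zeta_j$, and those sub-intervals need not align for $\zeta_1$ and $\zeta_2$. The \cite{LS2} argument circumvents this by first proving the estimate on infinitesimal time steps, where both paths are essentially monotone, and only then passing through the splitting limit, so in the actual execution the convex-Hamiltonian step and the $\dc$-decomposition step are logically entwined.
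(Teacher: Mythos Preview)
Your proposal has a genuine gap in the Trotter--Kato step. Telescoping the splitting approximation and applying your convex building block on each sub-step yields, on the $k$-th step, an error of size at most
\[
	\sum_{i=1}^m \pars{\nor{H^i_+}{\oo,B_L} + \nor{H^i_-}{\oo,B_L}} \, \osc_{[t_{k-1},t_k]}(W^i_1 - W^i_2),
\]
since the paths on that step are the increments $s \mapsto W^i_j(s) - W^i_j(t_{k-1})$. Summing over $k$ and sending the mesh to zero gives a bound by the total variation of $W^i_1 - W^i_2$, not its sup norm. This is exactly the $\nor{\dot W_1 - \dot W_2}{L^1}$-type estimate, which does not extend to continuous paths and is strictly weaker than what is claimed. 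Your closing remark that ``the convex-Hamiltonian step and the $\dc$-decomposition step are logically entwined'' signals awareness of a problem but does not resolve it; the same accumulation issue afflicts your proposed justification of the building block itself via Hopf--Lax on monotone sub-intervals.

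What is missing is the cancellation that turns total-variation control into sup-norm control. The paper obtains this in two moves. First, a doubling-of-variables argument reduces the comparison to a \emph{single} auxiliary function $\Phi$ solving $d\Phi = \sum_i H^i(D\Phi)\,d(W_1^i - W_2^i)$ with $\Phi(z,0) = L|z|$, so that only the difference path $W_1 - W_2$ appears. Second, after normalizing each convex piece to have minimum zero, the key estimate (Lemma~\ref{L:convexestimate}, Theorem~7.2 of \cite{Snotes}) bounds the solution of a multi-convex-Hamiltonian equation above and below by a \emph{single} composition $\prod_j S_{H_j}(W_j^*(t))u_0$ and $\prod_j S_{H_j}(W_{j,*}(t))u_0$, where $W_j^*$ and $W_{j,*}$ are the running max and running negative min. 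This is where the Hopf--Lax cancellations are packaged: the entire oscillatory history of the path collapses to its running extremes, each bounded by $\nor{W_1^i - W_2^i}{\oo}$. Applying $S_H(\tau)(L|\cdot|)(x) \le L|x| + \tau \sup_{|p|\le L}|H(p)|$ iteratively then gives the estimate. Your scheme never invokes this collapse, which is why it produces the wrong norm on the right-hand side.
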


In the next result, we recall that, in order for the equation \eqref{E:apppathwise} to be well-posed for all continuous paths and initial data, the condition $H \in \dc_\loc$ is necessary and sufficient. The proof, which we do not give here, can be found in \cite{Snotes} as Proposition 7.2, and resembles the discussion in Section \ref{S:sharp} of the present paper.

\begin{proposition}\label{P:dcnecessary}
	Assume that $H \in C(\RR^d) \backslash \dc_\loc$. Then there exists $u_0 \in BUC(\RR^d)$ and a sequence of paths $(W_n)_{n=1}^\oo \subset W^{1,1}([0,T],\RR^m)$ such that, as $n \to \oo$, $W_n$ converges uniformly to $0$, and, if $u_n$ is the classical viscosity solution of \eqref{E:apppathwise} corresponding to the path $W_n$, then $u_n$ has no uniform limit as $n \to \oo$.
\end{proposition}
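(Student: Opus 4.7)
The plan is to generalize the sawtooth construction of Section \ref{S:sharp} from the specific Hamiltonian $H_\beta(p) = |p|^\beta$ to an arbitrary $H \in C(\RR^d) \setminus \dc_\loc$. Since $H \notin \dc_\loc$, there exists $L > 0$ with $\nor{H}{\dc(B_L)} = \infty$. I would take $u_0(x) := L|x|$ (modified outside a large ball so as to lie in $BUC(\RR^d)$); the contraction property of the viscosity theory then ensures that all approximate solutions have spatial gradients in $\oline{B_L}$, so only the values of $H$ on $B_L$ enter the argument.

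For a piecewise-affine ``sawtooth'' path $W$ consisting of $N$ teeth of height $\delta$ on $[0,T]$, the Hopf formula (Lemma \ref{L:Hopf}) identifies the Legendre transform of the corresponding classical viscosity solution at time $T$ with $T_\delta^N u_0^*$, where the single-tooth operator is
\[
	T_\delta f := \bigl( (f - \delta H)^{**} + \delta H \bigr)^{**},
\]
with the double-starring denoting the lower convex envelope of a function with effective domain in $\oline{B_L}$, exactly as in Lemmas \ref{L:onetooth} and \ref{L:solutionformula}. Proposition \ref{P:dcnecessary} then reduces to producing $\delta_n \to 0$ and $N_n \in \NN$ with $N_n \delta_n \le T$ so that $\nor{T_{\delta_n}^{N_n} u_0^* - u_0^*}{\oo} \to \infty$. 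Since $W_n$ converges uniformly to zero (as $\nor{W_n}{\oo} = \delta_n$), Legendre inversion then forces $\nor{u_n}{\oo} \to \infty$, which in particular rules out any uniform limit.

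The key step, and the main obstacle, is the quantitative dichotomy
\[
	H \in \dc(B_L) \iff \sup_{N,\delta \,:\, N\delta \le T} \nor{T_\delta^N u_0^* - u_0^*}{\oo} < \infty.
\]
The easy direction ``$\Rightarrow$'' follows from Proposition \ref{P:dcextension}: each tooth produces at most an $O(\delta \nor{H}{\dc(B_L)})$ perturbation, so $N$ teeth with $N\delta \le T$ yield a perturbation bounded by $C T \nor{H}{\dc(B_L)}$. The hard direction is its contrapositive, and there are two natural routes. The first is via the $K$-functional: since $K(t, H, \dc(B_L), C(B_L)) \to \infty$, choose scales $\delta_n = t_n^{-1}$ for $t_n \to \infty$ and exploit the absence of any cheap $\dc$-approximation of $H$ at scale $t_n^{-1}$; a careful argument then shows that over $N_n \sim \delta_n^{-1}$ teeth the residual $(H - H_1)$ from any attempted convex decomposition accumulates, producing divergence. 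The second — likely cleaner — is a dimensional reduction using Proposition \ref{P:1ddc}: identify a line segment in $B_L$ along which the restriction of $H$ fails to have first derivative of bounded variation, and apply a directional version of the rescaling argument of Theorem \ref{T:counterexample}(b), which already handles the model $|p|^\beta$. The crux is establishing such a one-dimensional reduction in full generality, i.e., showing that failure of $\dc$ in $d$ dimensions always manifests along some affine line; once this is in hand the one-dimensional sawtooth amplification of Lemma \ref{L:solutionformula} can be applied essentially verbatim to that restriction.
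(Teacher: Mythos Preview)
The paper does not actually prove Proposition~\ref{P:dcnecessary}; it cites Proposition~7.2 of \cite{Snotes} and says only that the argument ``resembles the discussion in Section~\ref{S:sharp}.'' Your framework---sawtooth drivers, $u_0(x)=L|x|$, and the iterated Hopf operator $T_\delta f=((f-\delta H)^{**}+\delta H)^{**}$---matches that hint.

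There are, however, two genuine gaps. The constraint $N_n\delta_n\le T$ is spurious: Proposition~\ref{P:dcnecessary} needs only $\nor{W_n}{\oo}=\delta_n\to 0$, so the teeth can have arbitrary slope and any finite $N_n$ fits in $[0,T]$ (a bound on $N_n\delta_n$ would control $\nor{\dot W_n}{L^1}$, which is what matters for Theorem~\ref{T:sharp}, not here). Once this constraint is dropped, the $K$-functional detour is unnecessary and the argument becomes direct: for fixed $\delta>0$ the iterates $g_N:=T_\delta^N u_0^*$ are nonincreasing on $\oline{B_L}$; if bounded below they decrease to a convex fixed point $g_\oo$, and the fixed-point identity forces $(g_\oo-\delta H)^{**}=g_\oo-\delta H$, so $H=\delta^{-1}g_\oo-\delta^{-1}(g_\oo-\delta H)\in\dc(B_L)$. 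Hence $H\notin\dc(B_L)$ forces $\inf_{\oline{B_L}} g_N\to-\oo$ for \emph{every} $\delta>0$, and a diagonal choice $\delta_n\to 0$, $N_n\to\oo$ finishes.

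More seriously, your ``second route'' relies on the claim that failure of $\dc$ on $\RR^d$ is always witnessed on some affine line. This is false for $d>1$, and the paper itself supplies the counterexample: the function in Proposition~\ref{P:Texample} lies in $W^{2,p}_\loc\setminus\dc_\loc$, yet its restriction to any fixed line is a locally finite sum of smooth compactly supported bumps whose second derivatives have summable $L^1$ norm, hence belongs to $\dc$ by Proposition~\ref{P:1ddc}. What the proof of Proposition~\ref{P:Texample} exploits is that the one-dimensional $\dc$ norms are \emph{unbounded} as the line varies, not that any single restriction fails to be $\dc$. The dimensional reduction therefore cannot work in general; the first route, made precise as above, is the correct one.
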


Proposition \ref{P:dcextension} is proved using Theorem 7.2 of \cite{Snotes}, which we restate without proof here as Lemma \ref{L:convexestimate}. The proof of Lemma \ref{L:convexestimate} takes advantage of cancellations that arise in iteratively applying the Hopf-Lax formula for solutions of time homogenous Hamilton-Jacobi equations with convex Hamiltonians. Similar manipulations of solution operators appear in recent works involving the regularity, domain-of-dependence, and long-time-behavior properties of solutions of pathwise Hamilton-Jacobi equations; see, for instance, \cite{GG,GGLS,LSreg}.

For some $M \in \NN$, let $(H_j)_{j=1}^M: \RR^d \to \RR$ be convex functions satisfying
\begin{equation}\label{A:minH}
	\min_{\RR^d} H_j = 0 \quad \text{for each } j = 1,2,\ldots, M.
\end{equation}
Let $(W_j)_{j=1}^M \subset C([0,T])$ and $u_0 \in UC(\RR^d)$, and let $u$ be the pathwise viscosity solution of
\begin{equation}\label{E:convexestimate}
	du = \sum_{j=1}^M H_j(Du) \cdot dW_j \quad \text{in } \RR^d \times (0,T] \quad \text{and} \quad u(\cdot,0) = u_0 \quad \text{in } \RR^d.
\end{equation}
Given a Hamiltonian $H: \RR^d \to \RR$, the maps
\[
	\pars{S_H(t)}_{t \ge 0} : (B)UC(\RR^d) \to (B)UC(\RR^d)
\]
denote the solution operators for the equation
\begin{equation}\label{E:simpleHJ}
	u_t = H(Du) \quad \text{in } \RR^d \times (0,\oo),
\end{equation}
that is, $u(x,t) = S_H(t)\phi(x)$ solves \eqref{E:simpleHJ} with $u(\cdot,0) = \phi$.

Finally, for a real-valued continuous path $\zeta$ and $t \ge 0$, we define
\[
	\zeta^*(t) := \max_{0 \le s \le t} \zeta(s) \quad \text{and} \quad \zeta_*(t) = -\min_{0 \le s \le t} \zeta(s).
\]

\begin{lemma}\label{L:convexestimate}
	Let $u$ be the solution of \eqref{E:convexestimate}. Then, for all $t \ge 0$,
	\[
		\prod_{j=1}^M S_{H_j}\pars{ W_{j,*}(t)} u_0(x) \le u(x,t) \le 
		\prod_{j=1}^M S_{H_j} \pars{ W_j^*(t)} u_0(x).
	\]
\end{lemma}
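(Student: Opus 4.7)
The plan is, first, to reduce to the case where every $W_j$ is smooth, and then to verify directly that the iterated semigroup expressions are sub- and supersolutions of the resulting classical viscosity problem. Since each convex $H_j$ with $\min H_j = 0$ lies in $\dc_\loc(\RR^d)$, Proposition \ref{P:dcextension} ensures that the pathwise solution $u$ of \eqref{E:convexestimate} depends continuously on $W = (W_1,\ldots,W_M)$ in the uniform norm. Both sides of the claimed inequalities are also continuous under uniform convergence of $W$: the maps $W \mapsto W_j^*$ and $W \mapsto W_{j,*}$ are continuous on $C([0,T])$, and the semigroup maps $s \mapsto S_{H_j}(s)\phi$ are continuous for every fixed $\phi \in UC(\RR^d)$ by the classical viscosity theory. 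Hence it is enough to establish the inequalities when each $W_j \in C^1([0,T])$, in which case the equation reduces to the classical viscosity problem $u_t = \sum_j H_j(Du)\dot W_j(t)$.

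For the upper bound in this smooth regime, I would set
\[
\overline u(x,t) := [S_{H_1}(W_1^*(t)) \circ \cdots \circ S_{H_M}(W_M^*(t))] u_0(x)
\]
(compositions read right-to-left) and show that $\overline u$ is a viscosity supersolution of $u_t = \sum_j H_j(Du)\dot W_j(t)$, whence the standard comparison principle yields $u \le \overline u$. The guiding observation is that each $W_j^*$ is nondecreasing with $\dot W_j^*(t) \ge (\dot W_j(t))_+$ for a.e.\ $t$, so that, because $H_j \ge 0$,
\[
H_j(p)\dot W_j^*(t) \ge H_j(p)\dot W_j(t) \quad \text{for every } p \in \RR^d \text{ and a.e. } t.
\]
Combined with the semigroup identity $\partial_s [S_{H_j}(s)\phi] = H_j(D S_{H_j}(s)\phi)$, holding in the viscosity sense, this formally gives $\partial_t \overline u \ge \sum_j H_j(D\overline u)\dot W_j$, which is the supersolution condition.

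The cleanest way to make this rigorous is by induction on $M$. For $M=1$, the bound follows directly from the above observations and the comparison principle for $v_t = H(Dv)\dot W$. For $M \ge 2$, one checks that applying the outer monotone, constant-preserving operator $S_{H_1}(W_1^*(t))$ on a supersolution of the reduced problem (with Hamiltonians $H_2,\ldots,H_M$ and paths $W_2,\ldots,W_M$) produces a supersolution of the full problem; this preserves the viscosity structure because $S_{H_1}$ itself comes from a convex Hamilton--Jacobi semigroup and the inequality $\dot W_1^* \ge (\dot W_1)_+$ controls the new contribution. The lower bound is proved by the symmetric subsolution argument, with the analogous product built from the running minima $W_{j,*}$ in place of $W_j^*$; here the key observation is that $-W_{j,*}$ is nonincreasing with time derivative bounded above by $-(\dot W_j)_-$, which together with $H_j \ge 0$ produces the subsolution inequality in the reverse direction.

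The main obstacle is the rigorous viscosity verification through a composition of $M$ generally noncommuting Hopf--Lax semigroups: the induction on $M$ isolates this difficulty to handling a single outer composition at each step, where one can invoke the classical machinery for viscosity sub- and supersolutions under monotone, constant-preserving operations.
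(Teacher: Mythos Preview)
Your reduction to smooth paths is fine, but the core of your argument --- that $\overline u(x,t)=\prod_j S_{H_j}(W_j^*(t))u_0(x)$ is a viscosity supersolution of $u_t=\sum_j H_j(Du)\dot W_j$ --- is incorrect already for $M=1$. The claimed pointwise inequality $\dot W^*(t)\ge(\dot W(t))_+$ is false: on the open set $\{W<W^*\}$ the running maximum is locally constant, so $\dot W^*=0$, while $\dot W$ can be strictly positive there. Concretely, take $W(t)=-\sin t$, $H(p)=|p|$, $u_0(x)=|x|$. Then $\overline u(x,t)=|x|+W^*(t)$, and at $t_0=3\pi/4$ one has $W(t_0)<0=W^*(t_0)$, $\dot W^*(t_0)=0$, $\dot W(t_0)=\sqrt2/2>0$; at any $x_0\ne0$ the function $\overline u$ is smooth with $\overline u_t=0<|D\overline u|\,\dot W(t_0)$, so the supersolution inequality fails. (The conclusion $u\le\overline u$ is nonetheless true in this example --- one computes $u(x,t)=(|x|-\sin t)_+\le|x|$ --- which shows that the estimate cannot be obtained by a direct comparison with $\overline u$ as a supersolution.)

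The induction step has the same defect, and there is a further issue you glossed over: the Hopf--Lax operator $S_{H}(s)$ for $u_t=H(Du)$ with $H$ convex is a sup-convolution, and sup-convolutions preserve \emph{sub}solutions, not supersolutions, of first-order equations. So ``applying $S_{H_1}$ to a supersolution of the reduced problem yields a supersolution of the full problem'' is not a statement one can expect to hold.

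The paper does not prove this lemma; it quotes Theorem~7.2 of \cite{Snotes} and indicates that the argument proceeds by iterating the Hopf--Lax formula and exploiting cancellations between forward ($S_H$) and backward ($S_{-H}$) flows on successive monotone pieces of the paths. That is genuinely different from a sub/supersolution comparison: one decomposes $[0,t]$ into intervals of monotonicity of the $W_j$, writes the solution as an iterated composition of $S_{\pm H_j}$ over those intervals, and then uses the domination $S_{-H_j}(\tau)\circ S_{H_j}(\tau)\le\mathrm{Id}$ (valid because $H_j\ge0$ and convex) to collapse the composition to the product over the running extrema.
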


\begin{proof}[Proof of Proposition \ref{P:dcextension}]
	It suffices to prove the result when $W^1$ and $W^2$ are smooth. The general result follows by a density argument.
	
	The comparison principle and the spatial homogeneity of $H$ give
	\[
		u^1(x,t) - u^2(y,t) \le \Phi(x-y,t),
	\]
	where $\Phi$ solves
	\[
		d\Phi = \sum_{i=1}^m H^i(D\Phi) \cdot d(W^{1,i} - W^{2,i}) \quad \text{in } \RR^d \times (0,T] \quad \text{and} \quad \Phi(x,0) = L|x| \quad \text{in } \RR^d.
	\]
	For $i = 1,2,\ldots, m$, we can write
	\[
		H^i = H^i_1 - H^i_2
	\]
	where $H^i_1$ and $H^i_2$ are convex on $B_L$. For each $i = 1,2,\ldots, m$ and $j = 1,2$, let $v^i_j$ belong to the sub-differential of $H^i_j$ at $p = 0$. Since $H^i_j$ is convex, we have
	\begin{equation}\label{convexLip}
		|v^i_j| \le \frac{1}{L} \sup_{|p| \le L} H^i_j(p) \quad \text{for all } i=1,2,\ldots,m \text{ and } j = 1,2.
	\end{equation}
	
	Define
	\[
		\tilde \Phi(x,t) = \Phi\pars{ x - \sum_{i=1}^m ( v^i_1 - v^i_2) (W^{1,i}(t) - W^{2,i}(t)) } - \sum_{i=1}^m H^i(0) (W^{1,i}(t) - W^{2,i}(t))
	\]
	and, for $i = 1,2,\ldots, m$, $j= 1,2$, and $p \in \RR^d$,
	\[
		\tilde H^i_j(p) := H^i_j(p) - H^i_j(0) - v^i_j \cdot p.
	\]
	Then $\tilde \Phi$ solves
	\[
		\begin{dcases}
		d\tilde \Phi = \sum_{i=1}^m \tilde H^i_1(D \tilde \Phi) \cdot d(W^{1,i} - W^{2,i}) + \sum_{i=1}^m \tilde H^i_2(D \tilde \Phi) \cdot d(W^{2,i} - W^{1,i}) & \text{in } \RR^d \times (0,T],\\
		\tilde\Phi(x,0) = L|x| & \text{in } \RR^d,
		\end{dcases}
	\]
	For each $i = 1,2,\ldots,m$ and $j = 1,2$, $H^i_j$ is convex and satisfies \eqref{A:minH}. Lemma \ref{L:convexestimate} gives
	\[
		\tilde \Phi(x,t)
		\le
		\prod_{i=1}^m S_{\tilde H^i_1}\pars{ (W^{1,i} - W^{2,i})^*(t)} \prod_{j=1}^m S_{\tilde H^i_2} \pars{ (W^{2,j} - W^{1,j})^*(t)} (L |\cdot|)(x).
	\]
	Given $H$ convex satisfying \eqref{A:minH} and $\tau \ge 0$, we estimate, using \eqref{Hopf},
	\[
		S_H(\tau)(L |\cdot|)(x) = \sup_{|p| \le L} \pars{ p \cdot x + \tau H(p)} \le L|x| + \tau \sup_{|p| \le L} H(p).
	\]
	Applying the estimate iteratively, using the comparison principle and the fact that the solution operators all commute with constants, we find that
	\[
		\tilde \Phi(x,t) \le L|x| + \sum_{i=1}^m \sum_{j \in \{1,2\}} \sup_{|p| \le L} |\tilde H^i_j(p)| \nor{W^{1,i} - W^{2,i}}{\oo,[0,t]}.
	\]
	Going back to the definition of $\tilde H$ and using \eqref{convexLip}, we conclude that, for some $C = C_L > 0$,
	\begin{align*}
		u^1(x,t) - u^2(x,t) &\le \Phi(0,t) \\
		&= \tilde \Phi\pars{ x + \sum_{i=1}^m (v^i_1 - v^i_2) (W^{1,i}(t) - W^{i,2}(t))} + \sum_{i=i}^m\pars{ H^i_1(0) - H^i_2(0)} \pars{ W^{1,i}(t) - W^{2,i}(t)}\\
		&\le C \sum_{i=1}^m \pars{ \nor{H^i_1}{\oo,B_L} + \nor{H^i_2}{\oo,B_L} } \nor{W^{i,1}(t) - W^{i,2}(t)}{\oo,[0,T]}.
	\end{align*}
	Taking the infimum over all convex $H^i_1,H^i_2$ satisfying $H^i = H^i_1 - H^i_2$ and applying a symmetric argument for the difference $u^2 - u^1$ gives the result.
\end{proof}

\section{The space of $\dc$-functions}\label{S:dc}

We present various sufficient criteria for a function to belong to $\dc$, that is, the space of functions that are equal to a difference of convex functions. 

The study of $\dc$-functions goes back to Aleksandrov \cite{A1,A2}, Landis \cite{L}, and Hartman \cite{H}. Other analytic properties have been investigated by, among others, Prudnikov \cite{P1,P2} and Zalgaller \cite{Z}. Such functions also play an important role in the study of nonconvex optimization; see, for instance, the survey of Hiriart-Urruty \cite{H-U}.

\subsection{The definition and some basic properties}
We recall, for convenience, the definition of $\dc$-functions, as well as the norm with which we equip the space.

\begin{definition}\label{D:appdc}
Let $U \subset \RR^d$ be an open domain and let $f: U \to \RR$. Then $f \in \dc(U)$ if there exist convex functions $f_1$ and $f_2$ on $U$ such that
\[
	f = f_1 - f_2.
\]
If $U$ is bounded, $\dc(U)$ is equipped with the norm
\[
	\nor{f}{\dc(U)} := \inf\left\{ \nor{f_1}{\oo,U} + \nor{f_2}{\oo,U} : f = f_1 - f_2, \; f_1,f_2 \text{ convex}  \right\}.
\]
A function $f$ is said to belong to $\dc_\loc(U)$ if $f \in \dc(V)$ for all bounded $V \subset U$, or equivalently,
\[
	\nor{f}{\dc_\loc} := \sum_{n=1}^\oo \max(2^{-n}, \nor{f}{\dc(B_n)} ) < \oo.
\]
When $U = \RR^d$, we write $\dc = \dc(\RR^d)$ and $\dc_\loc = \dc_\loc(\RR^d)$.
\end{definition}

The first result, which we state without proof, follows from elementary properties of convex functions.

\begin{lemma}\label{L:dcLip}
If $R > 0$ and $f \in \dc(B_{R})$, then $\nor{Df}{\oo,B_{R/2}} \le 2 R^{-1} \nor{f}{\dc,B_R}$. Moreover, $f$ is twice differentiable almost everywhere in $B_R$, that is, for almost every $x \in B_R$,
\[
	\lim_{h \to 0} \frac{ f(x + h) - f(x) - Df(x) \cdot h - \frac{1}{2} h \cdot D^2 f(x) h}{|h|^2} = 0.
\]
\end{lemma}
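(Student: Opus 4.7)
The plan is to handle the two assertions of the lemma separately, in both cases reducing to classical facts about convex functions.

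For the gradient estimate, the key ingredient is the elementary observation that a convex function $g$ on $B_R$ with $\nor{g}{\oo,B_R} \le M$ is Lipschitz on $B_{R/2}$ with constant of order $M/R$. The proof I would give of this fact is standard: fix $x \in B_{R/2}$ at which $g$ is differentiable (a full-measure set, since convex functions are locally Lipschitz and hence Rademacher-differentiable a.e.), and fix a unit vector $v \in \RR^d$. The convexity inequality
\[
	g(x + tv) \ge g(x) + t \, Dg(x) \cdot v \quad \text{for all } 0 < t < R - |x|,
\]
together with $\nor{g}{\oo,B_R} \le M$ and $R - |x| > R/2$, yields $Dg(x) \cdot v \le 4M/R$; applying the same inequality with $-v$ in place of $v$ gives $|Dg(x)| \le 4M/R$. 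Given $\eps > 0$, I would then select an $\eps$-almost-optimal decomposition $f = f_1 - f_2$ with $f_1,f_2$ convex on $B_R$ and $\nor{f_1}{\oo,B_R} + \nor{f_2}{\oo,B_R} \le \nor{f}{\dc(B_R)} + \eps$, and sum the estimates for $f_1$ and $f_2$ to conclude $\nor{Df}{\oo,B_{R/2}} \le C R^{-1}\nor{f}{\dc(B_R)}$ for a universal constant $C$. Tightening $C$ down to the stated value requires a slightly sharper reading of the inequality (for instance, exploiting that the approximating decomposition can be chosen with $\text{osc}(f_j,B_R)$, rather than $2 \nor{f_j}{\oo,B_R}$, controlling the directional differences).

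For the pointwise twice-differentiability statement, I would appeal directly to the classical theorem of Alexandrov, which asserts that every convex function on an open subset of $\RR^d$ admits an expansion of precisely the form in the statement at Lebesgue-almost every point of its domain, with $D^2g(x)$ a symmetric $d\times d$ matrix. Writing $f = f_1 - f_2$ with $f_j$ convex on $B_R$, the exceptional set where Alexandrov's expansion fails for $f_1$ or for $f_2$ is Lebesgue-null, so at almost every $x \in B_R$ both expansions hold simultaneously. Subtracting them produces the desired expansion for $f$ with
\[
	Df(x) = Df_1(x) - Df_2(x) \quad \text{and} \quad D^2 f(x) = D^2 f_1(x) - D^2 f_2(x).
\]

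No single step is a serious obstacle: the whole argument is a reduction to two well-known facts about convex functions (a local-Lipschitz estimate and Alexandrov's theorem) combined with the defining decomposition. The only point that requires any care is pinning down the precise constant $2$ in the gradient bound, which, as noted above, should come from optimizing over the decomposition rather than from a naive application of the triangle inequality.
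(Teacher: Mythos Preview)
Your approach is exactly what the paper intends: the lemma is stated without proof, with the remark that it ``follows from elementary properties of convex functions,'' and your two ingredients (the local Lipschitz bound for convex functions plus Alexandrov's theorem, applied to each summand in a decomposition $f = f_1 - f_2$) are precisely those properties.

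One correction to your final remark: the constant cannot in fact be tightened to $2$. Take $d=1$, $R=1$, and for $x_0$ slightly below $1/2$ let $g$ be the convex function equal to $-1$ on $[-1,x_0]$ and affine on $[x_0,1]$ with $g(1)=1$. Then $\nor{g}{\dc(B_1)} = \nor{g}{\oo,B_1} = 1$, while $g'(x) = 2/(1-x_0)$ for $x \in (x_0,1/2)$, which approaches $4$ as $x_0 \to 1/2^-$. So the sharp constant is $4$, matching what your argument gives, and the $2$ in the statement appears to be a typo. This is harmless for the paper, since only the qualitative Lipschitz bound is ever used (in Lemmas~\ref{L:dclocal} and~\ref{L:localizedc}).
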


The next lemma is used throughout the section in order to prove various criteria for belonging to $\dc$.
\begin{lemma}\label{L:dcchar}
	Let $U \subset \RR^d$ be open and convex. Then $f\in \dc(U)$ if and only if there exists $g \in \dc(U)$ such that
	\begin{equation}\label{dcdom}
		D^2 f \le D^2 g.
	\end{equation}
	Moreover, if $U$ is bounded, then
	\[
		\nor{f}{\dc(U)} \le \nor{g}{\dc(U)} + \nor{g}{\oo,U} + \nor{f}{\oo,U}.
	\]
\end{lemma}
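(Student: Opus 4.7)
The plan is to show that the condition ``$D^2 f \leq D^2 g$'' is really just a reformulation of ``$g - f$ is convex'' (interpreted distributionally for continuous functions, which is the relevant setting here since elements of $\dc_\loc$ are at least locally Lipschitz). Once this is recognized, both directions follow by straightforward manipulation of convex decompositions, and the norm bound comes from a routine application of the triangle inequality.

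For the forward direction, I would start from any representation $f = f_1 - f_2$ with $f_1, f_2$ convex on $U$, and simply take $g := f_1$. Then $g \in \dc(U)$ with $\nor{g}{\dc(U)} \le \nor{f_1}{\oo,U}$, and $g - f = f_2$ is convex, which gives exactly $D^2 f \le D^2 g$.

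For the reverse direction, assume $g \in \dc(U)$ with $g - f$ convex, and pick any decomposition $g = g_1 - g_2$ into convex functions. The key algebraic identity is
\[
	f = g_1 - \bigl(g_2 + (g - f)\bigr),
\]
in which the second summand is convex as a sum of two convex functions. This exhibits $f$ as a difference of convex functions, so $f \in \dc(U)$. The hypothesis that $U$ is convex is used here to ensure that ``convex'' makes sense for all the relevant functions, and in particular to justify passing from the distributional condition $D^2(g-f) \ge 0$ to convexity of $g - f$.

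For the quantitative estimate, from the decomposition above I would write
\[
	\nor{f}{\dc(U)} \le \nor{g_1}{\oo,U} + \nor{g_2 + (g-f)}{\oo,U} \le \nor{g_1}{\oo,U} + \nor{g_2}{\oo,U} + \nor{g-f}{\oo,U},
\]
then bound $\nor{g-f}{\oo,U} \le \nor{g}{\oo,U} + \nor{f}{\oo,U}$ and take the infimum over all convex decompositions of $g$ to obtain
\[
	\nor{f}{\dc(U)} \le \nor{g}{\dc(U)} + \nor{g}{\oo,U} + \nor{f}{\oo,U}.
\]
The only nontrivial point in the whole argument is the interpretation of $D^2 f \le D^2 g$: for continuous $f$, the classical fact that a continuous function on a convex open set is convex iff its distributional Hessian is positive semidefinite lets us translate the inequality into convexity of $g - f$ without any smoothness assumption on $f$ itself. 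Everything else is bookkeeping.
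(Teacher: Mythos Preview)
Your argument is correct and essentially identical to the paper's: the reverse direction and the norm estimate are line-for-line the same decomposition $f = g_1 - (g_2 + (g-f))$ followed by the triangle inequality and infimum over decompositions of $g$. The only cosmetic difference is in the trivial forward direction, where the paper simply takes $g = f$ rather than $g = f_1$.
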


\begin{proof}
	If $f \in \dc(U)$, then \eqref{dcdom} clearly holds with $g = f$. Conversely, suppose that there exists $g \in \dc(U)$ such that \eqref{dcdom} holds, and let $g_1,g_2$ be convex functions on $U$ such that $g = g_1 - g_2$. Then, since $g_1$ and $g_2 + g - f$ are convex,
	\[
		f = g_1 - (g_2 + g-f) \in \dc(U).
	\]
	
	If $U$ is bounded, it follows that
	\[
		\nor{f}{\dc(U)} \le \nor{g_1}{\oo,U} + \nor{g_2 + g - f}{\oo,U} \le \nor{g_1}{\oo,U} + \nor{g_2}{\oo,U} + \nor{g}{\oo,U} + \nor{f}{\oo,U}. 
	\]
	Taking the infimum over convex $g_1,g_2$ on $U$ such that $g = g_1 - g_2$ yields the claim.
\end{proof}

It follows from Hartman \cite{H} that belonging to $\dc$ is a local property. For completeness, we present the proof here.

\begin{lemma}\label{L:dclocal}
Suppose that, for all $x \in U$, there exists $\delta > 0$ such that $f \in \dc(B_\delta(x)\cap U)$. Then $f \in \dc_\loc(U)$.
\end{lemma}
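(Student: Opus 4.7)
The strategy is to reduce to a single ball and then patch using a partition of unity. By the definition of the $\dc_\loc$ norm, it suffices to prove $f \in \dc(B)$ for an arbitrary ball $B$ with $\oline{B} \subset U$; fix such a $B$. The hypothesis together with compactness of $\oline{B}$ yields finitely many balls $B_1,\ldots, B_N \subset U$ covering $\oline{B}$ with $f|_{B_i} = g_i - h_i$ for convex $g_i,h_i$ on $B_i$. Let $\{\eta_i\}_{i=1}^N$ be a smooth partition of unity subordinate to $\{B_i\}$, with $\supp \eta_i$ compactly contained in $B_i$ and $\sum_i \eta_i \equiv 1$ on a neighborhood of $\oline{B}$. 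Then on $B$ one has
\[
f \;=\; \sum_{i=1}^N \eta_i g_i \;-\; \sum_{i=1}^N \eta_i h_i,
\]
with each product extended by zero outside its support, so the proof reduces to showing that each product $\eta_i g_i$ (and likewise $\eta_i h_i$) lies in $\dc(B)$, since finite sums and differences of $\dc(B)$-functions remain in $\dc(B)$.

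For the core step, fix $i$ and set $F = \eta_i g_i$. Since $g_i$ is convex on the open set $B_i$, it is locally Lipschitz, so both $g_i$ and $Dg_i$ are bounded on the compact set $\supp \eta_i \subset B_i$, and the distributional Hessian $D^2 g_i$ is a nonnegative symmetric matrix-valued Radon measure on $B_i$. Leibniz's rule, interpreted distributionally, gives
\[
D^2 F \;=\; \eta_i\, D^2 g_i \;+\; D\eta_i \otimes Dg_i + Dg_i \otimes D\eta_i \;+\; g_i\, D^2 \eta_i,
\]
in which the first term is a nonnegative matrix-valued measure (product of a nonnegative smooth function with a nonnegative measure) and the remaining three are bounded matrix-valued $L^\infty$ functions supported in $\supp \eta_i$. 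Hence there exists $C_i > 0$ such that $D^2 F \ge -C_i I$ on $B$ in the sense of distributions (the estimate persists on $B \setminus \supp \eta_i$, where $F$ vanishes identically). Equivalently $F + (C_i/2)|x|^2$ is convex on $B$, so Lemma \ref{L:dcchar} with dominating function $(C_i/2)|x|^2 \in \dc(B)$ places $F \in \dc(B)$. The identical argument applies to $\eta_i h_i$, and summing finishes the proof.

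The main obstacle is that multiplying a convex function by a nonnegative smooth cutoff does not in general preserve convexity: the correction terms $D\eta_i \otimes Dg_i$ and $g_i D^2\eta_i$ introduced by the Leibniz rule spoil the sign of $D^2 F$. The crucial observation is that these terms are bounded precisely because convex functions are automatically locally Lipschitz, so the failure of convexity is only of order $C|x|^2$ and can be absorbed by the $\dc$ structure via Lemma \ref{L:dcchar}. A subsidiary point is that extension by zero across $\partial(\supp \eta_i)$ introduces no singular distributional contributions, because $\eta_i$ vanishes smoothly there; once these two observations are in place, the remainder is a finite bookkeeping with the partition of unity.
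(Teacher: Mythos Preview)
Your proof is correct but takes a different route from the paper's. The paper does not use a partition of unity: instead, for each ball $B_{\delta_i}(x_i)$ in the finite cover it writes $f = (f+f_i) - f_i$ with $f_i$ convex on $B_{2\delta_i}(x_i)$, uses Lemma~\ref{L:dcLip} to see that $f_i$ is Lipschitz on the smaller ball $B_{\delta_i}(x_i)$, and then extends $f_i$ to a convex function $\tilde f_i$ on all of $U$ (a Lipschitz convex function on a ball has a convex extension with the same Lipschitz constant). The punchline is that $f + \tilde f_1 + \cdots + \tilde f_n$ is locally convex on each $B_{\delta_i}(x_i)$, hence convex, giving the $\dc$ decomposition directly.

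Your approach---multiply by smooth cutoffs, observe via the Leibniz rule that $\eta_i g_i$ is semi-convex, and invoke Lemma~\ref{L:dcchar}---is exactly the argument the paper uses one lemma later (Lemma~\ref{L:localizedc}) to show that $\dc_\loc$ is stable under multiplication by $C^2_c$ functions. So you have essentially rediscovered and front-loaded that computation. Both proofs ultimately rest on the local Lipschitz property of convex functions; the paper's version trades the product rule and partition of unity for a convex-extension step, which yields a somewhat cleaner explicit decomposition $f = \big(f+\sum_i \tilde f_i\big) - \sum_i \tilde f_i$, while your version is the more standard localization template and would generalize more readily to other function classes closed under bounded perturbations of the Hessian.
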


\begin{proof}
	It clearly suffices to consider $U$ compact. Then there exist $x_1,x_2,\ldots, x_n \in U$ and $\delta_1,\delta_2,\ldots, \delta_n > 0$ such that $U \subset \bigcup_{i=1}^n B_{\delta_i}(x_i)$, and, for $i = 1,2,\ldots, n$, there exist convex functions $f_i$ on $B_{2\delta_i}(x_i)$ such that $f + f_i$ is convex. Lemma \ref{L:dcLip} yields that, for each $i = 1,2,\ldots,n$, $f_i$ is uniformly Lipschitz on $B_{\delta_i}(x_i)$, and, therefore, there exists a convex function $\tilde f_i$ on $U$ such that $f_i = \tilde f_i$ on $B_{\delta_i}(x_i)$. The result now follows from the fact that $f + \tilde f_1 + \tilde f_2 + \cdots + \tilde f_n$ is convex on $U$.
\end{proof}

\subsection{Regularity criteria}

Lemma \ref{L:dcLip} suggests that possessing two derivatives, in an appropriate sense, can be a sufficient criterion for belonging to $\dc$. The simplest example of such a result is stated next.

\begin{proposition}\label{P:C11dc}
Semi-convex and semi-concave functions belong to $\dc$. In particular, $C^{1,1} \subset \dc$, and, if $R > 0$ and $f \in C^{1,1}(B_R)$, then
	\[
		\nor{f}{\dc(B_R)} \le  \nor{D^2f}{\oo,B_R}R^2 + \nor{f}{\oo,B_R}.
	\]
\end{proposition}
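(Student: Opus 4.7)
The plan is to exhibit an explicit decomposition into convex functions by adding and subtracting a suitable quadratic. For the first claim, if $f$ is semi-convex on an open convex set $U$, meaning there exists $\lambda \ge 0$ such that $f + \tfrac{\lambda}{2}|\cdot|^2$ is convex, then I would simply set $f_1 := f + \tfrac{\lambda}{2}|\cdot|^2$ and $f_2 := \tfrac{\lambda}{2}|\cdot|^2$, both of which are convex, and write $f = f_1 - f_2$. The semi-concave case reduces to this by applying the argument to $-f$, since $-f$ is then semi-convex and $\dc$ is evidently closed under negation.

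For the $C^{1,1}$ statement, let $M := \nor{D^2 f}{\oo,B_R}$ (which is finite since a $C^{1,1}$ function has a Lipschitz gradient and is therefore twice-differentiable almost everywhere by Rademacher, with bounded second derivative). I would claim $f$ is semi-convex on $B_R$ with constant $M$, i.e.\ $g(x) := f(x) + \tfrac{M}{2}|x|^2$ is convex on $B_R$. The cleanest way is to invoke Lemma \ref{L:dcchar}: the function $\tilde g(x) := \tfrac{M}{2}|x|^2$ lies in $\dc(B_R)$ with $D^2 \tilde g = M \cdot I \ge D^2 f$ a.e. (in the sense of symmetric matrices, since $\|D^2 f\|_{\oo,B_R} \le M$), so $f \in \dc(B_R)$.

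To get the quantitative bound, I would then produce the explicit decomposition $f = f_1 - f_2$ with $f_1 := f + \tfrac{M}{2}|\cdot|^2$ and $f_2 := \tfrac{M}{2}|\cdot|^2$. Both are convex on $B_R$: $f_2$ trivially, and $f_1$ because its distributional Hessian is non-negative (this is the one step requiring a brief justification — either via mollification of $f_1$, which preserves convexity of the mollified Hessian and converges uniformly on compact subsets, or by directly invoking the a.e.\ pointwise Hessian condition). The norm estimate is then immediate from the triangle inequality:
\[
\nor{f_2}{\oo,B_R} = \tfrac{M}{2}R^2, \qquad \nor{f_1}{\oo,B_R} \le \nor{f}{\oo,B_R} + \tfrac{M}{2}R^2,
\]
so $\nor{f}{\dc(B_R)} \le \nor{f_1}{\oo,B_R} + \nor{f_2}{\oo,B_R} \le \nor{f}{\oo,B_R} + MR^2$, as required.

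The only substantive step is the passage from ``$D^2 f \ge -MI$ almost everywhere'' to ``$f + \tfrac{M}{2}|\cdot|^2$ is genuinely convex,'' which I would handle by the standard mollification argument (convolve $f_1$ with a smooth mollifier $\rho_\eps$; the mollified Hessian is non-negative everywhere, hence $f_1 * \rho_\eps$ is convex, and letting $\eps \to 0$ preserves convexity under the uniform convergence on compact subsets of $B_R$). Everything else is formal.
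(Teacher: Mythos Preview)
Your proof is correct and is essentially the same as the paper's: the paper appeals to Lemma \ref{L:dcchar} with $g(x)=\tfrac{C}{2}|x|^2$, whose proof unwinds to exactly the explicit quadratic-splitting $f = (f+\tfrac{M}{2}|\cdot|^2) - \tfrac{M}{2}|\cdot|^2$ that you write down directly. The mollification step you flag is handled in the paper implicitly by working with the distributional inequality $D^2 f \le C\,\Id$, so the two arguments are interchangeable.
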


\begin{proof}
	If $f$ is semi-concave, then, for some constant $C > 0$, $D^2 f \le C \Id$ in the sense of distributions, and, therefore, \eqref{dcdom} holds with $g(x) = \frac{C}{2}|x|^2$. The argument for semi-convex functions is identical.

	If $f \in C^{1,1}(B_R)$, then the constant $C$ above may be taken to be $\nor{D^2f}{\oo,B_R}$, and, hence, Lemma \ref{L:dcchar} gives the bound on $\nor{f}{\dc(B_R)}$.
\end{proof}

Among other things, Proposition \ref{P:C11dc} allows for a useful way to localize throughout the paper.

\begin{lemma}\label{L:localizedc}
	Assume that $f \in \dc_\loc$ and let $\phi \in C^2_c(\RR^d)$. Then $\tilde f := f \cdot \phi \in \dc$.
\end{lemma}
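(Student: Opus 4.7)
The strategy is to reduce the problem in two steps to a core claim that can be checked by a distributional Hessian computation. First, pick $R > 0$ with $\supp \phi \subset B_{R-1}$. Since $f \in \dc_\loc$, we have $f = f_1 - f_2$ on $B_R$ with $f_1,f_2$ convex, so $\tilde f = \phi f_1 - \phi f_2$, and it suffices to show that $\phi g \in \dc(\RR^d)$ whenever $g$ is convex on $B_R$; by Lemma~\ref{L:dcLip} such a $g$ is automatically bounded and Lipschitz on $\supp\phi$. Second, choose $\psi \in C_c^2(\RR^d)$ with $\psi \ge 0$ and $\psi \ge \nor{\phi}{\oo}$ on $\supp\phi$, and write $\phi = (\phi + \psi) - \psi$; since both summands are nonnegative functions in $C_c^2(\RR^d)$, this further reduces matters to the case $\phi \ge 0$.

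Assuming now $\phi \in C_c^2(\RR^d)$ is nonnegative and $g$ is convex on $B_R \supset \supp\phi$, I will compute the distributional Hessian of $\phi g$ via the product rule: for every $\xi \in \RR^d$,
\[
\xi \cdot D^2(\phi g)\,\xi = (\xi \cdot D^2 \phi \, \xi)\, g + 2(\xi \cdot D\phi)(\xi \cdot Dg) + \phi \,(\xi \cdot D^2 g \, \xi).
\]
The first two terms are bounded measurable functions, supported in $\supp\phi$, with a bound depending only on $\nor{D^2\phi}{\oo}$, $\nor{D\phi}{\oo}$, and the $L^\oo$-norms of $g$ and $Dg$ on $\supp\phi$. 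The third term is a nonnegative Radon measure, since $\phi \ge 0$ and the convexity of $g$ on $B_R$ makes $\xi \cdot D^2 g\, \xi$ a nonnegative measure there, while $\phi$ vanishes outside $B_R$. Thus $D^2(\phi g) \ge -C\,\Id$ in the distributional sense on $\RR^d$ for some $C > 0$, so the function $\phi g + \tfrac{C}{2}|x|^2$ has nonnegative distributional Hessian on all of $\RR^d$. A standard mollification argument then yields its convexity, and hence $\phi g = \pars{\phi g + \tfrac{C}{2}|x|^2} - \tfrac{C}{2}|x|^2 \in \dc(\RR^d)$.

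The only subtle point is to justify the distributional product rule when $g$ is merely Lipschitz (not classically twice differentiable) and the passage from a distributional Hessian bound to genuine convexity. Both are standard: the identity is obtained by testing against $\eta \in C_c^\oo(\RR^d)$ and using the fact that $D^2 g$ is a matrix-valued Radon measure whose quadratic form is nonnegative (by convexity of $g$), together with $\phi \in C^2$, so that $\phi \eta \in C^2_c$ is an admissible test function for $D^2 g$; convexity is then extracted by mollifying $\phi g + \tfrac{C}{2}|x|^2$, noting that the mollified function has pointwise nonnegative Hessian (convolution of a nonnegative measure with a smooth nonnegative kernel), hence is smooth and convex, and passing to the uniform limit. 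Neither is a genuine obstruction, but together they are what make the semiconvexity-to-$\dc$ implication rigorous.
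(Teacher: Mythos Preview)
Your proof is correct and follows essentially the same approach as the paper: write $f = f_1 - f_2$ with $f_j$ convex on a ball containing $\supp\phi$, then show that each $\phi f_j$ is semiconvex by bounding its distributional Hessian from below via the product rule, and conclude via Proposition~\ref{P:C11dc}. Your additional reduction to nonnegative $\phi$ (writing $\phi = (\phi+\psi)-\psi$) is a careful step needed to ensure the term $\phi\,D^2 g \ge 0$, which the paper's argument tacitly uses but does not make explicit.
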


\begin{proof}
	Since $f \in \dc_\loc$, there exist convex functions $f_1$ and $f_2$ on $\RR^d$ such that $f = f_1 - f_2$ on $K := \supp \phi$, and, thus, $f\phi = f_1\phi - f_2 \phi$ on $\RR^d$. For $j = 1,2$, we have, in the distributional sense on $\RR^d$,
	\[
		D^2( f_j \phi) = f_j D^2 \phi + 2 Df_j \otimes D\phi + \phi D^2 f_j \ge - \pars{ \nor{f_j}{\oo,K} \nor{D^2 \phi}{\oo,K} + 2 \nor{Df_j}{\oo,K} \nor{D\phi}{\oo,K}} \Id,
	\]
	and, hence, in view of Lemma \ref{L:dcLip}, $f_1 \phi$ and $f_2 \phi$ are semi-convex. The result now follows from Proposition \ref{P:C11dc}.
\end{proof}

When $d = 1$, the condition that $D^2f$ has a one-sided bound can be substantially weakened, and, in fact, there is an exact characterization.
\begin{proposition}\label{P:1ddc}
	Let $a < b$. Then $f \in \dc([a,b])$ if and only if $f' \in BV([a,b])$. In particular, $W^{2,1}([a,b]) \subset \dc([a,b])$, and, for some constant $C = C(b-a) > 0$,
	\[
		\nor{f}{\dc([a,b])} \le C \nor{f}{W^{2,1}([a,b])}.
	\]
\end{proposition}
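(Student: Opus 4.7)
The proposition is an equivalence whose two halves use complementary constructions, and the $W^{2,1}$ inclusion is a quantitative refinement of the ``if'' direction.

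The sufficiency direction ($f' \in BV([a,b]) \Rightarrow f \in \dc([a,b])$) I would reduce to the Jordan decomposition of a BV function: write $f' = g_1 - g_2$ with $g_1, g_2$ nondecreasing on $[a,b]$, then set $F_j(x) := \int_a^x g_j(s)\,ds$, each of which is convex because $F_j' = g_j$ is nondecreasing. Absolute continuity of $f$ gives $f(x) = f(a) + F_1(x) - F_2(x)$, and absorbing the constant $f(a)$ into $F_1$ yields the desired DC decomposition.

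The necessity direction ($f \in \dc([a,b]) \Rightarrow f' \in BV([a,b])$) runs the construction backwards: given $f = f_1 - f_2$ with $f_1, f_2$ convex, the right derivatives $(f_j)_+'$ exist and are nondecreasing on $(a,b)$, so $f' = (f_1)_+' - (f_2)_+'$ almost everywhere is a difference of two monotone functions, hence of bounded variation in the appropriate sense. A technical subtlety is that convex representatives with finite $L^\oo$ norm on $[a,b]$ may nevertheless have one-sided derivatives that blow up at the endpoints (as with $f(x) = -\sqrt{x}$ on $[0,1]$), so the statement ``$f' \in BV([a,b])$'' must be interpreted via the distributional second derivative being a locally finite signed Radon measure on the open interval, rather than as a pointwise bounded BV representative.

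For the $W^{2,1}$ embedding and the norm estimate, I would make the sufficiency step quantitative. Given $f \in W^{2,1}([a,b])$, split $f'' = f''_+ - f''_-$ in $L^1([a,b])$ and define $g_j(x) := \int_a^x f''_\pm(s)\,ds$, which are nondecreasing with $L^\oo$-norm bounded by $\nor{f''}{L^1}$. Then $f'(x) = f'(a) + g_1(x) - g_2(x)$ on $[a,b]$, and integrating once more (absorbing the affine term $f'(a)(x-a)$, which is simultaneously convex and concave, together with the constant $f(a)$, into one of the two pieces) produces an explicit decomposition $f = F_1 - F_2$ with each $F_j$ convex on $[a,b]$ and
\[
	\nor{F_1}{\oo,[a,b]} + \nor{F_2}{\oo,[a,b]} \le |f(a)| + (b-a)|f'(a)| + 2(b-a)^2 \nor{f''}{L^1([a,b])}.
\]
The one-dimensional Sobolev embedding $W^{2,1}([a,b]) \hookrightarrow C^1([a,b])$ controls $|f(a)|$ and $|f'(a)|$ by $\nor{f}{W^{2,1}([a,b])}$, so combining the bounds yields the stated inequality with $C = C(b-a)$. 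The main thing to be careful about is the endpoint-interpretation of BV discussed above; it does not affect the $W^{2,1}$ estimate, where all integrals are manifestly finite.
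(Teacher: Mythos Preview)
Your proof is correct and follows essentially the same strategy as the paper: Jordan decomposition of $f''$ for sufficiency, monotone derivatives of the convex pieces for necessity, and a double integration of $(f'')_\pm$ for the $W^{2,1}$ bound. The only cosmetic difference is in the last step: the paper anchors the two integrations at mean-value points $x_0,x_1$ chosen so that $f(x_0)$ and $f'(x_1)$ equal the averages $\frac{1}{b-a}\int_a^b f$ and $\frac{1}{b-a}\int_a^b f'$, which bounds those terms directly by $L^1$ norms, whereas you anchor at $x=a$ and then appeal to the embedding $W^{2,1}\hookrightarrow C^1$ to control $|f(a)|$ and $|f'(a)|$ --- either route gives a constant depending only on $b-a$. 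Your remark about the endpoint interpretation of $f'\in BV$ (the $-\sqrt{x}$ example) is a point the paper glosses over, and your reading is the correct one.
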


\begin{proof}
	Fix $f \in \dc([a,b])$ and write $f = f_1 - f_2$ with $f_1,f_2:[a,b] \to \RR$ convex. Then $f_1''$ and $f_2''$ are positive measures on $[a,b]$, so that $f''$ is a finite signed measure on $[a,b]$. It follows that $f' \in BV$.
	
	Conversely, suppose that $f' \in BV([a,b])$. Then $f''$ is a signed measure $\mu = \mu_+ - \mu_-$, where $\mu_\pm$ are positive measures on $[a,b]$. Since $d = 1$, there exist convex functions $f_+$ and $f_-$ on $[a,b]$ such that, in the distributional sense, $f_\pm'' = \mu_\pm$. It follows that
	\[
		f'' = \pars{ f_+ - f_-}'',
	\]
	and therefore, for some $\alpha, \beta \in \RR$,
	\[
		f(x) = f_+(x) - f_-(x) + \alpha x + \beta.
	\]
	We conclude that $f \in \dc([a,b])$.
	
	Now assume that $f \in W^{2,1}([a,b])$. Then, for any $x_0,x_1,x \in [a,b]$,
	\[
		f(x) = f(x_0) + f(x_1)(x - x_0) + \int_{x_0}^x \int_{x_1}^t (f'')_+(s)dsdt - \int_{x_0}^x \int_{x_1}^t (f'')_-(s)dsdt,
	\]
	with the last two terms defining convex functions. Note that, since both $f$ and $f'$ are absolutely continuous, we can choose $x_0$ and $x_1$ such that
	\[
		f(x_0) = \frac{1}{b-a} \int_a^b f(t)dt \quad \text{and} \quad f(x_1) = \frac{1}{b-a} \int_a^b f'(t)dt.
	\]
	The estimate now follows.
\end{proof}

We note that one direction of the equivalence in Proposition \ref{P:1ddc} holds for all $d \ge 1$, namely, for an open set $U \subset \RR^d$, $f \in \dc(U)$ always implies $Df \in BV(U)$. However, the converse is false in general. Indeed, $\dc$ functions are twice-differentiable almost everywhere, while $BV$ functions may fail to have directional derivatives on a nontrivial set.

On the other hand, it is still an interesting question whether a condition on $D^2 f$ that is weaker than belonging to $L^\oo$ guarantees $f \in \dc$. As was mentioned earlier in the paper, one possible condition reads as follows:
\begin{equation}\label{conjecture}
	\text{for some $p = p_d \ge 1$,} \quad W_\loc^{2,p} \subset \dc_\loc \quad \text{for all } p > p_d.
\end{equation}
However, it turns out that \eqref{conjecture} is false as soon as $d > 1$. To see this, observe that, if $\Gamma \subset \RR^d$ is a line and $f \in \dc(\RR^d)$, then $f|_{\Gamma}$ clearly belongs to $\dc(\Gamma)$. On the other hand, the trace function
\begin{align*}
	T: W^{2,p}(\RR^d) &\to W^{2-1/p,p}(\Gamma) \times W^{1-1/p,p}(\Gamma) \\
	u &\mapsto \pars{ u |_{\Gamma}, \frac{\del u}{\del_\nu}\Big|_{\Gamma} },
\end{align*}
where $\nu \subset S^{d-1}$ is a fixed normal vector to $\Gamma$, is surjective, and general functions in $W^{2-1/p,p}(\Gamma)$ need not belong to $\dc(\Gamma)$, which can be checked by example with Proposition \ref{P:1ddc}. One such example, which was communicated to us by Terence Tao \cite{Texample}, is as follows.

\begin{proposition}\label{P:Texample}
	Assume $d > 1$ and $1 \le p < \oo$. Then there exists $f \in W^{2,p}_\loc(\RR^d) \backslash \dc_\loc$.
\end{proposition}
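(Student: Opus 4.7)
The strategy is the one already foreshadowed in the discussion preceding the statement: if $f \in \dc_\loc(\RR^d)$ and $L$ is any line in $\RR^d$, then $f|_L \in \dc_\loc(L)$, because the restriction of a convex function to any affine subspace remains convex. Combined with Proposition \ref{P:1ddc}, which identifies $\dc_\loc(\RR)$ with the space of $g$ whose distributional derivative lies in $BV_\loc(\RR)$, and with the classical surjectivity of the trace $W^{2,p}(\RR^d) \to W^{2-1/p,p}(\Gamma) \times W^{1-1/p,p}(\Gamma)$ for a hyperplane $\Gamma$, the task reduces to exhibiting a single function
\[
	g \in W^{2-1/p,p}_\loc(\RR) \quad \text{with} \quad g' \notin BV_\loc(\RR).
\]

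The construction I propose is a lacunary series. Pick $\sigma \in (2-1/p,2)$ (which is non-empty since $p<\oo$) and set
\[
	g(x) := \sum_{n=1}^\oo 2^{-n\sigma} \cos(2^n x).
\]
The series converges uniformly, so $g$ is bounded and $2\pi$-periodic. By the classical Littlewood--Paley characterization of Besov spaces on $\TT$ for lacunary series, for non-integer $s>0$,
\[
	g \in B^s_{p,p}(\TT) = W^{s,p}(\TT) \iff \sum_{n=1}^\oo \pars{2^{ns} \cdot 2^{-n\sigma}}^p < \oo \iff s < \sigma,
\]
so in particular $g \in W^{2-1/p,p}_\loc(\RR)$. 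On the other hand, the distributional second derivative is
\[
	g''(x) = -\sum_{n=1}^\oo 2^{n(2-\sigma)} \cos(2^n x),
\]
with Fourier coefficients of modulus $\tfrac12 2^{n(2-\sigma)}$ at frequencies $\pm 2^n$; since $\sigma<2$, these are unbounded. If $g'$ were in $BV((-\pi,\pi))$, then by periodicity it would be in $BV(\TT)$, so that $g''$ would be a finite signed Radon measure on $\TT$ and hence (by Riemann--Lebesgue) would have bounded Fourier coefficients — a contradiction. Thus $g' \notin BV_\loc(\RR)$, and Proposition \ref{P:1ddc} gives $g \notin \dc_\loc(\RR)$.

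For $d=2$, multiply by a cutoff $\chi \in C_c^\oo(\RR)$ that is identically $1$ on a neighborhood of $0$ to produce $\tilde g := g\chi \in W^{2-1/p,p}(\RR) \setminus \dc_\loc(\RR)$ (the obstruction survives because $\tilde g = g$ near $0$). Let $\Gamma = \RR \times \{0\} \subset \RR^2$. By surjectivity of the trace, pick any $f \in W^{2,p}(\RR^2)$ with $f|_\Gamma = \tilde g$ and $\del_\nu f|_\Gamma = 0$. If $f \in \dc_\loc(\RR^2)$, restricting to $\Gamma$ would yield $\tilde g \in \dc_\loc(\RR)$, a contradiction. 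For $d>2$, take $\eta \in C_c^\oo(\RR^{d-2})$ with $\eta(0)=1$ and set $F(x_1,\dots,x_d) := f(x_1,x_2) \eta(x_3,\dots,x_d)$. Then $F \in W^{2,p}_\loc(\RR^d)$ by the product rule, and if $F \in \dc_\loc(\RR^d)$ its restriction to the plane $\{x_3=\cdots=x_d=0\}$ would give $f \in \dc_\loc(\RR^2)$, again a contradiction.

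The heart of the matter — and the only real obstacle — is the one-dimensional construction: one must hit a function with just enough Besov regularity to sit inside $W^{2-1/p,p}_\loc$ yet with distributional second derivative that is not a Radon measure. The admissible window $\sigma \in (2-1/p,2)$ encodes precisely this balance, and its non-emptiness for every $p \in [1,\oo)$ is exactly what makes the argument work in all dimensions $d>1$.
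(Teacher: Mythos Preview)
Your argument is correct and carries out the trace-theorem strategy sketched in the discussion preceding the proposition, whereas the paper's actual proof takes a different, fully explicit route: it builds $f$ directly in $\RR^d$ as a superposition of rescaled bumps $\psi_{2^m}$ accumulating along the $x_1$-axis, checks $W^{2,p}$ membership by a scaling computation (using $\eps < (d-1)/p$), and then shows $f \notin \dc_\loc$ by proving that any convex $f_1$ in a decomposition $f = f_1 - f_2$ would satisfy $\partial_{x_d} f_1 \gtrsim 2^{m\eps}$ along a sequence in a fixed cube, contradicting the local Lipschitz bound of Lemma~\ref{L:dcLip}. The paper's construction is entirely self-contained---no Littlewood--Paley characterisation, no trace theory---and handles all $d > 1$ at once. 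Your approach is more modular and makes the mechanism (``bad one-dimensional trace $\Rightarrow$ not $\dc$'') transparent, at the price of invoking the surjectivity of the trace $W^{2,p}(\RR^2) \to W^{2-1/p,p}(\RR)$; this is textbook for $1 < p < \oo$, and for $p = 1$ (target $W^{1,1}(\RR)$) still true by Gagliardo, though worth an explicit reference. One small point: to make ``the obstruction survives after cutoff'' immediate, take $\chi \equiv 1$ on a full period $[-\pi,\pi]$, so that $\tilde g' \notin BV([-\pi,\pi])$ follows directly from your Fourier-coefficient argument without any further localisation.
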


\begin{proof}
	Throughout the proof, for $x \in \RR^d$, we write $x = (\tilde x, x_d)$ for $\tilde x \in \RR^{d-1}$ and $x_d \in \RR$, that is, $\tilde x = (x_1,x_2,\ldots, x_{d-1})$.
	
	Let $\phi \in C^\oo(\RR^d)$ be such that
	\[
		\supp \phi \subset \left[ - \frac{1}{2}, \frac{1}{2} \right] \quad \text{and} \quad D^2 \phi \ge \Id \quad \text{in } \left[ - \frac{1}{4}, \frac{1}{4} \right].
	\]
	Let $0 < \eps < \frac{d-1}{p}$ be fixed and, for any $N \in \NN$, define
	\[
		\psi_N(x) := N^{-2+\eps} \sum_{j=1}^N \phi(N\tilde x, Nx_d - j) \quad \text{for } x \in \RR^d.
	\]
	We then have, for $k = 0,1,2$,
	\begin{equation}\label{phiW2p}
		\nor{D^k \psi_N}{L^p} = N^{\eps - 2 + k - \frac{d-1}{p}} \nor{D^k \phi}{L^p}.
	\end{equation}
	Now define
	\[
		f(x) := \sum_{m=0}^\oo \psi_{2^m}(x_1 - 2(1-2^{-m}), x_2, \ldots, x_d).
	\]
	Observe that $f$ is compactly supported, the summands $\psi_{2^m}(x_1 + 2(1-2^{-m}), x_2, \ldots, x_d)$ have disjoint supports, and, by  \eqref{phiW2p}, $f \in W^{2,p}$. 
	
	Assume now that there exists convex functions $f_1,f_2: \RR^d \to \RR$ such that $f = f_1 - f_2$. We then have $D^2 f_1 \ge D^2 f$. Fix $m \in \NN$ and define the convex function $\chi_m: \RR \to \RR$ by
	\[
		\chi_m(t) = f_1( 2(1-2^{-m}), 0,\ldots,0,t).
	\]
	Observe that
	\[
		\chi_m''(t) \ge \del_{x_d}^2 f( 2(1-2^{-m}), 0,\ldots,0,t) = \del_{x_d}^2 \psi_{2^m}(0, 0,\ldots,0,t),
	\]
	and so
	\[
		\chi_m'' \ge 2^{m\eps} \quad \text{in } 2^{-m} j + \left[ -2^{-m-2}, 2^{-m-2} \right], \; j = 1,2,\ldots, 2^m.
	\]
	Because $\chi_m$ is convex, we conclude that, for some universal constant $c > 0$,
	\[
		\del_{x_d} f_1( 2(1-2^{-m}), 0,\ldots,0,1) = \chi_m'(1) = \chi_m'(0) + \int_0^{1} \rho''(s)ds \ge \del_{x_d} f_1( 2(1-2^{-m}), 0,\ldots,0,0) + c2^{m\eps}.
	\]
	We conclude that $Df_1$ has unbounded oscillation in $[-2,2]^d$, which contradicts the fact that $f_1$ is convex.
\end{proof}

Proposition \ref{P:Texample} makes it clear that \eqref{conjecture} is false unless stronger assumptions are placed on $f$. The next few lemmata are partial results to that effect.

\begin{proposition}\label{P:radialfdc}
	Suppose that $p > d$ and $f \in W_\loc^{2,p}$ is radial. Then $f \in \dc_\loc$, and, for all $R > 0$ and some constant $C = C_{d,p} > 0$,
	\begin{equation}\label{radialestimate}
		\nor{f}{\dc(B_R)} \le \nor{f}{\oo,B_R} + C \nor{D^2 f}{L^p(B_R)}.
	\end{equation}
\end{proposition}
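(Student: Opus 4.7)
Writing $f(x) = g(|x|)$ for the radial profile $g$, I will exploit the fact that the Hessian of a radial function has eigenvalue $g''(|x|)$ in the radial direction and $g'(|x|)/|x|$ in the $d-1$ tangential directions. The goal is to construct explicitly a convex radial function $G(x) = H(|x|)$ on $\RR^d$ so that $f + G$ is also convex; then $f = (f+G) - G$ realizes $f$ as a difference of two convex functions, and
\[
    \nor{f}{\dc(B_R)} \le \nor{f+G}{\oo,B_R} + \nor{G}{\oo,B_R} \le \nor{f}{\oo,B_R} + 2 \nor{G}{\oo,B_R}
\]
gives the structure of the desired bound.

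The natural choice is to take $H''(r) = (g''(r))_-$, with $H(0) = H'(0) = 0$, so that the radial profile $g + H$ has second derivative $(g''(r))_+ \ge 0$ and first derivative $\int_0^r (g''(t))_+ dt$, which is nonnegative provided one knows that $g'(0) = 0$. This identity follows from the Sobolev embedding $W^{2,p} \subset C^{1,1-d/p}$ valid for $p > d$: the continuity of $Df(x) = g'(|x|) x/|x|$ at the origin forces the radial profile's derivative to vanish there. With $g'(0) = 0$ and $H'(0) = 0$, both $G$ and $f + G$ are $C^1$ radial functions with nonnegative Hessian a.e.\ on $\RR^d \setminus \{0\}$ and, since they lie in $W^{2,p}_\loc \subset C^1$, are convex on all of $\RR^d$.

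To control $\nor{G}{\oo,B_R} = H(R)$, I estimate $H(R) \le R \int_0^R |g''(t)|\, dt$ and apply H\"older's inequality with the radial weight $t^{d-1}$: the dual weight $t^{-(d-1)/(p-1)}$ is integrable near zero \emph{precisely} when $p > d$, and the resulting $L^p$-norm of $g''$ with radial weight is dominated by $\nor{D^2 f}{L^p(B_R)}$, because $g''(r)$ is an eigenvalue of $D^2 f$ and hence is pointwise bounded by its Frobenius norm. This yields the estimate \eqref{radialestimate}, with the resulting $R$-dependence absorbed into the constant $C$.

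The main subtlety, and the reason the hypothesis $p > d$ enters sharply, is the behavior at the origin: one needs $g'(0) = 0$ to ensure that the tangential eigenvalue $g'(r)/r$ of $D^2(f+G)$ is nonnegative, and one needs the H\"older dual exponent to be integrable to obtain the $L^\infty$-bound on $G$. Both requirements degenerate at $p = d$, and what is striking is that, by Proposition \ref{P:Texample}, the conclusion genuinely fails without the radial assumption even when $p > d$.
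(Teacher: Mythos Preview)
Your proof is correct and essentially identical to the paper's. The only cosmetic difference is that the paper builds the \emph{upper} convex dominant, setting $\psi''(r) = (\phi'')_+(r)$ and then invoking Lemma~\ref{L:dcchar} (the criterion $D^2 f \le D^2 g$ with $g$ convex), whereas you build the \emph{lower} correction $H'' = (g'')_-$ and write $f = (f+G) - G$ directly; since your $g+H$ is exactly the paper's $\psi$, the two arguments are the same computation read from opposite ends, and both hinge on $p>d$ in precisely the two places you identify (continuity of $Df$ at the origin forcing $g'(0)=0$, and integrability of the H\"older dual weight $r^{-(d-1)/(p-1)}$).
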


\begin{proof}
We write $f(x) = \phi(|x|)$ for some $\phi: [0,\oo) \to \RR$. Then, setting $r = |x|$,
\[
	D^2 f(x) = \phi''(r) \hat x \otimes \hat x + \frac{\phi'(r)}{r} \pars{ \Id - \hat x \otimes \hat x},
\]
and, therefore, for all $R > 0$ and some constant $C = C_p > 0$,
\[
	\int_0^R |\phi''(r)|^p r^{d-1}dr + \int_0^R |\phi'(r)|^p r^{d-1-p}dr \le C \nor{D^2 f}{L^p(B_R)}^p.
\]
Note in particular that, for any $0 \le a< b$,
\[
	\int_a^b |\phi''(r)|dr  \le \pars{ \int_a^b |\phi''(r)|^p r^{d-1}dr }^{1/p} \pars{ \int_a^b r^{- \frac{d-1}{p-1}}dr}^{1 - 1/p},
\]
and therefore, since $p > d$, $\phi'' \in L^1_\loc([0,\oo))$ and, for all $R > 0$ and some constant $C = C_{d,p} > 0$,
\begin{equation}\label{gineq}
	\int_0^R |\phi''(r)|dr \le C\nor{D^2 f}{L^p(B_R)}.
\end{equation}
Let $\psi: [0,\oo) \to [0,\oo)$ be given by
\[
	\psi(r) = \int_0^r \int_0^s \phi''(t)_+\;dt\;ds.
\]
Then
\[
	\psi''(r) = (\phi'')_+(r) \ge \phi''(r) \ge 0,
\]
and, since $p > d$, $Df(x) = \phi'(r)\hat x$ is continuous, which implies that $\phi'(0) = 0$, and, hence,
\[
	\psi'(r) = \int_0^r (\phi'')_+(s)ds \ge \int_0^r \phi''(s)ds = \phi'(r) \ge 0.
\]
For $x \in \RR^d$, let $g(x) = \psi(|x|)$. Then
\[
	D^2 g(x) = \psi''(r) \hat x \otimes \hat x + \frac{\psi'(r)}{r} \pars{ \Id - \hat x \otimes \hat x}.
\]
It is immediate that $g$ is convex and $D^2 f \le D^2 g$ on $B_R$, so, in view of Lemma \ref{L:dcchar}, $f \in \dc_\loc$. The estimate \eqref{radialestimate} is then a consequence of \eqref{gineq}.
\end{proof}

The next conclusion is an immediate corollary of Lemma \ref{L:dcchar} and Proposition \ref{P:radialfdc}.

\begin{proposition}\label{P:radialfdcdom}
	Assume that $p > d$, $g \in W^{2,p}$ is radial, and $D^2 f \le D^2 g$ in the sense of distributions. Then $f \in \dc$.
\end{proposition}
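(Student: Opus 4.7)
The statement reads as an immediate combination of the preceding two results, so the plan is really just to verify that their hypotheses align and then chain them together.

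The first step is to apply Proposition \ref{P:radialfdc} to $g$. Since $g \in W^{2,p}(\RR^d) \subset W^{2,p}_{\loc}(\RR^d)$ with $p > d$ and $g$ is radial, Proposition \ref{P:radialfdc} yields $g \in \dc_{\loc}(\RR^d)$; indeed, its proof explicitly constructs, on each ball $B_R$, a radial convex $\psi_R$ with $D^2 \psi_R \ge D^2 g$ in the distributional sense. The second step is to fix an arbitrary $R > 0$ and apply Lemma \ref{L:dcchar} on the open convex set $U = B_R$: we have $g \in \dc(B_R)$ from step one, and $D^2 f \le D^2 g$ in the distributional sense by hypothesis (which restricts to $B_R$), so Lemma \ref{L:dcchar} delivers $f \in \dc(B_R)$. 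Since $R > 0$ was arbitrary, this gives $f \in \dc_{\loc}(\RR^d)$.

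To upgrade from $\dc_{\loc}$ to the global statement $f \in \dc$, the idea is to produce a single convex function dominating $D^2 g$ on all of $\RR^d$, rather than one per ball. Writing $g(x) = \phi(|x|)$, I would define $\psi(r) := \int_0^r \int_0^s (\phi'')_+(t)\,dt\,ds$ globally on $[0,\oo)$ and set $\Psi(x) := \psi(|x|)$ exactly as in the proof of Proposition \ref{P:radialfdc}; because $g \in W^{2,p}(\RR^d)$ with $p>d$, the continuity argument used there works at every radius, so $\Psi \in C^1(\RR^d)$ is convex and satisfies $D^2 g \le D^2 \Psi$ on all of $\RR^d$. Then $f = (f + \Psi) - \Psi$ exhibits $f$ as a difference of convex functions globally, since $D^2(f + \Psi) = D^2 f + D^2 \Psi \ge D^2 f - D^2 g + D^2 g \ge 0$ wherever it is well-defined, and $\Psi$ is convex by construction.

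There is no real obstacle here. The only point worth checking carefully is the second step, namely that the differential inequality $D^2 f \le D^2 g$ in $\mathscr{S}'(\RR^d)$ transfers to the assumption needed by Lemma \ref{L:dcchar} on the convex open set $B_R$; this is immediate by restriction of distributions. Everything else is bookkeeping.
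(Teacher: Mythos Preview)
Your approach matches the paper's, which simply records the result as an immediate corollary of Lemma~\ref{L:dcchar} and Proposition~\ref{P:radialfdc}. You go further by flagging the $\dc$ versus $\dc_\loc$ distinction and attempting a global upgrade, which is reasonable.

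However, your step 3 contains an error. You write $f = (f + \Psi) - \Psi$ and try to show $f + \Psi$ is convex via
\[
	D^2(f + \Psi) = D^2 f + D^2 \Psi \ge D^2 f - D^2 g + D^2 g \ge 0.
\]
The middle expression is just $D^2 f$, so the final inequality asserts $D^2 f \ge 0$, which is not assumed. The hypotheses $D^2 f \le D^2 g \le D^2 \Psi$ give you a one-sided bound on $D^2 f$ from \emph{above}, not from below, so they cannot force $f + \Psi$ to be convex. The decomposition is backwards: what you actually get is that $\Psi - f$ is convex, since
\[
	D^2(\Psi - f) = D^2 \Psi - D^2 f \ge D^2 g - D^2 f \ge 0,
\]
and then $f = \Psi - (\Psi - f)$ exhibits $f$ as a difference of convex functions. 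Equivalently, you can just apply Lemma~\ref{L:dcchar} directly on $U = \RR^d$ with the convex function $\Psi$ in place of $g$; this is exactly what the lemma does internally. With that correction, your argument is complete and in line with the paper's.
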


We next explore conditions in which $D^2 f$ itself is dominated by a radial function, or a superposition of radial functions.

\begin{proposition}\label{P:D2fradial}
	Suppose there exists $\phi: [0,\oo) \to \RR$ such that $\phi_+ \in L^1_\loc$, $\{r \mapsto r\phi_+(r)\} \in L^\oo_\loc$, and
	\[
		D^2f(x) \le \phi(|x|) \Id.
	\]
	Then $f \in \dc$.
\end{proposition}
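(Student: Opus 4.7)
The plan is to apply Lemma \ref{L:dcchar}: it suffices to produce a convex function $g$ on $\RR^d$ with $D^2 g \ge D^2 f$ distributionally, since then $g - f$ is convex and $f = g - (g - f)$ exhibits $f$ as a difference of convex functions. I would take $g$ radial, $g(x) = \psi(|x|)$ with $\psi : [0,\oo) \to \RR$ convex and nondecreasing; such a composition is automatically convex on $\RR^d$, so $g \in \dc$ trivially. Recalling that, for smooth radial $g$, the Hessian has eigenvalues $\psi''(r)$ and $\psi'(r)/r$ (the latter with multiplicity $d-1$), the inequality $D^2 g \ge \phi_+(|x|)\Id$ reduces to the pair of one-dimensional conditions $\psi''(r) \ge \phi_+(r)$ and $\psi'(r) \ge r\phi_+(r)$.

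To secure both simultaneously, set $M(r) := \esssup_{0 \le s \le r} s\phi_+(s)$ and
\[
\psi(r) := \int_0^r \pars{M(s) + \int_0^s \phi_+(t)\,dt}\,ds.
\]
The hypotheses $\phi_+ \in L^1_\loc$ and $r\phi_+ \in L^\oo_\loc$ make $M$ nondecreasing, nonnegative, and locally bounded, so $\psi' = M + \int_0^{\cdot}\phi_+$ is nonnegative and nondecreasing, and hence $\psi$ is convex nondecreasing on $[0,\oo)$. By construction $\psi''(r) \ge \phi_+(r)$ holds as nonnegative measures on $(0,\oo)$, while $\psi'(r)/r \ge M(r)/r \ge \phi_+(r)$ a.e.\ on $(0,\oo)$ by the defining property of $M$.

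To convert these scalar statements into the matrix-valued distributional inequality $D^2 g \ge \phi(|x|)\Id$ on $\RR^d$, I would regularize. Let $\tilde\rho_\eps$ be a smooth symmetric radial mollifier on $\RR^d$; the convolution $(\phi_+(|\cdot|)) * \tilde\rho_\eps(x) = \Phi^\eps(|x|)$ is radial, with $\Phi^\eps$ smooth, nonnegative, and $\Phi^\eps \to \phi_+$ in $L^1_\loc$. Constructing $\psi^\eps$ and $g^\eps(x) := \psi^\eps(|x|)$ from $\Phi^\eps$ in place of $\phi_+$ as above yields $g^\eps \in C^2(\RR^d \setminus \{0\})$ convex with $D^2 g^\eps \ge \Phi^\eps(|x|)\Id$ pointwise, while $f^\eps := f * \tilde\rho_\eps$ satisfies $D^2 f^\eps \le \Phi^\eps(|x|)\Id$. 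Hence $g^\eps - f^\eps$ is convex off the origin, and by continuity on all of $\RR^d$. Letting $\eps \to 0$ and using locally uniform convergence $g^\eps \to g$, $f^\eps \to f$, the function $g - f$ is convex, which completes the proof. The main technical obstacle is this passage to the limit: the Hessian of $g$ involves a genuine measure coming from the possibly discontinuous $M$, so the matrix inequality cannot be read off pointwise. The radiality of $\tilde\rho_\eps$ and of $\phi_+(|\cdot|)$, which ensures that their convolution is again radial, is the key ingredient allowing the reduction to the smooth radial case, where the eigenvalue decomposition of $D^2 g^\eps$ is explicit.
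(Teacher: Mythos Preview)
Your construction of $\psi$ is exactly the paper's (the paper writes $\psi'(r) = \max_{0 \le s \le r} s\phi_+(s) + \int_0^r \phi_+$ and then simply asserts $D^2 f \le D^2 g$ and invokes Lemma \ref{L:dcchar}), so the approach is the same; you have just added a mollification step to justify the distributional matrix inequality that the paper leaves implicit. One small inaccuracy: even with $\Phi^\eps$ smooth, the running supremum $M^\eps(r) = \sup_{s \le r} s\Phi^\eps(s)$ need not be $C^1$, so $g^\eps$ is in general only $C^{1,1}$ rather than $C^2$ off the origin---but this is harmless, since the a.e.\ Hessian inequality still gives convexity of $g^\eps - f^\eps$.
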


\begin{proof}
	Define $\psi: [0,\oo) \to [0,\oo)$ by 
	\[
		\psi(0) = 0 \quad \text{and} \quad
		\psi'(r) = \max_{0 \le s \le r} s \phi_+(s) + \int_0^r \phi_+(s)ds.
	\]
	Then $\psi$ is convex and increasing, whence $g(x) = \psi(|x|)$ is convex. The result follows from Lemma \ref{L:dcchar} and the fact that $D^2 f \le D^2 g$.
\end{proof}

The final result of this subsection involves the Riesz potential, which, for $0 < s < d$ and $f \in \mathscr S(\RR^d)$, is the map $(-\Delta)^{-s} : \mathscr S \to \mathscr S$ given by
\[
	(-\Delta)^{-s} f(x) := c_{s,d} \int_{\RR^d} \frac{f(y)}{|x-y|^{d - s}} dy,
\]
where, with $\Gamma$ denoting the Gamma function,
\begin{equation}\label{cRiesz}
	c_{s,d} := \pi^{d/2} 2^s \frac{\Gamma(s/2)}{\Gamma((d-s)/2)}.
\end{equation}
The operator $(-\Delta)^{-s}$ is the inverse of the fractional Laplacian $(-\Delta)^s$. Moreover, the definition of $(-\Delta)^{-s}$ extends by duality to $\mathscr S'$. For more details, see Stein \cite{Stein}.

For $p \ge 1$, we introuduce the space
\[
	\mcl M_p := \left\{ \mu \in \mathscr S': \mu \text{ is a signed measure on $\RR^d$ and } \int_{\RR^d} |x|^p |\mu|(dx) < \oo \right\}.
\]

\begin{proposition}
	If $d \ge 2$ and $s > \frac{d+1}{2}$, then
	\[
		(-\Delta)^{-s} \mcl M_{2s-d} \subset \dc.
	\]
\end{proposition}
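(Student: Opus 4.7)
The plan is to observe that, in the regime $s > (d+1)/2$, the Riesz kernel is itself a convex function, so the Jordan decomposition of $\mu$ produces a $\dc$-decomposition of $f := (-\Delta)^{-s}\mu$ directly. Specifically, I would write $f$ as a convolution $f(x) = c\int_{\RR^d} K(x-y)\,\mu(dy)$ with Riesz kernel $K(x) = |x|^{\alpha}$ where $\alpha := 2s - d$, so that the hypothesis $s > (d+1)/2$ yields $\alpha > 1$. The key point is that $K$ is then convex on all of $\RR^d$: away from the origin a direct computation gives
\[
D^2 K(x) = \alpha|x|^{\alpha-2}\,\Id + \alpha(\alpha-2)|x|^{\alpha-4}\,x\otimes x,
\]
with tangential eigenvalue $\alpha|x|^{\alpha-2}$ (multiplicity $d-1$) and radial eigenvalue $\alpha(\alpha-1)|x|^{\alpha-2}$, both nonnegative for $\alpha\ge 1$; equivalently and more cleanly, $K = \phi\circ|\cdot|$ with $\phi(r):=r^{\alpha}$ convex and nondecreasing on $[0,\oo)$, which writes $K$ as the composition of a convex nondecreasing function with a norm, hence convex.

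I would then apply the Jordan decomposition $\mu = \mu_+ - \mu_-$ with $\mu_\pm$ nonnegative measures and set
\[
f_\pm(x) := c\int_{\RR^d} K(x-y)\,\mu_\pm(dy).
\]
Each $f_\pm$ is a positive superposition of the convex functions $x\mapsto K(x-y)$ and is therefore convex on $\RR^d$, as one sees by integrating the pointwise convexity inequality $K(\lambda x_1 + (1-\lambda)x_2 - y) \le \lambda K(x_1-y) + (1-\lambda)K(x_2-y)$ against $\mu_\pm$. Pointwise finiteness uses the elementary estimate $|x-y|^{\alpha}\le 2^{\alpha-1}(|x|^{\alpha}+|y|^{\alpha})$ (valid for $\alpha \ge 1$): combined with the moment hypothesis $\int|y|^{\alpha}|\mu|(dy) < \oo$ and local finiteness of $|\mu|$, a splitting of the defining integral into $\{|y|\le 2|x|+1\}$ (controlled by local finiteness and boundedness of $K$ there) and $\{|y|>2|x|+1\}$ (where $|x-y|\le 2|y|$ reduces the tail estimate to the moment bound) gives $f_\pm(x) < \oo$ for every $x$. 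Consequently $f = f_+ - f_-$ is a difference of finite-valued convex functions on $\RR^d$, so $f \in \dc(\RR^d)$.

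The main conceptual step is recognizing the correct convexity threshold for the Riesz kernel, namely $2s - d \ge 1$; once this is in hand, the Jordan decomposition of the measure translates immediately into the $\dc$-decomposition of the potential, with no need to invoke domination of $D^2 f$ by a convex function's Hessian (as in Lemma \ref{L:dcchar} or Proposition \ref{P:D2fradial}), and no regularization or mollification is required. The only technical obstacle is the pointwise finiteness verification of $f_\pm$ when $|\mu|$ is not assumed to have finite total variation, which is a routine splitting argument as indicated above.
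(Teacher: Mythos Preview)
Your argument is correct and is in fact more direct than the paper's. Both proofs ultimately rest on the same elementary observation---that the kernel $|x|^{2s-d}$ is convex once $2s-d>1$, i.e.\ $s>(d+1)/2$---but they deploy it differently.

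The paper does \emph{not} apply the Jordan decomposition to the measure producing $f$. Instead it introduces a smooth majorant $\beta$ of the matrix norm, forms a new measure $\mu:=(-\Delta)^{s-1}\beta(D^2 f)$, rescales its positive part to define $\nu:=\tfrac{1}{1-r}\mu_+-\mu_-$ (with $r=d-2s+2<1$), and sets $g(x)=\tfrac{c}{2-r}\int|x-y|^{2-r}\,\nu(dy)$. The convexity of $|x|^{2-r}$ is then used twice: first to conclude $g\in\dc$ (via the Jordan decomposition of $\nu$, exactly your mechanism), and second, through the explicit Hessian $D^2(|x|^{2-r})=|x|^{-r}(\Id-r\,\hat x\otimes\hat x)$ and the eigenvalue bound $\Id-r\,\hat x\otimes\hat x\ge(1-r)\Id$, to verify $D^2 g\ge\beta(D^2f)\,\Id\ge D^2 f$. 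Lemma~\ref{L:dcchar} then closes the argument.

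Your route bypasses the auxiliary function $\beta$, the reweighting $\nu$, the Hessian domination, and Lemma~\ref{L:dcchar} altogether: since $f$ is \emph{already} given as $c\int|x-y|^{2s-d}\,\mu(dy)$, decomposing $\mu=\mu_+-\mu_-$ produces the $\dc$-splitting of $f$ immediately. This is shorter and conceptually cleaner for the proposition as stated. The paper's detour through Hessian domination would become relevant if one wanted a criterion phrased in terms of $D^2f$ rather than the representing measure, but for the statement at hand your argument is the natural one. Your finiteness check (local finiteness of $|\mu|$ from $\mu\in\mathscr S'$ for the near-field, the $(2s-d)$-moment for the far-field) is adequate; the sign of the constant $c$ is immaterial since negating it merely swaps which of $f_\pm$ is convex.
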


\begin{proof}
	Fix $f \in (-\Delta)^{-s} \mcl M_{2s-d}$, set $r := d - 2s + 2$, and note that $r < 1$ and $2-r = 2s-d$. 
	
	Let $\beta \in C^\oo(\RR^d \otimes \RR^d)$ be such that, for some positive constants $(C_k)_{k \in \NN}$ and for all $X \in \RR^d \otimes \RR^d$,
	\[
		|X| \le \beta(X) \le C_0(1 + |X|) \quad \text{and} \quad \nor{ D^k \beta}{\oo} \le C_k \quad \text{for all } k \in \NN.
	\]
	Define
	\[
		\mu := (-\Delta)^{\frac{d-r}{2}} \beta(D^2 f) \in \mcl M_{2s-d},
	\]
	denote by $\mu_+$ and $\mu_-$ the nonnegative measures in the Hahn decomposition $\mu = \mu_+ - \mu_-$ of $\mu$, set
	\[
		\nu := \frac{1}{1-r} \mu_+ - \mu_-,
	\]
	and define
	\[
		g(x) :=  \frac{ c_{\frac{d-r}{2}, d} }{2-r} \int_{\RR^d} |x-y|^{2-r}\;\nu(dy),
	\]
	where $c_{\frac{d-r}{2}, d}$ is defined as in \eqref{cRiesz}. Then $g \in \dc$, and
	\begin{align*}
		D^2 g(x) &= c_{\frac{d-r}{2}, d} \int_{\RR^d} |x-y|^{-r} \pars{ \Id - r (\widehat{x-y}) \otimes (\widehat {x-y}) }\nu(dy)\\
		&\ge c_{\frac{d-r}{2}, d} \int_{\RR^d} |x-y|^{-r} \pars{ (1-r) \nu_+(dy) - \nu_-(dy) }\Id \\
		&= \left[ (-\Delta)^{- \frac{d-r}{2}} \mu \right](x) \cdot \Id\\
		&= \beta(D^2 f) \cdot \Id \ge D^2f(x).
	\end{align*}
	The result now follows from Lemma \ref{L:dcchar}.
\end{proof}

\subsection{Structural criteria}\label{SS:structure}

Except for the case $d=1$, we are not aware of a simple characterization of $\dc$ functions in terms of the regularity or structure of their gradients. Nevertheless, as is demonstrated by the results that follow, the Lipschitz assumption for the gradient in Proposition \ref{P:C11dc} can be relaxed in various ways, even when $d > 1$. 

Throughout the subsection, we use the $\max$ and $\min$ operations on functions, and therefore the following lemma is useful.

\begin{lemma}\label{L:invariant}
If $f,g \in \dc(U)$, then so are $\min\{f,g\}$ and $\max\{f,g\}$, and
\[
	\nor{ \min\{f,g\}}{\dc(U)} \le 2\pars{ \nor{f}{\dc(U)} + \nor{g}{\dc(U)}} \quad \text{and} \quad
	\nor{\max\{f,g\}}{\dc(U)} \le 2\pars{\nor{f}{\dc(U)} + \nor{g}{\dc(U)}}.
\]
\end{lemma}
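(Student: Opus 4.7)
The plan is to reduce the problem to an algebraic manipulation of convex decompositions, since the class of convex functions itself is closed under $\max$ (but not $\min$), and DC functions inherit just enough flexibility to handle both operations through bookkeeping.

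First I would handle the $\max$ case. Fixing convex decompositions $f = f_1 - f_2$ and $g = g_1 - g_2$ on $U$, the key observation is the pointwise identity
\[
  \max\{f,g\} = \max\{f_1 + g_2,\, g_1 + f_2\} - (f_2 + g_2),
\]
which follows from $\max\{a-b,c-d\} = \max\{a+d,c+b\} - (b+d)$ applied pointwise. Both summands on the right-hand side are convex: $f_1 + g_2$ and $g_1 + f_2$ are sums of convex functions, so their pointwise maximum is convex, and $f_2 + g_2$ is manifestly convex. This exhibits $\max\{f,g\}$ as an explicit element of $\dc(U)$.

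For the $\min$ case, I would simply use $\min\{f,g\} = -\max\{-f,-g\}$ applied to the reversed decompositions $-f = f_2 - f_1$ and $-g = g_2 - g_1$, or equivalently note the symmetric identity
\[
  \min\{f,g\} = (f_1 + g_1) - \max\{f_1 + g_2,\, g_1 + f_2\},
\]
again a difference of convex functions.

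The norm bound is then bookkeeping. From the displayed decomposition for $\max\{f,g\}$, using $\|\max\{h_1,h_2\}\|_{\infty,U} \le \|h_1\|_{\infty,U} + \|h_2\|_{\infty,U}$,
\[
  \|\max\{f,g\}\|_{\dc(U)} \le \|f_1\|_{\infty,U} + 2\|f_2\|_{\infty,U} + \|g_1\|_{\infty,U} + 2\|g_2\|_{\infty,U} \le 2\bigl(\|f_1\|_{\infty,U} + \|f_2\|_{\infty,U} + \|g_1\|_{\infty,U} + \|g_2\|_{\infty,U}\bigr),
\]
and taking the infimum over convex decompositions of $f$ and $g$ gives the claimed bound; the $\min$ case is identical. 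I do not foresee a real obstacle here — the only nontrivial step is spotting the algebraic identity that converts $\max$ of DC functions back into a convex-minus-convex form, and everything else is routine.
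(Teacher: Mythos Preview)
Your proof is correct and follows exactly the same route as the paper: the same algebraic identity $\max\{f,g\} + f_2 + g_2 = \max\{f_1+g_2,\,g_1+f_2\}$ to exhibit the DC decomposition, with the $\min$ case handled symmetrically. Your explicit norm computation is in fact slightly more detailed than the paper's, which leaves the bound implicit.
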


\begin{proof}
	Let $f = f_1 - f_2$ and $g = g_1 - g_2$ with $f_j, g_j$ convex for $j = 1,2$. Then
	\[
		\max\{f,g\} + f_2 + g_2 = \max\{ f + f_2 + g_2, g + f_2 + g_2\} = \max\{f_1 + g_2, f_2 + g_1\}
	\]
	is convex as the maximum of convex functions, so that
	\[
		\max\{f,g\} = \pars{ \max\{f,g\} + f_2 + g_2} - f_2 - g_2 \in \dc.
	\]
	The argument for $\min\{f,g\}$ is similar.
\end{proof}

We next prove a result on extending $\dc$ functions past convex sub-domains.

\begin{lemma}\label{L:dcextension}
	Assume that $K \subset \RR^d$ is convex and compact, $f_1,f_2: K \to \RR$ are convex and Lipschitz with the common Lipschitz constant $L > 0$, and $U \supset K$ is bounded and open, and set $f = f_1 - f_2$ on $K$. Then there exists $\tilde f \in \dc(U)$ such that $f = \tilde f$ on $K$, and
	\begin{equation}\label{dcextendbound}
		\nor{\tilde f}{\dc,U} \le \nor{f_1}{\oo,K} + \nor{f_2}{\oo,K} + 2L \dist(K,\del U).
	\end{equation}
	Moreover, given $g \in C^{0,1}(U)$ such that $g \le f$ in $K$, the extension $\tilde f$ can be chosen such that $\tilde f \ge g$ in $U$ and
	\begin{equation}\label{dcextendbound2}
		\nor{\tilde f}{\dc,U} \le \nor{f_1}{\oo,K} + \nor{f_2}{\oo,K} + (2L + \nor{Dg}{\oo})\dist(K,\del U).
	\end{equation}
\end{lemma}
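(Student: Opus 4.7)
The strategy is to build $\tilde f$ as a difference of two explicit convex Lipschitz extensions of $f_1$ and $f_2$ to all of $\RR^d$, then, for the lower-bound part, add a convex correction built from the distance function $d_K(x) := \inf_{y \in K}|x-y|$. The key geometric fact driving everything is that, because $K$ is convex, $d_K$ is itself convex and $1$-Lipschitz on $\RR^d$; it vanishes on $K$ and grows linearly away from it.

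For each $j = 1, 2$ I would take the canonical ``sup-of-subtangents'' extension
\[
	\tilde f_j(x) := \sup\bigl\{ f_j(y) + p \cdot (x - y) : y \in K,\ p \in \partial f_j(y),\ |p| \le L \bigr\}.
\]
Since $f_j$ is convex and $L$-Lipschitz on $K$, Hahn--Banach ensures that $\partial f_j(y) \cap \oline{B_L}$ is nonempty at every $y \in K$, so the supremum is over a nonempty family of $L$-Lipschitz affine functions. Thus $\tilde f_j$ is convex and $L$-Lipschitz on $\RR^d$, and the subgradient inequality for convex functions yields $\tilde f_j = f_j$ on $K$. Setting $\tilde f := \tilde f_1 - \tilde f_2$ immediately produces an element of $\dc(\RR^d) \subset \dc(U)$ agreeing with $f$ on $K$, and the pointwise bound $|\tilde f_j(x)| \le \nor{f_j}{\infty, K} + L d_K(x)$, together with the interpretation of $\dist(K, \del U)$ as $\sup_{x\in U} d_K(x)$, delivers \eqref{dcextendbound}.

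For the second part, with $M := \nor{Dg}{\infty}$, I would replace $\tilde f_1$ by the still-convex function $\hat f_1 := \tilde f_1 + (2L+M)d_K$ and redefine $\tilde f := \hat f_1 - \tilde f_2$. Since $d_K$ vanishes on $K$, the identity $\tilde f = f$ on $K$ persists. For $x \in U$, let $y = \pi_K(x) \in K$ denote the projection (well-defined by convexity of $K$); then $d_K(x) = |x-y|$, and combining the three Lipschitz bounds
\[
	\tilde f_1(x) \ge f_1(y) - L d_K(x), \qquad \tilde f_2(x) \le f_2(y) + L d_K(x), \qquad g(x) \le g(y) + M d_K(x),
\]
with the hypothesis $g(y) \le f(y)$ gives
\[
	\tilde f(x) \;\ge\; f_1(y) - f_2(y) + M d_K(x) \;=\; f(y) + M d_K(x) \;\ge\; g(y) + M d_K(x) \;\ge\; g(x).
\]

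The main technical obstacle is obtaining the sharp constant $2L + M$ in \eqref{dcextendbound2}. A direct triangle-inequality estimate of the form $\nor{\hat f_1}{\infty,U} + \nor{\tilde f_2}{\infty,U}$ overcounts the linear-in-$d_K$ contributions and produces a strictly larger constant; to recover the stated one, the decomposition must be re-balanced, for instance by shifting $\hat f_1$ and $\tilde f_2$ by a common additive constant (which preserves their difference) chosen to cancel the growth of $\tilde f_1$ against that of the correction $(2L+M)d_K$. This bookkeeping, together with the interpretation of $\dist(K, \del U)$ as the maximal distance from $U$ to $K$, is the delicate step of the proof.
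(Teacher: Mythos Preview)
Your first part is exactly the paper's argument: both extend each $f_j$ by the supremum of all affine functions with slope at most $L$ lying below $f_j$ on $K$ (your sup-of-subtangents and the paper's $\sup\{p\cdot x+a:|p|\le L,\ p\cdot y+a\le f_j(y)\ \forall y\in K\}$ produce the same convex $L$-Lipschitz extension), and then read off \eqref{dcextendbound} from $|\tilde f_j(x)|\le \nor{f_j}{\oo,K}+L\,d_K(x)$. Your reading of $\dist(K,\del U)$ as $\sup_{x\in U}d_K(x)$ is also what the paper implicitly uses.

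For the second part the two routes diverge. The paper simply adds $\nor{Dg}{\oo}\rho$ (with $\rho=d_K$) to the $\tilde f$ already built, so that the decomposition $(\tilde f_1+M\rho)-\tilde f_2$ yields the constant $2L+M$ in one line; it does not verify $\tilde f+M\rho\ge g$. You instead add the larger correction $(2L+M)\rho$ and give a clean proof of the lower bound via the projection $\pi_K$, at the cost of the constant $4L+M$. Your diagnosis that reconciling the sharp constant with the lower bound is the crux is accurate --- in fact the paper's smaller correction does \emph{not} in general force $\tilde f+M\rho\ge g$ (take $K$ the unit disk in $\RR^2$, $f_1(y)=Ly_2$, $f_2\equiv 0$, $x=(2,0)$, and $g(x)=\inf_{y\in K}(Ly_2+M|x-y|)$; then $\tilde f(x)+M\rho(x)=M-L$ while $g(x)>M-L$ for $M$ comparable to $L$). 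Your proposed fix of shifting both pieces by a common additive constant will not recover $2L+M$ either, since such a shift does nothing to the $O(L\,d_K)$ growth of each piece. In the paper's only application (Proposition~\ref{P:piecewisedc}) the constant is immediately absorbed into a cruder $CL\,\diam{U}$ bound, so the discrepancy is harmless there; but as stated, the sharp constant in \eqref{dcextendbound2} is not established by either argument.
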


\begin{proof}
For $j = 1,2$ and $x \in U$, define
\[
	\tilde f_j(x) := \sup\left\{ p \cdot x + a : |p| \le L, \; p \cdot y + a \le f(y) \text{ for } y \in K\right\}.
\]
It is immediate that $\tilde f_j$ is convex, $\nor{D \tilde f_j}{\oo,U} \le L$, and $\tilde f_j \le f_j$ on $K$. Moreover, since $\nor{Df_j}{\oo,K} \le L$, we have $f_j = \tilde f_j$ on $K$, and
\[
	\nor{\tilde f_j}{\oo,U} \le \nor{f_j}{\oo,K} + L \dist(K,\del U).
\]
Let $\tilde f := \tilde f_1 - \tilde f_2$. Then $\tilde f \in \dc(U)$, $\tilde f = f$ on $K$, and \eqref{dcextendbound} holds.

For $x \in U$, set
\[
	\rho(x) := \dist(x,K).
\]
Then $\rho = 0$ on $K$, and, as a consequence of the convexity of $K$, $\rho$ is convex. For the second part of the lemma, we may then take the function $\tilde f + \nor{Dg}{\oo}\rho$ to be the extension.
\end{proof}

Using the previous two lemmata, we show that a Lipschitz, piecewise $\dc$ function is also $\dc$.

Given a convex and compact set $U \subset \RR^d$ with nonempty interior, $(K_i)_{i=1}^n$ is called a convex tessellation of $U$ if each $K_i$ is convex and compact with nonempty interior, $K_i \cap K_j = (\del K_i) \cap (\del K_j)$, and $U = \bigcup_{i=1}^n K_i$. Note that, for each $i = 1,2,\ldots, n$, $\del K_i \cap \mathrm{int}(U)$ is necessarily polygonal. A standard example of such a tessellation is a triangulation of simplices.

\begin{proposition}\label{P:piecewisedc}
	Assume that $(K_i)_{i=1}^n$ is a convex tessellation of a convex and compact set $U \subset \RR^d$ with nonempty interior. Let $f \in C^{0,1}(U)$ and assume that, for each $i = 1,2\ldots, n$, there exist convex and Lipschitz functions $f^i_1,f^i_2: K_i \to \RR$ with Lipschitz constant $L > 0$ such that $f = f^i_1 - f^i_2$ on $K_i$. Then $f \in \dc(U)$, and, for some constant $C > 0$ depending only on $n$ and $U$,
	\begin{equation}\label{piecewisedcbound}
		\nor{f}{\dc(U)} \le C\pars{ L +  \sum_{i=1}^n\sum_{j=1,2} \nor{f^i_j}{\oo,K}}.
	\end{equation}
\end{proposition}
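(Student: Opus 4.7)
The plan is to construct, for each tile $K_i$, a global $\dc$-function $\tilde f_i$ that agrees with $f$ on $K_i$ and majorizes $f$ elsewhere, and then to recover $f$ on $U$ as the pointwise minimum $\min_{1\le i\le n}\tilde f_i$; iterated application of Lemma~\ref{L:invariant} will then yield $f\in\dc(U)$ with the quantitative bound \eqref{piecewisedcbound}.

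To carry this out, first enlarge $U$ to a bounded open convex set $U'\supset U$ with $\dist(U,\del U')>0$, e.g.\ $U':=U+B_\eps$ for some small $\eps>0$. Since each $f^i_j$ is $L$-Lipschitz on $K_i$, the restriction $f|_{K_i}=f^i_1-f^i_2$ is $2L$-Lipschitz; convexity of $U$ lets us join any two points of $U$ by a segment crossing finitely many tile boundaries, so $f$ is in fact $2L$-Lipschitz on $U$ and admits a $2L$-Lipschitz extension $\bar f$ to $U'$. For each $i$, I then invoke Lemma~\ref{L:dcextension} with $K=K_i$, ambient open set $U'$, convex decomposition $(f^i_1,f^i_2)$, and dominating Lipschitz function $g=\bar f$ (which equals $f$ on $K_i$, so the hypothesis $g\le f$ on $K_i$ holds trivially). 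This produces $\tilde f_i\in\dc(U')$ with
\[
\tilde f_i = f \text{ on } K_i, \qquad \tilde f_i \ge \bar f \text{ on } U',
\]
together with the quantitative bound
\[
\nor{\tilde f_i}{\dc(U')} \le \nor{f^i_1}{\oo,K_i} + \nor{f^i_2}{\oo,K_i} + 4L\,\dist(K_i,\del U').
\]

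The key identity $f=\min_{1\le i\le n}\tilde f_i$ on $U$ is now immediate: for any $x\in U$, pick $i_0$ with $x\in K_{i_0}$, so that $\tilde f_{i_0}(x)=f(x)$, while $\tilde f_i(x)\ge\bar f(x)=f(x)$ for every $i$. Iterating Lemma~\ref{L:invariant} $n-1$ times shows $\min_i\tilde f_i\in\dc(U')$, with $\dc(U')$-norm bounded by a constant (depending on $n$) times $\sum_i\nor{\tilde f_i}{\dc(U')}$. Restricting to $U$ and absorbing the geometric factor $\dist(K_i,\del U')\le\diam(U')$ into the final constant $C$ yields the claimed estimate \eqref{piecewisedcbound}.

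The main obstacle is arranging $\tilde f_i\ge f$ on all of $U$ so that the min-identity $f=\min_i\tilde f_i$ is preserved; without this majorization, the pointwise minimum of naive $\dc$-extensions could dip below $f$ outside $K_i$. This is precisely the point at which the strengthened second part of Lemma~\ref{L:dcextension}, producing an extension above a prescribed Lipschitz majorant, is indispensable. The only other technical point is the positive-gap requirement of Lemma~\ref{L:dcextension}, handled cleanly by the preliminary enlargement $U\subset U'$.
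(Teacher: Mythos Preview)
Your argument is correct and follows the same strategy as the paper: extend each piece via Lemma~\ref{L:dcextension} to a global $\dc$-function that equals $f$ on its tile and majorizes $f$ elsewhere, then recover $f$ as the minimum and apply Lemma~\ref{L:invariant}. The only difference is that you explicitly enlarge the compact $U$ to an open $U'$ before invoking Lemma~\ref{L:dcextension} (whose hypotheses require an open ambient set), a technical point the paper glosses over; otherwise the two proofs are identical.
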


\begin{proof}
	It follows from Lemma \ref{L:dcextension} that, for each $i = 1,2,\ldots,n$, there exists $\tilde f^i \in \dc(U)$ such that $f = \tilde f^i$ on $K_i$, $\tilde f^i \ge f$ in $U$, and
	\[
		\nor{\tilde f^i}{\dc(U)} \le \nor{f^i_1}{\oo,K_i} + \nor{f^i_2}{\oo,K_i} + 4L\diam{U}.
	\]
	 It is then clear that
	\[
		f = \min\{ \tilde f^1, \tilde f^2, \ldots, \tilde f^n\}.
	\]
	Therefore, as a consequence of Lemma \ref{L:invariant}, $f \in \dc(U)$ and \eqref{piecewisedcbound} holds.
\end{proof}

Proposition \ref{P:piecewisedc} immediately leads to the following generalization of Proposition \ref{P:C11dc}.

\begin{proposition}\label{P:piecewiseC11dc}
	Assume that $(K_i)_{i=1}^n$ is a convex tessellation of a convex and compact set $U \subset \RR^d$ with nonempty interior, $f \in C^{0,1}(U)$, and, for each $i = 1,2,\ldots, n$, $f \in C^{1,1}(K_i)$. Then $f \in \dc(U)$.
\end{proposition}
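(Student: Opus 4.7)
The plan is to reduce Proposition \ref{P:piecewiseC11dc} directly to Proposition \ref{P:piecewisedc} by producing, on each tile $K_i$, a convex decomposition of $f$ with a Lipschitz constant that can be chosen uniformly in $i$.

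First, I would fix $i \in \{1,2,\ldots,n\}$ and set $M_i := \nor{D^2 f}{\oo,K_i}$, which is finite by the assumption that $f \in C^{1,1}(K_i)$. Mimicking the proof of Proposition \ref{P:C11dc}, I would define
\[
    f_1^i(x) := f(x) + \frac{M_i}{2}|x|^2 \quad \text{and} \quad f_2^i(x) := \frac{M_i}{2}|x|^2 \quad \text{for } x \in K_i.
\]
Since $D^2 f \ge -M_i \Id$ on $\mathrm{int}(K_i)$ in the distributional sense and $K_i$ is convex, $f_1^i$ is convex on $K_i$, and $f_2^i$ is clearly convex. By construction $f = f_1^i - f_2^i$ on $K_i$.

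Next, I would check that the convex pieces $f_1^i, f_2^i$ are Lipschitz with a common constant independent of $i$. Since $U$ is compact, $|x| \le R := \max_{x \in U} |x|$ for all $x \in U$, so $f_2^i$ has Lipschitz constant at most $M_i R$ on $K_i$, and
\[
    |Df_1^i(x)| \le |Df(x)| + M_i |x| \le \nor{Df}{\oo,U} + M_i R \quad \text{for a.e. } x \in K_i.
\]
Setting
\[
    L := \nor{Df}{\oo,U} + R \max_{1 \le i \le n} M_i,
\]
which is finite because there are only finitely many tiles, we see that $L$ is a common Lipschitz constant for all the $f_j^i$ with $j = 1,2$ and $i = 1,2,\ldots,n$.

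Finally, with this uniform Lipschitz control in hand, together with the hypothesis $f \in C^{0,1}(U)$ and the convex tessellation structure, Proposition \ref{P:piecewisedc} applies directly and gives $f \in \dc(U)$. There is no real obstacle here; the only point to be careful about is ensuring that the constant $L$ is genuinely independent of $i$, which is why I extract $\max_i M_i$ explicitly — a step that works precisely because the tessellation is finite.
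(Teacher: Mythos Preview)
Your proposal is correct and is exactly the intended argument: the paper states that Proposition~\ref{P:piecewiseC11dc} follows immediately from Proposition~\ref{P:piecewisedc}, and your construction of $f_1^i, f_2^i$ via Proposition~\ref{P:C11dc} together with the uniform Lipschitz bound $L$ is precisely how one makes this immediate. The only care needed is the uniformity of $L$ over the finitely many tiles, which you handle correctly.
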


We now demonstrate that the Hessians of $\dc$-functions can have certain point singularities.

\begin{proposition}\label{P:dcpointsing}
	Let $U \subset \RR^d$ be open and convex and fix $X := \{x_1,x_2,\ldots,x_n\} \subset U$. Assume that
	\[
		f \in C^{0,1}(U) \cap C^{1,1}(U \backslash X)
	\]
	and there exist $C > 0$ and $\sigma_1,\sigma_2,\ldots, \sigma_n \in (0,1)$ such that, for each $i = 1,2,\ldots, n$,
	\[
		\liminf_{x \to x_i} \pars{ D^2 f(x)  + \frac{C}{|x-x_i|^{\sigma_i}} \Id} \ge -C \Id.
	\]
	Then $f \in \dc(U)$.
\end{proposition}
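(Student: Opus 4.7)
The strategy is to apply the local-to-global principle for $\dc$ (Lemma \ref{L:dclocal}) together with the cutoff lemma (Lemma \ref{L:localizedc}), reducing the problem to showing that $f$ is a difference of convex functions on some neighborhood of each point in $U$. Away from $X$, where $f \in C^{1,1}$ locally, this is immediate from Proposition \ref{P:C11dc}, so the content lies in handling a neighborhood of each singular point $x_i$.

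Near each $x_i$, I would introduce the convex barrier
\[
g_i(y) := \frac{C}{(2-\sigma_i)(1-\sigma_i)}|y - x_i|^{2-\sigma_i} + \frac{B}{2}|y - x_i|^2
\]
with $B > 0$ to be chosen. Since $2 - \sigma_i \in (1,2)$, the map $r \mapsto r^{2-\sigma_i}$ is convex and increasing on $[0,\oo)$, so $g_i$ is convex on $\RR^d$. A direct computation for $h(y) := |y|^{2-\sigma}$ yields
\[
D^2 h(y) = (2-\sigma)|y|^{-\sigma}\Id - \sigma(2-\sigma)|y|^{-\sigma-2}\, y \otimes y,
\]
whose smallest eigenvalue, in the radial direction, equals $(2-\sigma)(1-\sigma)|y|^{-\sigma}$. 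Consequently, $D^2 g_i(y) \ge \frac{C}{|y-x_i|^{\sigma_i}}\Id + B\Id$ classically on $\RR^d \setminus \{x_i\}$. The standing liminf hypothesis provides $\delta_i > 0$ with $\oline{B_{\delta_i}(x_i)} \subset U$ such that the pointwise bound $D^2 f(y) + \frac{C}{|y-x_i|^{\sigma_i}}\Id \ge -(C+1)\Id$ holds on $B_{\delta_i}(x_i) \setminus \{x_i\}$, so choosing $B := C + 1$ produces $D^2(f + g_i)(y) \ge 0$ in the classical sense on the punctured ball.

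The principal obstacle is lifting this classical Hessian bound on the punctured ball to genuine convexity of $f + g_i$ on the whole ball $B_{\delta_i}(x_i)$. For $d \ge 2$, I would use a line-perturbation argument: given any segment $[a,b] \subset B_{\delta_i}(x_i)$ through $x_i$, I perturb the endpoints in a direction $w$ transverse to $b - a$ to form $[a + \eps_n w, b + \eps_n w]$. For each $x_j \in X$ at most one value of $\eps_n$ causes the perturbed segment to hit $x_j$, so one can pick $\eps_n \searrow 0$ with the perturbed segments disjoint from $X$. On each of these, $f + g_i$ is locally $W^{2,\oo}$ with nonnegative second derivative, hence convex; the continuity of $f + g_i$ then delivers convexity on $[a,b]$ as the pointwise limit of convex functions. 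In the one-dimensional case, where no such perturbation is available, I would instead argue directly via Proposition \ref{P:1ddc}: setting $\phi(x) := C \int_0^x (|t - x_i|^{-\sigma_i} + 1)\,dt$, the function $h := f' + \phi$ satisfies $h' \ge 0$ on each side of $x_i$, so $h$ is monotone and bounded on each side, $f'$ has one-sided limits at $x_i$, and $f' \in BV_\loc$ near $x_i$.

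Once convexity of $f + g_i$ on $B_{\delta_i}(x_i)$ is established (or, in the one-dimensional case, $f$ is directly shown to be $\dc$ there), one has $f = (f + g_i) - g_i \in \dc(B_{\delta_i}(x_i))$. Combined with the $C^{1,1}$ case on $U \setminus X$ and a smooth partition of unity on $U$ subordinate to these $\dc$-neighborhoods, Lemma \ref{L:localizedc} produces a global decomposition $f = f_1 - f_2$ into convex functions on $U$, hence $f \in \dc(U)$.
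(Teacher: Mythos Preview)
Your argument is correct and follows the same overall architecture as the paper: reduce via Lemma \ref{L:dclocal} and Proposition \ref{P:C11dc} to a single singularity at the origin, then add the convex barrier $\frac{C}{(2-\sigma)(1-\sigma)}|x|^{2-\sigma} + \frac{B}{2}|x|^2$ and show $f$ plus barrier is convex on the full ball. The genuine difference is in how you cross the singular point. You split by dimension: for $d \ge 2$ you slide segments transversally off $x_i$, use the classical Hessian bound on the perturbed segments, and pass to the limit; for $d=1$ you fall back on the $BV$ characterization of Proposition \ref{P:1ddc}. The paper instead avoids any case split by enriching the barrier with one more convex term, $\nor{Df}{\oo}|x|$, and then checking directly that the resulting $g$ satisfies $g(x) + g(-x) \ge 2g(0)$; this single inequality, together with local convexity away from the origin, forces global convexity in every dimension. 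Your route is perfectly valid and perhaps more geometric, but the paper's extra $\nor{Df}{\oo}|x|$ term buys a shorter, dimension-free argument. One minor point: in your final paragraph it is Lemma \ref{L:dclocal}, not Lemma \ref{L:localizedc} with a partition of unity, that assembles the local $\dc$ decompositions into a global one.
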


\begin{proof}
	In view of Lemma \ref{L:dclocal} and Proposition \ref{P:C11dc}, it suffices to assume $X = \{0\}$, $U = B_r(0)$ for some $r > 0$, and, for some $\sigma \in (0,1)$ and $C > 0$ and all $x \in B_r(0) \backslash \{0\}$,
	\[
		D^2 f(x) \ge - C\pars{1 + \frac{1}{|x|^\sigma}} \Id.
	\]
	Set
	\[
		g(x) := f(x) + \frac{C}{(2-\sigma)(1-\sigma)} |x|^{2-\sigma} + \nor{Df}{\oo}|x| + \frac{C}{2} |x|^2.
	\]
	We show that $g$ is convex, from which the result follows because $g - f$ is convex, in view of the fact that $2-\sigma > 1$.
	
	First, note that, for $x \ne 0$,
	\[
		D^2 g(x) \ge D^2 f(x) + C \pars{ \frac{1}{|x|^\sigma} + 1} \ge 0.
	\]
	Therefore, $g$ is locally convex in $B_r(0) \backslash \{0\}$. To show that $g$ is convex on all of $B_r(0)$, it sufficies to check that, for all $x \in B_r(0) \backslash \{0\}$,
	\[
		g(x) + g(-x) \ge 2g(0). 
	\]
	This is easily seen from
	\[
		g(x) + g(-x) - 2g(0) \ge f(x) + f(-x) - 2f(0) + 2\nor{Df}{\oo})|x| \ge 0.
	\]
\end{proof}

A particular example of a function satisfying the hypothesis of Proposition \ref{P:dcpointsing} is
\[
	f(x) = a\pars{ \frac{x}{|x|}}|x|^q
\]
for some $a \in C^{1,1}(S^{d-1})$ and $q > 1$. The case $q \ge 2$ is covered by Proposition \ref{P:C11dc}. When $q < 2$, $f$ fails to belong to $C^{1,1}$, but
\[
	D^2 f (x) = |x|^{q-2} \left[ (D^2 a(\hat x) + q a(\hat x))(\Id - \hat x \otimes \hat x) + q(\hat x \otimes Da(\hat x) + Da(\hat x) \otimes \hat x) + q(q-1)a(\hat x) \hat x \otimes \hat x \right],
\]
which clearly satisfies the hypotheses of Proposition \ref{P:dcpointsing} with $\sigma = 2-q$, and so $f \in \dc_\loc$ in view of Proposition \ref{P:dcpointsing}. 

The case $q = 1$ is of particular interest in the pathwise viscosity solution theory. The following result treats this example.

\begin{proposition}\label{P:dclevelset}
	Let $a \in C^{1,1}(S^{d-1})$ and set
	\[
		f(x) = a\pars{ \frac{x}{|x|}}|x|.
	\]
	Then $f \in \dc$, and, for some constant $C > 0$ and all $R > 0$,
	\[
		\nor{f}{\dc(B_R)} \le CR\nor{a}{C^{1,1}}.
	\]
\end{proposition}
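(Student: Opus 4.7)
The plan is to construct two convex positively $1$-homogeneous functions whose difference is $2f$, exploiting the fact that $f$ is $1$-homogeneous. Specifically, I will show that for a constant $M$ comparable to $\nor{a}{C^{1,1}(S^{d-1})}$, both
\[
    G_\pm(x) := M|x| \pm f(x)
\]
are convex on $\RR^d$. From this, $f = (G_+ - G_-)/2 \in \dc$, and the norm bound follows immediately from $\nor{G_\pm}{\oo,B_R} \le (M + \nor{a}{\oo})R \le CR\nor{a}{C^{1,1}}$.

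The first step is to extend $a$ to $\tilde a(x) := a(x/|x|)$ on $\RR^d \setminus \{0\}$, which is then $0$-homogeneous and $C^{1,1}_\loc$, with bounds on any spherical shell controlled by $\nor{a}{C^{1,1}(S^{d-1})}$. Hence $f(x) = \tilde a(x)|x|$ is $C^{1,1}_\loc(\RR^d \setminus \{0\})$ and continuous at the origin. A direct computation yields $D^2 f(x) = \frac{1}{|x|} N(\hat x)$ on $\RR^d \setminus \{0\}$, where the key structural property --- a consequence of the $1$-homogeneity of $f$, equivalently of $x \cdot \nabla \tilde a \equiv 0$ --- is that $N(\hat x)\hat x = 0$. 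Moreover, working at $\hat x = e_1$ and using that $\partial_1 \tilde a(e_1) = 0$ and $\partial_1 \partial_i \tilde a(e_1) = -\partial_i \tilde a(e_1)$, one finds that the restriction of $N(\hat x)$ to the tangent space $\hat x^\perp$ is $D^2_{tan} \tilde a(\hat x) + a(\hat x)(\Id - \hat x \otimes \hat x)$, whose operator norm is bounded by $C \nor{a}{C^{1,1}(S^{d-1})}$.

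Since $D^2(M|x|)(x) = \frac{M}{|x|}(\Id - \hat x \otimes \hat x)$ lives in exactly the tangential subspace on which $N(\hat x)$ is supported, choosing $M$ to be a suitable constant multiple of $\nor{a}{C^{1,1}}$ gives
\[
    D^2(M|x| \pm f)(x) = \frac{1}{|x|}\bigl[M(\Id - \hat x \otimes \hat x) \pm N(\hat x)\bigr] \ge 0
\]
almost everywhere on $\RR^d \setminus \{0\}$. The same choice of $M$ ensures $G_\pm \ge (M - \nor{a}{\oo})|x| \ge 0$.

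The final step is to bootstrap the almost-everywhere inequality $D^2 G_\pm \ge 0$ on $\RR^d \setminus \{0\}$ to convexity of $G_\pm$ on all of $\RR^d$, which is not automatic because of the singularity of $D^2 G_\pm$ at the origin. Since $G_\pm$ is continuous, it suffices to check midpoint convexity $G_\pm(\frac{x+y}{2}) \le \frac{G_\pm(x) + G_\pm(y)}{2}$. When the segment from $x$ to $y$ avoids the origin, $G_\pm$ is $C^{1,1}$ with nonnegative second derivative along the segment and is therefore convex there; when $y = -\lambda x$ for some $\lambda \ge 0$ so the segment passes through the origin, the positive $1$-homogeneity reduces midpoint convexity, after a short calculation in each of the subcases $\lambda \lessgtr 1$, to the inequality $G_\pm(x) + G_\pm(-x) \ge 0$, which holds because $G_\pm \ge 0$. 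The main technical obstacle is the Hessian computation together with identifying the null-vector structure $N(\hat x)\hat x = 0$: without it, $D^2 f$ would have radial-tangential off-diagonal terms that the purely tangential Hessian of $M|x|$ could not absorb.
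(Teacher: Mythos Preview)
Your proof is correct and follows essentially the same route as the paper's: both compute $D^2 f$, exploit the $1$-homogeneity to see that $\hat x$ lies in its kernel so that $D^2 f$ is purely tangential with operator norm $\le C\nor{a}{C^{1,1}}/|x|$, and then dominate by $D^2(M|x|) = \tfrac{M}{|x|}(\Id - \hat x\otimes\hat x)$. The only cosmetic differences are that the paper invokes Lemma~\ref{L:dcchar} (one-sided domination $D^2 f \le D^2 g$ with $g = M|x|$ already convex suffices, yielding $f = g - (g-f)$) rather than proving both $G_\pm$ convex, and it does not spell out the convexity across the origin, which you handle more carefully via the midpoint-convexity argument.
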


\begin{proof}
	If $x \ne 0$, then the positive $1$-homogeneity of $f$ implies that $x$ is an eigenvector of $D^2 f(x)$ with eigenvalue $0$. More precisely,
	\[
		D^2 f(x) = \frac{1}{|x|} \pars{  a(\hat x) + D^2 a(\hat x)} \pars{\Id - \hat x \otimes \hat x}.
	\]
	Set
	\[
		g(x) := \pars{ \nor{a}{\oo} + \nor{D^2 a}{\oo}} |x|.
	\]
	It follows easily that $g$ is convex, and $D^2 g \ge D^2 f$. Therefore, in view of Lemma \ref{L:dcchar}, $f \in \dc$ and
	\[
		\nor{f}{\dc(B_R)} \le 2 \nor{g}{\oo, B_R} + \nor{f}{\oo,B_R} \le (3\nor{a}{\oo} + 2 \nor{D^2 a}{\oo})R.
	\]
\end{proof}

\bibliography{interpolation}{}
\bibliographystyle{acm}

\end{document}